\newtheorem{prop}{Proposition}[section]
\newtheorem{theorem}[prop]{Theorem}
\newtheorem{lemma}[prop]{Lemma}
\newtheorem{corollary}[prop]{Corollary}
\newtheorem*{prop*}{Proposition}
\theoremstyle{definition}
\theoremstyle{remark}
\newtheorem{example}[prop]{Example}
\newtheorem*{remark*}{Remark}
\newtheorem{remark}[prop]{Remark}
\theoremstyle{theorem}
\newcommand{\R}{\mathbb{R}}
\newcommand{\A}{\mathbb{A}}
\newcommand{\N}{\mathbb{N}}
\newcommand{\Z}{\mathbb{Z}}
\newcommand{\E}{\mathbb{E}}
\newcommand{\EE}{\mathcal{E}}
\newcommand{\fq}{{\lfloor q\rfloor}}
\newcommand{\fp}{{\lfloor p\rfloor}}
\newcommand{\Prob}{\mathbb{P}}
\newcommand{\eps}{\varepsilon}
\newcommand{\hit}{\textup{hit}}
\newcommand{\di}{\textup{div}}
\newcommand{\ca}{\text{\textup{cap}}}
\newcommand{\Aut}{\text{\textup{Aut}}\,}
\newcommand{\Cay}{\mathcal{C}}
\newcommand*{\diam}{\mathop{\textup{diam}}\nolimits}
\newcommand*{\rad}{\mathop{\textup{rad}}\nolimits}
\numberwithin{equation}{section}
\numberwithin{table}{section}
\title[Volume growth, isoperimetry and escape probability in vertex-transitive graphs]{Sharp relations between volume growth, isoperimetry and escape probability in vertex-transitive graphs}
\date{}
\author{Romain Tessera}
\address{Institut de Math\'ematiques de Jussieu-Paris Rive Gauche, France}
\email{romain.tessera@imj-prg.fr}
\author{Matthew Tointon}
\address{School of Mathematics, University of Bristol, United Kingdom}
\email{m.tointon@bristol.ac.uk}
\thanks{M. Tointon was partially supported by a Junior Research Fellowship from Homerton College, Cambridge; grant FN 200021\_163417/1 of the Swiss National Fund for scientific research; the Stokes Research Fellow from Pembroke College, Cambridge; and by a travel grant from the Pembroke College Fellows' Research Fund.}
\begin{document}
\maketitle
\begin{abstract}
We prove sharp bounds on the probability that the simple random walk on a vertex-transitive graph escapes the ball of radius $r$ before returning to its starting point. In particular, this shows that if the ball of radius $r$ has size slightly greater than quadratic in $r$ then this probability is bounded from below. On the other hand, we show that if the ball of radius $r$ has volume slightly less than cubic in $r$ then this probability decays logarithmically for all larger balls. These results represent a finitary refinement of Varopoulos's theorem that a random walk on a vertex-transitive graph is recurrent if and only if the graph has at most quadratic volume growth. They also imply the existence of a \emph{gap} at $0$ for escape probabilities: there exists a universal constant $c>0$ such that the random walk on an arbitrary vertex-transitive graph is either recurrent or has a probability of at least $c$ of escaping to infinity. We also prove versions of these results for finite graphs, in particular confirming and strengthening a conjecture of Benjamini and Kozma from 2002. Amongst other things, we also generalise our results to give a sharp finitary version of the characterisation of $p$-parabolic vertex-transitive graphs, prove a number of sharp isoperimetric inequalities for vertex-transitive graphs, and prove a locality result for the escape probability of the random walk on a vertex-transitive graph that can be seen as an analogue of Schramm's locality conjecture for the critical percolation probability.
\end{abstract}
\tableofcontents

\section{Introduction}

One of the oldest results about random walks is P\'olya's theorem that the random walk on the grid $\Z^d$ is \emph{recurrent} if $d\le2$, and \emph{transient} otherwise -- that is to say, if $d\le2$ then the random walk will almost surely return to $0$ infinitely often, whereas if $d\ge 3$ then it will almost surely visit $0$ only finitely many times. Varopoulos extended this result to arbitrary connected vertex-transitive graphs, showing that every such graph with super-quadratic volume growth has a transient random walk. The main purpose of this paper is to prove quantitative, finitary analogues of Varopoulos's theorem. In particular, we prove an analogue for finite graphs that completely resolves a conjecture of Benjamini and Kozma \cite{bk}. Our arguments build on the results and techniques of a number of our previous papers \cite{proper.progs,tt.trof,ttBalls}, as well as a celebrated result of Breuillard, Green and Tao on the structure of so-called \emph{approximate groups} \cite{bgt}.

A graph $\Gamma$ is called \emph{vertex-transitive} if its automorphism group $\Aut(\Gamma)$ acts transitively on the set of vertices of $\Gamma$. A particular class of vertex-transitive graphs is the class of \emph{Cayley graphs}. Given a group $G$ with a symmetric generating set $S$, we define the \emph{Cayley graph $\Cay(G,S)$ of $G$ with respect to $S$} to be the graph whose vertices are the elements of $G$ with an edge between $x$ and $y$ precisely when and there exists $s\in S\setminus\{1\}$ such that $xs=y$.

If $\Gamma$ is a graph then we abuse notation slightly by also writing $\Gamma$ for the set of vertices of $\Gamma$. We write $E(\Gamma)$ for the set of edges. In this paper we will generally assume that $\Gamma$ is connected. Define the \emph{degree} of $\Gamma$ to be $\deg(\Gamma)=\sup_{x\in\Gamma}\deg(x)$. Write $d=d_\Gamma$ for the graph metric on the vertices of $\Gamma$. Given $x\in\Gamma$ and $r\in\N_0$, write $B_\Gamma(x,r)=\{u\in\Gamma:d(u,x)\le r\}$ for the \emph{ball} of radius $r$ centred at $x$, and $S_\Gamma(x,r)=\{u\in\Gamma:d(u,x)=r\}$ for the \emph{sphere} of radius $r$ centred at $x$. Abbreviate $\beta_\Gamma(x,r)=|B_\Gamma(x,r)|$ and $\sigma_\Gamma(x,r)=|S_\Gamma(x,r)|$. If $\Gamma$ is vertex transitive then $\beta_\Gamma(x,r)$ and $\sigma_\Gamma(x,r)$ are independent of $x$, and so we abbreviate them further by $\beta_\Gamma(r)$ and $\sigma_\Gamma(r)$, respectively.

Here, as elsewhere in this series of papers, the notation $A\ll B$ and $B\gg A$ both mean that there exists an absolute constant $C>0$ such that $A\le CB$, and the notation $A\asymp B$ means that $A\ll B\ll A$.

We adopt the standard convention that, given a vertex $x$ of a graph $\Gamma$, the notation $\Prob_x$ represents probability conditioned on the simple random walk starting at $x$. Given a set of vertices $Y\subset\Gamma$, write $T_Y$ for the \emph{hitting time} of $Y$, which is to say the earliest time at which the random walk lies in $Y$, with $T_Y=\infty$ if the random walk never lies in $Y$. Write $T_Y^+$ for the earliest time after zero that the random walk lies in $Y$; this is different from $T_Y$ only if the random walk starts in $Y$. If $Y$ is a singleton $\{y\}$, we generally write $T_y$ or $T_y^+$ instead of $T_{\{y\}}$ or $T_{\{y\}}^+$. Given in addition $x\in\Gamma\setminus Y$, we denote by $\Prob[\,x\to Y\,]$ the probability
\[
\Prob[\,x\to Y\,]=\Prob_x[\,T_Y<T_x^+\,]
\]
that the random walk starting at $x$ hits $Y$ without first returning to $x$. By convention we set $\Prob[\,x\to\varnothing\,]=0$.

For every $x\in\Gamma$, the events $[\,T_{S_\Gamma(x,r)}<T_x^+\,]$ are decreasing in $r$, so we may define $\Prob[\,x\to\infty\,]$ to be the limit
\[
\Prob[\,x\to\infty\,]=\lim_{r\to\infty}\Prob[\,x\to S_\Gamma(x,r)\,].
\]
Thus $\Prob[\,x\to\infty\,]$ is the probability that the random walk, starting at $x$, never returns to $x$. This is sometimes called the \emph{escape probability} from $x$. A connected graph $\Gamma$ is therefore transient if and only if $\Prob[\,x\to\infty\,]>0$ for some (equivalently, every) vertex $x\in\Gamma$ (see e.g. \cite[Theorem 2.3]{ly-per}).

Using the above notation, Varopoulos's theorem can be stated as follows.
\begin{theorem}[Varopoulos]\label{thm:var}
Let $\Gamma$ be a connected, locally finite vertex-transitive graph, and suppose that $\beta_\Gamma(r)/r^2\to\infty$ as $r\to\infty$. Then $\Prob[\,x\to\infty\,]>0$ for every $x\in\Gamma$.
\end{theorem}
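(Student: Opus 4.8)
The plan is to establish transience of $\Gamma$ in its potential‑theoretic form: writing $p_n$ for the $n$‑step transition probabilities of the simple random walk and $G(x,x)=\sum_{n\ge0}p_n(x,x)$ for the Green's function, one has $G(x,x)=1/\Prob[\,x\to\infty\,]$, so it suffices to prove $G(x,x)<\infty$ for one (hence every) vertex $x$. I would bound $p_n(x,x)$ through the isoperimetric profile of $\Gamma$, using the two standard steps: (i) the hypothesis $\beta_\Gamma(r)\ge cr^{2+\eps}$ forces a ``$(2+\eps)$‑dimensional'' isoperimetric inequality on $\Gamma$; and (ii) a $d$‑dimensional isoperimetric inequality with $d>2$ forces $p_n(x,x)\ll n^{-d/2}$, which is summable.

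For step (i), let $\rho(v)=\min\{r\in\N_0:\beta_\Gamma(r)\ge v\}$ be the (generalised) inverse of the volume‑growth function. The input here is the Coulhon--Saloff-Coste isoperimetric inequality, which holds on every connected vertex‑transitive graph: for every finite $A\subseteq\Gamma$,
\[
|\partial A|\;\gg\;\frac{|A|}{\rho(2|A|)}.
\]
From $\beta_\Gamma(r)\ge cr^{2+\eps}$ one reads off $\rho(v)\ll v^{1/(2+\eps)}$, and substituting this into the displayed inequality gives
\[
|\partial A|\;\gg\;|A|^{1-\frac1{2+\eps}}\;=\;|A|^{\frac{d-1}{d}},\qquad d:=2+\eps>2,
\]
for every finite $A\subseteq\Gamma$; i.e.\ $\Gamma$ satisfies a $d$‑dimensional (``Euclidean‑type'') isoperimetric inequality with $d>2$.

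For step (ii) I would run the classical Nash/Faber--Krahn argument. The $d$‑dimensional isoperimetric inequality above yields a Faber--Krahn inequality, equivalently the Nash inequality $\|f\|_2^{2+4/d}\ll\|\nabla f\|_2^2\,\|f\|_1^{4/d}$ for finitely supported $f$, and this in turn gives the on‑diagonal bound $p_{2n}(x,x)\ll n^{-d/2}$; combining with $p_{2n+2}(x,x)\le p_{2n}(x,x)$ and $p_{2n+1}(x,x)\le p_{2n}(x,x)^{1/2}p_{2n+2}(x,x)^{1/2}$ (reversibility) gives $p_n(x,x)\ll n^{-d/2}$ for all $n$. Since $d/2=1+\eps/2>1$, the series $\sum_n p_n(x,x)$ converges, so $G(x,x)<\infty$ and hence $\Prob[\,x\to\infty\,]>0$ for every $x\in\Gamma$. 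An alternative, purely probabilistic implementation of step (ii) would be to construct directly from the isoperimetric inequality a unit flow from $x$ to infinity of finite energy and to invoke T.\ Lyons's criterion for transience (see \cite{ly-per}); I would expect the heat‑kernel route to be marginally cleaner to write up.

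The computation in step (i) and the functional‑analytic implications in step (ii) are routine and well documented. The one point that needs genuine care is the isoperimetric input: the Coulhon--Saloff-Coste inequality is most often stated for Cayley graphs, and one cannot simply reduce the vertex‑transitive case to that, since not every vertex‑transitive graph is quasi‑isometric to a Cayley graph. The remedy is that the proof of the inequality uses only the transitive action of $\Aut(\Gamma)$ (a translation‑and‑averaging argument over balls) and so extends verbatim; this is in any case implicit in our earlier work.
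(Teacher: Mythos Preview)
Your proof is correct. The paper does not give a direct self-contained proof of Theorem~\ref{thm:var}; it is stated as Varopoulos's classical result and then recovered as a consequence of the finitary resistance bound Theorem~\ref{thm:main.unimod} (equivalently Theorem~\ref{thm:main.unimod.rw}). So the relevant comparison is between your argument and the paper's route to that upper bound.

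Both you and the paper begin with the Coulhon--Saloff-Coste inequality, and you correctly flag that it must be extended from Cayley graphs to general vertex-transitive graphs; the paper carries out exactly this extension in Proposition~\ref{prop:iso-growth}, via a Haar-measure argument on a closed transitive subgroup of $\Aut(\Gamma)$. The two arguments then diverge at step~(ii). You take the classical functional-analytic route: isoperimetry $\Rightarrow$ Nash inequality $\Rightarrow$ on-diagonal heat-kernel decay $p_n(x,x)\ll n^{-d/2}$ $\Rightarrow$ summable Green's function. The paper instead stays in the resistance picture throughout, feeding the isoperimetric profile into the Benjamini--Kozma level-set argument (Theorem~\ref{thm:bk.inf}) to bound $R_2(x\leftrightarrow\Gamma\setminus B(x,r))$ directly. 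Your route is the standard textbook derivation and is arguably tidier for the bare qualitative statement. The Benjamini--Kozma route buys something different: it produces an explicit bound on $R_2(x\leftrightarrow S(x,r))$ at each finite scale $r$, which is exactly what is needed for the paper's genuinely new contributions (the finitary refinements, the finite-graph analogues, the gap phenomenon of Corollary~\ref{cor:gap}, and the $p$-resistance versions). A heat-kernel argument would not immediately deliver those scale-by-scale resistance estimates.
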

It is well known that $\beta_\Gamma(r)/r^2\to\infty$ implies the even stronger condition $\beta_\Gamma(r)\gg r^3$. Actually the proof requires that $\beta_\Gamma(r)\gg r^2\log r$, a subtlety that will become relevant in the finitary setting of the present paper.

Our first result is a quantitative finitary refinement of Theorem \ref{thm:var}, where note that the $r^2\log r$ featuring in Varopoulos's proof appears in plain view.
\begin{theorem}\label{thm:main.unimod.rw}
Let $\Gamma$ be a connected, locally finite vertex-transitive graph, let $x\in\Gamma$, and let $r\in\N$, with $r<\diam(\Gamma)$ if $\Gamma$ is finite. Then
\[
\min\left\{1, \frac{\beta_\Gamma(r)}{\deg(\Gamma)r^2\log r}\right\}\ll\Prob[\,x\to S_\Gamma(x,r+1)\,]\ll\frac{\beta_\Gamma(r)}{r^2}.
\]
\end{theorem}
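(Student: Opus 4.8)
\subsection*{Proof plan}

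The plan is to pass to the language of electrical networks. By the standard identity for the simple random walk on a graph with unit conductances (see e.g.\ \cite{ly-per}), for any $A\subseteq\Gamma$ with $x\notin A$ one has $\Prob[\,x\to A\,]=\Prob_x[T_A<T_x^+]=\bigl(\deg(x)\,\mathcal{R}(x\leftrightarrow A)\bigr)^{-1}$, where $\mathcal{R}(x\leftrightarrow A)$ is the effective resistance from $x$ to $A$. Since $\Gamma$ is vertex-transitive $\deg(x)=\deg(\Gamma)$, and since $r<\diam(\Gamma)$ the sphere $S_\Gamma(x,r+1)$ is non-empty, so the theorem is equivalent to the two-sided resistance estimate
\[
\frac1{\deg(\Gamma)}+\frac{r^2}{\deg(\Gamma)\,\beta_\Gamma(r)}\;\ll\;\mathcal{R}\bigl(x\leftrightarrow S_\Gamma(x,r+1)\bigr)\;\ll\;\frac1{\deg(\Gamma)}+\frac{r^2\log r}{\beta_\Gamma(r)},
\]
and I would prove the two inequalities by essentially opposite methods: cutsets for the lower bound, flows for the upper bound.

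For the lower bound on $\mathcal{R}$ (equivalently, the upper bound on the escape probability) I would use the Nash--Williams inequality. For $1\le k\le r$ let $\Pi_k$ be the set of edges joining $S_\Gamma(x,k-1)$ to $S_\Gamma(x,k)$; since adjacent vertices differ in distance by at most $1$, the sets $\Pi_k$ are pairwise disjoint and each separates $x$ from $S_\Gamma(x,r+1)$, and $|\Pi_k|\le\deg(\Gamma)\,\sigma_\Gamma(k)$. As $\sum_{k=1}^r\sigma_\Gamma(k)=\beta_\Gamma(r)-1$, a pigeonhole argument gives $\gg r$ indices $k\le r$ with $\sigma_\Gamma(k)\le 2\beta_\Gamma(r)/r$, and restricting Nash--Williams to the corresponding cutsets yields $\mathcal{R}(x\leftrightarrow S_\Gamma(x,r+1))\gg r^2/(\deg(\Gamma)\beta_\Gamma(r))$; combined with the trivial bound $\mathcal{R}(x\leftrightarrow S_\Gamma(x,r+1))\ge 1/\deg(\Gamma)$ (the $\deg(\Gamma)$ edges at $x$ in parallel) this gives the left-hand inequality, and dividing by $\deg(\Gamma)$ gives $\Prob[\,x\to S_\Gamma(x,r+1)\,]\ll(1+r^2/\beta_\Gamma(r))^{-1}$.

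The upper bound on $\mathcal{R}$ (the lower bound on the escape probability) is the substantial direction, and here I would invoke Thomson's principle: it suffices to construct a unit flow $\theta$ from $x$ to $S_\Gamma(x,r+1)$ of energy $\EE(\theta)\ll 1/\deg(\Gamma)+r^2\log r/\beta_\Gamma(r)$. Pushing one unit of flow outward from $x$ and splitting it as evenly as possible across successive spheres produces a flow of energy $\ll\sum_{k=0}^{r}\bigl|\partial_E B_\Gamma(x,k)\bigr|^{-1}$, where $\partial_E B_\Gamma(x,k)$ denotes the set of edges between $B_\Gamma(x,k)$ and its complement (the $k=0$ term supplying the $1/\deg(\Gamma)$), so the task reduces to proving, for vertex-transitive $\Gamma$, the inequality
\[
\sum_{k=0}^{r}\frac1{\bigl|\partial_E B_\Gamma(x,k)\bigr|}\;\ll\;\frac1{\deg(\Gamma)}+\frac{r^2\log r}{\beta_\Gamma(r)}.
\]
This would \emph{fail} for an arbitrary graph with the given ball sizes, because $\beta_\Gamma(r)$ alone controls neither the sphere sizes $\sigma_\Gamma(k)$ nor the edge-boundaries $\bigl|\partial_E B_\Gamma(x,k)\bigr|$ at intermediate scales $k<r$; this is precisely where the structure theory of vertex-transitive graphs enters, via the Breuillard--Green--Tao theorem on approximate groups \cite{bgt} together with our earlier work \cite{tt,tt.trof,ttLie}. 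The output of that theory is that $\Gamma$ either has volume growth so fast that the right-hand side is $\asymp 1/\deg(\Gamma)$ and the escape probability is bounded below outright, or has, up to scale $r$, uniformly polynomial growth of some degree $d$ across \emph{all} scales $\rho\le r$ --- so that $\bigl|\partial_E B_\Gamma(x,k)\bigr|$ is comparable to $\beta_\Gamma(k)/k$ and $\beta_\Gamma(k)\asymp\beta_\Gamma(r)(k/r)^d$ --- whereupon a Coulhon--Saloff-Coste-type conversion of these uniform volume bounds into an isoperimetric profile valid for all scales $\le r$ lets one evaluate the sum and obtain the stated bound, the logarithmic factor arising from the borderline contribution of the scales at which the profile is of order $t^{1/2}$ (the case $d=2$, modelled on $\Z^2$, where the inequality is sharp).

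The main obstacle, as this makes clear, is entirely in this last direction: upgrading a single-scale volume lower bound into control of the geometry, and hence of the flow energy, at \emph{all} smaller scales. This step is genuinely non-elementary and rests on the structure theory; the rest is bookkeeping --- organising the regimes (subquadratic, near-quadratic, and genuinely superquadratic growth) so that each produces the sharp, logarithmically-corrected estimate and the pieces patch together, and checking that the whole argument runs uniformly over all $r<\diam(\Gamma)$, so that graphs of finite diameter are treated on the same footing as infinite ones.
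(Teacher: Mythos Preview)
Your reformulation in terms of resistance and your Nash--Williams argument for the lower bound on $\mathcal R$ are exactly what the paper does (see Theorem~\ref{thm:main.unimod} and the proof of the lower bound of Theorem~\ref{thm:main.unimodp} in \S\ref{sec:nash-w}). The broader upper-bound strategy---use the Breuillard--Green--Tao structure theory (via Theorems~\ref{thm:tt} and~\ref{thm:tt.rel}) to propagate a single-scale volume bound to all smaller scales, then convert the resulting growth bounds into a Coulhon--Saloff-Coste isoperimetric profile---also matches the paper's route through \S\S\ref{sec:growth->iso}--\ref{sec:iso}.

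The gap is in your final step, converting isoperimetry into a resistance \emph{upper} bound via Thomson's principle. The assertion that ``splitting the unit flow as evenly as possible across successive spheres produces a flow of energy $\ll\sum_k|\partial_E B_\Gamma(x,k)|^{-1}$'' is not justified and, as stated, is not correct in general. A flow that is literally uniform on each edge cutset $\partial_E B_\Gamma(x,k)$ will fail the divergence-free condition unless every vertex of $S_\Gamma(x,k)$ has the same ratio of backward to forward neighbours; vertex-transitivity of $\Gamma$ does \emph{not} guarantee this, since the stabiliser $G_x$ need not act transitively on spheres or on their edge boundaries. Rebalancing within each sphere can cost unbounded additional energy (spheres need not even be connected). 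Equivalently, you are implicitly claiming that the Nash--Williams lower bound for the sphere cutsets is sharp up to constants, which is precisely what is at issue.

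The paper therefore takes a different route for this conversion: instead of building a flow, it applies the Benjamini--Kozma argument (Theorem~\ref{thm:bk.inf}, proved via Proposition~\ref{prop:bk}). That argument works directly with the unit potential $f$ from $\partial B$ to $x$, orders vertices by $f$-value, and uses the isoperimetric inequality for the \emph{sublevel sets} $A_m=\{x_1,\dots,x_m\}$---which are arbitrary connected sets containing $x$, not balls---to bound the increments of $f$. This is why the paper needs an isoperimetric inequality (Theorem~\ref{thm:isoperimRel}) for \emph{all} connected $A$ with $|A|\le\beta_\Gamma(r)/2$, and then feeds it into the dyadic sum over size ranges in Theorem~\ref{thm:bk.inf} via Lemma~\ref{lem:j.ub.p=3}. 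Once you replace your flow step by this potential-level-set mechanism, the rest of your outline goes through unchanged.
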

Note that Theorem \ref{thm:main.unimod.rw} shows in particular that if $\beta_\Gamma(r)$ grows at least as fast as $r^2\log r$ then the random walk on $\Gamma$ is transient, recovering Theorem \ref{thm:var}, and in fact giving the following quantitative strengthening.
\begin{corollary}[gap at $0$ for the escape probability on vertex-transitive graphs]\label{cor:gap}
There exists a universal constant $c>0$ such that the random walk on every connected, locally finite vertex-transitive graph is either recurrent or has escape probability at least $c$.
\end{corollary}

In Section \ref{sec:bk.examples}, we give examples to show that Theorem \ref{thm:main.unimod.rw} is in general sharp up to the constants implied by the $\ll$ notation. Nonetheless, in the special case in which $\beta_\Gamma(r)$ is sub-quadratic in $r$ we can dispense with the logarithm in the lower bound, as follows.
\begin{theorem}\label{thm:unimod.linear.rw}
Let $\eps>0$. Let $\Gamma$ be a connected, locally finite vertex-transitive graph, let $x\in\Gamma$, and let $r\in\N$ be such that $r<\diam(\Gamma)$. Suppose that $\beta_\Gamma(r)\le r^{2-\eps}$. Then
\[
\Prob[\,x\to S_\Gamma(x,r+1)\,]\gg_\eps\frac{\beta_\Gamma(r)}{\deg(\Gamma)r^2}.
\]
\end{theorem}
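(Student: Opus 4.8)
The plan is to exploit the standard electrical-network interpretation of escape probabilities, namely that
\[
\Prob[\,x\to S_\Gamma(x,r+1)\,]=\frac{1}{\deg(x)\,\mathcal{R}_{\textup{eff}}\bigl(x\leftrightarrow S_\Gamma(x,r+1)\bigr)},
\]
where $\mathcal{R}_{\textup{eff}}$ denotes the effective resistance in the unit-conductance network on $B_\Gamma(x,r+1)$, so that proving the lower bound on the escape probability amounts to proving the matching upper bound
\[
\mathcal{R}_{\textup{eff}}\bigl(x\leftrightarrow S_\Gamma(x,r+1)\bigr)\ll_\eps\frac{r^2}{\beta_\Gamma(r)}
\]
on the effective resistance. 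The key input is the sub-quadratic volume hypothesis $\beta_\Gamma(r)\le r^{2-\eps}$, which, via the (well-known) consequence that a vertex-transitive graph of sub-polynomial-degree-$d$ volume growth at one scale must have controlled growth at all smaller scales, forces $\sigma_\Gamma(k)$ to be comparable to $\beta_\Gamma(k)/k$ for a positive proportion of scales $k\le r$, and in particular forces the sphere sizes not to dip too low too often. Concretely, I would first record that for a vertex-transitive graph one always has $\beta_\Gamma(2k)\le C\beta_\Gamma(k)$ failing only finitely often unless growth is polynomial; under $\beta_\Gamma(r)\le r^{2-\eps}$ one gets genuine doubling-type control, hence $\sigma_\Gamma(k)\gg\beta_\Gamma(k)/k\gg\beta_\Gamma(r)(k/r)^{2-\eps}/k$ for all $k$ in a suitable range.

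The second step is the resistance upper bound via the Nash–Williams / series-law heuristic made rigorous: decompose the ball $B_\Gamma(x,r)$ into the concentric annuli $A_k=S_\Gamma(x,k)$, $0\le k\le r$, and observe that collapsing each sphere $S_\Gamma(x,k)$ to a single point can only decrease effective resistance is the wrong direction — instead one uses the dual bound: a unit flow from $x$ to $S_\Gamma(x,r+1)$ spreading out as evenly as the sphere sizes permit has energy at most $\sum_{k=0}^{r}\frac{1}{|E(S_\Gamma(x,k),S_\Gamma(x,k+1))|}\le\sum_{k=0}^{r}\frac{1}{\sigma_\Gamma(k)}$ (up to the factor $\deg\Gamma$), since every edge between consecutive spheres carries at most the total flow and there are at least $\sigma_\Gamma(k)$ such edges by connectivity and vertex-transitivity. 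Plugging in the lower bound on $\sigma_\Gamma(k)$ from the first step yields
\[
\mathcal{R}_{\textup{eff}}\bigl(x\leftrightarrow S_\Gamma(x,r+1)\bigr)\ll_\eps\sum_{k=1}^{r}\frac{r^{2-\eps}}{\beta_\Gamma(r)}\cdot k^{\eps-2}\asymp_\eps\frac{r^{2-\eps}}{\beta_\Gamma(r)}\sum_{k\ge1}k^{\eps-2}\ll_\eps\frac{r^{2-\eps}}{\beta_\Gamma(r)},
\]
where the sum over $k^{\eps-2}$ converges precisely because $\eps>0$; note this is even stronger than what is claimed, and in any case is bounded by $r^2/\beta_\Gamma(r)$, which combined with the electrical identity gives the theorem.

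The main obstacle is making rigorous the claim that $\sigma_\Gamma(k)\gg_\eps\beta_\Gamma(r)(k/r)^{2-\eps}/k$ for \emph{all} $k$ in the relevant range, rather than merely on average: individual spheres in a vertex-transitive graph can be much smaller than $\beta_\Gamma(k)/k$ (think of long thin "necks"), so a naive annulus decomposition can fail. The fix, which is where the hypotheses and the earlier machinery of the paper (and of \cite{tt,tt.trof}) genuinely enter, is to replace honest spheres by \emph{thickened} annuli of width $w$ chosen so that $|B_\Gamma(x,k+w)\setminus B_\Gamma(x,k)|\gg\beta_\Gamma(k)$ — such a width exists with $w\ll k/\log(1/\text{something})$, in fact $w\asymp 1$ suffices in the sub-quadratic regime by the doubling property — and to run the flow argument across these fat annuli, using that the number of edges crossing a fat annulus is at least of order its volume divided by its width. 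I expect that once the correct "fat annulus" formulation is in place the energy estimate is routine; the delicate point is quantifying the width $w$ uniformly in $k$ using only $\beta_\Gamma(r)\le r^{2-\eps}$, and it is here that one must be careful to keep the dependence on $\eps$ explicit and avoid circularity with Theorem \ref{thm:main.unimod.rw}.
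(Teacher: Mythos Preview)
Your reduction to an effective-resistance upper bound via $\Prob[\,x\to S_\Gamma(x,r+1)\,]=1/(\deg(x)\,R_2(x\leftrightarrow S_\Gamma(x,r+1)))$ is exactly right, and it is also correct that the structural input from \cite{tt,tt.trof} is what makes the sub-quadratic hypothesis usable. However, the heart of your argument has a genuine gap. You assert that ``a unit flow from $x$ to $S_\Gamma(x,r+1)$ spreading out as evenly as the sphere sizes permit has energy at most $\sum_{k}1/|E(S_\Gamma(x,k),S_\Gamma(x,k+1))|$''. This is precisely the reverse of Nash--Williams, and it is \emph{not} a theorem: to invoke Thomson's principle you must actually construct a unit flow satisfying Kirchhoff's node law at every interior vertex, and a flow that is merely ``as even as the sphere sizes permit'' across each annulus need not exist. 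In a general vertex-transitive graph the ball $B_\Gamma(x,r)$ has no internal symmetry, so pushing mass outward from $S_\Gamma(x,k)$ to $S_\Gamma(x,k+1)$ can concentrate on a few vertices; the resulting energy can be far larger than $1/|E(S_\Gamma(x,k),S_\Gamma(x,k+1))|$. Your ``fat annulus'' fix addresses a different concern (individual spheres being small) but does nothing to produce a valid flow, so the obstacle you identify is not the real one.

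The paper sidesteps this entirely. Rather than attempting a flow construction, it uses the Benjamini--Kozma bound (Theorem~\ref{thm:bk.inf}), which upper-bounds $R_p(x\leftrightarrow\partial B)$ by a sum involving the isoperimetric quantities $j_{A,p}$ over \emph{all} connected sets $A\ni x$, not just balls. This is what absorbs the lack of spherical symmetry. The growth hypothesis $\beta_\Gamma(r)\le r^{2-\eps}$ is propagated to scale $4r$ via Theorem~\ref{thm:tt}, then converted (through the Coulhon--Saloff-Coste argument of \S\ref{sec:growth->iso} and Theorem~\ref{thm:bk.iso.orig}) into the isoperimetric inequality $|\partial A|\gg_\eps|A|^{(q-1)/q}$ with $q\le 2-\eps/2$; plugging this into Theorem~\ref{thm:bk.inf} gives the geometric series $\sum_n 2^{(2-q)n/q}$, which is summable exactly because $q<2$, yielding $R_2\ll_\eps r^2/\beta_\Gamma(r)$ (see Proposition~\ref{prop:unimod.p.ub}\ref{item:main.p.ub.p-eps} and the proof of Theorem~\ref{thm:main.unimodp.linear}). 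So the structural machinery enters through growth~$\to$~isoperimetry~$\to$~resistance, not through sphere-size control, and no explicit flow is ever built.
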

We prove Theorem \ref{thm:unimod.linear.rw} in the more general form of Theorem \ref{thm:main.unimodp.linear}, below.

\subsection*{A converse to Varopoulos's theorem}
A well-known converse to Varopoulos's theorem says that any vertex-transitive graph of at most quadratic growth has a recurrent random walk. This does not follow from Theorem \ref{thm:main.unimod.rw}, since the upper bound of that theorem is merely constant in the case of quadratic growth. This might seem surprising given that the upper bound of Theorem \ref{thm:main.unimod.rw} is optimal. To reconcile these seemingly opposing statements -- that this finitary upper bound is optimal, and yet still too weak to imply its asymptotic analogue -- it is instructive to consider the following examples achieving the upper bound.
\begin{example}\label{ex:rw.quad.ub.optimal}
Let $\Gamma_r$ be the Cayley graph of $\Z\times(\Z/r^{1/2}\Z)^2$ with respect to the generating set $\{-1,0,1\}^3$. We show in Proposition \ref{prop:resist.lb.ex} that there exists a constant $\alpha>0$ such that for every $r\in\N$ we have $\Prob[\,x\to S_{\Gamma_r}(x,r)\,]\ge\alpha$ for every $x\in\Gamma_r$. In particular, the upper bound of Theorem \ref{thm:unimod.linear.rw} cannot be improved in the case $\beta_\Gamma(r)\asymp r^2$. Despite this, note that each graph $\Gamma_r$ is recurrent. Indeed, Theorem \ref{thm:unimod.linear.rw} shows that $\Prob[\,x\to S_{\Gamma_r}(x,n)\,]$ decays like $1/n$ for $n\ge r$; it is just that the radius $r$ at which Theorem \ref{thm:unimod.linear.rw} begins to detect this decay can be arbitrarily large.
\end{example}
To fill this gap in the conclusion of Theorem \ref{thm:main.unimod.rw} we provide the following finitary converse to Varopoulos's theorem.
\begin{theorem}\label{thm:var.converse.rw}
There exists $\eps>0$ such that if $\Gamma$ is a locally finite vertex-transitive graph satisfying
\begin{equation}\label{var.conv.hyp}
\beta_\Gamma(n)\le\eps n^3\beta_\Gamma(1)
\end{equation}
for some $n\in\N$ then for every $x\in\Gamma$ we have
\[
\Prob[\,x\to S_\Gamma(x,r)\,]\ll\frac{\beta_\Gamma(n)}{n^2\log(r/n)}
\]
for every $r\ge n$.
\end{theorem}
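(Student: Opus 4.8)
The plan is to prove the contrapositive-flavoured statement via resistance estimates, exploiting the well-known identity $\Prob[\,x\to S_\Gamma(x,r)\,]=1/(\deg(x)\,\mathcal R_{\text{eff}}(x\leftrightarrow S_\Gamma(x,r)))$, so that the required upper bound on the escape probability is equivalent to a \emph{lower} bound on the effective resistance from $x$ to the sphere of radius $r$. Thus it suffices to show that under the hypothesis \eqref{var.conv.hyp} we have
\[
\mathcal R_{\text{eff}}\bigl(x\leftrightarrow S_\Gamma(x,r)\bigr)\gg\frac{n^2\log(r/n)}{\beta_\Gamma(n)}
\]
for all $r\ge n$. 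The first step is to reduce from the hypothesis at a single scale $n$ to a statement controlling the growth at \emph{all} scales $m\ge n$: by the structure theory of vertex-transitive graphs of polynomial growth (the quantitative doubling and "escape" results from \cite{tt,tt.trof,ttLie,bgt}), a vertex-transitive graph whose ball of radius $n$ is smaller than $n^3\beta_\Gamma(1)/C$ cannot have volume that grows faster than roughly quadratically over a long subsequent range of scales; concretely one should extract from those papers a bound of the form $\beta_\Gamma(m)\ll (m/n)^2\beta_\Gamma(n)$ for all $m$ in some range $[n,\Lambda]$, possibly with $\Lambda$ depending on $\Gamma$ but with the bound holding up to a genuinely large scale, together with an upper bound on $\deg(\Gamma)=\beta_\Gamma(1)$ in terms of $\beta_\Gamma(n)/n$ in the relevant regime.

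Granting near-quadratic growth on a long range of scales, the second step is the resistance lower bound itself. The standard tool is the Nash--Williams inequality: choosing the cutsets to be the spheres $S_\Gamma(x,m)$ for $n\le m<r$, one gets
\[
\mathcal R_{\text{eff}}\bigl(x\leftrightarrow S_\Gamma(x,r)\bigr)\ \ge\ \sum_{m=n}^{r-1}\frac{1}{|E(S_\Gamma(x,m),S_\Gamma(x,m+1))|}\ \gg\ \sum_{m=n}^{r-1}\frac{1}{\deg(\Gamma)\,\sigma_\Gamma(m)}.
\]
Now near-quadratic volume growth, $\beta_\Gamma(m)\ll(m/n)^2\beta_\Gamma(n)$, forces $\sigma_\Gamma(m)=\beta_\Gamma(m)-\beta_\Gamma(m-1)$ to be $\ll m\beta_\Gamma(n)/n^2$ on average over dyadic blocks — one has to be a little careful because $\sigma_\Gamma(m)$ need not be monotone, but summing over a dyadic block $[2^k n,2^{k+1}n)$ the total boundary is at most $\beta_\Gamma(2^{k+1}n)\ll 4^{k}\beta_\Gamma(n)$ spread over $\asymp 2^k n$ values of $m$, so by convexity (or simply Cauchy--Schwarz applied to $\sum 1/\sigma$ against $\sum\sigma$) each dyadic block contributes $\gg n^2/(\deg(\Gamma)\beta_\Gamma(n))$ to the sum; there are $\asymp\log(r/n)$ such blocks, giving the claimed $n^2\log(r/n)/(\deg(\Gamma)\beta_\Gamma(n))$. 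Combined with $\deg(\Gamma)=\beta_\Gamma(1)$ and the normalisation in \eqref{var.conv.hyp}, this yields the theorem.

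The main obstacle is the first step: transferring a hypothesis at the \emph{single} scale $n$ into quantitative near-quadratic growth across the whole range $[n,r]$, for arbitrarily large $r$. A vertex-transitive graph with $\beta_\Gamma(n)\lesssim n^3$ could a priori have much faster growth at scales far beyond $n$; ruling this out is exactly where the Breuillard--Green--Tao machinery and its refinements in \cite{tt,tt.trof,ttLie} enter, via the fact that such a graph must be (at a suitable scale) close to a nilpotent group of homogeneous dimension $\le 2$, hence effectively $\Z^2$ or $\Z$ or finite up to scale $r$, all of which have the desired growth. Packaging this cleanly — in particular getting the constant $C$ uniform and handling the borderline case $\beta_\Gamma(n)\asymp n^3$ where the relevant approximate group could be genuinely three-dimensional over a bounded range — is the delicate part; once near-quadratic growth on $[n,r]$ is in hand, the Nash--Williams estimate is routine.
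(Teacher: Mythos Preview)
Your approach is essentially the paper's: translate to a resistance lower bound, invoke the structure theory to upgrade the single-scale hypothesis to $\beta_\Gamma(m)\ll(m/n)^2\beta_\Gamma(n)$ for all $m\ge n$, then apply Nash--Williams with spherical cutsets (your dyadic Cauchy--Schwarz and the paper's ``at least half the values of $k$ satisfy $\sigma_\Gamma(k)\ll k\beta_\Gamma(n)/n^2$'' pigeonhole are interchangeable here). Your main worry is unwarranted: the result you need from \cite{ttLie} (stated in the paper as Theorem~\ref{thm:tt.rel}, applied with $d=2$ so that $h(d)=2$) gives the quadratic upper bound for \emph{every} $m\ge n$, not merely on a bounded range $[n,\Lambda]$, so once $C$ is chosen accordingly there is no further delicacy---and note that the factor $\deg(\Gamma)$ in the denominator of the resistance bound cancels against the $\deg(x)$ in the escape-probability formula, so your initial target $\mathcal R_{\mathrm{eff}}\gg n^2\log(r/n)/\beta_\Gamma(n)$ is stronger than needed and no separate control on $\deg(\Gamma)$ is required.
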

Note in particular that Theorem \ref{thm:var.converse.rw} is able to detect the recurrence of the graph $\Gamma_r$ from Example \ref{ex:rw.quad.ub.optimal} with reference only to the ball of radius $r$. We state a more precise version of Theorem \ref{thm:var.converse.rw} in Theorem \ref{thm:var.converse}, below.
\begin{corollary}\label{cor:gap.orig}
There exists a universal constant $c>0$ such that if $\Gamma$ is a connected, locally finite vertex-transitive graph satisfying
\[
\Prob[\,x\to S_\Gamma(x,n)\,]<c
\]
for some $n\in\N$ and $x\in\Gamma$ then
\[
\Prob[\,x\to S_\Gamma(x,r)\,]\ll\frac{\beta_\Gamma(n_0)}{n_0^2\log(r/n_0)}
\]
for every $r\ge n$.
\end{corollary}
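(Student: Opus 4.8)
The statement follows by combining Theorem~\ref{thm:var.converse.rw} with the lower bound in Theorem~\ref{thm:main.unimod.rw}. The point is that Theorem~\ref{thm:var.converse.rw} already delivers exactly the desired conclusion as soon as the ball of radius $n_0$ obeys the sub-cubic bound \eqref{var.conv.hyp}, so it is enough to show that the hypothesis $\Prob[\,x\to S_\Gamma(x,n)\,]<c$ forces this; and the lower bound of Theorem~\ref{thm:main.unimod.rw} is precisely the contrapositive of such an implication, since it says that a large ball forces a large escape probability.

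I would begin with two harmless reductions. First, since the events $[\,T_{S_\Gamma(x,r)}<T_x^+\,]$ are decreasing in $r$, the hypothesis at radius $n$ implies $\Prob[\,x\to S_\Gamma(x,n_0)\,]<c$, and $n_0\ge C$ by construction, so we may assume $n=n_0\ge C$ from the outset. Second, if $\diam(\Gamma)\le n_0$ then the assertion is trivial: for $r>n_0$ the sphere $S_\Gamma(x,r)$ is empty and the left-hand side vanishes, while for $r=n_0$ the quantity $\log(r/n_0)$ vanishes and the right-hand side is infinite. Hence we may also assume $n_0<\diam(\Gamma)$, which in particular permits applying Theorem~\ref{thm:main.unimod.rw} with radius $n_0$.

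Applying that lower bound with $r=n_0$ gives
\[
\left(1+\frac{n_0^2\log n_0}{\beta_\Gamma(n_0)/\deg(\Gamma)}\right)^{-1}\ll\Prob[\,x\to S_\Gamma(x,n_0+1)\,]\le\Prob[\,x\to S_\Gamma(x,n_0)\,]<c,
\]
using that $[\,T_{S_\Gamma(x,n_0+1)}<T_x^+\,]\subseteq[\,T_{S_\Gamma(x,n_0)}<T_x^+\,]$ for the middle inequality. Since $\deg(\Gamma)<\beta_\Gamma(1)$, rearranging this — and using that, once $c$ is below half the absolute constant implicit in the first $\ll$, the term $1+(\cdot)$ may be absorbed into $(\cdot)$ — yields
\[
\beta_\Gamma(n_0)\ll c\,n_0^2\log n_0\,\beta_\Gamma(1).
\]
It then remains to choose the constants so that the right-hand side is at most $\tfrac1C n_0^3\beta_\Gamma(1)$, i.e.\ so that $c\log n_0\ll n_0/C$. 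As $t\mapsto t/\log t$ is increasing for $t\ge 3$ and $n_0\ge C$, we have $n_0/\log n_0\ge C/\log C$, so this holds once $c$ is taken of size $\asymp 1/(C\log C)$, with $C$ fixed at least as large as the constant of Theorem~\ref{thm:var.converse.rw} (and at least $3$). With these choices the hypothesis \eqref{var.conv.hyp} of Theorem~\ref{thm:var.converse.rw} holds with $n_0$ in place of $n$, and that theorem gives exactly $\Prob[\,x\to S_\Gamma(x,r)\,]\ll\beta_\Gamma(n_0)/\big(n_0^2\log(r/n_0)\big)$ for all $r\ge n_0$.

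The only step that really requires attention is the bookkeeping of constants at the end: the implied constant coming from Theorem~\ref{thm:main.unimod.rw} has to be absorbed before the conclusion of Theorem~\ref{thm:var.converse.rw} can be invoked, and one must check that $c$ and $C$ stay universal. There is no circularity, because $C$ is fixed first, purely in terms of the absolute constant of Theorem~\ref{thm:var.converse.rw} and the implied constant of Theorem~\ref{thm:main.unimod.rw}, and only then is $c$ fixed in terms of $C$. It is precisely the $\log n_0$ factor in the lower bound of Theorem~\ref{thm:main.unimod.rw} — rather than a clean power of $n_0$ — that obstructs running the argument with $n$ itself and forces the truncation $n_0=\max\{n,C\}$.
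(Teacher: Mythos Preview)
Your proof is correct and follows essentially the same approach as the paper's own proof: reduce to $n_0$ by monotonicity, use the lower bound of Theorem~\ref{thm:main.unimod.rw} to force $\beta_\Gamma(n_0)\ll c\,n_0^2(\log n_0)\beta_\Gamma(1)$, and then feed this into Theorem~\ref{thm:var.converse.rw}. Your version is in fact more careful than the paper's---you explicitly handle the edge case $\diam(\Gamma)\le n_0$ and give a cleaner account of why $c\asymp 1/(C\log C)$ suffices, whereas the paper simply writes ``if $c$ is smaller than $C^{-1}$''.
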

\begin{proof}
Theorem \ref{thm:main.unimod.rw} implies that $\beta(n)\ll cn^2(\log n)\beta(1)$, so for small enough $c$ the desired result follows from Theorem \ref{thm:var.converse.rw}.
\end{proof}

\subsection*{Analogues of transience for finite graphs}
It is worth noting that Theorem \ref{thm:main.unimod.rw} has content for finite graphs. However, in that setting a possible analogue of $\Prob[\,x\to\infty\,]$ is $\min_{x,y\in\Gamma}\Prob[\,x\to y\,]$, and a possible analogue of transience is thus that $\min_{x,y\in\Gamma}\Prob[\,x\to y\,]$ is bounded away from zero.

A conjecture of Benjamini and Kozma proposes, amongst other things, an analogue of Varopoulos's theorem for finite vertex-transitive graphs in terms of this notion of transience \cite[Conjecture 4.2]{bk}. Denoting by $\diam(\Gamma)$ the diameter of $\Gamma$, Benjamini and Kozma conjecture in particular that $\min_{x,y\in\Gamma}\Prob[\,x\to y\,]$ should be uniformly bounded away from zero for any vertex-transitive graph $\Gamma$ of bounded degree satisfying
\begin{equation}\label{eq:bk.cond}
\diam(\Gamma)^2\ll\frac{|\Gamma|}{\log|\Gamma|}.
\end{equation}
Note that \eqref{eq:bk.cond} amounts to saying that the ball of radius $\diam(\Gamma)$ has volume at least slightly greater than quadratic in its radius.

Our next result confirms this aspect of Benjamini and Kozma's conjecture and extends it to graphs of arbitrarily large degree. (We resolve the rest of their conjecture in Corollary \ref{cor:bk.lin}, below.)
\begin{theorem}\label{thm:main.rw}
Let $\Gamma$ be a finite, connected, vertex-transitive graph. Then
\[
\min\left\{1,\frac{|\Gamma|}{\deg(\Gamma)\diam(\Gamma)^2\log(|\Gamma|/\deg(\Gamma))}\right\}\ll\min_{x,y\in\Gamma}\Prob[\,x\to y\,]\ll\frac{|\Gamma|}{\diam(\Gamma)^2}.
\]
\end{theorem}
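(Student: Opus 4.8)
The plan is to rephrase the statement in terms of effective resistance and then extract it, essentially, from Theorem~\ref{thm:main.unimod.rw} (in its equivalent resistance form, Theorem~\ref{thm:main.unimod}). A vertex-transitive graph is regular of degree $\deg(\Gamma)$, and $\Prob[\,x\to y\,]=\bigl(\deg(\Gamma)\,\mathcal{R}(x,y)\bigr)^{-1}$, where $\mathcal{R}(x,y)$ is the effective resistance between $x$ and $y$ for unit conductances. Since $\mathcal{R}$ is symmetric and, by vertex-transitivity, $\max_{x,y}\mathcal{R}(x,y)=\max_{y}\mathcal{R}(x_0,y)$ for any fixed $x_0$, the assertion is equivalent to the two-sided estimate
\[
\frac{1}{\deg(\Gamma)}\Bigl(1+\frac{\diam(\Gamma)^2}{|\Gamma|}\Bigr)\;\ll\;\max_{y}\mathcal{R}(x_0,y)\;\ll\;\frac{1}{\deg(\Gamma)}\Bigl(1+\frac{\diam(\Gamma)^2\log(|\Gamma|/\deg(\Gamma))}{|\Gamma|/\deg(\Gamma)}\Bigr).
\]
Write $D=\diam(\Gamma)$; I may assume $D\ge2$ and $|\Gamma|/\deg(\Gamma)$ at least a large absolute constant, the remaining cases being degenerate after inspecting the displayed bounds.

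For the lower bound on $\max_y\mathcal{R}(x_0,y)$ (equivalently the upper bound on $\min\Prob$) the term $\deg(\Gamma)^{-1}$ is immediate from $\Prob[\,x\to y\,]\le1$. For the other term, choose $y$ with $d(x_0,y)=D$; any path from $x_0$ to $y$ meets $S_\Gamma(x_0,D)$, so $\Prob[\,x_0\to y\,]\le\Prob[\,x_0\to S_\Gamma(x_0,D)\,]$, and the upper bound of Theorem~\ref{thm:main.unimod.rw} applied with $r=D-1$ gives $\Prob[\,x_0\to S_\Gamma(x_0,D)\,]\ll\bigl(1+(D-1)^2/\beta_\Gamma(D-1)\bigr)^{-1}\ll\bigl(1+D^2/|\Gamma|\bigr)^{-1}$, using $\beta_\Gamma(D-1)\le|\Gamma|$ and $(D-1)^2\ge D^2/4$. (Alternatively one applies the Nash--Williams inequality to the cutsets $E(S_\Gamma(x_0,i),S_\Gamma(x_0,i+1))$, $0\le i<D$, with Cauchy--Schwarz and $\sum_{i<D}\sigma_\Gamma(i)\le|\Gamma|$.)

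The upper bound on $\max_y\mathcal{R}(x_0,y)$ (equivalently the lower bound on $\min\Prob$) is the substantial direction. Fixing $y$ and putting $\ell=d(x_0,y)\le D$, the plan is a two-step reduction. First I would pass from the two-point resistance to a point-to-sphere resistance: using that $\sqrt{\mathcal{R}}$ is a metric and vertex-transitivity to compare the two halves of an $x_0$--$y$ geodesic, bound $\mathcal{R}(x_0,y)\ll\mathcal{R}\bigl(x_0,S_\Gamma(x_0,\lceil\ell/2\rceil)\bigr)$ up to at most a logarithmic factor (or, alternatively, adapt the proof of Theorem~\ref{thm:main.unimod} to estimate the two-point resistance directly). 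Theorem~\ref{thm:main.unimod.rw} then gives $\mathcal{R}(x_0,S_\Gamma(x_0,r+1))\ll\deg(\Gamma)^{-1}\bigl(1+r^2\log r/\beta_\Gamma(r)\bigr)$ for $r\asymp\ell$, so I need $\beta_\Gamma(r)\gg|\Gamma|$, or $|\Gamma|$ up to a polylogarithmic factor, at this near-maximal scale. This does not follow from monotonicity alone, since a priori the outermost spheres could carry a definite proportion of the volume; so I would split according to whether $\beta_\Gamma(D)=|\Gamma|$ is essentially quadratic in $D$ or genuinely super-quadratic, and invoke the structure theory of vertex-transitive graphs behind \cite{tt,tt.trof,ttLie,bgt}: in the sub-cubic regime this forces regular (boundedly doubling) volume growth below the diameter, whence $\beta_\Gamma(r)\asymp|\Gamma|$, while in the super-quadratic regime a direct application of the escape estimate of Theorem~\ref{thm:main.unimod.rw} at radius $D-1$ already shows the escape probability is bounded below. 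Finally, to replace $\log r\le\log D$ by $\log(|\Gamma|/\deg(\Gamma))$ I would use the packing bound $D\deg(\Gamma)\ll|\Gamma|$ -- the radius-one balls centred at every third vertex of a geodesic of length $D$ are pairwise disjoint, each of size $\deg(\Gamma)+1$ -- which yields $\log D\ll\log(|\Gamma|/\deg(\Gamma))$.

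The hard part will be the last paragraph: neither the passage from the two-point resistance to a point-to-sphere resistance, nor the bound $\beta_\Gamma(r)\asymp|\Gamma|$ for $r$ close to $D$, is formal, and controlling both at once while losing at most a bounded or logarithmic factor is exactly what forces one out of soft Rayleigh/Nash--Williams manipulations and into the classification of approximate groups \cite{bgt} and its consequences for vertex-transitive graphs. This is also where the sharp cubic threshold of \eqref{var.conv.hyp} enters, via Theorem~\ref{thm:var.converse.rw} being the companion statement that rules out the intermediate regime.
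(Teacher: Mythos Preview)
Your upper bound on $\min_{x,y}\Prob[\,x\to y\,]$ (equivalently the lower bound on $R_{\Gamma,2}$) is fine and matches the paper: it is Proposition~\ref{prop:lower.bound}, proved by Nash--Williams on the spherical cutsets, which is exactly your parenthetical alternative.

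The difficulty is entirely in the other direction, and here your proposed route has a genuine gap. You want to deduce the two-point upper bound from the point-to-sphere upper bound of Theorem~\ref{thm:main.unimod}, but the inequality runs the wrong way: for $z\in S_\Gamma(x_0,r)$ one has $\mathcal R\bigl(x_0,S_\Gamma(x_0,r)\bigr)\le\mathcal R(x_0,z)$, not the reverse. The $\sqrt{\mathcal R}$-metric trick only gives $\mathcal R(x_0,y)\le4\max\bigl(\mathcal R(x_0,z),\mathcal R(z,y)\bigr)$ with $z$ the geodesic midpoint, i.e.\ it compares two-point resistance at distance $\ell$ to two-point resistance at distance $\ell/2$, not to point-to-sphere resistance. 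Iterating this doubling down to distance $1$ loses a factor $\ell^2$, not a logarithm, so the reduction as stated does not work. Your companion worry about controlling $\beta_\Gamma(r)/|\Gamma|$ at intermediate radii is a second obstacle layered on top of this one.

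The paper does not attempt this reduction. Instead it proves a \emph{two-point} Benjamini--Kozma bound directly: Theorem~\ref{thm:bk} is the analogue of Theorem~\ref{thm:bk.inf} in which both endpoints are vertices, obtained by running Proposition~\ref{prop:bk} (the level-set/current argument) outward from $u$ and outward from $v$ and meeting in the middle at the median level of the potential. This is precisely your parenthetical ``alternatively, adapt the proof of Theorem~\ref{thm:main.unimod} to estimate the two-point resistance directly'', and it is the right move. Once one has Theorem~\ref{thm:bk}, the isoperimetric inequalities of \S\ref{sec:iso} (themselves consequences of the growth bounds in \S\ref{sec:growth} via Proposition~\ref{prop:iso-growth}) feed straight into it to give Proposition~\ref{prop:main.ub}, with the factor $\log(|\Gamma|/\deg(\Gamma))$ appearing as the number of dyadic scales in the Benjamini--Kozma sum rather than via a separate comparison of $\log D$ with $\log(|\Gamma|/\deg(\Gamma))$. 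Both of the difficulties you flag in your final paragraph simply do not arise along this route.
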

We prove Theorem \ref{thm:main.rw} in the equivalent form of Theorem \ref{thm:main}, below. As with Theorem \ref{thm:main.unimod.rw}, we provide examples in Section \ref{sec:bk.examples} to show that the bounds of Theorem \ref{thm:main.rw} are in general both sharp up to the multiplicative constants, but in the special case in which $\diam(\Gamma)$ is comparable to $|\Gamma|$ we can dispense with the logarithm in the lower bound, as follows.
\begin{theorem}\label{thm:main.linear.rw}
Let $\eps>0$. Let $\Gamma$ be a finite connected vertex-transitive graph such that $\diam(\Gamma)\ge|\Gamma|^{\frac{1+\eps}{2}}$. Then
\[
\min_{x,y\in\Gamma}\Prob[\,x\to y\,]\gg_\eps\frac{|\Gamma|}{\deg(\Gamma)\diam(\Gamma)^2}.
\]
\end{theorem}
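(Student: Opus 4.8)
The plan is to translate the statement into one about effective resistance and then strip a logarithmic factor from Theorem~\ref{thm:main.rw}, in the same way that Theorem~\ref{thm:unimod.linear.rw} sharpens Theorem~\ref{thm:main.unimod.rw} in the infinite setting.

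\emph{Step 1: reduction to resistance.} Equip $\Gamma$ with unit conductances on edges and write $\mathcal R(x\leftrightarrow y)$ for the effective resistance. The standard identity $\Prob[\,x\to y\,]=\bigl(\deg(x)\,\mathcal R(x\leftrightarrow y)\bigr)^{-1}$, together with $\deg(x)=\deg(\Gamma)$ for all $x$ (vertex-transitivity), shows that the theorem is equivalent to
\[
\max_{x,y\in\Gamma}\mathcal R(x\leftrightarrow y)\ll_\eps\frac{\diam(\Gamma)^2}{|\Gamma|}.
\]
Writing $D=\diam(\Gamma)$ and using $|\Gamma|=\beta_\Gamma(D)$, the hypothesis $D\ge|\Gamma|^{(1+\eps)/2}$ becomes $\beta_\Gamma(D)\le D^{2-\eps'}$ with $\eps'=2\eps/(1+\eps)>0$: the graph has strictly sub-quadratic volume relative to its diameter. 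Observe that any dependence on $\deg(\Gamma)$ is now harmless: the single factor $\deg(\Gamma)$ in the statement has been absorbed into the identity, and a larger degree only adds edges between consecutive spheres, which by Rayleigh monotonicity can only decrease resistances.

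\emph{Step 2: propagate the volume bound to all scales.} The crux, and what I expect to be the main obstacle, is to promote the sub-quadratic bound at the single scale $D$ to a uniform polynomial bound at every smaller scale, namely
\[
\beta_\Gamma(r)\gg_\eps\Bigl(\tfrac rD\Bigr)^{2-\eps'}\beta_\Gamma(D)\qquad(1\le r\le D).
\]
This cannot be proved by soft arguments — a priori $\beta_\Gamma$ could jump and then plateau — and is precisely where the structure theory of approximate groups \cite{bgt} and its finitary consequences from \cite{tt,tt.trof,ttLie} are needed. Granting it, and using that consecutive spheres in a vertex-transitive graph of controlled growth are joined by $\gg\sigma_\Gamma(k)\asymp\beta_\Gamma(k)/k$ edges, a geometric-series estimate gives
\[
\sum_{k=1}^{D}\frac{k}{\beta_\Gamma(k)}\ll_\eps\sum_{k=1}^{D}\frac{k^{\eps'-1}D^{2-\eps'}}{\beta_\Gamma(D)}\ll_\eps\frac{D^2}{\beta_\Gamma(D)},
\]
the sum being dominated by its top few terms precisely because the growth is strictly sub-quadratic.

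\emph{Step 3: bound resistance by the volume integral.} It remains to show $\mathcal R(x\leftrightarrow y)\ll\sum_{k=1}^{D}k/\beta_\Gamma(k)$ for every pair $x,y$. I would do this via Thomson's principle, constructing a unit flow from $x$ to $y$ that spreads out radially over the spheres $S_\Gamma(x,k)$, passes through the bulk, and is collected radially into $y$, using vertex-transitivity to keep the flow roughly uniform across each shell so that its energy is $\asymp\sum_k 1/\sigma_\Gamma(k)$; the passage from point-to-sphere to point-to-point resistance is routine here since $D=\diam(\Gamma)$. In practice the resistance reformulation of Step~1 and much of Step~3 are likely already available from Theorem~\ref{thm:main} (the equivalent form of Theorem~\ref{thm:main.rw}) and Theorem~\ref{thm:main.unimodp.linear}, so the genuinely new content is Step~2 together with the summation above.
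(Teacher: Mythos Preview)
Your Steps~1 and~2 match the paper's approach. The reduction to a resistance upper bound is exactly how the paper proceeds (the theorem is stated as the $p=2$ case of Theorem~\ref{thm:main.linearp}), and you correctly identify that the heart of the matter is propagating the volume bound from scale $D$ down to all smaller scales via the finitary structure theory of \cite{bgt,tt,tt.trof}; this is Proposition~\ref{prop:growth.lb} in the paper, resting on Theorem~\ref{thm:tt}.

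The gap is Step~3. The paper does \emph{not} pass directly from growth bounds to resistance via a Thomson flow, and your sketch of that passage does not stand on its own. Two concrete issues: first, averaging a flow over the stabiliser of $x$ makes it $\Stab(x)$-invariant, but $\Stab(x)$ need not act transitively on the edges of $\partial^E B(x,k)$, so there is no reason the averaged flow is close to uniform on that edge set, and hence no reason its energy is $\asymp\sum_k 1/\sigma_\Gamma(k)$. Second, the claim $\sigma_\Gamma(k)\asymp\beta_\Gamma(k)/k$ is not a general fact about vertex-transitive graphs; at best a pigeonhole gives it for \emph{many} $k$, which is not obviously enough here. I am not aware of a clean proof of the inequality $\mathcal R(x\leftrightarrow y)\ll\sum_k k/\beta_\Gamma(k)$ by an explicit flow in this generality.

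What the paper does instead is insert an isoperimetric step between growth and resistance. From the growth lower bound one obtains, via the Coulhon--Saloff-Coste inequality (Proposition~\ref{prop:iso-growth}, extended here to arbitrary vertex-transitive graphs), the isoperimetric inequality $|\partial A|\gg_{\fq}|A|^{(q-1)/q}$ for $|A|\le\tfrac12|\Gamma|$ (Theorem~\ref{thm:bk.iso.orig}). This is then fed into the Benjamini--Kozma potential-level-set argument (Theorem~\ref{thm:bk}), which bounds $R_{\Gamma,p}^{1/(p-1)}$ by a dyadic sum of terms $j_{A,p}\ll|A|^{(p-q)/(q(p-1))}$ (Lemma~\ref{lem:j.ub.pq}). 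When $q\le p-\eps$ this dyadic sum is a genuine geometric series dominated by its top term, yielding $R_{\Gamma,p}\ll_{p,\eps}|\Gamma|^{(p-q)/q}=\gamma^p/|\Gamma|$ (Proposition~\ref{prop:main.p.ub}\ref{item:main.p.ub.p-eps}); this is exactly the mechanism behind your Step~2 summation, but carried out on level sets of the harmonic potential rather than on spheres, which is what makes it rigorous. So the ``genuinely new content'' is not just Step~2 plus a sum: it is the isoperimetry-to-resistance bridge of \S\ref{sec:bk}--\ref{sec:growth->iso}, which replaces your flow construction.
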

We prove Theorem \ref{thm:main.linear.rw} in the more general form of Theorem \ref{thm:main.linearp}, below. In Section \ref{sec:bk.examples} we provide examples to show that the bound in Theorem \ref{thm:main.linear.rw} is sharp up to the multiplicative constant; in particular, the gap of $\deg(\Gamma)$ between the lower bound of Theorem \ref{thm:main.linear.rw} and the upper bound of Theorem \ref{thm:main.rw} is unavoidable.

\subsection*{Isoperimetric inequalities} It is well known that {\it isoperimetric inequalities} give information about random walks, and indeed Varopoulos's proof of his result relied on using an isoperimetric inequality to bound the return probabilities $p_{t}(x,x)=\Prob_x(X_t=x)$.  
However, the finitary nature of Theorem 1.2 requires a different approach. To that end, adapting an argument of Benjamini and Kozma \cite{bk}, we manage to bound $\Prob[\,x\to S_\Gamma(x,r+1)\,]$ directly via isoperimetric inequalities. 

Given a set $A$ in a graph $\Gamma$, we write $\partial A$ for the \emph{external vertex boundary} $\partial A=\{x\in\Gamma\setminus A:(\exists a\in A)(a\sim x)\}$. By an \emph{isoperimetric inequality} we mean a lower bound on $|\partial A|$ in terms of $|A|$. 
The isoperimetric inequality we ultimately need in order to prove our main results is slightly complicated to state, so we leave it until Theorem \ref{thm:isoperimRel}. However, our techniques also yield two isoperimetric inequalities that are of independent interest. The first is the following inequality, which verifies and generalises another conjecture of Benjamini and Kozma \cite[Conjecture 4.1]{bk}.
\begin{theorem}\label{thm:bk.iso.orig}
Let $q\ge1$, let $\Gamma$ be a locally finite vertex-transitive graph, let $r\in\N$ be such that $r\le\diam(\Gamma)$, and suppose that $\beta_\Gamma(r)\ge r^q$. Then for every finite subset $A\subset\Gamma$ with $|A|\le\frac{1}{2}\beta_\Gamma(r)$ we have
\[
|\partial A|\gg_\fq|A|^{\frac{q-1}{q}}.
\]
\end{theorem}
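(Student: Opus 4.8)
The plan is to reduce the statement to the classical Coulhon--Saloff-Coste isoperimetric inequality for vertex-transitive graphs, which says that every finite $A\subset\Gamma$ satisfies $|\partial A|\gg |A|/\rho(2|A|)$, where $\rho(v)=\min\{s:\beta_\Gamma(s)\ge v\}$ is the inverse volume-growth function. Indeed, the content of Theorem \ref{thm:bk.iso.orig} is that the hypothesis $\beta_\Gamma(r)\ge r^q$ forces $\rho$ to grow at most like $v^{1/q}$ on the relevant range of scales, and one then just substitutes this bound into the Coulhon--Saloff-Coste inequality. So the first step is to record that inequality (it is standard, and certainly consistent with the ``results and techniques of previous papers'' the authors cite).

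The second step is the volume estimate. By transitivity, the balls $B_\Gamma(x,r)$ behave submultiplicatively in $r$: a standard covering argument gives $\beta_\Gamma(kr)\le C^k\beta_\Gamma(r)$ for some absolute $C$, but more usefully one has the elementary monotonicity-type bound that $\beta_\Gamma$ cannot grow too slowly once it is large, and in fact a classical fact (again going back to Coulhon--Saloff-Coste, and used repeatedly in this series of papers) is that if $\beta_\Gamma(r)\ge r^q$ at one scale $r\le\diam(\Gamma)$, then $\beta_\Gamma(s)\ge c_q s^q$ for \emph{all} $s\le r$; the point is that the doubling-type inequality $\beta_\Gamma(2s)\le \frac{\beta_\Gamma(2r)}{\beta_\Gamma(r)}\cdot$(something) combined with $s\le r$ propagates a lower bound on volume \emph{downwards}. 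Concretely, for $s\le r$ one writes $r=ms+t$ with $0\le t<s$ and uses $\beta_\Gamma(r)\le \beta_\Gamma((m+1)s)\le (m+1)^{?}\cdots$; rather than the crude bound, the clean statement one wants is: $\beta_\Gamma(s)/s^q \gg_q \beta_\Gamma(r)/r^q \ge 1$ for all $s\le r$. Granting this, for any $v\le \beta_\Gamma(r)$ the radius $\rho(v)$ satisfies $\rho(v)\le r$, hence $\beta_\Gamma(\rho(v))\gg_q \rho(v)^q$, and since $v\le\beta_\Gamma(\rho(v))$ by definition of $\rho$ (up to the usual off-by-one, $\beta_\Gamma(\rho(v)-1)<v\le\beta_\Gamma(\rho(v))$, and adjacent balls in a vertex-transitive graph have comparable size only up to a factor $\deg$, so one is slightly careful here), we get $\rho(v)\ll_q v^{1/q}$.

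The third step assembles the pieces. Given $A$ with $|A|\le\tfrac12\beta_\Gamma(r)$, set $v=2|A|\le\beta_\Gamma(r)$. Then $\rho(v)\ll_q v^{1/q}\asymp |A|^{1/q}$, so Coulhon--Saloff-Coste gives
\[
|\partial A|\;\gg\;\frac{|A|}{\rho(2|A|)}\;\gg_q\;\frac{|A|}{|A|^{1/q}}\;=\;|A|^{\frac{q-1}{q}},
\]
which is exactly the claimed bound (the subscript $\fq$ on the implied constant absorbing both the $q$-dependence from the volume propagation and, if $q$ is used only through $\fq=\lfloor q\rfloor$ in the final constant, a harmless rounding).

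The step I expect to be the main obstacle is the downward volume propagation in the second paragraph: one must be genuinely careful that the hypothesis is only assumed \emph{at a single scale} $r$ with $r\le\diam(\Gamma)$, so the argument may not freely pass to scales larger than $r$ (where $\Gamma$ could be finite and the volume could saturate), and the deduction of a lower bound $\beta_\Gamma(s)\gg_q s^q$ for $s\le r$ has to be done via a genuinely finitary submultiplicativity/packing estimate rather than by invoking any asymptotic growth result. A secondary, more cosmetic difficulty is tracking the inverse function $\rho$ across the gap between $\beta_\Gamma(\rho(v)-1)$ and $\beta_\Gamma(\rho(v))$; in a vertex-transitive graph this gap is at worst a factor of $\deg(\Gamma)$, but since the Coulhon--Saloff-Coste bound $|A|/\rho(2|A|)$ is what we feed in, one checks that only an absolute constant is lost. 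Everything else is bookkeeping.
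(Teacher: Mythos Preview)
Your overall architecture is exactly the paper's: combine the Coulhon--Saloff-Coste inequality (the paper's Proposition~\ref{prop:iso-growth}) with a lower bound $\beta_\Gamma(s)\gg_{\fq} s^q$ for all $s\le r$ (the paper's Proposition~\ref{prop:growth.lb}), and read off $|\partial A|\gg_{\fq}|A|^{(q-1)/q}$. So the outline is right.

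The gap is in your second step, the downward volume propagation. You call the statement ``if $\beta_\Gamma(r)\ge r^q$ at one scale then $\beta_\Gamma(s)\gg_q s^q$ for all $s\le r$'' a classical fact going back to Coulhon--Saloff-Coste, and expect to prove it by an elementary finitary submultiplicativity or packing argument. This is not so. Submultiplicativity gives only $\beta_\Gamma(s)^{\lceil r/s\rceil}\ge\beta_\Gamma(r)\ge r^q$, hence $\beta_\Gamma(s)\ge r^{qs/r}$, which is far too weak (take $s=r/2$). Packing and covering arguments fare no better: nothing elementary prevents a vertex-transitive graph from growing roughly linearly up to scale $r/2$ and then very fast up to scale $r$. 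Ruling this out is precisely the content of the paper's Theorem~\ref{thm:tt}, which says that once $\beta_\Gamma(n)\le n^{d+1}/C$ at some scale $n$, the growth is at most degree $d$ at all larger scales. That theorem is not elementary: it rests on the Breuillard--Green--Tao structure theorem for approximate groups, and is the main new input in this series of papers. The paper proves Proposition~\ref{prop:growth.lb} by the contrapositive of Theorem~\ref{thm:tt} (if $\beta_\Gamma(s)$ were too small for some $s\le r$, then $\beta_\Gamma(r)$ would be forced below $r^q$). You have correctly identified this step as ``the main obstacle'', but your proposed resolution---an elementary packing/submultiplicativity estimate---does not work; you need to invoke the structural result.
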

Theorem \ref{thm:bk.iso.orig} is sharp; indeed, see Proposition \ref{prop:iso.conv} for a converse. We actually prove a slight refinement of Theorem \ref{thm:bk.iso.orig}, which we state as Theorem \ref{thm:bk.iso}.

Our second isoperimetric inequality is the following.
\begin{theorem}\label{thm:isoperim.orig.rel.bgt}
Let $q\ge1$, let $\Gamma$ be a locally finite  vertex-transitive graph, let $r\in\N$ be such that $r\le\diam(\Gamma)$, and suppose that $\beta_\Gamma(r)\ge r^q\beta_\Gamma(1)$. Then there exists $b(q)$ with $b(q)\to\infty$ as $q\to\infty$ such that for every subset $A\subset\Gamma$ with $|A|\le\beta_\Gamma(r)/2$ we have
\[
|\partial A|\gg_\fq\beta_\Gamma(1)^\frac{1}{b(q)}|A|^\frac{b(q)-1}{b(q)}.
\]
\end{theorem}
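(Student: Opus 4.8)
\textbf{Proof proposal for Theorem \ref{thm:isoperim.orig.rel.bgt}.}

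The plan is to deduce this from Theorem \ref{thm:bk.iso.orig} (equivalently its refinement Theorem \ref{thm:bk.iso}) combined with the Breuillard--Green--Tao structure theorem for approximate groups, exactly in the spirit of the earlier papers \cite{tt,tt.trof,ttLie} in this series. The point is that Theorem \ref{thm:bk.iso.orig} already gives the inequality $|\partial A|\gg_\fq |A|^{(q-1)/q}$ under the weaker hypothesis $\beta_\Gamma(r)\ge r^q$, so the only extra feature we need is the factor $\beta_\Gamma(1)^{1/b(q)}$, gained at the cost of slightly weakening the exponent of $|A|$ from $(q-1)/q$ to $(b(q)-1)/b(q)$ with $b(q)\ge q$. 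Morally, if the degree $\beta_\Gamma(1)$ is large then the ball $B_\Gamma(r)$ of volume $\ge r^q\beta_\Gamma(1)$ ``really'' has growth of degree somewhat larger than $q$, and one should be able to run the argument of Theorem \ref{thm:bk.iso.orig} at that larger exponent. The technical obstacle is that $\beta_\Gamma(r)\ge r^q\beta_\Gamma(1)$ does not by itself say the growth exponent is $>q$ at \emph{every} scale up to $r$ — the extra volume coming from the large degree could be concentrated at small scales — so one cannot naively apply Theorem \ref{thm:bk.iso.orig} with $q$ replaced by a larger value.

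To handle this, I would first invoke the structure theory: by (the vertex-transitive version of) the Breuillard--Green--Tao theorem, as packaged in \cite{tt,ttLie}, if $\beta_\Gamma(r)\ge r^q\beta_\Gamma(1)$ then either (a) $\Gamma$ has a finite-index subgroup/structure exhibiting genuine polynomial growth of degree at least roughly $q$ at all scales between some controlled threshold and $r$, in which case Theorem \ref{thm:bk.iso} applies with a larger effective exponent, or (b) $\beta_\Gamma(1)$ itself is bounded in terms of $q$ (the ``no growth'' regime), in which case the claimed inequality is just Theorem \ref{thm:bk.iso.orig} with $b(q)=q$ and the factor $\beta_\Gamma(1)^{1/b(q)}$ absorbed into the implied constant. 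More concretely, one shows there is a scale $s\le r$ (a power of $r$ depending only on $q$) such that $\beta_\Gamma(s)\gg_\fq s^{q'}$ with $q'=b(q)$ growing with $q$, \emph{and} such that $\beta_\Gamma(s)\asymp_\fq \beta_\Gamma(1)\cdot(\text{something})$; the precise bookkeeping is the part that needs care.

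Concretely the steps are: (1) record that we may assume $\beta_\Gamma(1)$ is at least some large constant $K(q)$, else we are done by Theorem \ref{thm:bk.iso.orig}; (2) using doubling/structure results from the earlier papers, find an intermediate scale $s$, depending polynomially on $r$ with exponent a function of $q$ only, at which $\beta_\Gamma(s)\ge s^{b(q)}$ and $\beta_\Gamma(s)\gg \beta_\Gamma(1)^{1-1/b(q)}\,(\text{stuff})$, quantifying how the large degree propagates to an enlarged growth exponent at scale $s$; (3) apply Theorem \ref{thm:bk.iso} at scale $s$ with exponent $b(q)$ to every $A$ with $|A|\le\beta_\Gamma(s)/2$, obtaining $|\partial A|\gg_\fq \beta_\Gamma(s)^{1/b(q)}|A|^{(b(q)-1)/b(q)}\gg_\fq \beta_\Gamma(1)^{1/b(q)}|A|^{(b(q)-1)/b(q)}$; and (4) deal with the remaining range $\beta_\Gamma(s)/2 < |A|\le \beta_\Gamma(r)/2$ by the standard trick of passing to a ball of the appropriate radius $r'\in(s,r]$ with $\beta_\Gamma(r')\asymp 2|A|$ and applying the previous case there, checking that $\beta_\Gamma(r')\ge (r')^{b(q)}$ still holds at that intermediate radius (which follows from the growth being at least of degree $b(q)$ on the whole range $[s,r]$). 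The main obstacle is step (2): extracting, from the single inequality $\beta_\Gamma(r)\ge r^q\beta_\Gamma(1)$, a clean statement that at some controlled scale the growth exponent is genuinely boosted to $b(q)$ while the degree contribution is retained — this is where the approximate-group machinery and the scale-by-scale volume estimates of \cite{tt,tt.trof,ttLie} do the real work, and getting $b(q)\to\infty$ as $q\to\infty$ with explicit (if not optimal) dependence is the crux.
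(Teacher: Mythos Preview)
Your proposal has a genuine gap in step (3), and more broadly it takes an unnecessary detour through Theorem \ref{thm:bk.iso.orig}/\ref{thm:bk.iso}. You claim that applying Theorem \ref{thm:bk.iso} at scale $s$ with exponent $b(q)$ yields $|\partial A|\gg_\fq \beta_\Gamma(s)^{1/b(q)}|A|^{(b(q)-1)/b(q)}$, but Theorem \ref{thm:bk.iso} (and its simpler form Theorem \ref{thm:bk.iso.orig}) under the hypothesis $\beta_\Gamma(s)\ge s^{b(q)}$ only gives $|\partial A|\gg_\fq |A|^{(b(q)-1)/b(q)}$, with no $\beta_\Gamma(s)^{1/b(q)}$ factor. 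The factor $\beta_\Gamma(1)^{1/b(q)}$ that the theorem demands simply does not appear this way. Trying to recover it by applying Theorem \ref{thm:bk.iso.orig} with the larger exponent $q'=\log_s\beta_\Gamma(s)$ fails too, since the implied constant in that theorem depends on $\lfloor q'\rfloor$, which is unbounded as $\beta_\Gamma(1)$ grows.

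The paper's route is more direct and avoids this problem entirely. Instead of reducing to the ``absolute'' isoperimetric inequality of Theorem \ref{thm:bk.iso.orig}, it first establishes a \emph{relative} growth lower bound (Proposition \ref{prop:growth.lb.rel.all}, equation \eqref{eq:lb.rel.bgt}): from $\beta_\Gamma(r)\ge r^q\beta_\Gamma(1)$ one deduces $\beta_\Gamma(n)\gg_\fq n^{b(q)}\beta_\Gamma(1)$ for \emph{every} $n<r$. This is exactly the clean statement you were groping for in step (2), and it is proved by contrapositive from Theorem \ref{thm:tt.rel} (the ``relative'' version of the polynomial-growth persistence theorem from \cite{ttLie}). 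Once one has this uniform growth bound, one feeds it straight into the Coulhon--Saloff-Coste inequality (Proposition \ref{prop:iso-growth}): the bound $\beta_\Gamma(n)\gg n^{b(q)}\beta_\Gamma(1)$ gives $\phi_\Gamma(m)\ll_\fq (m/\beta_\Gamma(1))^{1/b(q)}$, and hence $|\partial A|\gg_\fq |A|/\phi_\Gamma(2|A|)\gg_\fq \beta_\Gamma(1)^{1/b(q)}|A|^{(b(q)-1)/b(q)}$ directly, with the $\beta_\Gamma(1)$ factor emerging naturally. No intermediate scale $s$, no splitting into ranges, and no appeal to Theorem \ref{thm:bk.iso} is needed.
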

In Theorem \ref{thm:isoperim.orig.rel} we prove Theorem \ref{thm:isoperim.orig.rel.bgt} with an explicit and optimal function $b$. This sharpens and generalises a result of Breuillard, Green and Tao, who proved it with a non-explicit function $b$ and with $\Gamma$ assumed to be a Cayley graph \cite[Corollary 11.15]{bgt}. 

By a famous argument of Coulhon and Saloff-Coste, one can deduce isoperimetric inequalities from lower bounds on the size of balls. However, in \cref{thm:bk.iso.orig,thm:isoperim.orig.rel.bgt} the only ball on which we have a lower volume bound a priori is the ball of radius $r$. A crucial ingredient in this paper is therefore the following result of ours \cite[Corollary 1.17]{ttBalls} (see also our earlier papers \cite{proper.progs,tt.trof} for part \ref{item:growth.abs}), the contrapositive of which shows that this lower bound implies lower bounds on the sizes of all smaller balls.
\begin{theorem}[polynomial volume implies polynomial growth]\label{thm:Growth}\label{thm:tt}\label{thm:tt.rel}
For every $d\in\N$ there exists $\eps=\eps(d)>0$ such that the following holds for any connected, locally finite vertex-transitive graph $\Gamma$.
\begin{enumerate}[label=(\roman*)]
\item If $\beta_\Gamma(n)\le\eps n^{d+1}\beta_\Gamma(1)$ for some $n\ge 1$ then $\beta_\Gamma(m)\ll_d(m/n)^{\frac12d(d-1)+1}\beta_\Gamma(n)$ for every $m\ge n$.\label{item:Benj.rel}
\item If $\beta_\Gamma(n)\le\eps n^{d+1}$ for some $n\ge 1$ then $\beta_\Gamma(m)\ll_d(m/n)^d\beta_\Gamma(n)$ for every $m\ge n$.\label{item:growth.abs}
\end{enumerate} 
\end{theorem}
In fact, we need a substantial refinement of \cref{thm:Growth} in order to obtain the isoperimetric inequality we need for \cref{thm:main.unimod.rw}; we obtain this in \cref{prop:growth.lb.rel.all}.

\subsection*{Locality of the escape probability}
Varopoulos's theorem shows that transience is a \emph{global} property of a vertex-transitive graph, determined entirely by the large-scale geometry of that graph. However, using an isoperimetric inequality coming from \cref{thm:Growth}, we can show that the actual \emph{value} of the escape probability is then determined entirely by the \emph{local} geometry of the graph.

To make this precise, define the \emph{local topology} on the space $\mathcal{G}$ of vertex-transitive graphs so that a sequence $(\Gamma_n)_{n=1}^\infty$ of such graphs converges to $\Gamma$ if and only if for each $r\in\N$ there exists $N\in\N$ such that for all $n\ge N$ the balls of radius $r$ in $\Gamma_n$ and $\Gamma$ are isomorphic as rooted graphs. Define $e:\mathcal{G}\to[0,1)$ by setting $e(\Gamma)$ to be the escape probability of the random walk starting at some arbitrary vertex of $\Gamma$. We then have the following result, which answers a question of Itai Benjamini (private communication); we thank Tom Hutchcroft for pointing out that it would follow from a result along the lines of \cref{thm:Growth}.
\begin{corollary}[locality of the escape probability]\label{cor:local}
If $(\Gamma_n)_{n=1}^\infty$ is a sequence of transient vertex-transitive graphs converging locally to a vertex-transitive graph $\Gamma$ then $e(\Gamma_n)\to e(\Gamma)$.
\end{corollary}
The condition that the $\Gamma_n$ are transient is necessary to rule out examples such as $\Gamma_n=\Z^2\times\Z/n\Z$ for $n\in\N$, which satisfies $\Gamma_n\to\Z^3$ but $e(\Gamma_n)=0\not\to e(\Z^3)>0$.

This result is reminiscent of Schramm's famous \emph{locality conjecture} for the critical percolation probability $p_c$, recently proved in breakthrough work of Easo and Hutchcroft \cite{Eas-Hut}. Like transience, for vertex-transitive graphs the property of having $p_c<1$ depends only on the global geometry (it is equivalent to superlinear growth \cite{dgrsy}), but Schramm's conjecture, and now Easo and Hutchcroft's result, asserts that if $(\Gamma_n)_{n=1}^\infty$ is a sequence of vertex-transitive graphs converging locally to a vertex-transitive graph $\Gamma$ such that $p_c(\Gamma_n)<1$ for all $n$ then $p_c(\Gamma_n)\to p_c(\Gamma)$.

\subsection*{Relation to our other work} The present paper contains some of the main applications of a larger project in which we investigate various structural, geometric and probabilistic properties of vertex-transitive graphs. The input we require from elsewhere in that project is Theorem \ref{thm:Growth} above, which results from a finitary and quantitative version of Gromov's famous polynomial growth theorem  \cite[Theorem 1.11]{ttBalls}. A theorem of this form was first proved in a celebrated work of Breuillard, Green and Tao on approximate groups \cite{bgt}, but we needed to significantly refine both the quantitative and the structural conclusions of that theorem for the present application. These refinements turn out to have a number of other applications as well; see \cite[\S 1.4]{ttBalls}.

\subsection*{Further applications} Since we first circulated a preprint of the present paper, a number of applications of its results have emerged. These include universality theorems for cover-time fluctuations \cite{berestycki2023covertime}, a comparison between the mixing time of the interchange process and random walks on transitive graphs \cite{hermon2021interchange}, and non-triviality of the supercritical phase for percolation on finite transitive graphs \cite{hutch-toint}.

\subsection*{Organisation of the paper}   In \S\ref{sec:resistanceStatements}, we restate our results on escape probability in terms of \emph{electric resistance}, which has long been known as a convenient language for studying random walks. Electric resistance is actually a special case of a more general notion called {$p$-resistance}, and a number of our arguments extend without difficulty to this general setting, so we formulate many of our results in this more general setting. 
In \S\ref{sec:resist} we collect some basic facts from potential theory on graphs in preparation for the proofs of our results on $p$-resistance. In \S\ref{sec:bk} we modify and extend Benjamini and Kozma's argument for bounding resistances using isoperimetric inequalities, and in \S\ref{sec:growth->iso} we generalise to vertex-transitive graphs the Coulhon--Saloff-Coste result linking isoperimetric inequalities in Cayley graphs to volume growth. In \S\ref{sec:growth} we deduce lower bounds on the growth of certain vertex-transitive graphs from \cref{thm:Growth}, and in \S\ref{sec:iso} we combine them with the material of \S\ref{sec:growth->iso} to prove Theorems \ref{thm:bk.iso.orig} and \ref{thm:isoperim.orig.rel.bgt} and related results. In \S\ref{sec:resist.ub} we prove the resistance upper bounds of our main theorems, in \S\ref{sec:nash-w} we prove the lower bounds, and in \S\ref{sec:bk.examples} we provide examples to illustrate the optimality of our bounds. Finally, in \S\ref{sec:locality} we prove the locality result \cref{cor:local}.

\subsection*{Miscellaneous notation} We write $\N=\{1,2,\ldots\}$ for the set of positive integers, and write $\N_0=\N\cup\{0\}$.

\subsection*{Acknowledgements} We express our warm and sincere thanks to Itai Benjamini for drawing our attention to the questions that have inspired this project, and for a number of helpful discussions. We also thank Goulnara Arjantseva, Persi Diaconis, Jonathan Hermon and Tom Hutchcroft for helpful conversations, and Russell Lyons and Alain Valette for comments on draft versions of this paper.

\section{Statements (and generalisations) of results in terms of resistance}\label{sec:resistanceStatements}

\subsection{Electric resistance} Our proofs of our bounds on escape probabilities make use of a well-known characterisation of recurrence in terms of \emph{electric resistance}. Indeed, it was in the language of electric resistance that Benjamini and Kozma stated the conjecture that we resolve in \cref{thm:main.rw}. For full details on the links between electrical networks and random walks on graphs we refer the reader to \cite[Chapter 2]{ly-per}; here we give only a very brief overview.

One can identify a connected locally finite graph $\Gamma$ with an electrical network by viewing each edge as a wire with resistance $1$. If $X,Y\subset\Gamma$ are disjoint subsets of $\Gamma$ such that every connected component of $\Gamma\setminus(X\cup Y)$ that borders both $X$ and $Y$ is finite, one can then define the \emph{effective resistance} $R_2(X\leftrightarrow Y)$ between $X$ and $Y$. We define this precisely in Section \ref{sec:resist}; this definition will also explain the subscript $2$ in the notation. See \cite[\S2.2]{ly-per} for an equivalent definition. As before, if either $X$ is a singleton $x$ or $Y$ is a singleton $y$ then we often write instead $R_2(x\leftrightarrow y)$, for example. It is shown in \cite[(2.4)]{ly-per} that
\[
\Prob[\,x\to Y\,]=\frac{1}{\deg(x)R_2(x\leftrightarrow Y)}.
\]

For every vertex $x\in\Gamma$ the sequence $R_2(x\leftrightarrow\Gamma\setminus B_\Gamma(x,r))$ is increasing, and therefore converges to a (possibly infinite) limit $R_2(x\leftrightarrow\infty)$. In particular,
\[
\Prob[\,x\to\infty\,]=\frac{1}{\deg(x)R_2(x\leftrightarrow\infty)}.
\]
Varopoulos's result can therefore be restated as saying that if $\Gamma$ is a connected locally finite vertex-transitive graph with super-quadratic volume growth and $x\in\Gamma$ then $R_2(x\leftrightarrow\infty)<\infty$.

In this language, Theorem \ref{thm:main.unimod.rw} can be restated as follows.
\begin{theorem}\label{thm:main.unimod}
Let $\Gamma$ be a connected, locally finite vertex-transitive graph, let $x\in\Gamma$, and let $r\in\N$ be such that $r<\diam(\Gamma)$. Then
\[
\frac{1}{\deg(\Gamma)}+\frac{r^2}{\deg(\Gamma)\beta_\Gamma(r)}\ll R_2(x\leftrightarrow\Gamma\setminus B_\Gamma(x,r))\ll\frac{1}{\deg(\Gamma)}+\frac{r^2\log r}{\beta_\Gamma(r)}.
\]
\end{theorem}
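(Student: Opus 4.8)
The plan is to establish the two displayed resistance bounds, having first recorded that Theorem~\ref{thm:main.unimod} is equivalent to the random-walk statement Theorem~\ref{thm:main.unimod.rw}. For the equivalence one observes that $S_\Gamma(x,r+1)$ separates $x$ from $\Gamma\setminus B_\Gamma(x,r)$, so that identifying (``shorting'') the vertices at each distance $\ge r+1$ from $x$ leaves the effective resistance unchanged and gives $R_2(x\leftrightarrow S_\Gamma(x,r+1))=R_2(x\leftrightarrow\Gamma\setminus B_\Gamma(x,r))$; combined with the relation $\Prob[\,x\to S_\Gamma(x,r+1)\,]=(\deg(x)\,R_2(x\leftrightarrow S_\Gamma(x,r+1)))^{-1}$ quoted above, each of the two theorems follows from the other. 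The two tools throughout are Thomson's principle ($R_2(x\leftrightarrow Y)$ is the least energy $\sum_e\theta(e)^2$ among unit flows $\theta$ from $x$ to $Y$) and the Nash--Williams inequality ($R_2(x\leftrightarrow Y)\ge\sum_i|\Pi_i|^{-1}$ for pairwise disjoint edge-cutsets $\Pi_i$ separating $x$ from $Y$), for which I would refer to \cite[Chapter~2]{ly-per}.

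For the \emph{lower bound} --- the easy direction --- write $Y=\Gamma\setminus B_\Gamma(x,r)$. Any unit flow out of $x$ splits total current $1$ among the $\deg(x)$ edges at $x$, so its energy is at least $\deg(x)(1/\deg(x))^2=1/\deg(\Gamma)$, whence $R_2(x\leftrightarrow Y)\ge 1/\deg(\Gamma)$ by Thomson. For the other term I would take the cutsets $\Pi_k$ ($1\le k\le r$) of edges between $S_\Gamma(x,k-1)$ and $S_\Gamma(x,k)$; these are pairwise disjoint, each separates $x$ from $Y$ (all the spheres involved being non-empty as $r<\diam(\Gamma)$), and $|\Pi_k|\le\deg(\Gamma)\sigma_\Gamma(k)$, so Nash--Williams followed by Cauchy--Schwarz gives
\[
R_2(x\leftrightarrow Y)\ge\frac{1}{\deg(\Gamma)}\sum_{k=1}^r\frac{1}{\sigma_\Gamma(k)}\ge\frac{1}{\deg(\Gamma)}\cdot\frac{r^2}{\sum_{k=1}^r\sigma_\Gamma(k)}\ge\frac{r^2}{\deg(\Gamma)\beta_\Gamma(r)}.
\]
Since $R_2(x\leftrightarrow Y)$ is at least each of $1/\deg(\Gamma)$ and $r^2/(\deg(\Gamma)\beta_\Gamma(r))$, it is at least half their sum, which is the claimed lower bound.

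The \emph{upper bound} is the substantive part, and here the plan is to build, via Thomson's principle, a unit flow from $x$ to $\Gamma\setminus B_\Gamma(x,r)$ of energy $\ll\deg(\Gamma)^{-1}+r^2\log r/\beta_\Gamma(r)$. The mechanism is standard once one has a strong enough isoperimetric inequality: fix a nested chain $x\in A_0\subsetneq A_1\subsetneq\cdots\subsetneq A_m=B_\Gamma(x,r)$ with $|A_{j+1}|\asymp2|A_j|$, spread the unit current across the edge boundary of each intermediate set so that each such edge carries a comparable amount, and estimate the total energy shell by shell. With a Coulhon--Saloff-Coste-type lower bound, of the shape $|\partial A|\gg\deg(\Gamma)|A|/\rho_\Gamma(C|A|)$ for $A\subseteq B_\Gamma(x,r)$ (where $\rho_\Gamma(v)=\min\{s:\beta_\Gamma(s)\ge v\}$ and $C$ is absolute), this produces a sum that, using $\rho_\Gamma(\beta_\Gamma(r))\asymp r$, collapses to $\ll\deg(\Gamma)^{-1}\bigl(1+\int_1^r t\,\beta_\Gamma(t)^{-1}\diff t\bigr)$; the factor $\log r$ is exactly the contribution of the $\asymp\log r$ borderline scales at which $\beta_\Gamma(t)\asymp t^2$, and is absorbed into $\deg(\Gamma)^{-1}$ wherever the growth is faster. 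The genuine work is in justifying this uniformly over all vertex-transitive $\Gamma$: one splits into cases according to whether the growth of $\Gamma$ at scale $r$ is of polynomial type. In the polynomial-type case I would model $\Gamma$ up to scale $r$ on a Cayley graph of a nilpotent group of bounded step and rank --- this is where the Breuillard--Green--Tao structure theorem for approximate groups \cite{bgt} and its consequences for vertex-transitive graphs from \cite{tt,tt.trof,ttLie} enter --- and on such a model both the isoperimetric inequality and the regularity of $t\mapsto\beta_\Gamma(t)$ needed to run the shell sum are available (or can be computed by hand). In the complementary case $\beta_\Gamma(r)$ exceeds $r^2\log r$ by more than an absolute constant, so $\deg(\Gamma)^{-1}+r^2\log r/\beta_\Gamma(r)\asymp\deg(\Gamma)^{-1}$ and it suffices to know the escape probability is $\gg\deg(\Gamma)^{-1}$, a quantitative form of Varopoulos's Theorem~\ref{thm:var} that again rests on the same structural input.

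I expect the main obstacle to be precisely this upper bound, and within it the need to control the volume-growth profile $\beta_\Gamma$ --- equivalently, to have an isoperimetric inequality with the ``right'' exponent --- uniformly at \emph{every} scale up to $r$ and for \emph{every} vertex-transitive graph, which is what rules out a soft, self-contained argument and forces the appeal to \cite{bgt,tt,tt.trof,ttLie}. One should not try to remove the logarithm in general: the examples of Section~\ref{sec:bk.examples} show the upper bound is sharp, and it is only in the strictly sub-quadratic regime --- where $\Gamma$ resembles $\Z$ at scale $r$ and the shell sum has just boundedly many effective terms --- that it can be dropped, which is the content of Theorem~\ref{thm:unimod.linear.rw}.
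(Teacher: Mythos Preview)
Your lower bound matches the paper's (Nash--Williams on the edge boundaries of balls; the paper extracts the $r^2/\beta_\Gamma(r)$ term by pigeonhole rather than Cauchy--Schwarz), and your upper-bound \emph{strategy} --- Coulhon--Saloff-Coste isoperimetry fed by the BGT-based growth bounds of \cite{tt,tt.trof,ttLie} --- is also the paper's. But two things go wrong in your execution of the upper bound.

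First, a quantitative error: the inequality $|\partial A|\gg\deg(\Gamma)\,|A|/\rho_\Gamma(C|A|)$ is false. Proposition~\ref{prop:iso-growth} gives only $|\partial A|\gg|A|/\phi_\Gamma(2|A|)$, with no $\deg(\Gamma)$ factor, and the example $G=(\Z/n\Z)^2\oplus(\Z/k\Z)$ of Proposition~\ref{prop:upper.sharp} (where $\deg(\Gamma)\asymp k$) shows this is sharp: the set $A=[-m,m]^2\times(\Z/k\Z)$ has $|A|\asymp m^2k$ and $\rho_\Gamma(|A|)\asymp m$, yet $|\partial A|\asymp mk$, so your form is off by a factor $k$. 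Consequently your claimed output $R_2\ll\deg(\Gamma)^{-1}\bigl(1+\int_1^r t\,\beta_\Gamma(t)^{-1}\,\diff t\bigr)$ is too strong --- in that same example it predicts $(\log r)/k^2$, contradicting the Nash--Williams lower bound $R_2\gg(\log r)/k$ --- and your ``complementary case'' inherits the slip: what is needed there is $R_2\ll1/\deg(\Gamma)$, i.e.\ escape probability $\gg1$, not $\gg\deg(\Gamma)^{-1}$. The second term of the correct upper bound, $r^2\log r/\beta_\Gamma(r)$, carries no $\deg(\Gamma)$.

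Second, a mechanism gap: the paper does not build a flow. The device converting isoperimetry into a resistance bound is the Benjamini--Kozma argument (Theorems~\ref{thm:bk} and~\ref{thm:bk.inf}, Proposition~\ref{prop:bk}): take the unit potential $f$, let $A_m$ be the set of its $m$ smallest vertices (automatically connected by the maximum principle), note via Stokes that the total current across $\partial^E A_m$ equals $C_p(f)$, and bound the increments of $f$ dyadically in $|A_m|$ using the isoperimetric lower bound on $|\partial A_m|$. Your ``spread the current evenly across each $\partial A_j$'' is the dual intuition, but it is not a construction --- prescribing edge-currents on successive cutsets does not by itself produce a divergence-free flow in between --- and it is precisely the potential-side argument that makes this step rigorous for free.
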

Theorem \ref{thm:var.converse.rw} is immediate from the following slightly more precise result.
\begin{theorem}\label{thm:var.converse}
There exists $\eps>0$ such that if $\Gamma$ is a locally finite vertex-transitive graph satisfying
\begin{equation}\label{var.conv.hyp}
\beta_\Gamma(n)\le\eps n^3\beta_\Gamma(1)
\end{equation}
for some $n\in\N$ then for every $x\in\Gamma$ we have
\[
R_2(S_\Gamma(x,n)\leftrightarrow S_\Gamma(x,r))\gg\frac{n^2}{\deg(\Gamma)\beta_\Gamma(n)}\log\frac{r}{n}
\]
for every $r\ge n$.
\end{theorem}
\begin{remark*}
One may check that
\[
R_2(x\leftrightarrow S_\Gamma(x,r))\ge R_2(x\leftrightarrow S_\Gamma(x,n))+R_2(S_\Gamma(x,n)\leftrightarrow S_\Gamma(x,r)),
\]
so the conclusion of Theorem \ref{thm:main.unimod} implies in particular that
\[
R(x\leftrightarrow S_\Gamma(x,r))\gg R_2(x\leftrightarrow S_\Gamma(x,n))+\frac{n^2}{\beta_\Gamma(n)}(\log r-\log n).
\]
\end{remark*}

Denoting by $R_{\Gamma,2}$ the maximum resistance between two points in a finite graph $\Gamma$, Theorem \ref{thm:main.rw} can be restated as follows.
\begin{theorem}\label{thm:main}
Let $\Gamma$ be a finite, connected, vertex-transitive graph. Then
\[
\frac{1}{\deg(\Gamma)}+\frac{\diam(\Gamma)^2}{\deg(\Gamma)|\Gamma|}\ll R_{\Gamma,2}\ll\frac{1}{\deg(\Gamma)}+\frac{\diam(\Gamma)^2\log(|\Gamma|/\deg(\Gamma))}{|\Gamma|}.
\]
\end{theorem}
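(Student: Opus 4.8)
Since $\Prob[\,x\to y\,]=1/(\deg(x)R_2(x\leftrightarrow y))$ and $\Gamma$ is vertex-transitive, this is literally equivalent to Theorem~\ref{thm:main.rw}, so I would argue directly with resistances, by the same scheme that proves Theorem~\ref{thm:main.unimod}: the finite case is the one in which $B_\Gamma(x,\diam(\Gamma))=\Gamma$, so that $\beta_\Gamma(\diam(\Gamma))=|\Gamma|$ and the nonempty sphere $S_\Gamma(x,\diam(\Gamma))$ play the roles of $\beta_\Gamma(r)$ and $S_\Gamma(x,r)$. The lower bound is elementary; the content is in the upper bound.

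For the lower bound, set $D=\diam(\Gamma)$ and $d_0=\deg(\Gamma)$. The total conductance of the edges at $x$ is $d_0$, so $R_2(x\leftrightarrow y)\ge 1/d_0$ for all $y\ne x$, giving $R_{\Gamma,2}\gg 1/d_0$. Next fix $x,y$ with $d(x,y)=D$ and, for $0\le j\le D-1$, let $\Pi_j$ be the set of edges between $S_\Gamma(x,j)$ and $S_\Gamma(x,j+1)$ (the edge boundary of $B_\Gamma(x,j)$); these are pairwise disjoint edge-cutsets, each separating $x$ from $y$, so the Nash--Williams inequality gives $R_2(x\leftrightarrow y)\ge\sum_{j=0}^{D-1}|\Pi_j|^{-1}$. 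Since $|\Pi_j|\le d_0\sigma_\Gamma(j)$ and $\sum_{j=0}^{D-1}\sigma_\Gamma(j)\le\beta_\Gamma(D)=|\Gamma|$, Cauchy--Schwarz yields $\sum_{j=0}^{D-1}|\Pi_j|^{-1}\ge D^2/\big(\sum_{j=0}^{D-1}|\Pi_j|\big)\ge D^2/(d_0|\Gamma|)$; combining the two bounds finishes this direction.

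For the upper bound I would first reduce to a statement purely about volume growth. Fix $x,y\in\Gamma$ and, using vertex-transitivity, pick an automorphism $g$ with $gx=y$. If $\theta$ is a unit flow from $x$ to the uniform probability measure $\pi$ on $\Gamma$, then $\theta-g_*\theta$ is a unit flow from $x$ to $y$, so by Thomson's principle and the automorphism-invariance of the Dirichlet energy $\EE$ we get $R_2(x\leftrightarrow y)\le\EE(\theta-g_*\theta)\le4\,\EE(\theta)$; hence it is enough to build, for each $x$, one such $\theta$ of energy $\ll\tfrac1{d_0}+\tfrac{D^2\log(|\Gamma|/d_0)}{|\Gamma|}$. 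A standard construction of a flow to $\pi$ from an isoperimetric profile (cf.\ \cite{ly-per}; equivalently, bounding $\sum_{t\ge0}\big(p_t(x,x)-|\Gamma|^{-1}\big)$ via a Nash-type inequality) produces $\theta$ with $\EE(\theta)\ll\sum_{k\,:\,\beta_\Gamma(1)\le2^k\le|\Gamma|}2^k/I_\Gamma(2^k)^2$, where $I_\Gamma(v)=\inf\{|\partial_E A|:|A|\le v\}$. Inserting only the lower bound of the Coulhon--Saloff-Coste isoperimetric inequality for vertex-transitive graphs, namely $I_\Gamma(v)\gg v/\rho_\Gamma(v)$ with $\rho_\Gamma(v)=\min\{n:\beta_\Gamma(n)\ge v\}$, turns this into $\EE(\theta)\ll\sum_{k\,:\,\beta_\Gamma(1)\le2^k\le|\Gamma|}\rho_\Gamma(2^k)^2/2^k$, so everything comes down to the combinatorial inequality
\[
\sum_{k\,:\,\beta_\Gamma(1)\le2^k\le|\Gamma|}\frac{\rho_\Gamma(2^k)^2}{2^k}\ll\frac1{d_0}+\frac{D^2\log(|\Gamma|/d_0)}{|\Gamma|}
\]
for the growth function of a vertex-transitive graph.

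Proving this last inequality is where I expect the real difficulty to lie. Because $d_0\asymp\beta_\Gamma(1)$ and $\rho_\Gamma(|\Gamma|)=D$, the left-hand side is a sum of $\asymp\log(|\Gamma|/d_0)$ terms while the right-hand side is $\asymp\beta_\Gamma(1)^{-1}+\rho_\Gamma(|\Gamma|)^2|\Gamma|^{-1}\log(|\Gamma|/\beta_\Gamma(1))$, so the estimate would be immediate if $r\mapsto r^2/\beta_\Gamma(r)$ were, up to an absolute constant, maximised at $r\in\{1,D\}$. For a general increasing function this is false, and the point is that the growth function of a vertex-transitive graph is far from arbitrary: morally, $\Gamma$ cannot grow faster than quadratically at a small scale and then merely quadratically (or more slowly) over a long range of larger scales, so its ``dimension'' can only drop as the scale grows. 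Making this precise requires controlling the shape of $\beta_\Gamma$ at every scale up to $\diam(\Gamma)$; Coulhon--Saloff-Coste alone does not suffice, and one instead feeds in the Breuillard--Green--Tao structure theorem for approximate groups \cite{bgt} together with the structural results on vertex-transitive graphs of polynomial growth from \cite{tt,tt.trof,ttLie}. Finally, the logarithm in the upper bound is genuinely necessary and has no counterpart in the lower bound (see the examples of Section~\ref{sec:bk.examples}); the gap is precisely that between summing over $\asymp\log(|\Gamma|/d_0)$ dyadic scales above and applying a single Cauchy--Schwarz inequality below.
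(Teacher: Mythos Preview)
Your lower bound is the paper's argument: Nash--Williams on the edge boundaries of concentric balls (Proposition~\ref{prop:lower.bound}). The paper uses pigeonhole on the cutset sizes where you use Cauchy--Schwarz, but the two estimates are equivalent.

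For the upper bound your outline is correct and lands on the same ingredients, but the organisation differs from the paper's in two places. First, the flow-to-$\pi$ reduction is an unnecessary detour: the Benjamini--Kozma bound (Theorem~\ref{thm:bk}) already applies directly to $R_2(x\leftrightarrow y)$ for any two points, and its output \emph{is} precisely the dyadic isoperimetric sum $\sum_k 2^k/I_\Gamma(2^k)^2$ that you want. So your black-box ``standard construction'' in step~2 is exactly Theorem~\ref{thm:bk}; no separate appeal to heat-kernel or Nash-inequality machinery is needed, and citing it that way leaves the step looking less justified than it is. Second, the paper does not package the structural input as a single combinatorial inequality on $\rho_\Gamma$ to be attacked at the end. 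Instead it runs the BGT-type input first: Theorem~\ref{thm:tt.rel} gives that a sub-cubic ball forces at-most-quadratic growth thereafter, which yields the two-regime lower bound on $\beta_\Gamma(n)$ of Proposition~\ref{prop:growth.lb.rel}; this is fed through Coulhon--Saloff-Coste (Proposition~\ref{prop:iso-growth}) to produce the two-regime isoperimetric inequality of Theorem~\ref{thm:isoperimRel}, and only then is Theorem~\ref{thm:bk} applied (via Lemma~\ref{lem:j.ub.p=3} and Proposition~\ref{prop:main.ub}). The two orderings are equivalent, but the paper's has the advantage of producing the sharp isoperimetric inequalities of \S\ref{sec:iso} as standalone results rather than burying them inside a sum.

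Your diagnosis of where the real content lies --- that Coulhon--Saloff-Coste alone cannot rule out a growth function that is fast at small scales and then merely quadratic at large scales, and that one needs the approximate-groups machinery to show the ``dimension'' of $\beta_\Gamma$ can only drop as the scale increases --- is exactly right, and Theorem~\ref{thm:tt.rel} is precisely the statement that makes this rigorous.
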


\begin{remark}
Given a finite connected graph $\Gamma$, set $t_\hit(\Gamma)=\max_{x,y\in\Gamma}\E_xT_y$, the maximum expected hitting time between a pair of vertices of $\Gamma$. It is shown in \cite[Proposition 3.18]{aldous-fill} that $t_\hit(\Gamma)\ge|\Gamma|-1$, with equality attained by a complete graph, so $t_\hit(\Gamma)$ is always at least linear in the number of vertices of $\Gamma$. Aldous and Fill \cite[Chapter 4]{aldous-fill} suggest that having a linear \emph{upper} bound on $t_\hit(\Gamma)$ is a natural analogue of transience for finite graphs. In a finite regular graph, a linear upper bound on $t_\hit(\Gamma)$ is equivalent to an upper bound on $R_{\Gamma,2}\deg(\Gamma)$ (this follows from the commute-time identity \cite[Corollary 2.21]{ly-per} and the fact that $t_\hit(\Gamma)\le\max_{x,y\in\Gamma}\E_xT_y+\E_yT_x\le2t_\hit(\Gamma)$). Theorem \ref{thm:main} can therefore also be viewed as an analogue of Varopoulos's theorem in the context of Aldous and Fill's version of `transience'.
\end{remark}

Theorems \ref{thm:main.unimod} and \ref{thm:main} are special cases of Theorems \ref{thm:main.unimodp} and \ref{thm:mainp}, below. We prove Theorem \ref{thm:var.converse} in Section \ref{sec:nash-w}.

Theorems \ref{thm:unimod.linear.rw} and \ref{thm:main.linear.rw} can be similarly rephrased in the language of electric resistance. However, these rephrased statements remain true for a slightly generalised notion of resistance, so we defer them until Theorems \ref{thm:main.unimodp.linear} and \ref{thm:main.linearp}, below, at which point we will have enough terminology to state them in full generality.

\subsection{Generalisation to $p$-parabolicity}\label{subsec:p-parabolic}
The resistance $R_2$ is in fact a special case of a more general notion of \emph{$p$-resistance} $R_p$ for $p\ge1$.
To define this, we first define the \emph{$p$-capacity} $\ca_p(U,U')$ between two subsets $U$ and $U'$ of a graph $\Gamma$ to be the infimum of
\[
\frac{1}{2}\sum_{\{(x,y)\in\Gamma^2:x\sim y\}}|f(x)-f(y)|^p
\]
over all functions $f:\Gamma\to\R$ such that $f= 1$ on $U$ and $f= 0$ on $U'$. By convention we denote by $\ca_p(U,\infty)$ the  infimum of
\[
\frac{1}{2}\sum_{\{(x,y)\in\Gamma^2:x\sim y\}}|f(x)-f(y)|^p
\]
over all finitely supported functions $f:\Gamma\to\R$ such that $f= 1$ on $U$.
The \emph{$p$-resistance} $R_p(U\leftrightarrow U')$ between $U$ and $U'$ is then the reciprocal of the $p$-capacity; thus 
\[
R_p(U\leftrightarrow U') = \frac{1}{\ca_p(U,U')}.
\]
See \S\ref{sec:resist} for a more detailed introduction to $p$-resistance, including all background necessary to understand the present paper.

A connected graph $\Gamma$ is called \emph{$p$-parabolic} if for every vertex $u\in\Gamma$ we have $\ca_p(u,\infty)=0$ (or equivalently if $R_p(u\leftrightarrow \infty)=\infty$ for every vertex $u\in\Gamma$). Otherwise, the graph is called \emph{$p$-hyperbolic}. Thus $2$-parabolicity is equivalent to recurrence. On the other hand, $1$-parabolicity is equivalent to $\Gamma$ being finite.

The following result is a natural generalisation of Theorem \ref{thm:var}, a proof of which for Cayley graphs can be found in \cite[Theorem 2.1]{Maillot}.
\begin{theorem}
Let $p\in(1,\infty)$. Then a locally finite vertex-transitive graph is $p$-parabolic if and only if it has polynomial growth of degree at most $p$. 
\end{theorem}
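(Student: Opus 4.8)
The plan is to prove the two implications separately, after a preliminary reduction to two-sided control on the volume growth. Recall that for a vertex-transitive graph, ``polynomial growth of degree at most $p$'' is equivalent, by the polynomial-growth dichotomy (Trofimov, together with the Breuillard--Green--Tao structure theorem already used elsewhere in this paper), to the existence of a constant $C$ with $\beta_\Gamma(r)\le Cr^{\lfloor p\rfloor}$ for every $r$; the same results tell us that a vertex-transitive graph of polynomial growth satisfies $\beta_\Gamma(r)\asymp r^d$ for some integer $d\ge0$. Hence, if $\Gamma$ does \emph{not} have polynomial growth of degree at most $p$, then there are an integer $d>p$ and a constant $c>0$ with $\beta_\Gamma(r)\ge cr^d$ for every $r$ (in the super-polynomial case this holds after absorbing the finitely many small radii into $c$). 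The theorem therefore follows from two claims: \textbf{(A)} if $\beta_\Gamma(r)\le Cr^p$ for all $r$ then $\Gamma$ is $p$-parabolic; and \textbf{(B)} if $\Gamma$ is vertex-transitive with $\beta_\Gamma(r)\ge cr^d$ for all $r$ and some $d>p$, then $\Gamma$ is $p$-hyperbolic. Claim (A) uses no transitivity; (B) is where the geometry of vertex-transitive graphs enters.

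For (A) I would produce finitely supported test functions of vanishing $p$-energy. Fix $u\in\Gamma$, let $R\in\N$, put $N=\lfloor\log_2R\rfloor$, and take the non-increasing $g\colon\N_0\to[0,1]$ equal to $1$ at $0$ and to $0$ on $[2^N,\infty)$ that decreases by exactly $1/N$ across each dyadic block $[2^k,2^{k+1})$, so that its per-step decrease there is $2^{-k}/N$; set $f_R(x)=g(d(u,x))$, a discretisation of the radial $p$-harmonic profile $\log(R/d(u,x))/\log R$. An edge with an endpoint at distance in $[2^k,2^{k+1})$ from $u$ contributes $|f_R(x)-f_R(y)|\ll 2^{-k}/N$, and the number of edges meeting $B_\Gamma(u,2^{k+1})\setminus B_\Gamma(u,2^k)$ is at most $\deg(\Gamma)\beta_\Gamma(2^{k+1})\ll\deg(\Gamma)2^{kp}$ by hypothesis; hence the $p$-energy of $f_R$ is $\ll\deg(\Gamma)\sum_{k\le N}2^{kp}(2^{-k}/N)^p=\deg(\Gamma)(N+1)N^{-p}\to0$ as $R\to\infty$. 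Thus $\ca_p(u,\infty)=0$ for every $u$, i.e. $\Gamma$ is $p$-parabolic.

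For (B) I would invoke the Coulhon--Saloff-Coste isoperimetric inequality for vertex-transitive graphs, which together with $\beta_\Gamma(r)\ge cr^d$ yields the $d$-dimensional isoperimetric inequality $|\partial_EA|\gg_{c,d}|A|^{1-1/d}$ for every finite $A\subseteq\Gamma$, equivalently the $\ell^1$-Sobolev inequality $\|f\|_{d/(d-1)}\ll\sum_{x\sim y}|f(x)-f(y)|$ for finitely supported $f$. Applying this to $|f|^\gamma$ with the appropriate exponent $\gamma=\gamma(p,d)$ and using H\"older --- the standard self-improvement of Sobolev inequalities, available precisely because $d>p$ --- upgrades it to the $\ell^p$-Sobolev inequality $\|f\|_{dp/(d-p)}\ll_{p,d,\deg(\Gamma)}\bigl(\sum_{x\sim y}|f(x)-f(y)|^p\bigr)^{1/p}$. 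Now any finitely supported $f$ with $f(u)=1$ has $\|f\|_{dp/(d-p)}\ge1$, so $\sum_{x\sim y}|f(x)-f(y)|^p\gg1$; therefore $\ca_p(u,\infty)>0$ and $\Gamma$ is $p$-hyperbolic, as required.

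The routine-looking ingredients --- the dyadic energy estimate of (A) and the $\ell^1\to\ell^p$ Sobolev bootstrap of (B) --- are entirely classical, so I expect the main obstacle to be not any single calculation but assembling the pieces cleanly: in particular, pinning down the two quoted vertex-transitive facts (integrality of the growth dimension, so that ``degree at most $p$'' really does give the sharp bound $\beta_\Gamma(r)\ll r^{\lfloor p\rfloor}$ and excludes pathologies such as growth $\asymp r^p\log r$, together with the Coulhon--Saloff-Coste inequality), and checking that the constant in the $\ell^p$-Sobolev inequality depends on $\Gamma$ only through $\deg(\Gamma)$.
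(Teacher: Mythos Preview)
Your argument is correct. Note, however, that the paper does not give its own proof of this theorem: it is stated as background with a citation to Maillot for the Cayley-graph case, and the paper's actual contribution is the quantitative finitary version in Theorem~\ref{thm:main.unimodp}. So there is no ``paper's proof'' to compare against directly.

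That said, the paper's toolkit does furnish an alternative route, and it differs from yours in both directions. For $p$-hyperbolicity you derive the Coulhon--Saloff-Coste isoperimetric inequality and then bootstrap it to an $\ell^p$-Sobolev inequality via the Gagliardo--Nirenberg trick; the paper instead feeds the same isoperimetric input (Proposition~\ref{prop:iso-growth}) into the Benjamini--Kozma potential-theoretic bound (Theorem~\ref{thm:bk.inf}), obtaining finitary control on $R_p(x\leftrightarrow S_\Gamma(x,r))$ rather than merely a positive capacity. For $p$-parabolicity you use an explicit radial test function with dyadic level sets; the paper's approach (Section~\ref{sec:nash-w}) is the dual one via the Nash--Williams inequality applied to the edge boundaries of balls. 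Your test-function argument has the advantage of handling the critical case $\beta_\Gamma(r)\asymp r^p$ (with $p\in\N$) cleanly, whereas the paper's Nash--Williams lower bound in Theorem~\ref{thm:main.unimodp} gives only a constant there and one must appeal to the analogue of Theorem~\ref{thm:var.converse} (which the paper defers for general $p$). Conversely, the paper's arguments are inherently quantitative and finitary, which yours are not.

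Your preliminary reduction---that a vertex-transitive graph not of polynomial growth of degree at most $p$ satisfies $\beta_\Gamma(r)\ge cr^d$ for some integer $d>p$---is correct, and indeed follows from the persistence-of-growth result quoted in the paper as Theorem~\ref{thm:tt}.
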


The following theorem provides a finitary version of this result, and generalises Theorem \ref{thm:main.unimod}. In it and subsequent results we define
\[
\alpha(p)=\begin{cases}
   1&\text{if $p<3$}\\
   1-\frac{p}{\fp+1}&\text{otherwise}.
\end{cases}
\]
\begin{theorem}\label{thm:main.unimodp}
Let $\Gamma$ be a connected, locally finite vertex-transitive graph, let $x\in\Gamma$, and let $r\in\N$ be such that $r<\diam(\Gamma)$. Then for every $p\in(1,\infty)$ we have
\[
\frac{1}{\deg(\Gamma)}+\frac{r^p}{\deg(\Gamma)\beta_\Gamma(r)}\ll R_p(x\leftrightarrow\Gamma\setminus B_\Gamma(x,r))\ll_p\frac{1}{\deg(\Gamma)^{\alpha(p)}}+\frac{r^p(\log r)^{p-1}}{\beta_\Gamma(r)}.
\]
Moreover, if $p$ is not an integer then the upper bound can be improved to
\[
R_p(x\leftrightarrow\Gamma\setminus B_\Gamma(x,r))\ll_p\frac{1}{\deg(\Gamma)^{\alpha(p)}}+\frac{r^p}{\beta_\Gamma(r)}.
\]
\end{theorem}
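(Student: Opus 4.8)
The plan is to prove the two bounds separately, treating the lower bound by a direct flow (or test-function) argument and the upper bound by exhibiting explicit near-optimal test functions whose construction is informed by the structure theory of groups of polynomial growth.

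For the \emph{lower bound}, I would argue that $R_p(x\leftrightarrow\Gamma\setminus B_\Gamma(x,r))\gg 1/\deg(\Gamma)$ simply by noting that any admissible $f$ must satisfy $|f(x)-f(y)|=1$ on at least one edge out of $x$ (in fact, summing over all $\deg(x)$ neighbours and using convexity of $t\mapsto t^p$ would be cleaner, but even one edge suffices for the stated bound once one is careful). For the term $r^p/(\deg(\Gamma)\beta_\Gamma(r))$, I would use the dual (flow) characterisation of $p$-capacity: push a unit flow from $x$ out to $S_\Gamma(x,r)$, spreading it as evenly as possible across the spheres $S_\Gamma(x,k)$ for $k\le r$. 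Since $\sum_{k\le r}\sigma_\Gamma(k)\le\beta_\Gamma(r)$ while each flow has to traverse $r$ spheres, a convexity/Hölder estimate on $\sum_k \sigma_\Gamma(k)(\text{flow per edge})^{p'}$ with $p'=p/(p-1)$ gives a flow of $p'$-energy $\ll (\text{stuff})$, and dualising yields the claimed resistance lower bound; the extra $1/\deg(\Gamma)$ enters because each edge carries flow divided by roughly $\deg(\Gamma)$. This should be essentially the $p$-analogue of the classical Nash--Williams / Varopoulos lower bound and I expect it to be routine.

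For the \emph{upper bound}, the task is to construct a function $f$ with $f=1$ on $x$, $f=0$ off $B_\Gamma(x,r)$, and $p$-Dirichlet energy $\ll_p \deg(\Gamma)^{-\alpha(p)}+r^p(\log r)^{p-1}/\beta_\Gamma(r)$ (or without the log when $p\notin\Z$). The natural first attempt is a radial function $f(y)=\psi(d(x,y))$ for a suitable decreasing profile $\psi$; its energy is $\asymp \tfrac1{\deg}\sum_{k<r}\sigma_\Gamma(k)\deg(\Gamma)|\psi(k)-\psi(k+1)|^p$, roughly $\sum_k \sigma_\Gamma(k)|\psi(k+1)-\psi(k)|^p$. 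Optimising $\psi$ subject to $\sum_k(\psi(k)-\psi(k+1))=1$ via Lagrange/Hölder gives energy $\asymp\bigl(\sum_{k<r}\sigma_\Gamma(k)^{-1/(p-1)}\bigr)^{-(p-1)}$, so the whole game is to bound this quantity. Here one invokes the volume-growth structure available for vertex-transitive graphs: by the results of \cite{tt,tt.trof,ttLie} (ultimately resting on \cite{bgt}), if $\beta_\Gamma(r)$ is not too large then on the scale $[1,r]$ the graph looks like a nilpotent Cayley graph of some homogeneous dimension $D$, so that $\sigma_\Gamma(k)\asymp \beta_\Gamma(k)/k \gtrsim (\beta_\Gamma(r)/r^{D})k^{D-1}$ on a large range of $k$; plugging this in, the sum $\sum_k \sigma_\Gamma(k)^{-1/(p-1)}$ behaves like a $k^{-(D-1)/(p-1)}$-sum, which converges (giving the log-free bound) precisely when $(D-1)/(p-1)>1$, i.e. $D>p$, and is logarithmically divergent in the borderline integer case $D=p$ — this is exactly where the $(\log r)^{p-1}$ and the case distinction in $\alpha(p)$ come from, and why $p\notin\Z$ always lands in the strict regime. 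The remaining subtlety is the $\deg(\Gamma)^{-\alpha(p)}$ term: when $\deg(\Gamma)$ is large one cannot afford the crude bound near $x$, and one must instead use that a vertex-transitive graph of large degree contains many disjoint short paths (roughly, it retracts onto a product with a large torus/clique factor), so that the resistance from $x$ to $S_\Gamma(x,1)$ is already as small as $\deg(\Gamma)^{-\alpha(p)}$ — I would isolate this as a separate lemma about the ``local'' contribution and then glue it to the radial function on the annulus via the series law for resistance.

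The main obstacle, as I see it, is not any single estimate but getting the \emph{uniformity} right: the structure theory only gives nilpotent approximation on scales where $\beta_\Gamma$ is controlled, so one has to handle dyadically the ranges of $k$ where the local dimension changes, sum the log-free contributions geometrically, and check that the worst (integer-dimension, borderline) scale contributes the single $(\log r)^{p-1}$ factor rather than a power of $\log r$ per scale. I would structure the write-up as: (i) the flow lower bound; (ii) a lemma bounding the radial energy by $(\sum_k \sigma_\Gamma(k)^{-1/(p-1)})^{-(p-1)}$; (iii) a lemma, from the cited structure theorems, controlling $\sum_k\sigma_\Gamma(k)^{-1/(p-1)}$ from below in terms of $\beta_\Gamma(r)$, $r$ and $p$, with the integer/non-integer dichotomy; (iv) a lemma handling large degree via many disjoint paths; (v) assembling (ii)--(iv) through the series law. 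Steps (i), (ii), (iv) I expect to be short; step (iii) is where the real work lies and where the hypothesis $r<\diam(\Gamma)$ and the polynomial-growth structure are genuinely used.
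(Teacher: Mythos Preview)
Your lower-bound sketch is essentially the paper's: it applies the $p$-Nash--Williams inequality (Proposition~\ref{prop:Nash-William}) with the edge boundaries of $B_\Gamma(x,i)$ as cutsets. Your phrasing in terms of pushing a flow is slightly off---a flow with small energy gives an \emph{upper} bound on $R_p$, not a lower one---but the underlying cutset idea is right and matches the paper.

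The upper-bound proposal has a genuine gap in the direction of the inequality. A test function $f$ only certifies $\ca_p\le\EE_p(f)$, hence $R_p\ge1/\EE_p(f)$; exhibiting a radial $f$ with small $p$-energy therefore cannot show that $R_p$ is small. In fact your optimal radial profile, with energy $\bigl(\sum_k|\partial^EB(x,k)|^{-1/(p-1)}\bigr)^{-(p-1)}$, reproduces exactly the Nash--Williams \emph{lower} bound on $R_p$; it is the same inequality seen from the Dirichlet side. To bound $R_p$ from above one must bound $\ca_p$ from below, and radial data do not suffice for this: even with $\beta_\Gamma(k)$ well controlled by the structure theory, individual sphere sizes $\sigma_\Gamma(k)$ can be highly irregular, so neither a radial potential nor a radial flow yields a usable bound in the required direction.

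The paper's route is quite different. It uses the Benjamini--Kozma argument (Proposition~\ref{prop:bk}, Theorem~\ref{thm:bk.inf}): one takes the \emph{actual} $p$-potential $f$, orders vertices by the value of $f$, and observes that the sublevel sets $A_m$ are connected and carry total outward $p$-current $C_p(f)$ across $\partial^E A_m$; pigeonholing on currents then bounds the increment of $f$ across each dyadic block. Summing gives $R_p^{1/(p-1)}\ll\sum_n\max_A j_{A,p}$, where the maximum is over \emph{all} connected $A$ of size $\asymp\beta_\Gamma(r)/2^n$, not just balls. The required isoperimetric lower bound on $|\partial A|$ for arbitrary connected $A$ comes from the growth lower bounds of \S\ref{sec:growth} via the Coulhon--Saloff-Coste inequality (Proposition~\ref{prop:iso-growth}); the factor $(\log r)^{p-1}$ and the exponent $\alpha(p)$ then emerge from the case analysis in Lemmas~\ref{lem:j.ub.p=3}--\ref{lem:j.ub.fp=fq}. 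Your intuition that the integer/non-integer dichotomy reflects a borderline-dimension phenomenon is correct, but the mechanism is isoperimetry for arbitrary sets, not control of sphere sizes.
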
 
We prove the upper bounds of Theorem \ref{thm:main.unimodp} in Section \ref{sec:resist.ub}. We prove the lower bound in Section \ref{sec:nash-w}.

Denoting by $R_{\Gamma,p}$ the maximum $p$-resistance between two vertices of a finite graph $\Gamma$, we have the following variant of Theorem \ref{thm:main.unimodp} for finite graphs, which generalises Theorem \ref{thm:main}.
\begin{theorem}\label{thm:mainp}
Let $\Gamma$ be a finite, connected, vertex-transitive graph. Then for every $p\in(1,\infty)$ we have
\[
\frac{1}{\deg(\Gamma)}+\frac{\diam(\Gamma)^p}{\deg(\Gamma)|\Gamma|}\ll R_{\Gamma,p}\ll_p\frac{1}{\deg(\Gamma)^{\alpha(p)}}+\frac{\diam(\Gamma)^p(\log(|\Gamma|))^{p-1}}{|\Gamma|}.
\]
Moreover, if $p$ is not an integer then the upper bound can be improved to
\[
R_{\Gamma,p}\ll_p\frac{1}{\deg(\Gamma)^{\alpha(p)}}+\frac{\diam(\Gamma)^p}{|\Gamma|}.
\]
\end{theorem}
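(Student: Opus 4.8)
The plan is to deduce Theorem~\ref{thm:mainp} from Theorem~\ref{thm:main.unimodp}, using its statement for the lower bound and the flow construction underlying its proof for the upper bound.

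\medskip

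\emph{Lower bound.} Fix $x,y\in\Gamma$ with $d_\Gamma(x,y)=\diam(\Gamma)$, and abbreviate $D=\diam(\Gamma)$. The indicator of $x$ is the only function equal to $1$ at $x$ and $0$ off $x$, so $\ca_p(x,\Gamma\setminus\{x\})=\deg(\Gamma)$ and hence $R_{\Gamma,p}\ge R_p(x\leftrightarrow\Gamma\setminus\{x\})=1/\deg(\Gamma)$. Moreover $y\notin B_\Gamma(x,D-1)$, and since enlarging a target set only increases the $p$-capacity and therefore decreases the $p$-resistance, we get $R_{\Gamma,p}\ge R_p(x\leftrightarrow y)\ge R_p(x\leftrightarrow\Gamma\setminus B_\Gamma(x,D-1))$; when $D\ge2$ the lower bound of Theorem~\ref{thm:main.unimodp} with $r=D-1$, together with $\beta_\Gamma(D-1)\le|\Gamma|$, gives $R_{\Gamma,p}\gg\frac1{\deg(\Gamma)}+\frac{\diam(\Gamma)^p}{\deg(\Gamma)|\Gamma|}$, and the case $D\le1$ (so $\Gamma$ complete) is immediate. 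To obtain an absolute implied constant one can instead argue directly: the edge sets $\Pi_i$ joining $S_\Gamma(x,i-1)$ to $S_\Gamma(x,i)$ for $1\le i\le D$ are disjoint cutsets separating $x$ from $y$ with $|\Pi_i|\le\deg(\Gamma)\sigma_\Gamma(i)$, so the $p$-Nash--Williams inequality gives $R_p(x\leftrightarrow y)\ge\bigl(\sum_{i=1}^D|\Pi_i|^{-1/(p-1)}\bigr)^{p-1}$, and Jensen's inequality applied to the convex function $t\mapsto t^{-1/(p-1)}$, using $\sum_i\sigma_\Gamma(i)\le|\Gamma|$, bounds this below by $\diam(\Gamma)^p/(\deg(\Gamma)|\Gamma|)$.

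\medskip

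\emph{Upper bound.} This is the substantial direction. Fix $x,y\in\Gamma$. Let $\theta_x$ be a flow from $x$ to the uniform probability measure on $\Gamma$ of least $p$-energy, and let $\theta_y$ be the corresponding flow from $y$; then $\theta_x-\theta_y$ is a unit flow from $x$ to $y$, so by the variational characterisation of $p$-resistance in terms of flows and the elementary inequalities $(a+b)^{p/(p-1)}\ll_p a^{p/(p-1)}+b^{p/(p-1)}$ and $(a+b)^{p-1}\ll_p a^{p-1}+b^{p-1}$ we obtain $R_p(x\leftrightarrow y)\ll_p\EE_p(\theta_x)+\EE_p(\theta_y)$, which by vertex-transitivity equals $2\EE_p(\theta_x)$. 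It thus suffices to bound the least $p$-energy of a flow from a single vertex to the uniform measure. Such a flow has flux $1-\beta_\Gamma(j)/|\Gamma|\le1$ across each sphere $S_\Gamma(x,j)$, so its least energy is at most that of a flow from $x$ delivering unit mass to $S_\Gamma(x,D)$; and this last quantity is precisely what is estimated, by means of the Breuillard--Green--Tao structure theorem \cite{bgt} and the geometric results of \cite{tt,tt.trof,ttLie}, in the course of proving the upper bound of Theorem~\ref{thm:main.unimodp}. Running that estimate at scale $r=\diam(\Gamma)$ — which is legitimate here, since no boundary sphere of $\Gamma$ is being used as a target and so the hypothesis $r<\diam(\Gamma)$ is not needed — and then using $\beta_\Gamma(\diam(\Gamma))=|\Gamma|$ and $\diam(\Gamma)\le|\Gamma|$, yields $R_p(x\leftrightarrow y)\ll_p\frac1{\deg(\Gamma)^{\alpha(p)}}+\frac{\diam(\Gamma)^p(\log|\Gamma|)^{p-1}}{|\Gamma|}$; when $p$ is not an integer, the log-free form of the estimate gives the improved bound in the same way.

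\medskip

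\emph{Main obstacle.} The only hard ingredient is inherited from Theorem~\ref{thm:main.unimodp}: one must use the structure theory of vertex-transitive graphs to build a flow spreading current roughly evenly over each sphere, so that its $p$-energy is governed by $\bigl(\sum_{j\le\diam(\Gamma)}(\deg(\Gamma)\sigma_\Gamma(j))^{-1/(p-1)}\bigr)^{p-1}$, followed by a delicate but elementary estimate of this sum. The points specific to Theorem~\ref{thm:mainp} that still need checking are comparatively routine: the reduction to a single basepoint through $\theta_x-\theta_y$, and the verification that the scale-$\diam(\Gamma)$ instance of the flow construction degenerates nowhere — in particular that the structure theorem is invoked at a scale at which it is available, and that the identity $\beta_\Gamma(\diam(\Gamma))=|\Gamma|$ is exactly what removes the need for the restriction $r<\diam(\Gamma)$.
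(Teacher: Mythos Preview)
Your lower bound is fine and essentially matches the paper: Proposition~\ref{prop:lower.bound} uses the same Nash--Williams cutsets $\partial^E B_\Gamma(x,i)$, with a pigeonhole argument (at least half of them have size at most $2|E(\Gamma)|/\diam(\Gamma)$) in place of your Jensen step; both yield the required bound with an absolute constant.

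The upper bound, however, has a genuine gap, and the proposed strategy is circular. In the paper, the upper bound of Theorem~\ref{thm:main.unimodp} is \emph{deduced from} Theorem~\ref{thm:mainp}: when $\diam(\Gamma)<4r$ its proof simply quotes Theorem~\ref{thm:mainp} (see the end of \S\ref{sec:resist.ub}), so you cannot run the implication the other way. More fundamentally, your description of what the proof of Theorem~\ref{thm:main.unimodp} does is inaccurate. There is no flow construction ``spreading current roughly evenly over each sphere''; the argument is the Benjamini--Kozma bound (Theorems~\ref{thm:bk} and~\ref{thm:bk.inf}), which works entirely with the $p$-harmonic \emph{potential}: one orders vertices by the value of $f$ and uses isoperimetry to control how fast $f$ can increase as its sublevel sets grow. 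The quantity $\bigl(\sum_j(\deg(\Gamma)\sigma_\Gamma(j))^{-1/(p-1)}\bigr)^{p-1}$ you write down is the Nash--Williams \emph{lower} bound, not something one can realise by an explicit flow---realising it would require routing current evenly through every sphere, which is in general impossible.

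The paper's actual route is direct. Theorem~\ref{thm:bk} already bounds the point-to-point resistance $R_p(u\leftrightarrow v)$ in a finite graph, by running the potential analysis of Proposition~\ref{prop:bk} outward from each of $u$ and $v$ and meeting at the median level set---this is the correct analogue of your ``$\theta_x-\theta_y$'' reduction, carried out on potentials rather than flows, and it is where the two-endpoint symmetry is exploited. The isoperimetric input (Theorem~\ref{thm:isoperimRel}, Corollaries~\ref{cor:bk.iso} and~\ref{cor:isoperimRel}, Lemmas~\ref{lem:j.ub.p=3}--\ref{lem:j.ub.fp=fq}) then bounds each term $j_{A,p}$ appearing in Theorem~\ref{thm:bk}, and summing over dyadic scales of $|A|$ (Propositions~\ref{prop:main.ub} and~\ref{prop:main.p.ub}) gives the upper bound. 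So the step you flag as routine---combining the two basepoints---is already built into Theorem~\ref{thm:bk}; what your proposal is missing is that the point-to-point estimate is established directly in the finite graph rather than imported from a ball-to-complement estimate.
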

We prove the upper bounds of Theorem \ref{thm:mainp} in Section \ref{sec:resist.ub}. We state the lower bound of Theorem \ref{thm:mainp} more precisely in Proposition \ref{prop:lower.bound}, which does not in fact require the transitivity hypothesis.

We complement Theorems \ref{thm:main.unimodp} and \ref{thm:mainp} with the following refinement for graphs with sufficiently slow growth.
\begin{theorem}\label{thm:main.unimodp.linear}
Let $p\in(1,\infty)$ and $\eps>0$. Let $\Gamma$ be a connected, locally finite  vertex-transitive graph, let $x\in\Gamma$, and let $r\in\N$ be such that $r<\diam(\Gamma)$. Suppose that $\beta_\Gamma(r)\leq r^{p-\eps}$. Then
\[
R_p(x\leftrightarrow\Gamma\setminus B_\Gamma(x,r))\ll_{p,\eps} \frac{r^p}{\beta_\Gamma(r)}.
\]
\end{theorem}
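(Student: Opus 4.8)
The plan is to prove the equivalent bound $\ca_p(x,\Gamma\setminus B_\Gamma(x,r))\gg_{p,\eps}\beta_\Gamma(r)/r^p$ on the $p$-capacity, and to do so constructively. Recall that any unit flow $\theta$ from a set $A$ to a set $B$ in $\Gamma$ witnesses the bound $R_p(A\leftrightarrow B)\le\big(\sum_{e\in E(\Gamma)}|\theta(e)|^{p'}\big)^{p-1}$, where $p'=p/(p-1)$: indeed $\big|\sum_e (df)(e)\theta(e)\big|=1$ for every $f$ that is $1$ on $A$ and $0$ on $B$, so H\"older's inequality gives $\|df\|_{\ell^p}\,\|\theta\|_{\ell^{p'}}\ge1$, and hence $\ca_p(A,B)\,\big(\sum_{e}|\theta(e)|^{p'}\big)^{p-1}\ge1$. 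Two preliminary remarks. First, under the hypothesis $\beta_\Gamma(r)\le r^{p-\eps}$ we have $r^p/\beta_\Gamma(r)\ge r^\eps\ge1$, so the term $\deg(\Gamma)^{-\alpha(p)}\le1$ in the upper bound of Theorem \ref{thm:main.unimodp} is already $\le r^p/\beta_\Gamma(r)$; the entire task is therefore to remove the factor $(\log r)^{p-1}$ from the remaining term. Second, if $r\le r_0$ for a threshold $r_0=r_0(p,\eps)$ to be fixed below, then $\beta_\Gamma(r)=O_{p,\eps}(1)$, so routing a unit flow from $x$ along a geodesic of length $r+1$ to a vertex of $\Gamma\setminus B_\Gamma(x,r)$ gives $R_p(x\leftrightarrow\Gamma\setminus B_\Gamma(x,r))\le(r+1)^{p-1}=O_{p,\eps}(1)\ll_{p,\eps}r^p/\beta_\Gamma(r)$ (again using $r^p/\beta_\Gamma(r)\ge1$); we may thus assume $r>r_0$.

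The crucial observation is that the hypothesis $\beta_\Gamma(r)\le r^{p-\eps}$ places us in the strictly subcritical regime of the structure theory for vertex-transitive graphs of moderate growth --- the effective forms of the theorems of Gromov and Trofimov built on \cite{bgt} and developed in \cite{tt,tt.trof,ttLie}, which already underpin the proof of the upper bound of Theorem \ref{thm:main.unimodp} in Section \ref{sec:resist.ub}. Provided $r_0=r_0(p,\eps)$ is large enough, this machinery models the ball $B_\Gamma(x,r)$ --- together with the comparison of $p$-resistances up to constants depending only on $p$ and $\eps$, exactly as in the proof of Theorem \ref{thm:main.unimodp} --- by the ball $B_N(o,r')$, with $r'\asymp_{p,\eps}r$, in a vertex-transitive graph $N$ of polynomial volume growth of some integer degree $D$, so that $\beta_N(t)\asymp_{p,\eps}t^D$ and $\sigma_N(t)\asymp_{p,\eps}t^{D-1}$ for all $1\le t\le r'$. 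Comparing $r^D\asymp_{p,\eps}\beta_N(r')\asymp_{p,\eps}\beta_\Gamma(r)\le r^{p-\eps}$ and using $r>r_0(p,\eps)$ forces $D\le p-\eps$; in particular $D<p$. (This is exactly where the proof of Theorem \ref{thm:main.unimodp} must allow for a logarithm: the obstruction there is a model of critical homogeneous dimension $D=p$.)

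On the model $N$ we build a spherically symmetric unit flow $\theta$ from $o$ to $N\setminus B_N(o,r')$: send $1/\sigma_N(k)$ units of flow outward through each vertex of the sphere $S_N(o,k)$, so that the flow crossing the edge-cut separating $B_N(o,k-1)$ from the rest equals $1$ and is spread over $\asymp_{p,\eps}\deg(N)\sigma_N(k)\asymp_{p,\eps}k^{D-1}$ edges, each of which therefore carries $\asymp_{p,\eps}k^{1-D}$ units. Hence
\[
\sum_{e\in E(N)}|\theta(e)|^{p'}\asymp_{p,\eps}\sum_{k=1}^{r'}k^{D-1}\big(k^{1-D}\big)^{p'}=\sum_{k=1}^{r'}k^{-\frac{D-1}{p-1}}.
\]
Since $D<p$, and since $D$ is an integer with $D\le p-\eps$, the exponent $\frac{D-1}{p-1}$ is strictly less than $1$ and bounded away from $1$ in terms of $p$ and $\eps$; the sum is therefore of genuinely polynomial order, $\asymp_{p,\eps}(r')^{1-\frac{D-1}{p-1}}=(r')^{\frac{p-D}{p-1}}$, with no logarithm --- the logarithm arises only in the borderline case $D=p$, where the sum degenerates to $\sum_k 1/k$. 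By the flow bound above, $R_p(o\leftrightarrow N\setminus B_N(o,r'))\ll_{p,\eps}(r')^{p-D}\asymp_{p,\eps}r^{p-D}\asymp_{p,\eps}r^p/\beta_\Gamma(r)$. Transferring this flow back to $\Gamma$ along the model comparison, exactly as in the proof of the upper bound of Theorem \ref{thm:main.unimodp}, costs only constants depending on $p$ and $\eps$ and yields $R_p(x\leftrightarrow\Gamma\setminus B_\Gamma(x,r))\ll_{p,\eps}r^p/\beta_\Gamma(r)$, as claimed.

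I expect the only substantive point to be the interface with the structure theory: one needs that a vertex-transitive graph with $\beta_\Gamma(r)\le r^{p-\eps}$ is modelled at scales up to $r$ by a graph of polynomial growth of a fixed integer degree $D\le p-\eps$, with all constants and the flow-transfer step controlled in terms of $p$ and $\eps$ alone. Since that package is already assembled for the proof of Theorem \ref{thm:main.unimodp}, the genuinely new content here is the elementary observation that $D<p$ removes the critical-scale logarithm; everything else is the geometric-series estimate above together with the trivial case $r\le r_0$. (Alternatively, one could phrase the argument via a coarea/isoperimetric route rather than flows, but the same dichotomy applies: the logarithm that appears in the borderline degree is excluded by our growth hypothesis.)
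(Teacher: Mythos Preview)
Your proposal has a genuine gap: the ``model comparison'' and ``flow-transfer step'' you invoke do not exist in the paper, and your repeated claim that this is ``exactly as in the proof of the upper bound of Theorem \ref{thm:main.unimodp}'' is incorrect. The upper bound of Theorem \ref{thm:main.unimodp} is proved in Section \ref{sec:resist.ub} entirely via the Benjamini--Kozma isoperimetric argument (Theorems \ref{thm:bk} and \ref{thm:bk.inf}) combined with the isoperimetric inequalities of Section \ref{sec:iso}; there is no construction of a model graph $N$ of exact polynomial growth, no spherically symmetric flow on such a model, and no mechanism for transferring flows between $N$ and $\Gamma$. The structure theory of \cite{bgt,tt,tt.trof,ttLie} enters only through the growth bounds of Theorems \ref{thm:tt} and \ref{thm:tt.rel}, which feed into the Coulhon--Saloff-Coste isoperimetric inequality; it is not used to build a comparison graph with $\beta_N(t)\asymp t^D$ and $\sigma_N(t)\asymp t^{D-1}$ uniformly at all scales, and indeed such a clean model need not exist (consider e.g.\ $\Z^d\times(\Z/k\Z)$ with $k$ an arbitrary function of $r$). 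Your flow construction on $N$ is also loose: ``sending $1/\sigma_N(k)$ outward through each vertex of $S_N(o,k)$'' does not in general define an honest edge flow, since vertices on a sphere can have varying numbers of neighbours on the next sphere.

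The paper's actual argument is much shorter and stays within the isoperimetric framework. One first uses Theorem \ref{thm:tt} to propagate the hypothesis $\beta_\Gamma(r)\le r^{p-\eps}$ to $\beta_\Gamma(m)\le m^{p-\eps/2}$ for all $m\ge r$ (after reducing to large $r$, as you also do). If $\diam(\Gamma)\ge 4r$, one applies Proposition \ref{prop:unimod.p.ub}\ref{item:main.p.ub.p-eps} at scale $4r$; the point there is precisely your ``elementary observation'', but phrased on the Benjamini--Kozma side: with $q\le p-\eps/2$, the sum $\sum_n 2^{(p-q)n/(q(p-1))}$ in Proposition \ref{prop:Rpleqq} is a genuine geometric series with ratio bounded away from $1$, so it is dominated by its largest term without a logarithm. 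If $\diam(\Gamma)<4r$, one reduces to the finite-graph result Theorem \ref{thm:main.linearp}. Your intuition that the absence of the critical degree $D=p$ is what kills the logarithm is exactly right; the gap is that the paper realises this through the geometric series in the Benjamini--Kozma bound, not through a flow on a model graph.
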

\begin{theorem}\label{thm:main.linearp}
Let $p\in(1,\infty)$ and $\eps>0$. Let $\Gamma$ be a finite, connected, vertex-transitive graph. Suppose that $\diam(\Gamma)\geq |\Gamma|^{\frac{1}{p-\eps}}$. Then
\[
R_{\Gamma,p}\ll_{p,\eps} \frac{\diam(\Gamma)^p}{|\Gamma|}.
\]
\end{theorem}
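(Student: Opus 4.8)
The plan is to deduce Theorem~\ref{thm:main.linearp} from Theorem~\ref{thm:main.unimodp.linear} (the ball version) together with the coarse structure theory of vertex-transitive graphs. Write $D=\diam(\Gamma)$ and $N=|\Gamma|$. First dispose of a degenerate case: if $D\le D_0$ for a threshold $D_0=D_0(p,\eps)$ to be fixed, then the hypothesis gives $N\le D^{p-\eps}\le D_0^{p}$, so $\Gamma$ ranges over a finite (depending on $p,\eps$) family of graphs, whence $R_{\Gamma,p}\ll_{p,\eps}1$, while $D^p/N\ge N^{-1}\ge D_0^{-p}$ is bounded below; the inequality is then immediate. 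So assume henceforth that $D$ --- hence also $N$ --- is large in terms of $p,\eps$. By the structure theory of vertex-transitive graphs (\cite{bgt} and \cite{tt,tt.trof,ttLie}), the hypothesis $\beta_\Gamma(D)\le D^{p-\eps}$ moreover forces the growth to be \emph{regular} below scale $D$ (up to the usual $\eps$-losses): $\beta_\Gamma(r)\ll_{p,\eps}r^{p-\eps/2}$ and $\beta_\Gamma(2r)\ll_{p,\eps}\beta_\Gamma(r)$ for all $1\le r\le D$; in particular $\beta_\Gamma(D)\ll_{p,\eps}(D/r)^p\beta_\Gamma(r)$ for all such $r$.

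The heart of the argument is to show $R_p(x\leftrightarrow y)\ll_{p,\eps}L^p/\beta_\Gamma(L)$ for every pair $x,y$ with $L:=d(x,y)$; given this, regularity yields $L^p/\beta_\Gamma(L)\ll_{p,\eps}D^p/\beta_\Gamma(D)=D^p/N$, and taking the maximum over $x,y$ finishes the proof. To prove the bound on $R_p(x\leftrightarrow y)$ I would use the dual description of $p$-resistance: $R_p(u\leftrightarrow v)^{1/p}$ equals the minimum over unit flows $\theta$ from $u$ to $v$ of $\bigl(\sum_{e}|\theta(e)|^{p/(p-1)}\bigr)^{(p-1)/p}$, i.e.\ the minimal $\ell^{p'}$-norm $\|\theta\|_{p'}$ with $p'=p/(p-1)$; since $\|\cdot\|_{p'}$ obeys the triangle inequality, concatenating flows gives $R_p(u\leftrightarrow w)^{1/p}\le R_p(u\leftrightarrow v)^{1/p}+R_p(v\leftrightarrow w)^{1/p}$, and the $\ell^{p'}$-norm of a sum of flows is at most the sum of the norms. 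Assuming $L$ large (the bounded case being handled as above, crudely), I build a unit flow $\theta=\theta_1+\theta_2+\theta_3$ from $x$ to $y$ by: (i) letting $\theta_1$ be an optimal unit flow from $x$ to $\Gamma\setminus B_\Gamma(x,\lceil L/4\rceil)$, so $\|\theta_1\|_{p'}^{\,p}=R_p(x\leftrightarrow\Gamma\setminus B_\Gamma(x,\lceil L/4\rceil))\ll_{p,\eps}L^p/\beta_\Gamma(L/4)$ by Theorem~\ref{thm:main.unimodp.linear} (applicable since, by regularity, $\beta_\Gamma(\lceil L/4\rceil)\le\lceil L/4\rceil^{p-\eps/4}$ once $L$ is large); (ii) letting $\theta_3$ be, symmetrically, the reverse of an optimal unit flow from $y$ to $\Gamma\setminus B_\Gamma(y,\lceil L/4\rceil)$, with the same bound; and (iii) letting $\theta_2$ transport the exit distribution of $\theta_1$ (a measure on a sphere around $x$) to the entrance distribution of $\theta_3$ (a measure on a sphere around $y$) through the intermediate region. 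Then $R_p(x\leftrightarrow y)\ll_p L^p/\beta_\Gamma(L/4)+\|\theta_2\|_{p'}^{\,p}$, and using $\beta_\Gamma(L/4)\asymp_{p,\eps}\beta_\Gamma(L)$ it remains to show $\|\theta_2\|_{p'}^{\,p}\ll_{p,\eps}L^p/\beta_\Gamma(L)$.

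Estimating $\|\theta_2\|_{p'}$ --- the middle transport step --- is the main obstacle, and the only place where the hypothesis $D\ge N^{1/(p-\eps)}$ (equivalently, sub-$r^p$ growth at scale $D$) is genuinely used. Here I would invoke the structure theory once more: at scale $L$ the graph is, up to bounded multiplicative distortion of distances and volumes, a ball in the Cayley graph of a nilpotent group of polynomial growth of some degree $d\le p-\eps/2$, in which one transports between two measures sitting at opposite ends of a geodesic of length $\asymp L$ by ``thickening the geodesic'' with the $\beta_\Gamma(L)/L\asymp L^{d-1}$ transverse directions --- exactly the computation that produces the slab flow realising $R_p(0\leftrightarrow Le_1)\asymp L^{p-d}$ in $\Z^d$. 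Because $d<p$, this transport cost is $\ll_{p,\eps}L^p/\beta_\Gamma(L)$ and, crucially, it dominates rather than is dominated by the finer scales (equivalently, the dyadic sum $\sum_{j\le\log_2 L}\bigl(2^{jp}/\beta_\Gamma(2^j)\bigr)^{1/(p-1)}$ is top-heavy in the sub-$r^p$ regime), which is precisely why no logarithm appears, in contrast with Theorems~\ref{thm:main.unimodp} and \ref{thm:mainp}. An essentially equivalent route is to repeat the proof of Theorem~\ref{thm:main.unimodp.linear} almost verbatim, chaining the resistance estimate over the dyadic annuli $B_\Gamma(x,2^{j+1})\setminus B_\Gamma(x,2^{j})$ out to scale $L$ and then funnelling back into $y$, the finite-graph bookkeeping being routine once the growth regularity above is in hand.
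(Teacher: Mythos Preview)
Your proposal has a genuine gap at the step you yourself flag as ``the main obstacle'': the middle transport $\theta_2$. You never specify what the exit measure of $\theta_1$ (or entrance measure of $\theta_3$) actually is, and for an \emph{optimal} point-to-sphere flow there is no reason it should be close to uniform; in the worst case it could be a Dirac mass, and then bounding $\|\theta_2\|_{p'}$ amounts to bounding $R_p(a\leftrightarrow b)^{1/p}$ for two essentially arbitrary vertices at distance $\asymp L$ --- exactly the quantity you are trying to control. The appeal to ``nilpotent approximation'' and ``thickening the geodesic'' does not fix this: the structure theory gives only a rough quasi-isometry at scale $L$, which neither identifies the exit measures nor tells you they are compatible with a slab-type flow in the model group. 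There is also a mild circularity: in the paper the proof of Theorem~\ref{thm:main.unimodp.linear} invokes Theorem~\ref{thm:main.linearp} when $\diam(\Gamma)<4r$, so you cannot cite it as a black box here (you would need to call Proposition~\ref{prop:unimod.p.ub}\ref{item:main.p.ub.p-eps} directly and arrange $r\le\diam(\Gamma)/4$).

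The paper avoids all of this by working on the potential side rather than the flow side. The Benjamini--Kozma bound in its point-to-point form (Theorem~\ref{thm:bk}) already controls $R_p(u\leftrightarrow v)$ by two dyadic sums, one growing out from $u$ and one from $v$; there is no separate ``middle transport'' to construct. Feeding in the isoperimetric inequality $|\partial A|\gg_{\fq}|A|^{(q-1)/q}$ of Theorem~\ref{thm:bk.iso.orig} (with $|\Gamma|=\gamma^q$, so $q\le p-\eps$) gives, via Lemma~\ref{lem:j.ub.pq} and Proposition~\ref{prop:Rpleqq'}, a geometric sum whose largest term is $|\Gamma|^{(p-q)/q}=\gamma^p/|\Gamma|$, and the ratio is bounded away from $1$ precisely because $q\le p-\eps$. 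This is Proposition~\ref{prop:main.p.ub}\ref{item:main.p.ub.p-eps}. Your ``essentially equivalent route'' in the last sentence is pointing at the right dyadic sum, but the key observation you are missing is that Theorem~\ref{thm:bk} already does the ``funnelling back into $y$'' for you.
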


\subsection{Benjamini and Kozma's resistance conjecture}Taken together, the upper bounds of Theorems \ref{thm:main} and \ref{thm:main.linearp} completely resolve \cite[Conjecture 4.2]{bk} of Benjamini and Kozma. Indeed, the first part of the conjecture is the upper bound of Theorem \ref{thm:main} with a constant term in place of the $1/\deg(\Gamma)$ term, and so for graphs of large degree Theorem \ref{thm:main} is stronger than that part of the conjecture. The second part of the conjecture is the following.
\begin{corollary}\label{cor:bk.lin}
Let $(\Gamma_n)_{n=1}^\infty$ be a sequence of finite, connected, vertex-transitive graphs with $|\Gamma_n|\to\infty$, and suppose that $\diam(\Gamma_n)=o(|\Gamma_n|)$ as $n\to\infty$. Then $R_{\Gamma_n,2}=o(\diam(\Gamma_n))$ as $n\to\infty$.
\end{corollary}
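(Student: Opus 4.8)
The plan is to deduce the corollary by combining the general upper bound of Theorem~\ref{thm:main} with the logarithm-free bound of Theorem~\ref{thm:main.linearp} (taken with $p=2$), splitting the sequence according to how large $\diam(\Gamma_n)$ is compared with a fixed power of $|\Gamma_n|$. Write $V_n=|\Gamma_n|$, $D_n=\diam(\Gamma_n)$ and $d_n=\deg(\Gamma_n)$, so the hypothesis reads $D_n=o(V_n)$ and the goal is $R_{\Gamma_n,2}=o(D_n)$, i.e.\ $R_{\Gamma_n,2}/D_n\to0$. The reason one cannot simply invoke Theorem~\ref{thm:main} is that it only yields $R_{\Gamma_n,2}/D_n\ll \tfrac{1}{d_nD_n}+\tfrac{D_n\log V_n}{V_n}$, whose second term need not vanish under the sole hypothesis $D_n=o(V_n)$ --- for instance when $D_n$ is of order $V_n/\log V_n$ --- so the improvement of Theorem~\ref{thm:main.linearp} in the regime of fast diameter growth is essential.

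I would first record that $d_nD_n\to\infty$: if not, then along a subsequence $d_nD_n$ would be bounded, hence both $d_n$ and $D_n$ would be bounded, and then $V_n=\beta_{\Gamma_n}(D_n)\le(1+d_n)^{D_n}$ would be bounded too, contradicting $V_n\to\infty$. Now fix any $\eps\in(0,1)$, say $\eps=\tfrac12$, and partition the indices $n$ into the set where $D_n\ge V_n^{1/(2-\eps)}$ and the set where $D_n<V_n^{1/(2-\eps)}$. For $n$ in the first set, Theorem~\ref{thm:main.linearp} with $p=2$ gives $R_{\Gamma_n,2}\ll_\eps D_n^2/V_n$, so $R_{\Gamma_n,2}/D_n\ll_\eps D_n/V_n\to0$. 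For $n$ in the second set, Theorem~\ref{thm:main} gives
\[
\frac{R_{\Gamma_n,2}}{D_n}\ll\frac{1}{d_nD_n}+\frac{D_n\log V_n}{V_n}<\frac{1}{d_nD_n}+\frac{\log V_n}{V_n^{(1-\eps)/(2-\eps)}},
\]
and both terms on the right tend to $0$ (the first by the observation above, the second because the exponent $(1-\eps)/(2-\eps)$ is positive). Since every index lies in one of these two classes and $R_{\Gamma_n,2}/D_n\to0$ in each, the whole sequence converges to $0$.

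I do not anticipate a genuine obstacle: the corollary is essentially a bookkeeping combination of the two upper bounds already established. The only point requiring a little care is that the implied constant in Theorem~\ref{thm:main.linearp} depends on $\eps$, so one must fix $\eps$ once and for all before performing the split (rather than letting it vary with $n$); with $\eps$ fixed, all implied constants are uniform and the two subsequential limits combine immediately into the stated conclusion.
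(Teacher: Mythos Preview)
Your proof is correct and follows essentially the same strategy as the paper: establish $\deg(\Gamma_n)\diam(\Gamma_n)\to\infty$, then split according to whether $\diam(\Gamma_n)$ exceeds a fixed power of $|\Gamma_n|$, applying Theorem~\ref{thm:main.linearp} above the threshold and Theorem~\ref{thm:main} below it. The only cosmetic difference is the choice of threshold exponent (you use $|\Gamma_n|^{2/3}$ via $\eps=\tfrac12$, the paper uses $|\Gamma_n|^{3/4}$), which is immaterial.
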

\begin{proof}
First note that $\deg(\Gamma_n)^{\diam(\Gamma_n)}\ge|\Gamma_n|\to\infty$, so $\max\{\deg(\Gamma_n),\diam(\Gamma_n)\}\to\infty$. Since each of $\deg(\Gamma_n)$ and $\diam(\Gamma_n)$ is bounded below by $1$, this implies that
\begin{equation}\label{eq:deg.diam.to.infty}
\deg(\Gamma_n)\diam(\Gamma_n)\to\infty.
\end{equation}
If $\diam(\Gamma_n)\le|\Gamma|^{3/4}$ then Theorem \ref{thm:main} implies that
\[
\frac{R_{\Gamma_n,2}}{\diam(\Gamma_n)}\ll\frac{1}{\deg(\Gamma)\diam(\Gamma_n)}+\frac{\log(|\Gamma_n|)}{|\Gamma_n|^{1/2}},
\]
which converges to zero by \eqref{eq:deg.diam.to.infty}. On the other hand, if $\diam(\Gamma_n)\ge|\Gamma|^{3/4}$ then $R_{\Gamma_n,2}=o(\diam(\Gamma_n))$ by Theorem \ref{thm:main.linearp}.
\end{proof}

\section{Background on $p$-resistance}\label{sec:resist}
This section is a self-contained introduction to $p$-resistance. Some of this material can be found in Soardi \cite{soardi}, for example.

We start by presenting some basic definitions. Fix $p\in(1,\infty)$. Let $\Gamma$ be an oriented locally finite graph. Given a function $f:\Gamma\to\R$,
we define the \emph{$p$-energy} $\EE_p(f)$ of $f$ via
\[
\EE_p(f)=\frac{1}{2}\sum_{\{(x,y)\in\Gamma^2:x\sim y\}} |f(x)-f(y)|^p.
\]
The $p$-capacity between two disjoint subsets $U$ and $U'$ is thus the infimum of $\EE_p(f)$ over functions $f:\Gamma\to\R$ that equal $1$ on $U$ and $0$ on $U'$. Recall that the $p$-resistance is the inverse of the $p$-capacity. Note that 
if $U_0\subset U_1$ and $U_0'\subset U_1'$, we have \[R_p(U_1\leftrightarrow U'_1)\leq R_p(U_0\leftrightarrow U_0').\] 

We shall give two characterisations of the $p$-resistance that will be useful for our purposes.
 First, we present some basic notions. The \emph{$p$-gradient} of $f:\Gamma\to\R$ is a real-valued function $\nabla_p f$ defined on the oriented edges of the graph by 
\[
\nabla_p f(\bar{e}) =|f(y)-f(x)|^{p-2}(f(y)-f(x))
\]
for an oriented edge $\bar{e}=(x,y)$. When $p=2$ we write $\nabla f$ instead of $\nabla_2f$. Denote by $\langle\cdot,\cdot \rangle_V$ the inner product on $\ell^2(\Gamma)$, and by $\langle\cdot,\cdot  \rangle_E$ the inner product on $\ell^2(E(\Gamma))$.
Note that 
\begin{equation}\label{eq:nablap}
\langle \nabla_p f,\nabla f\rangle_E= \EE_p(f).
\end{equation}
The \emph{divergence} of a function $F$ defined on the set of oriented edges is a function $\di\; F:\Gamma\to\R$ defined via $\di\; F(x) = \sum_{y\sim x} F(x,y).$
We define the \emph{$p$-Laplacian} $\Delta_p f$ of $f$ by 
\[\Delta_p f =-\di (\nabla_p f).\]
Equivalently, for every $x\in\Gamma$ we have
\[\Delta_p f(x) = \sum_{\{(x,y)\in\Gamma^2:x\sim y\}} |f(x)-f(y)|^{p-2}(f(x)-f(y)).\]
We say that $f$ is \emph{$p$-harmonic} at $x$ if $\Delta_p f(x)=0$. Using the fact that $-\di$ and $\nabla$ are adjoint to one another, we have 
\begin{equation}\label{eq:adj}
\langle \nabla_p f,\nabla f\rangle_{E}=\langle \Delta_p f,f\rangle_{V}=\sum_{u\in U}\Delta_p f(u)
\end{equation}

The divergence satisfies the following well-known discrete version of Stokes's theorem.
\begin{prop}[Stokes's theorem]\label{prop:stokes}
Let $\Gamma$ be a locally finite connected graph and let $A\subset\Gamma$ be a finite subset. For every $e\in \partial^E A$, let $\bar{e}$ be the oriented edge starting inside $A$ and ending outside $A$. Then
\[
\sum_{a\in A} \di\; F(a) = \sum_{e\in \partial^E A} F(\bar{e}).
\]
\end{prop}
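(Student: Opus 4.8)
The plan is to expand the left-hand side straight from the definition $\di\;F(x)=\sum_{y\sim x}F(x,y)$ and then split the resulting double sum according to whether the second vertex lies inside or outside $A$. Because $A$ is finite and $\Gamma$ is locally finite, every sum appearing has only finitely many nonzero terms, so rearranging is harmless.

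Concretely, I would write
\[
\sum_{a\in A}\di\;F(a)=\sum_{a\in A}\sum_{y\sim a}F(a,y)=\sum_{\substack{a,y\in A\\ a\sim y}}F(a,y)\;+\;\sum_{\substack{a\in A,\;y\notin A\\ a\sim y}}F(a,y).
\]
For the first sum, the index set consists of ordered pairs of adjacent vertices both lying in $A$, so each unordered interior edge $\{u,v\}$ with $u,v\in A$ contributes exactly the two terms $F(u,v)$ and $F(v,u)$; recalling that $F$ is antisymmetric as a function on oriented edges (the convention implicit in pairing such $F$ with elements of $\ell^2(E(\Gamma))$, so that $F(v,u)=-F(u,v)$), these cancel in pairs and the first sum is $0$. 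For the second sum, the ordered pairs $(a,y)$ with $a\in A$, $y\notin A$ and $a\sim y$ are in bijection with $\partial^E A$: each such pair selects the edge $e=\{a,y\}\in\partial^E A$ equipped with its orientation $\bar e=(a,y)$ pointing out of $A$, and conversely every $e\in\partial^E A$ arises exactly once this way, its unique endpoint in $A$ being forced to be the first coordinate. Hence the second sum equals $\sum_{e\in\partial^E A}F(\bar e)$, which is the asserted identity.

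There is no genuine obstacle here; the argument is pure bookkeeping. The only two points that deserve a moment's care are, first, the orientation/antisymmetry convention for functions on oriented edges, which is precisely what makes the interior contributions telescope away, and second, verifying that the enumeration of boundary edges in the second sum is an honest bijection rather than a double count, which holds because a boundary edge has exactly one endpoint in $A$.
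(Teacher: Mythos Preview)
Your argument is correct. It differs from the paper's proof in presentation: the paper invokes the adjointness $\langle \di F,\,1_A\rangle_V=-\langle F,\,\nabla 1_A\rangle_E$ and then observes that $\nabla 1_A$ is supported on $\partial^E A$ with value $-1$ along each outward orientation $\bar e$, whereas you expand the double sum directly and cancel the interior contributions using antisymmetry. These are really the same computation viewed at two levels of abstraction---the adjointness identity is itself proved by the splitting you carry out---so your version is the more self-contained one, while the paper's is shorter because it has already recorded the adjoint relation. Your explicit remark that the antisymmetry convention is what kills the interior terms is a useful clarification that the paper leaves implicit.
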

\begin{proof}
We have
\[
\sum_{a\in A} \di\; F(a) = \langle \di\; F, 1_{A}\rangle_V,
\]
and since $\di$ is the adjoint of $-\nabla$ we have
\[
\langle \di\; F, 1_{A}\rangle_V= -\langle F, \nabla 1_{A}\rangle_E=\sum_{e\in \partial^E A} F(\bar{e}).
\]
\end{proof}
We deduce from Proposition \ref{prop:stokes} that for every $f:\Gamma\to\R$ we have
\begin{equation}\label{eq:stokes}
\sum_{a\in A}  \Delta_p f(a) =  \sum_{e\in \partial^E A} \nabla_p f(\bar{e})
\end{equation}

The following is a classical fact. 
\begin{prop}\label{prop:current}
Given two disjoint subsets $U$ and $U'$ of a locally finite graph $\Gamma$ such that every connected component of $\Gamma\setminus(U\cup U')$ that borders both $U$ and $U'$ is finite. For every $t>0$, there exists a unique minimizer of $\EE_p(f)$ over functions $f$ that equal $t$ on $U$ and $0$ on $U'$. Moreover, this function is $p$-harmonic on $\Gamma\setminus(U\cup U')$. \end{prop}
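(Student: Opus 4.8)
The plan is to treat existence and uniqueness separately, using convexity as the main tool throughout. For \emph{uniqueness}, I would observe that the functional $\EE_p$ is convex, and in fact strictly convex in a suitable sense: writing $\EE_p(f) = \frac12\sum_{x\sim y}|f(x)-f(y)|^p$, the map $t\mapsto|t|^p$ is strictly convex on $\R$ for $p\in(1,\infty)$. If $f_0$ and $f_1$ are two minimizers, consider $f_{1/2}=(f_0+f_1)/2$, which also equals $t$ on $U$ and $0$ on $U'$; by convexity $\EE_p(f_{1/2})\le\frac12(\EE_p(f_0)+\EE_p(f_1))$, and equality forces $f_0(x)-f_0(y)=f_1(x)-f_1(y)$ on every edge. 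Since the graph restricted to each relevant component is connected and $f_0=f_1$ on $U\cup U'$ (which meets every component that borders both), this propagates to give $f_0=f_1$ everywhere that matters; on components not bordering both $U$ and $U'$ the minimizer is locally constant anyway, so after fixing the natural normalization the minimizer is unique. Care is needed here about components of $\Gamma\setminus(U\cup U')$ that are infinite but border at most one of $U,U'$ — there one simply takes the function to be constant, which costs zero energy — so the statement is really about the minimizer being unique once we agree on these conventions.

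For \emph{existence}, the issue is that $\Gamma$ may be infinite, so I cannot simply invoke compactness. The plan is an exhaustion argument: let $\Lambda_1\subset\Lambda_2\subset\cdots$ be finite subsets exhausting $\Gamma$, and on each finite piece minimize $\EE_p$ over functions equal to $t$ on $U\cap\Lambda_n$, equal to $0$ on $U'\cap\Lambda_n$, and (say) equal to $0$ on $\partial\Lambda_n$. On a finite graph the feasible set is a closed convex subset of a finite-dimensional space and $\EE_p$ is continuous and coercive modulo constants, so a minimizer $f_n$ exists; by the convexity argument above it is unique. One then shows the sequence $(f_n)$ is bounded pointwise — using the maximum principle for $p$-harmonic functions, the values stay between $0$ and $t$ — and has uniformly bounded energy (bounded by the energy of any fixed finitely-supported competitor, which exists because each component bordering both $U$ and $U'$ is finite). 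Extract a pointwise-convergent subsequence by diagonalization; the limit $f$ satisfies the boundary conditions, and by Fatou-type lower semicontinuity of $\EE_p$ under pointwise convergence, $\EE_p(f)\le\liminf\EE_p(f_n)\le\ca_p(U,U')/$... i.e. $f$ achieves the infimum. The hypothesis that every component of $\Gamma\setminus(U\cup U')$ bordering both $U$ and $U'$ is finite is exactly what guarantees a finite-energy competitor and hence that the infimum is finite and attained.

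Finally, for the \emph{$p$-harmonicity} claim, I would use the standard first-variation argument: if $f$ minimizes $\EE_p$ subject to the boundary constraints and $x\in\Gamma\setminus(U\cup U')$, then perturbing $f$ to $f+s\,\mathbf{1}_{\{x\}}$ keeps the constraints satisfied, so $\frac{d}{ds}\EE_p(f+s\mathbf 1_{\{x\}})\big|_{s=0}=0$; computing this derivative gives exactly $\sum_{y\sim x}|f(x)-f(y)|^{p-2}(f(x)-f(y))=\Delta_p f(x)=0$, using that $t\mapsto|t|^p$ is differentiable with derivative $p|t|^{p-2}t$ (valid for $p>1$). I expect the main obstacle to be the existence part in the infinite-volume case — specifically, making the exhaustion argument clean: one must choose the boundary conditions on $\partial\Lambda_n$ correctly so that the $f_n$ remain comparable, establish the uniform energy and pointwise bounds rigorously, and justify passing to the limit via lower semicontinuity rather than any stronger mode of convergence. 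The convexity/uniqueness part and the first-variation part are routine by comparison.
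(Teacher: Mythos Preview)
Your proposal is correct, but it works considerably harder than necessary for the existence part; the paper's proof is much shorter because it exploits the hypothesis more directly.

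The key observation you are missing is that the problem \emph{decomposes over the connected components} of $\Gamma\setminus(U\cup U')$. On a component bordering only $U$ (respectively only $U'$, or neither) the minimizer is forced to be the constant $t$ (respectively $0$, or any constant), contributing zero energy; you already noted this. But then the only components that remain are those bordering both $U$ and $U'$, and \emph{by hypothesis each of these is finite}. So after this reduction the minimization takes place on a finite-dimensional affine space, where $\EE_p$ is a non-negative, strictly convex, smooth function, and existence and uniqueness of the minimizer are immediate. The first-variation computation for $p$-harmonicity is then exactly as you describe. This is precisely the paper's argument.

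Your exhaustion-and-diagonalization scheme would work and is the kind of argument one genuinely needs in situations without such a finiteness hypothesis (for example, when defining $\ca_p(u,\infty)$ via finitely supported functions, or on graphs where the relevant components are infinite but one still expects a minimizer). It is more robust in that sense. But here it is overkill: you use the finiteness hypothesis only to manufacture a finite-energy competitor, whereas it actually collapses the whole problem to finite dimensions. Your remark that ``the main obstacle [is] the existence part in the infinite-volume case'' dissolves once you make this reduction.
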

\begin{proof}
Since we can treat every connected components of $\Gamma\setminus(U\cup U')$ independently, we can assume without loss of generality that $\Gamma\setminus(U\cup U')$ is finite.
Note that $f\to \EE_p(f)$ is a non-negative, strictly convex and smooth function on the finite dimensional affine space of functions  that equal $t$ on $U$ and $0$ on $U'$. Therefore it reaches a unique minimum $f$ that is in particular a critical value of $\EE_p$ on this affine space. A direct calculation shows that the differential of $\EE_p$ at $f$ satisfies
\[d\EE_p(f) (h)=p\langle\Delta_p f,h \rangle_V,\] 
for every function $h$ that equals $0$ on $U\cup U'$. Therefore $f$ is $p$-harmonic on $\Gamma\setminus(U\cup U')$ as claimed.
\end{proof}

A function as in \cref{prop:current} is called a \emph{$p$-potential} $f$ from a subset $U$ to a subset $U'$. If $t=1$ then $f$ is said to be a \emph{unit} $p$-potential. Note that if $f$ is a unit $p$-potential from $U$ to $U'$, then $(1-f)$ is a unit potential from $U'$ to $U$.
Note that if $f$ is a unit $p$-potential from $U$ to $U'$, then $(1-f)$ is a unit potential from $U'$ to $U$. The \emph{$p$-current} of a potential through an oriented edge $\bar{e}$ is then defined to be $\nabla_p f(\bar{e})$, and the \emph{total $p$-current} $C_p(f)$ of $f$ is defined via
\[
C_p(f)=\sum_{e\in \partial^E U} \nabla_p f(\bar{e}),
\]
where edges are oriented outwards.
Note that by (\ref{eq:stokes}) we have $C_p(f)=\sum_{e\in \partial^E A} \nabla_p f(\bar{e})$ for every subset $A$ of vertices containing $U$ and disjoint from $U'$.
In the particular case where $U$ and $U'$ are single vertices $u$ and $v$, the total $p$-current $C_p(f)$ of a $p$-potential $f$ is simply 
$\Delta_p f(u)=-\Delta_p f(v).$

\begin{prop}\label{prop:currentResistance}
Given two disjoint subsets $U$ and $U'$ of a locally finite graph $\Gamma$ such that every connected component of $\Gamma\setminus(U\cup U')$ that borders both $U$ and $U'$ is finite, we have
\[R_p(U\leftrightarrow U')= \frac{1}{C_p(f)},\]
where $f$ is the unit $p$-potential from $U$ to $U'$.
In other words, the $p$-capacity coincides with the total $p$-current. 
\end{prop}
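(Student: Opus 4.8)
The plan is to reduce the claimed identity to the single energy identity $\EE_p(f)=C_p(f)$, and then to derive this directly from the formal identities \eqref{eq:nablap}, \eqref{eq:adj} and Stokes's theorem. First I would note that, by \cref{prop:current}, the unit $p$-potential $f$ from $U$ to $U'$ is the unique minimiser of $\EE_p$ among functions equal to $1$ on $U$ and to $0$ on $U'$; by the definition of the $p$-capacity this gives $\ca_p(U,U')=\EE_p(f)$, hence $R_p(U\leftrightarrow U')=1/\EE_p(f)$. So it remains to prove that $\EE_p(f)=C_p(f)$.

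For this I would run through the chain
\[
\EE_p(f)=\langle\nabla_p f,\nabla f\rangle_E=\langle\Delta_p f,f\rangle_V=\sum_{u\in U}\Delta_p f(u),
\]
where the first equality is \eqref{eq:nablap}, the second is the adjointness of $-\di$ and $\nabla$ recorded in \eqref{eq:adj}, and the third uses that $f\equiv 1$ on $U$, that $f\equiv 0$ on $U'$, and that $f$ is $p$-harmonic on $\Gamma\setminus(U\cup U')$ by \cref{prop:current}, so that in $\langle\Delta_p f,f\rangle_V=\sum_x\Delta_p f(x)f(x)$ only the vertices of $U$ survive. It then remains to identify $\sum_{u\in U}\Delta_p f(u)$ with $C_p(f)=\sum_{e\in\partial^E U}\nabla_p f(\bar e)$, which is precisely Stokes's theorem: applying \eqref{eq:stokes} with $A=U$ gives $\sum_{u\in U}\Delta_p f(u)=\sum_{e\in\partial^E U}\nabla_p f(\bar e)=C_p(f)$, which is the desired conclusion.

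The only genuine subtlety is that $U$ — or $\Gamma$ itself — may be infinite, so that the sums appearing above are a priori infinite and \cref{prop:stokes} is only stated for finite sets $A$. I would handle this by the same device used in the proof of \cref{prop:current}, decomposing according to the connected components of $\Gamma\setminus(U\cup U')$: a component bordering only one of $U$, $U'$ carries a constant potential and so contributes nothing to either $\EE_p(f)$ or $C_p(f)$; a component bordering both is finite by hypothesis, so the finite version of the identity applies to it; and the edges running directly between $U$ and $U'$ are dealt with termwise. Summing these (nonnegative) contributions then yields $\EE_p(f)=C_p(f)$, with both sides interpreted in $[0,\infty]$. I expect this bookkeeping, rather than any conceptual point, to be the part of the argument requiring the most care; everything else is a formal consequence of the potential-theoretic identities already in place.
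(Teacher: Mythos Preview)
Your argument is correct and follows essentially the same route as the paper: identify $R_p(U\leftrightarrow U')^{-1}$ with $\EE_p(f)$ via \cref{prop:current}, then chain \eqref{eq:nablap}, \eqref{eq:adj} and \eqref{eq:stokes} to obtain $\EE_p(f)=C_p(f)$. Your explicit treatment of the potentially infinite $U$ via decomposition into connected components is in fact more careful than the paper's sketch, which glosses over this point.
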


\begin{proof}
This is classical, but for the sake of completeness we sketch its proof.  First, one checks that since every connected component of $\Gamma\setminus(U\cup U')$ that borders both $U$ and $U'$ is finite, there exists a unique function $f$ that is $p$-harmonic outside $U\cup U'$ and equal to $1$ on $U$ and $0$ on $U'$. Moreover, $f$ realizes the infimum in the definition of $p$-capacity.
We deduce from \eqref{eq:adj} and \eqref{eq:stokes} that
\[
\sum_{u\in U}\Delta_p f(u)=\sum_{u\in U}\Delta_p f(u)=\sum_{e\in \partial^E U} \nabla_p f(\bar{e})=C_p(f). 
\]
So the conclusion follows from \eqref{eq:nablap}.
\end{proof}

\begin{prop}\label{prop:p-energy/p-resistance}
Let  $U$ and $U'$ be two disjoint subsets of a locally finite graph $\Gamma$ such that every connected component of $\Gamma\setminus(U\cup U')$ that borders both $U$ and $U'$ is finite, and let $f$ be a $p$-potential between $U$ and $U'$ taking the value $t>0$ on $U$. Suppose that $C_p(f)=1$. Then 
\[R_p(U\leftrightarrow U')=t^{p-1}=\EE_p(f)^{p-1}.\]
\end{prop}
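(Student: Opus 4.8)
The plan is to reduce everything to the \emph{unit} $p$-potential $g$ from $U$ to $U'$, whose existence and uniqueness come from \cref{prop:current} and which satisfies $R_p(U\leftrightarrow U')=1/C_p(g)$ by \cref{prop:currentResistance}. The hypothesis $C_p(f)=1$ will then pin down $C_p(g)$, and hence $R_p(U\leftrightarrow U')$, in terms of $t$.

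First I would record that $\nabla_p$ is positively homogeneous of degree $p-1$: for any $t>0$ and any $h:\Gamma\to\R$ one has $\nabla_p(th)=t^{p-1}\nabla_p h$, and therefore $\Delta_p(th)=t^{p-1}\Delta_p h$. (It is precisely here that the hypothesis $t>0$ is used: the identity fails for negative scalars unless $p$ is an even integer.) In particular $tg$ is $p$-harmonic on $\Gamma\setminus(U\cup U')$, equals $t$ on $U$ and $0$ on $U'$, so by the uniqueness clause of \cref{prop:current} we have $f=tg$. Applying the definition of total current and the homogeneity of $\nabla_p$ gives $C_p(f)=\sum_{e\in\partial^E U}\nabla_p f(\bar e)=t^{p-1}\sum_{e\in\partial^E U}\nabla_p g(\bar e)=t^{p-1}C_p(g)$. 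Since $C_p(f)=1$ this yields $C_p(g)=t^{1-p}$, whence $R_p(U\leftrightarrow U')=1/C_p(g)=t^{p-1}$.

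Next I would compute $\EE_p(f)$. By \eqref{eq:nablap} and \eqref{eq:adj} we have $\EE_p(f)=\langle\nabla_p f,\nabla f\rangle_E=\langle\Delta_p f,f\rangle_V=\sum_{x\in\Gamma}\Delta_p f(x)f(x)$. Because $f$ is $p$-harmonic on $\Gamma\setminus(U\cup U')$ and vanishes identically on $U'$, only vertices of $U$ contribute to this sum, and there $f\equiv t$; hence $\EE_p(f)=t\sum_{u\in U}\Delta_p f(u)$. By \eqref{eq:stokes} (taking $A=U$, using the remark following it to handle the boundary correctly) together with the definition of $C_p$, one has $\sum_{u\in U}\Delta_p f(u)=\sum_{e\in\partial^E U}\nabla_p f(\bar e)=C_p(f)=1$. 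Therefore $\EE_p(f)=t$, and raising to the power $p-1$ gives $\EE_p(f)^{p-1}=t^{p-1}=R_p(U\leftrightarrow U')$, as required.

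I do not expect any genuine obstacle here: the argument is essentially bookkeeping once one isolates the degree-$(p-1)$ homogeneity of $\nabla_p$. The only point needing a little care is the step $\langle\Delta_p f,f\rangle_V=t\sum_{u\in U}\Delta_p f(u)$, which combines $p$-harmonicity off $U\cup U'$, the prescribed boundary values, and the already-established Stokes identity \eqref{eq:stokes}; in particular one must not conflate the general case $t>0$ treated here with the unit case $t=1$ implicit in \eqref{eq:adj}.
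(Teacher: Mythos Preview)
Your proof is correct and follows essentially the same approach as the paper: both use the degree-$(p-1)$ homogeneity of $\nabla_p$ to obtain $R_p(U\leftrightarrow U')=t^{p-1}$ from \cref{prop:currentResistance}, and both compute $\EE_p(f)=tC_p(f)$ via \eqref{eq:nablap}, \eqref{eq:adj}, $p$-harmonicity off $U\cup U'$, and the identity $\sum_{u\in U}\Delta_p f(u)=C_p(f)$. The only cosmetic difference is that you route the first step explicitly through the unit potential $g$ and the uniqueness clause of \cref{prop:current}, whereas the paper simply asserts the scaling relation $R_p(U\leftrightarrow U')=t^{p-1}/C_p(f)$ directly.
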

\begin{proof}
Note that $\nabla_p$ is homogeneous of degree $p-1$. Hence, given a $p$-potential $f$ from $U$ to $U'$, \cref{prop:currentResistance} implies that $R_p(U\leftrightarrow U')=\frac{t^{p-1}}{C_p(f)}$. The fact that $C_p(f)=1$ therefore gives the first equality.
Using \eqref{eq:nablap} and \eqref{eq:adj} obtain
\[\EE_p(f)=\langle \nabla_p f,\nabla f\rangle_{E}=\langle \Delta_p f,f\rangle_{V}=\sum_{u\in U}(\Delta_p f(u))f(u) .\]
Since $f$ is a $p$-potential from $U$ to $U'$, and using that $\sum_{u\in U}\Delta_p f(u)=C_p(f)$, this gives
\[\EE_p(f)=tC_p(f).\]
Since $C_p(f)=1$, we conclude that $\EE_p(f)=t$, giving the second equality.
\end{proof}

\section{An isoperimetric bound on $p$-resistance}\label{sec:bk}
In this section we use an argument of Benjamini and Kozma \cite{bk} to bound certain $p$-resistances in terms of isoperimetric inequalities. Given a set $A$ in a graph $\Gamma$, we write $\partial^EA$ for the \emph{edge boundary} of $A$, defined to consist of all those edges having one endpoint in $A$ and one endpoint outside of $A$. Note that
\begin{equation}\label{eq:boundaries}
|\partial A|\le|\partial^EA|\le\deg(\Gamma)|\partial A|
\end{equation}
for an arbitrary finite subset $A$ of an arbitrary graph $\Gamma$. Throughout this section, given a subset $A$ of a graph $\Gamma$ we write
\[
j_{A,p}=\min\left\{\frac{|A|}{|\partial A|^{\frac{p}{p-1}}}+\frac{1}{|\partial A|^{\frac{1}{p-1}}},\frac{\deg(\Gamma)|A|}{|\partial^EA|^{\frac{p}{p-1}}}+\frac{1}{|\partial^EA|^{\frac{1}{p-1}}}\right\}.
\]

The first specific result we give is the following, which bounds the $p$-resistance between a pair of points in a finite graph.
\begin{theorem}[Benjamini--Kozma {\cite[Theorem 2.1]{bk}}]\label{thm:bk}
Let $\Gamma$ be a finite connected graph, and let $u,v\in\Gamma$ be such that each of $\Gamma\setminus\{u\}$ and $\Gamma\setminus\{v\}$ is connected. Then for every $p\in(1,\infty)$ we have
\[
\begin{split}
R_p(u\leftrightarrow v)^{\frac{1}{p-1}}\ll\frac{1}{\deg(u)^{\frac{1}{p-1}}}+\frac{1}{\deg(v)^{\frac{1}{p-1}}}+\sum_{n=1}^{\lfloor\log_2(|\Gamma|/\deg(u))\rfloor}\max_{\substack{A\subset\Gamma\\u\in A\\A\text{ connected}\\|\Gamma|/2^{n+1}<|A|\le|\Gamma|/2^n}}j_{A,p}\\
+\sum_{n=1}^{\lfloor\log_2(|\Gamma|/\deg(v))\rfloor}\max_{\substack{A\subset\Gamma\\v\in A\\A\text{ connected}\\|\Gamma|/2^{n+1}<|A|\le|\Gamma|/2^n}}j_{A,p}.
\end{split}
\]
\end{theorem}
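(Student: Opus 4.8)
The plan is to exploit the characterisation of $p$-resistance as $p$-energy of the unit $p$-potential (Proposition~\ref{prop:p-energy/p-resistance}), together with a dyadic decomposition of the level sets of a suitably chosen test function. First I would reduce to a single vertex: since $R_p(u\leftrightarrow v)\le R_p(u\leftrightarrow w)+R_p(w\leftrightarrow v)$ for any intermediate vertex $w$ (subadditivity of $p$-resistance in series, which follows from gluing potentials), it suffices to bound $R_p(u\leftrightarrow w)$ for a fixed reference vertex $w$ in terms of the sum over dyadic scales appearing on the right-hand side; the two halves of the asserted bound then correspond to $u$ and to $v$. Alternatively, and perhaps more cleanly, one works directly by constructing a single test function $f$ with $f(u)=1$, $f(v)=0$ and controlling $\EE_p(f)^{1/(p-1)}$ by the right-hand side.

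The core of the Benjamini--Kozma argument is as follows. Consider the unit $p$-current flowing from $u$, i.e. the $p$-potential $g$ from $u$ to $v$, normalised so that $C_p(g)=1$; by Proposition~\ref{prop:p-energy/p-resistance} we have $R_p(u\leftrightarrow v)^{1/(p-1)}=\EE_p(g)$. One estimates $\EE_p(g)$ by decomposing the vertex set according to the value of $g$. For each dyadic scale, one looks at the superlevel set $A_t=\{x:g(x)>t\}$; by the max-flow/min-cut heuristic, the current forces the potential to drop by a definite amount across the boundary $\partial^E A_t$, and the $p$-energy contributed near scale $t$ is controlled by a term of the shape $|\partial^E A_t|\cdot(\text{drop})^p$, which after optimising and using Hölder becomes exactly the quantity $j_{A,p}$ (the two terms inside the $\min$ in $j_{A,p}$ reflect that one may bound the boundary by either $|\partial A|$ or $|\partial^E A|$, via \eqref{eq:boundaries}). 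Since the sets $A_t$ can be taken to be connected (replace $A_t$ by the connected component containing $u$, which only decreases the potential drop needed) and one only needs $O(\log(|\Gamma|/\deg(u)))$ dyadic scales before $A_t$ shrinks down to $\{u\}$ itself — at which point the residual energy is the $1/\deg(u)^{1/(p-1)}$ term — the sum telescopes into the stated bound. Symmetrically one handles the part of the walk near $v$, splitting the potential range $[0,1]$ at $1/2$ and running the argument from each end.

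The main technical obstacle is making the dyadic energy estimate rigorous: one must show that if $g$ is the unit-current $p$-potential then for a dyadic range of potential values $[a,b]$ with $b\le 2a$ one has a clean lower bound on the potential drop across the relevant cutset in terms of $|\partial^E A|$, and then that the $p$-energy restricted to the ``annulus'' $\{a<g\le b\}$ is at least (drop)$^p$ divided by an appropriate power of $|\partial^E A|$ — this is a Hölder/Jensen computation comparing $\sum|f(x)-f(y)|^p$ over boundary edges to $(\sum|f(x)-f(y)|)^p$, combined with the Stokes identity \eqref{eq:stokes} which says the total current across $\partial^E A$ equals $C_p(g)=1$. The connectedness requirement on $\Gamma\setminus\{u\}$ and $\Gamma\setminus\{v\}$ is what guarantees these annular regions are genuinely connected down to the last scale, so that passing to the component containing $u$ (resp.\ $v$) does not destroy the boundary estimate. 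Once this single-scale inequality is in place, summing over the $\lfloor\log_2(|\Gamma|/\deg(u))\rfloor$ scales and bounding each summand by the maximum of $j_{A,p}$ over connected sets in that size window is routine, and the reference to \cite[Theorem 2.1]{bk} means I would in fact just cite this, reproducing the argument only to the extent needed to track the explicit form of $j_{A,p}$ and the role of $\deg(u),\deg(v)$.
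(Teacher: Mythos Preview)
Your proposal circles the right ingredients---the unit-current $p$-potential, the Stokes identity \eqref{eq:stokes} for current conservation across sublevel sets, a dyadic decomposition, the split at the median between $u$ and $v$---but the core mechanism you describe is reversed and would not produce an upper bound on $R_p$.

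You write that ``the $p$-energy restricted to the annulus $\{a<g\le b\}$ is at least $(\text{drop})^p$ divided by an appropriate power of $|\partial^E A|$''. That is a Nash--Williams-type \emph{lower} bound on energy, hence on resistance; it points the wrong way. More generally, bounding the energy of the \emph{actual} potential on annuli between its own level sets cannot give an upper bound on the total energy, since you have no a priori control inside the annuli. Your alternative of constructing an explicit test function fares no better: the sets over which $j_{A,p}$ is maximised are not specified in advance but are the sublevel sets of the potential itself, so there is nothing to plug in. The subadditivity $R_p(u\leftrightarrow v)\le R_p(u\leftrightarrow w)+R_p(w\leftrightarrow v)$ you invoke is also not obvious for $p\ne2$ and is not used.

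The paper does not estimate energy in annuli. It bounds the potential \emph{value} incrementally, via a median-edge pigeonhole (Proposition~\ref{prop:bk}). Order the vertices $x_1,\dots,x_k$ by increasing $f$, set $A_m=\{x_1,\dots,x_m\}$ (connected by the maximum principle), and use Stokes to see that the total current across $\partial^E A_m$ equals $C_p(f)$. Then at least half of those boundary edges carry current at most $2C_p(f)/|\partial^E A_m|$; together they meet at least $|\partial^E A_m|/(2\deg(\Gamma))$ vertices outside $A_m$, each of whose potential therefore exceeds $\theta(m)=f(x_m)$ by at most $(2C_p(f)/|\partial^E A_m|)^{1/(p-1)}$. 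This gives
\[
\theta\Bigl(m+\Bigl\lceil\tfrac{|\partial^E A_m|}{2\deg(\Gamma)}\Bigr\rceil\Bigr)\le\theta(m)+\Bigl(\tfrac{2C_p(f)}{|\partial^E A_m|}\Bigr)^{\frac{1}{p-1}},
\]
with a parallel inequality using $|\partial A_m|$. Iterating this over a dyadic range in which $|A_m|$ doubles takes about $\deg(\Gamma)|A|/|\partial^E A|$ steps (or a single step if the boundary is already that large), and the product of the two factors is precisely the two terms of $j_{A,p}$. Thus the dyadic decomposition is by the \emph{size} $|A_m|$, not by the value of $f$ as you suggest. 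The theorem then follows by applying this with $\lambda=|\Gamma|/2$ once from $u$ and once (with $1-f$) from $v$, and adding the two bounds on the value of $f$ at the median vertex.
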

With some specific applications in mind we have stated Theorem \ref{thm:bk} in a slightly stronger form than Benjamini and Kozma, but our proof is essentially identical to theirs. The same argument also gives the following bound in a possibly infinite graph.
\begin{theorem}\label{thm:bk.inf}
Let $\Gamma$ be a connected, locally finite graph. Let $u\in\Gamma$, and let $B$ be a finite connected proper subset of $\Gamma$ containing $u$. Then for every $p>1$ we have
\[
R_p(u\leftrightarrow\partial B)^{\frac{1}{p-1}}\ll\frac{1}{\deg(u)^{\frac{1}{p-1}}}+\sum_{n=0}^{\lfloor\log_2(|B|/\deg(u))\rfloor}\max_{\substack{A\subset B\\u\in A\\A\text{ connected}\\|B|/2^{n+1}<|A|\le|B|/2^n}}j_{A,p}.
\]
\end{theorem}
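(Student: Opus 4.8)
The plan is to imitate the proof of Theorem \ref{thm:bk} (the Benjamini--Kozma argument), which bounds a $p$-resistance by exhibiting a good test function built from a nested family of sets, one at each dyadic scale of volume. Concretely, I would construct a unit $p$-potential $f$ from $u$ to $\partial B$ by a ``layer-cake'' procedure: let $f=0$ outside $B$ and $f=1$ at $u$, and interpolate across a chain of connected sets $\{u\}=A_0\subset A_1\subset\cdots\subset A_N\subset B$ chosen so that $|A_n|$ roughly doubles at each step, i.e. $|B|/2^{n+1}<|A_n|\le|B|/2^n$ reading from the outside in. On the ``annulus'' $A_{n}\setminus A_{n-1}$ the function $f$ will be declared constant, with the value dropping by a carefully chosen increment $t_n$ as one passes from $A_{n-1}$ out to the complement of $A_n$, so that $|f(x)-f(y)|$ is nonzero only on edges of $\partial^E A_n$ (for some $n$), where it equals $t_n$. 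Then $\EE_p(f)=\tfrac12\sum_n t_n^p\,|\partial^E A_n|$ up to the boundary edges at $u$ itself, which contribute the $\deg(u)^{-1/(p-1)}$ term.

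The second ingredient is the relation between $\EE_p$ and $R_p$ from Section \ref{sec:resist}. By \cref{prop:p-energy/p-resistance}, if one normalises the potential so that the total current $C_p(f)=1$, then $R_p(u\leftrightarrow\partial B)=\EE_p(f)^{p-1}$; equivalently, $R_p(u\leftrightarrow\partial B)^{1/(p-1)}=\EE_p(f)$ for that normalisation, and for a general admissible $f$ one still has the upper bound $R_p(u\leftrightarrow\partial B)^{1/(p-1)}\le\EE_p(f)/C_p(f)$ after rescaling. So I would instead work directly with a \emph{unit} potential (value $1$ at $u$, $0$ on $\partial B$): then $\sum_n t_n=1$, the current is $C_p(f)=\sum_{e\in\partial^E\{u\}}\nabla_p f(\bar e)$ which by Stokes's theorem (\eqref{eq:stokes}) equals $t_n^{p-1}|\partial^E A_n|$ for each $n$ (this is the key rigidity: the current through every ``cut'' $\partial^E A_n$ is the same), and hence $R_p(u\leftrightarrow\partial B)=\EE_p(f)/C_p(f)$. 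The standard optimisation is to choose $t_n$ proportional to $(\text{something involving }|\partial^E A_n|)$ so that, after summing, one gets $R_p^{1/(p-1)}\ll \sum_n \big(|\partial^E A_n|^{-1/(p-1)}\big)$, and then replacing each set $A_n$ by the worst connected set of that dyadic size gives the stated maximum; the extra $|A|/|\partial^E A|^{p/(p-1)}$ piece inside $j_{A,p}$ comes from handling the transition near $u$ more carefully (or from a slightly different, more robust choice of the $t_n$ that is less sensitive to a single small annulus), and the $|\partial A|$ versus $|\partial^E A|$ dichotomy in the definition of $j_{A,p}$ is harvested from \eqref{eq:boundaries}.

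I expect the main obstacle to be bookkeeping rather than conceptual: verifying that a chain of connected sets with the required dyadic volume spacing actually exists inside $B$ (one grows $A_n$ greedily from $A_{n-1}$ by adding boundary vertices, using connectedness of $B$, and stops as soon as the volume threshold is crossed; since adding one vertex changes the volume by at most ... well, one needs the jumps not to overshoot a full factor of $2$, which is where one may need to add vertices one at a time and argue that this is harmless), and then pushing the energy/current estimate through cleanly enough to land exactly on $j_{A,p}$ rather than some cruder quantity. Since the excerpt explicitly says ``the same argument also gives'' Theorem \ref{thm:bk.inf}, I would in fact present the proof of Theorem \ref{thm:bk} in full and then note the one-line modification: replace the two vertices $u,v$ by the single vertex $u$ and the ``far'' set by $\partial B$, drop the symmetric second sum, and observe that finiteness of $B$ makes every intermediate set finite so that all the potential-theoretic statements of Section \ref{sec:resist} apply verbatim. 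The only genuinely new point to check is that the sum can start at $n=0$ (rather than $n=1$): this is legitimate because $B$ is a \emph{proper} subset of $\Gamma$ containing $u$, so the boundary $\partial B$ is nonempty and the $n=0$ scale — sets of volume between $|B|/2$ and $|B|$ — is meaningful, whereas in the finite two-point case the outermost scale is split between the two endpoints.
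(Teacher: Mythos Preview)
Your plan has a genuine conceptual gap: the ``layer-cake'' construction you describe gives the wrong direction of inequality. If you build an admissible function $f$ with $f(u)=1$ and $f\equiv 0$ on $\partial B$, the definition of capacity yields $\ca_p(u,\partial B)\le\EE_p(f)$, hence $R_p(u\leftrightarrow\partial B)\ge 1/\EE_p(f)$ --- a \emph{lower} bound on resistance, not the required upper bound. Relatedly, your claimed ``key rigidity'' that the current through every cut $\partial^E A_n$ equals $C_p(f)$ is false for a piecewise-constant test function: Stokes's identity \eqref{eq:stokes} says $\sum_{e\in\partial^E A}\nabla_p f(\bar e)=\sum_{a\in A}\Delta_p f(a)$, and for a function that jumps across the shells $\Delta_p f$ is supported precisely on those shells, so the flux through successive cuts differs. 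The identity $R_p=\EE_p(f)^{p-1}$ in \cref{prop:p-energy/p-resistance} likewise applies only to the genuine $p$-potential, not to an arbitrary admissible $f$.

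The paper's argument (Proposition \ref{prop:bk}) goes the other way round: one takes $f$ to be the \emph{actual} $p$-harmonic unit potential from $\partial B$ to $u$, whose existence and uniqueness are guaranteed by \cref{prop:current}, and analyses its sublevel sets $A_m=\{x_1,\dots,x_m\}$ obtained by sorting vertices by increasing value of $f$. For \emph{this} $f$ the maximum principle makes each $A_m$ connected, and $p$-harmonicity away from $u$ and $\partial B$ makes the total current through each $\partial^E A_m$ equal to $C_p(f)$. Hence at least half the boundary edges carry current at most $2C_p(f)/|\partial^E A_m|$, forcing $f$ to rise by at most $(2C_p(f)/|\partial^E A_m|)^{1/(p-1)}$ over a block of roughly $|\partial^E A_m|/\deg(\Gamma)$ new vertices; iterating across each dyadic range produces exactly the $j_{A,p}$ term. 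The deduction of Theorem \ref{thm:bk.inf} is then the one-line specialisation you anticipated: restrict to the finite induced subgraph on $B\cup\partial B$ and apply Proposition \ref{prop:bk} with $U'=\partial B$ and $\lambda=|B|$, noting that the $(\lfloor\lambda\rfloor+1)$-st vertex already lies in $\partial B$ so has potential $1$.
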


\begin{remark*}
Which term of $j_{A,p}$ achieves the minimum depends on the context: the strongest relations between the two terms that one can obtain from \eqref{eq:boundaries} are
\[
\begin{split}
\frac{1}{\deg(\Gamma)}\left(\frac{\deg(\Gamma)|A|}{|\partial^EA|^{\frac{p}{p-1}}}+\frac{1}{|\partial^EA|^{\frac{1}{p-1}}}\right)\le\left(\frac{|A|}{|\partial A|^{\frac{p}{p-1}}}+\frac{1}{|\partial A|^{\frac{1}{p-1}}}\right)
\qquad\qquad\qquad\qquad\qquad\\
\le\deg(\Gamma)^{\frac{1}{p-1}}\left(\frac{\deg(\Gamma)|A|}{|\partial^EA|^{\frac{p}{p-1}}}+\frac{1}{|\partial^EA|^{\frac{1}{p-1}}}\right),
\end{split}
\]
both of which are sharp. Indeed, see Remark \ref{rem:vertex} for a specific example of a graph in which the vertex term is stronger by this amount, and Remark \ref{rem:edge} for an example in which the edge term is stronger by this amount.
\end{remark*}

We will deduce Theorems \ref{thm:bk} and \ref{thm:bk.inf} from the following result.
\begin{prop}\label{prop:bk}
Let $\Gamma$ be a finite connected graph. Let $u\in\Gamma$, and let $U'\subset\Gamma$ be a non-empty subset such that $\Gamma\setminus U'$ is connected and contains $u$. Let $\lambda>0$, and suppose that $|\Gamma\setminus U'|\ge\lfloor\lambda\rfloor$. Let $f$ be the unit $p$-potential from $U'$ to $u$. Let $u'\in\partial(\Gamma\setminus U')$, and label the elements of $(\Gamma\setminus U')\cup\{u'\}$ as $x_1,\ldots,x_k$ in such a way that $f(x_m)$ is non-decreasing in $m$. Then
\[
f(x_{\lfloor\lambda\rfloor+1})\ll C_p(f)^{\frac{1}{p-1}}\left(\frac{1}{\deg(u)^{\frac{1}{p-1}}}+\sum_{n=0}^{\lfloor\log_2(\lambda/\deg(u))\rfloor}\max_{\substack{A\subset\Gamma\setminus U'\\u\in A\\A\text{ connected}\\\lambda/2^{n+1}<|A|\le\lambda/2^n}}j_{A,p}\right).
\]
\end{prop}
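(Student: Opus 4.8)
The plan is to follow the classical Nash-Williams–type argument that Benjamini and Kozma use: bound the value $f(x_{\lfloor\lambda\rfloor+1})$ by summing, over a dyadic decomposition of the superlevel sets of $f$, the energy needed to cross each ``annulus'', and then express each such contribution in terms of an isoperimetric profile. Concretely, for a threshold $t\in(0,1)$ let $A_t=\{x\in\Gamma\setminus U':f(x)<t\}$; since $f$ is the unit $p$-potential from $U'$ to $u$ with $f(u)=0$ and $f=1$ on $U'$, the set $A_t$ is a subset of $\Gamma\setminus U'$ containing $u$, and one checks (using that $f$ is $p$-harmonic off $\{u\}\cup U'$ and hence satisfies a maximum principle) that $A_t$ may be taken connected, or at least that its component containing $u$ is the relevant object. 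The total current $C_p(f)$ flows across $\partial^E A_t$ for every such $t$, so by Stokes's theorem \eqref{eq:stokes} and the definition of the $p$-gradient, $C_p(f)=\sum_{e\in\partial^E A_t}\nabla_p f(\bar e)$, which after applying Hölder's inequality in the form $\sum|a_e|\le (\#\text{edges})^{1/q}(\sum|a_e|^p)^{1/p}$ with $q=p/(p-1)$ gives a lower bound on the energy $\sum_{e\in\partial^E A_t}|\nabla f(\bar e)|^p$ crossing the boundary in terms of $C_p(f)^p/|\partial^E A_t|^{p/(p-1)}$ — and symmetrically a lower bound involving $|\partial A_t|$ via \eqref{eq:boundaries}, which is where both terms of $j_{A,p}$ come from.

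The next step is to integrate this ``cross-section'' energy estimate across the range of $f$-values between $0$ and $f(x_{\lfloor\lambda\rfloor+1})$. Order the vertices $x_1,\dots,x_k$ so that $f(x_m)$ is non-decreasing; then $A_{f(x_{m+1})}$ has size roughly $m$ (it contains $x_1,\dots,x_m$ and possibly more vertices with equal $f$-value, which one handles by a standard tie-breaking/continuity remark). Writing $t_m=f(x_m)$, the potential difference $f(x_{\lfloor\lambda\rfloor+1})=\sum_{m\le\lfloor\lambda\rfloor}(t_{m+1}-t_m)$, and by the series form of the Nash-Williams inequality (a one-dimensional Cauchy–Schwarz/Hölder comparison between the consecutive layers, using that the flux $C_p(f)$ is constant) each increment $t_{m+1}-t_m$ is controlled by $C_p(f)^{1/(p-1)}$ times $\big(|\partial A_{t_{m+1}}|^{-1/(p-1)}\big)$-type quantities; summing these and grouping the indices $m$ into dyadic blocks $\lambda/2^{n+1}<m\le\lambda/2^n$ converts the sum into $\sum_n \max_{A}j_{A,p}$ over connected $A\ni u$ with $|A|$ in the corresponding dyadic range. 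The term $\deg(u)^{-1/(p-1)}$ appears from the very first block: the smallest superlevel sets have size comparable to $\deg(u)$ (a potential starting at a single vertex $u$ cannot be made to have a boundary of fewer than $\deg(u)$ edges without trivial energy), and one treats the bottom $\asymp\deg(u)$ vertices separately, which accounts for why the dyadic sum starts at $n=0$ and why $\lambda/\deg(u)$ is the number of dyadic scales.

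The main obstacle I anticipate is not any single estimate but the bookkeeping needed to make the ``$A_t$ may be taken connected'' step legitimate and to handle ties in the values $f(x_m)$ cleanly — i.e. making sure that when several vertices share an $f$-value the relevant boundary set $\partial A$ used in $j_{A,p}$ really is the boundary of a connected set containing $u$ of the claimed size. This is exactly the point where the hypothesis that $\Gamma\setminus U'$ is connected and contains $u$, together with $p$-harmonicity of $f$ off $\{u\}\cup U'$, is used: the maximum principle forces the superlevel (or sublevel) sets to be ``monotone'' along the potential and prevents spurious components from appearing. Once that is in place, everything else is the standard chaining/Hölder computation, and the factor $C_p(f)^{1/(p-1)}$ is forced out by the homogeneity of $\nabla_p$ (degree $p-1$), exactly as in Proposition \ref{prop:p-energy/p-resistance}. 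Theorems \ref{thm:bk} and \ref{thm:bk.inf} will then follow by specialising $U'$ to (a sphere around) $v$ or to $\partial B$, combining the two endpoints via the triangle-type inequality for resistances, and recalling $R_p=t^{p-1}/C_p(f)$ with $C_p(f)$ normalised.
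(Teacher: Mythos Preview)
Your approach has a genuine gap: the layer-by-layer H\"older/Nash--Williams argument you describe yields a bound strictly weaker than the one claimed. The increment bound $t_{m+1}-t_m\le(C_p(f)/|\partial A_m|)^{1/(p-1)}$ is valid (all currents across $\partial^E A_m$ are non-negative because $A_m$ is a sublevel set, so some edge carries at most the average), but summing it over a dyadic block $\lambda/2^{n+1}<m\le\lambda/2^n$ gives a contribution of order $(\lambda/2^n)\cdot t_n^{-1/(p-1)}$, where $t_n=\min_m|\partial A_m|$ over the block. The quantity $j_{A,p}$ in the proposition has leading term $|A|/|\partial A|^{p/(p-1)}\asymp(\lambda/2^n)\cdot t_n^{-p/(p-1)}$, smaller by a full factor of $t_n$. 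Concretely, take $p=2$ and a $\Z^3$-type profile $|\partial A|\asymp|A|^{2/3}$: your sum is $\sum_{m\le\lambda} m^{-2/3}\asymp\lambda^{1/3}$, whereas $\sum_n j_{A,2}\asymp\sum_n(\lambda/2^n)^{-1/3}\ll1$. Your estimate diverges exactly where the proposition (and transience of $\Z^3$) requires it to stay bounded.

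What is missing is the absorption step at the heart of the Benjamini--Kozma argument. One does not merely locate \emph{one} boundary edge with at-most-average current; by pigeonhole at least \emph{half} of the edges in $\partial^E A_m$ carry $p$-current at most $2C_p(f)/|\partial^E A_m|$, and these edges meet at least $|\partial^E A_m|/(2\deg(\Gamma))$ (respectively $|\partial A_m|/2$) distinct vertices outside $A_m$, each with $f$-value at most $\theta(m)+(2C_p(f)/|\partial^E A_m|)^{1/(p-1)}$. Hence a single increment advances the index from $m$ to roughly $m+|\partial A_m|/2$, so crossing a dyadic block costs only $O((\lambda/2^n)/t_n)$ increments rather than $\lambda/2^{n+1}$; that extra $t_n$ in the denominator is precisely what converts $|A|/|\partial A|^{1/(p-1)}$ into the required $|A|/|\partial A|^{p/(p-1)}$. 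The $\deg(u)^{-1/(p-1)}$ term arises by running this same pigeonhole once at $m=1$. The connectedness/tie-breaking issue you flag as the main obstacle is disposed of in one line via the maximum principle; it is not where the difficulty lies.
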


\begin{proof}We follow Benjamini and Kozma \cite[Theorem 2.1]{bk}.
Write $A_m=\{x_1,\ldots,x_m\}$ and $\theta(m)=f(x_m)$ for $m=1,\ldots,\lfloor\lambda\rfloor$. Note that by the maximum principle we may assume that each set $A_m$ is connected. 

It follows from Proposition \ref{prop:stokes} that the $p$-currents along all of the edges in $\partial^EA_m$ sum to $C_p(f)$, and hence that at least half of those edges carry a $p$-current of at most $2C_p(f)/|\partial^EA_m|$ each. Between them, these edges must meet at least $|\partial^EA_m|/2\deg(\Gamma)$ vertices outside $A_m$, so
\begin{equation}\label{eq:increment.edge}
\theta\left(m+\left\lceil\frac{|\partial^EA_m|}{2\deg(\Gamma)}\right\rceil\right)\le\theta(m)+\left(\frac{2C_p(f)}{|\partial^EA_m|}\right)^{\frac{1}{p-1}}.
\end{equation}
Similarly, if we select, for each $y\in\partial A_m$, an edge joining $y$ to $A_m$, then at least half of these edges carry a current of at most $2C_p(f)/|\partial A_m|$ each, and so
\begin{equation}\label{eq:increment.vertex}
\theta\left(m+\left\lceil\frac{|\partial A_m|}{2}\right\rceil\right)\le\theta(m)+\left(\frac{2C_p(f)}{|\partial A_m|}\right)^{\frac{1}{p-1}}.
\end{equation}
Using the same argument one final time, at least half of the edges containing $u$ carry a current of at most $2C_p(f)/\deg(u)$ each, and so
\begin{equation}\label{eq:increment.base}
\theta\left(1+\left\lceil\frac{\deg(u)}{2}\right\rceil\right)\le\left(\frac{2C_p(f)}{\deg(u)}\right)^{\frac{1}{p-1}}.
\end{equation}
Set
\[
s_n=\min\{|\partial^EA|:A\subset X;u\in A;A\text{ connected};\lfloor\lambda/2^{n+1}\rfloor<|A|\le\lfloor\lambda/2^n\rfloor\}
\]
and
\[
t_n=\min\{|\partial A|:A\subset X;u\in A;A\text{ connected};\lfloor\lambda/2^{n+1}\rfloor<|A|\le\lfloor\lambda/2^n\rfloor\}
\]
for each $n=1,\ldots,\lfloor\log_2\lambda\rfloor$. It then follows from \eqref{eq:increment.edge} that
\[
\theta\left(\left\lfloor\frac{\lambda}{2^n}\right\rfloor+1\right)\le\theta\left(\left\lfloor\frac{\lambda}{2^{n+1}}\right\rfloor+1\right)+\left(\frac{2}{s_n}\right)^{\frac{1}{p-1}}\left(\frac{2\deg(\Gamma)\lceil\lambda/2^{n+1}\rceil}{s_n}+1\right)C_p(f)^{\frac{1}{p-1}},
\]
and from \eqref{eq:increment.vertex} that
\[
\theta\left(\left\lfloor\frac{\lambda}{2^n}\right\rfloor+1\right)\le\theta\left(\left\lfloor\frac{\lambda}{2^{n+1}}\right\rfloor+1\right)+\left(\frac{2}{t_n}\right)^{\frac{1}{p-1}}\left(\frac{2\lceil\lambda/2^{n+1}\rceil}{t_n}+1\right)C_p(f)^{\frac{1}{p-1}},
\]
for each $n=0,\ldots,\lfloor\log_2\lambda\rfloor$. The desired result follows from these inequalities and \eqref{eq:increment.base}.
\end{proof}

\begin{proof}[Proof of Theorem \ref{thm:bk.inf}]
We may restrict attention to the subgraph $\Gamma'$ of $\Gamma$ induced by $B\cup\partial B$, noting that $\Gamma'$ is finite. The theorem then follows from applying Proposition \ref{prop:bk} in $\Gamma'$ with $U'=\partial B$ and $\lambda=|B|$, noting that $f(x_{\lambda+1})=1$.
\end{proof}

\begin{proof}[Proof of Theorem \ref{thm:bk}]
First apply Proposition \ref{prop:bk} with $\lambda=|\Gamma|/2$ to conclude that
\begin{equation}\label{eq:bk.pf.1}
f(x_{\lfloor|\Gamma|/2\rfloor+1})\ll C_p(f)^{\frac{1}{p-1}}\left(\frac{1}{\deg(u)^{\frac{1}{p-1}}}+\sum_{n=1}^{\lfloor\log_2(|\Gamma|/\deg(u))\rfloor}\max_{\substack{A\subset X\\u\in A\\A\text{ connected}\\|\Gamma|/2^{n+1}<|A|\le|\Gamma|/2^n}}j_{A,p}\right).
\end{equation}
Next, note that if we interchange the roles of $u$ and $v$ then this amounts to replacing $f$ with $1-f$, and so applying Proposition \ref{prop:bk} with $\lambda=|\Gamma|/2$ again, together with the fact that $C_p(f)=C_p(1-f)$, implies that
\begin{equation}\label{eq:bk.pf.2}
(1-f(x_{\lfloor|\Gamma|/2\rfloor+1}))\ll C_p(f)^{\frac{1}{p-1}}\left(\frac{1}{\deg(v)^{\frac{1}{p-1}}}+\sum_{n=1}^{\lfloor\log_2(|\Gamma|/\deg(v))\rfloor}\max_{\substack{A\subset X\\v\in A\\A\text{ connected}\\|\Gamma|/2^{n+1}<|A|\le|\Gamma|/2^n}}j_{A,p}\right).
\end{equation}
The theorem then follows from combining \eqref{eq:bk.pf.1} and \eqref{eq:bk.pf.2}.
\end{proof}

\section{Isoperimetric inequalities from lower bounds on growth}\label{sec:growth->iso}
Coulhon and Saloff-Coste \cite{csc} famously showed that lower bounds on the sizes of balls in Cayley graphs lead to isoperimetric inequalities. In this section we show how their result extends to all locally finite vertex-transitive graphs.

Throughout the section we assume familiarity with our earlier paper \cite{tt.trof}. In particular, given a locally finite vertex-transitive graph $\Gamma$, the group $\Aut(\Gamma)$ of automorphisms of $\Gamma$ is a locally compact group with respect to the topology of pointwise convergence, and every closed subgroup $\Aut(\Gamma)$ is also a locally compact group in which vertex stabilisers are compact and open. Moreover, an arbitrary closed subgroup $G<\Aut(\Gamma)$ admits a \emph{Haar measure} $\mu$, the properties of which include that
\begin{enumerate}[label=(\arabic*)]
\item$\mu(V)<\infty$ if $V$ is compact,
\item$\mu(U)>0$ if $U$ is open and nonempty,
\item$\mu(gA)=\mu(A)$ for every Borel set $A\subset G$ and every $g\in G$, and
\item\label{item:Haar.unique} if $\mu'$ is another Haar measure on $G$ then there exists $\lambda>0$ such that $\mu'=\lambda\cdot\mu$.
\end{enumerate}
See \cite[\S15]{hew-ross} for a detailed introduction to Haar measures.

Given a locally compact group $G$ with Haar measure $\mu$, we define the space $L^1(G)$ with respect to $\mu$. Note that since a right translate of a Haar measure is again a Haar measure, by property \ref{item:Haar.unique} there exists a homomorphism $\Delta_G:G\to\R^+$, called the \emph{modular function}, such that
\[
\mu(Ag)=\Delta_G(g^{-1})\mu(A)
\]
for every Borel set $A$. This in turn implies that we may define an action of $G$ on $L^1(G)$ via $gf(x)=f(xg)$. Note that property \ref{item:Haar.unique} implies that neither $L^1(G)$ nor $\Delta_G$ depend on the choice of Haar measure $\mu$.

The following proposition generalises the Coulhon--Saloff-Coste result to vertex-transitive graphs.
\begin{prop}\label{prop:iso-growth}
Let $\Gamma$ be a locally finite vertex-transitive graph, and define $\phi_\Gamma:\N\to\N$ by setting $\phi_\Gamma(m)$ equal to the minimum $r\in\N$ such that $\beta_\Gamma(r)\ge m$. Then for every finite subset $A\subset\Gamma$ with $|A|\le|\Gamma|/2$ we have
\begin{equation}\label{eq:iso-growth}
|\partial A|\ge\frac{|A|}{12\phi_\Gamma(2|A|)}.
\end{equation}
\end{prop}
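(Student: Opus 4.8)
The plan is to follow the classical Coulhon--Saloff-Coste strategy, which in the Cayley graph case proceeds by convolving the indicator $1_A$ with a carefully chosen probability measure supported on a ball and estimating the resulting $\ell^1$-displacement; here we must replay this argument in the locally compact group $G = \Aut(\Gamma)$ (or a suitable closed transitive subgroup) acting on $\Gamma$, using the Haar measure $\mu$ and the modular function $\Delta_G$ introduced above. First I would fix a vertex $o\in\Gamma$, let $K = \Stab_G(o)$ be its (compact, open) stabiliser, and use the standard dictionary between functions on $\Gamma$ and right-$K$-invariant functions on $G$: normalising $\mu$ so that $\mu(K)=1$, one has $\mu(\{g\in G: g\cdot o \in B\}) = |B|$ for any finite $B\subset\Gamma$, and more generally $|A| = \mu(\pi^{-1}(A))$ where $\pi(g)=g\cdot o$. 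The key point is that because vertex stabilisers are compact, the "ball of radius $r$" in $G$, namely $B_G(r) = \{g : d(g\cdot o, o)\le r\}$, is a compact symmetric set with $\mu(B_G(r)) = \beta_\Gamma(r)$, so I can set $r = \phi_\Gamma(2|A|)$ and work with $B = B_G(r)$, which satisfies $\mu(B) \ge 2|A|$.

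The heart of the argument is the bound $\|1_A - g\cdot 1_A\|_{L^1} \le 2\, d(g\cdot o, o)\cdot|\partial A|$ for $g\in G$ (each unit step of a geodesic from $o$ to $g\cdot o$ moves $1_A$ across the boundary by at most $|\partial^E A|$, and one converts to the vertex boundary, or one argues directly that translating by a generator changes $1_A$ in $\ell^1$ by at most $2|\partial A|$ — this is where the factor involving $\partial A$ rather than $\partial^E A$ enters, and where I would be slightly careful). Averaging this over $g\in B$ against normalised Haar measure $\mu|_B/\mu(B)$ gives
\[
\frac{1}{\mu(B)}\int_B \|1_A - g\cdot 1_A\|_{L^1}\,\mathrm{d}\mu(g)\le 2r\,|\partial A|.
\]
On the other hand, since $\mu(B)\ge 2|A|$ and $A$ has measure $|A|$, a simple "pigeonhole / overlap" estimate — essentially $\int_B \|1_A - g\cdot 1_A\|_{L^1}\,\mathrm{d}\mu(g) \ge \mu(B)\cdot|A| - (\text{mass of }1_A * \check{1}_B\text{ that stays inside})$, bounded below using $\mu(B)\ge 2|A|$ — forces the left-hand integral to be at least of order $|A|$. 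Combining the two inequalities yields $|A|\ll r\,|\partial A|$, i.e. $|\partial A|\gg |A|/\phi_\Gamma(2|A|)$, and tracking the constants gives the stated $1/12$.

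The main obstacle, and the only place where the non-Cayley setting genuinely bites, is the non-unimodularity of $G$: the modular function $\Delta_G$ appears because the action $g\cdot f(x) = f(xg)$ on $L^1(G)$ is built from \emph{right} translation, and $\|g\cdot f\|_{L^1} = \Delta_G(g^{-1})\|f\|_{L^1}$ rather than being isometric. I would handle this by restricting all averages to the \emph{symmetric} compact set $B_G(r) = B_G(r)^{-1}$: on a symmetric set one can pair $g$ with $g^{-1}$, and since $\Delta_G(g)\Delta_G(g^{-1}) = 1$, the modular distortions cancel in aggregate, or alternatively one notes that $\mu(B_G(r)) = \mu(B_G(r)^{-1})$ exactly because $B_G(r)$ is symmetric, so the two normalising quantities agree and the displacement estimate goes through unchanged. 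The remaining details — that $1_A$ lifted to $G$ is genuinely in $L^1(G)$, that the geodesic argument respects right-$K$-invariance, and that the overlap estimate is uniform in $A$ — are routine once this unimodularity issue is dealt with, and I expect them to contribute only the explicit constant $12$.
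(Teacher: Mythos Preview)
Your overall framework matches the paper's: lift $A$ to the compact open set $G_{e\to A}\subset G$, use the Haar-measure dictionary $\mu(G_{e\to A})=|A|$ and $\mu(\partial_S G_{e\to A})=|\partial A|$, and then run a Coulhon--Saloff-Coste averaging argument in the locally compact group. The displacement/overlap structure you describe is also essentially what the paper does (via the operator $M(f)(x)=\E_{g\in S^r}f(xg)$ and the pointwise bound $M(1_A)\le\tfrac12$).

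The genuine gap is your handling of non-unimodularity. The claim that ``on a symmetric set one can pair $g$ with $g^{-1}$, and since $\Delta_G(g)\Delta_G(g^{-1})=1$, the modular distortions cancel in aggregate'' does not go through. The modular function enters multiplicatively in each step of the telescoping bound: for $g=s_1\cdots s_r$ one has
\[
\|f-gf\|_1\le\sum_{i=0}^{r-1}\Delta_G(s_{r-i+1}\cdots s_r)\,\|f-s_{r-i}f\|_1,
\]
and the factors $\Delta_G(\cdot)$ can be exponentially large in $r$. Averaging over a symmetric ball does not undo this: integration against left Haar measure is \emph{not} invariant under $g\mapsto g^{-1}$ in the non-unimodular case (the substitution $g\mapsto g^{-1}$ introduces a Jacobian $\Delta_G(g)^{-1}$), so the ``pairing'' actually reinserts the modular function rather than cancelling it. Likewise, your one-step bound $\|1_A-s\cdot 1_A\|_1\le 2|\partial A|$ is false as stated: one only has $\mu(A\vartriangle As)\le(1+\Delta_G(s^{-1}))\,\mu(\partial_S A)$.

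The paper's fix is a dichotomy that you are missing. Either some generator $s\in S$ has $\Delta_G(s)\ge 1+\tfrac{\log 2}{r}$, in which case $\mu(\partial_S A)\ge\mu(As^{-1}\setminus A)\ge(\Delta_G(s)-1)\mu(A)\ge\tfrac{\log 2}{r}\mu(A)$ and one wins immediately without any averaging; or every $s\in S$ satisfies $\Delta_G(s)\le 1+\tfrac{\log 2}{r}$, whence $\Delta_G(g)\le 2$ for all $g\in S^r$, and the telescoping and averaging arguments go through with the modular factors uniformly bounded by $2$ (this is where the constant $12$ instead of $4$ comes from). Once you insert this case split, the rest of your sketch is correct.
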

\begin{remark}A locally compact group $G$ is called \emph{unimodular} if $\Delta_G\equiv1$, in which case $\mu(gA)=\mu(Ag)=\mu(A)$ for every Borel set $A\subset G$ and every $g\in G$. If the graph $\Gamma$ in Proposition \ref{prop:iso-growth} admits a closed, vertex-transitive unimodular group of automorphisms then the constant $12$ in \eqref{eq:iso-growth} can be replaced by $2$ \cite[Lemma 10.46]{ly-per}.
\end{remark}

\begin{remark}\label{rem:vertex}Proposition \ref{prop:iso-growth} trivially implies an edge isoperimetric inequality by \eqref{eq:boundaries}. In general this cannot be improved. For example, if $G=(\Z/n\Z)^d\times(\Z/k\Z)$ and
\[
S=\Big(\{-1,0,1\}^d\times\{0\}\Big)\cup\Big(\{0\}^d\times(\Z/k\Z)\Big)
\]
then Proposition \ref{prop:iso-growth} and \eqref{eq:boundaries} give bounds of the form $|\partial^EA|\gg k^{1/d}|A|^{1-1/d}$, with equality (up to multiplicative constants) achieved by sets of the form $([-r,r]\cap\Z)^d\times(\Z/k\Z)$. In particular, if using using an isoperimetric inequality coming from Proposition \ref{prop:iso-growth} to obtain a bound on resistance from Theorem \ref{thm:bk}, it is always better to use the vertex isoperimetric inequality than the edge isoperimetric inequality.
\end{remark}

We start by proving an analogue of Proposition \ref{prop:iso-growth} for locally compact groups. Given a group $G$ with a symmetric generating set $S$, we write $\partial_SA$ for the external vertex boundary of a set $A\subset G$ in the Cayley graph $\Cay(G,S)$. Given $g\in G$, we also write $|g|_S$ for the distance of $g$ from the identity in $\Cay(G,S)$.

\begin{prop}\label{prop:iso-growth.lc}
Let $G$ be a  locally compact group with a left Haar measure $\mu$ and a precompact symmetric open generating set $S$ containing the identity, and define $\phi_S:[0,\infty)\to\N$ by setting $\phi_S(\xi)$ equal to the minimum $r\in\N$ such that $\mu(S^r)\ge\xi$. Then for every precompact open set $A\subset G$ with $\mu(A)\le\mu(G)/2$ we have
\[
\mu(\partial_SA)\ge\frac{\mu(A)}{12\phi_S(2\mu(A))}.
\]
\end{prop}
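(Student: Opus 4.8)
The plan is to adapt the Coulhon--Saloff-Coste averaging argument to the locally compact setting. First I would fix a precompact open set $A\subset G$ with $\mu(A)\le\mu(G)/2$, write $r=\phi_S(2\mu(A))$, and aim to exhibit a single element $g\in S^r$ such that the symmetric-difference-type quantity $\mu(A\setminus Ag)$ (or $\mu(Ag\setminus A)$) is at least a definite fraction of $\mu(A)$; once such a $g$ is found, one telescopes along a geodesic $g=s_1\cdots s_r$ with each $s_i\in S$ to get $\mu(A\triangle As_1\cdots s_i)$ increasing, so some single generator step $s\in S$ contributes at least $\frac{1}{r}$ of the displacement, and finally one checks that $A\triangle As\subset \partial_S A\cdot S$ up to a bounded multiplicative loss, turning a bound on the measure of the symmetric difference into a bound on $\mu(\partial_S A)$. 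The constant $12$ should emerge from keeping track of these bounded losses (a factor from ``at least half'', a factor from passing between $A\triangle As$ and $\partial_S A$, and a factor of $2$ or $3$ somewhere in the covering estimate).

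The key step — and the main obstacle — is producing the good translate $g$, and this is exactly where one must be careful about unimodularity. The heuristic is: if $\mu(A\setminus Ag)$ were small for \emph{every} $g\in S^r$, then $A$ would be nearly invariant under right translation by all of $S^r$, so $AS^r$ would have measure only slightly larger than $\mu(A)$; but by the definition of $r=\phi_S(2\mu(A))$ we have $\mu(S^r)\ge 2\mu(A)$, and one wants to conclude $\mu(AS^r)\gg\mu(A)\cdot(\text{something like }\mu(S^r)/\mu(A))$, forcing a contradiction with $\mu(A)\le\mu(G)/2$. Making this rigorous is the content of the argument: one integrates $\mu(A\setminus Ag)$ over $g\in S^r$ against Haar measure and uses Fubini together with the identity $\int_G \mathbf 1_{A}(x)\mathbf 1_{G\setminus Ag}(x)\,\diff\mu(x) = \int\cdots$, then relates $\int_{S^r}\mu(A\cap Ag)\,\diff\mu(g)$ to $\mu(A)$ via an inequality of the form $\int_{S^r}\mu(A\cap Ag)\,\diff\mu(g)\le\mu(A)\cdot\sup_x\mu(\{g\in S^r: x\in Ag\})$ — and here the modular function intervenes, because $\{g: x\in Ag\}=\{g:xg^{-1}\in A\}$ has measure $\int \mathbf 1_A(xg^{-1})\,\diff\mu(g)$, which is $\Delta_G$-twisted rather than simply $\mu(A)$. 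In the non-unimodular case this twisting is exactly why one gets the worse constant $12$ instead of $4$; one bounds the modular function on the precompact set $S^r$ crudely, or — more cleanly — one runs the averaging with the \emph{left} instead of the \emph{right} action chosen so the twist works in one's favour, at the cost of a factor of at most $3$.

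Concretely I would structure it as follows. \textbf{Step 1:} Reduce to finding $g\in S^r$ with $\mu(Ag\setminus A)\ge\tfrac14\mu(A)$ (say), via the telescoping-along-a-geodesic argument and the elementary fact that $Ag\setminus A\subset (\partial_S A)S^{\,r'}$-type containments can be replaced, for a single generator $s$, by $As\setminus A\subset\partial_S A$, giving $\mu(\partial_S A)\ge \tfrac{1}{r}\cdot\tfrac14\mu(A)\cdot(\text{loss})$. \textbf{Step 2:} Prove Step 1 by contradiction: assume $\mu(Ag\setminus A)<\tfrac14\mu(A)$ for all $g\in S^r$; then $\mu(Ag\cap A)>\tfrac34\mu(A)$, and $\mu(A)\mu(S^r)=\int_{S^r}\mu(A)\,\diff\mu(g)$, while $\int_{S^r}\mu(Ag\cap A)\,\diff\mu(g)$ can be bounded above — using Fubini and a change of variables that absorbs $\Delta_G$ over the precompact set $S^r$ — by a constant multiple of $\mu(A)\mu(A\cup AS^r)$ or directly by estimating $\mu(AS^r)\ge\mu(S^r)\ge 2\mu(A)$ from below while the near-invariance forces $\mu(AS^r)\le(1+o(1))\mu(A)$; calibrating the $\tfrac14$ and tracking the $\Delta_G$-losses yields the contradiction and pins the constant at $12$. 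The arithmetic of which fractions combine to $12$ is routine and I would not grind through it here; the conceptual heart is the averaging-and-Fubini estimate in Step 2 together with the careful handling of the modular function, and this is the step most likely to need genuine care.
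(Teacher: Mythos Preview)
Your overall architecture (Coulhon--Saloff-Coste averaging over $S^r$, then telescoping along a word to pass to a single generator) matches the paper's. The gap is in how you handle the modular function. You write that one can ``bound the modular function on the precompact set $S^r$ crudely'' or absorb it by switching the side of the action ``at the cost of a factor of at most $3$''. Neither works: $\sup_{g\in S^r}\Delta_G(g)$ is finite but depends on $G$, $S$, and $r$, and there is no universal bound. Switching left and right translation does not help either, since the modular twist reappears on the other side. With no control on $\Delta_G$ your telescoping step collapses: writing $g=s_1\cdots s_r$ and bounding $\mu(Ag\setminus A)$ by a sum over steps, each term picks up a factor $\Delta_G(s_1\cdots s_{i-1})^{\pm1}$, and summing gives $\mu(\partial_SA)\cdot\sum_i\Delta_G(\cdots)$ rather than $r\cdot\mu(\partial_SA)$.

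The paper's missing idea is a dichotomy that you do not have. Either there exists $s\in S$ with $\Delta_G(s)\ge 1+\frac{\log 2}{r}$, in which case one wins in one line from
\[
\mu(\partial_SA)\ge\mu(As^{-1}\setminus A)\ge(\Delta_G(s)-1)\mu(A)\ge\frac{\log 2}{r}\,\mu(A);
\]
or else $\Delta_G(s)\le 1+\frac{\log 2}{r}$ for every $s\in S$, which forces $\Delta_G(g)\le\bigl(1+\frac{\log 2}{r}\bigr)^r\le 2$ for every $g\in S^r$, and now the telescoping goes through with a uniform factor of $2$. This case split is precisely what converts an a priori unbounded modular-function loss into the universal constant $12$. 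The averaging step itself is also cleaner than your Fubini computation: with the action $gf(x)=f(xg)$ one has $M(1_A)(x)=\mu(S^r\cap x^{-1}A)/\mu(S^r)\le\mu(A)/\mu(S^r)\le\tfrac12$ for every $x$ directly from left-invariance of $\mu$, giving $\|1_A-M(1_A)\|_1\ge\tfrac12\mu(A)$ without any $\Delta_G$-twist at that stage.
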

\begin{proof}
Let $r\in \N$, and suppose that $A$ is a measurable subset such that $\mu(A)\le\mu(S^r)/2$. We need to show that
\begin{equation}\label{eq:modularCase}
\mu(\partial_SA)\geq \frac{\mu(A)}{12r}.
\end{equation}

Assume first that there exists $s\in S$ such $\Delta_G(s)\geq 1+\frac{\log 2}{r}$. Then for any measurable subset $A\subset G$ with finite measure, we have
\[
\mu(\partial_SA)\ge\mu(As^{-1}\setminus A)\geq \mu(As^{-1})-\mu(A)=(\Delta_G(s)-1)\mu(A)\geq \frac{(\log 2)\mu(A)}{r}.
\]
In particular this implies (\ref{eq:modularCase}), as required.

Now assume that $\Delta_G(s)\leq 1+\frac{\log 2}{r}$ for every $s\in S$. This implies that for every $g\in S^r$ we have
\begin{equation}\label{eq:mod.log2}
\Delta_G(g)\leq \left(1+\frac{\log 2}{r}\right)^r\leq 2.
\end{equation}
We define a linear operator $M:L^1(G)\to L^1(G)$ via $M(f)(x)=\E_{g\in S^r}\,f(xg)$. Clearly, if $1_A$ is the indicator function of a measurable subset $A$ of measure at most $\mu(S^r)/2$, then $M(1_A)(x)\leq 1/2$ for all $x\in G$. We deduce that 
\begin{equation}\label{eq:Mf}
\|1_A-M(1_A)\|_1\geq \frac{\mu(A)}{2}.
\end{equation}
On the other hand, for all $g=s_1\ldots s_r\in S^r$, and all $f\in L^{1}(G)$, we deduce by the triangle inequality that
\[
\|f-gf\|_1\leq  \sum_{i=0}^{r-1}\|s_{r-i+1}\ldots s_rf-s_{r-i}\ldots s_rf\|_1= \sum_{i=0}^{r-1}\Delta_G(s_{r-i+1}\ldots s_r)\|f-s_{r-i}f\|_1.
\]
By \eqref{eq:mod.log2}, this implies that
\[\|f-gf\|_1\leq 2r\sup_{s\in S}\|f-sf\|_1,\]
which by the triangle inequality in turn yields
\[\|f-M(f)\|_1\leq  2r\sup_{s\in S}\|f-sf\|_1.\]
Applying this to $f=1_A$, and combining it with (\ref{eq:Mf}), we obtain
\[\sup_{s\in S}\mu(A\vartriangle As)\geq \frac{\mu(A)}{4r}.\]
Since
\begin{align*}
\mu(A\vartriangle As)&=\mu(A\setminus As)+\mu(As\setminus A)\\
   &=\Delta_G(s)\mu(As^{-1}\setminus A)+\mu(As\setminus A)\\
   &\leq 3\sup_{s'\in S}\mu(As'\setminus A)\\
   &\leq 3\mu(\partial A)
\end{align*}
for all $s\in S$, this gives \eqref{eq:modularCase}, as required.
\end{proof}

We now move on to the proof of Proposition \ref{prop:iso-growth}. For the remainder of this section, $\Gamma$ always denotes a locally finite vertex-transitive graph, $e\in\Gamma$ is some distinguished vertex, $G<\Aut(\Gamma)$ is a closed, vertex-transitive subgroup, and $S=\{g\in G:d(g(e),e)\le1\}$. By \cite[Lemma 4.8]{tt.trof}, the set $S$ is a compact open generating set for $G$ containing the identity. By transitivity of $G$ we may pick, for each $x\in\Gamma$, an automorphism $g_x\in G$ such that $g_x(e)=x$. Write $G_e$ for the stabiliser of $e$ in $G$, and given an arbitrary subset $X\subset\Gamma$, write $G_{e\to X}=\{g\in G:g(e)\in X\}$, noting that
\begin{equation}\label{eq:Ug_x}
G_{e\to X}=\bigcup_{x\in X}g_xG_e.
\end{equation}
The stabiliser $G_e$ is open by definition, and compact by \cite[Lemma 4.4]{tt.trof}, so \eqref{eq:Ug_x} means that $G_{e\to X}$ is compact and open whenever $X$ is finite. Normalising the Haar measure $\mu$ on $G$ so that $\mu(G_e)=1$, \eqref{eq:Ug_x} also means that
\begin{equation}\label{eq:pullback.mu}
\mu(G_{e\to X})=|X|.
\end{equation}
Writing $X^+=\{y:d(y,X)\le1\}$ for the neighbourhood of $X$, for every $g\in G$ we have
\begin{align*}
g\in G_{e\to X}S&\iff\text{there exists $q\in G_{e\to X}$ such that $d(q^{-1}g(e),e)\le1$}\\
    &\iff\text{there exists $q\in G_{e\to X}$ such that $d(g(e),q(e))\le1$}\\
    &\iff g(e)\in X^+\\
    &\iff g\in G_{e\to X^+},
\end{align*}
and so
\begin{equation}\label{eq:mu.boundary}
\begin{array}{rcl}\mu(\partial_SG_{e\to X})&=&\mu(G_{e\to\partial X})\\
   &=&|\partial X|
\end{array}
\end{equation}
by \eqref{eq:pullback.mu}.

\begin{proof}[Proof of Proposition \ref{prop:iso-growth}]
Normalising the Haar measure so that  $\mu(G_e)=1$, it follows from \cite[Lemma 4.8]{tt.trof} that the function $\phi_S$ defined in \cref{prop:iso-growth.lc} satisfies
\begin{equation}\label{eq:phi.S.Gamma}
\phi_S(m)=\phi_\Gamma(m)
\end{equation}
for every $m\in\N$. Given a finite set $A\subset\Gamma$ with $|A|\le|\Gamma|/2$, we have $G_{e\to A}$ compact and open as noted above and $\mu(G_{e\to A})\le\mu(G)/2$ by \eqref{eq:pullback.mu}, and hence
\begin{align*}
|\partial A|&=\mu(\partial_SG_{e\to A})&\text{(by \eqref{eq:mu.boundary})}\phantom{,}\\
    &\ge\frac{\mu(G_{e\to A})}{12\phi_S(2\mu(G_{e\to A}))}&\text{(by \cref{prop:iso-growth.lc})}\phantom{,}\\
    &=\frac{|A|}{12\phi_\Gamma(2|A|)}&\text{(by \eqref{eq:phi.S.Gamma} and \eqref{eq:pullback.mu})},
\end{align*}
as required.
\end{proof}

\section{Growth bounds}\label{sec:growth}

In light of Proposition \ref{prop:iso-growth} and the results of Section \ref{sec:bk}, in principle one can bound resistances in a vertex-transitive graph by proving lower bounds on the sizes of balls in that graph. In this section we prove the following results of this type.

\begin{prop}\label{prop:growth.lb}
Let $q\ge1$, let $r\in\N$, and suppose that $\Gamma$ is a locally finite vertex-transitive graph such that $\beta_\Gamma(r)\ge r^q$. Then for every $n\le r^{\{q\}}$ we have $\beta_\Gamma(n)\gg_\fq n^{\lfloor q\rfloor+1}$, whilst for every $n$ with $r^{\{q\}}\le n\le r$ we have $\beta_\Gamma(n)\gg_\fq r^{\{q\}}n^{\lfloor q\rfloor}$.
\end{prop}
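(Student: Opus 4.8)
\textbf{Proof proposal for Proposition \ref{prop:growth.lb}.}

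The plan is to reduce this to the known growth-regularity results for vertex-transitive graphs that are cited in the excerpt as Theorems \ref{thm:tt} and \ref{thm:tt.rel} (which establish, roughly, that a vertex-transitive graph for which some ball has polynomial size of exponent roughly $q$ has balls of exactly integer-exponent polynomial growth below that scale, with good control of the constants). First I would fix notation by writing $q = \fq + \{q\}$ and noting that the hypothesis $\beta_\Gamma(r)\ge r^q$ is a statement that the ball of radius $r$ is ``at least $q$-dimensional''. The key structural input is that whenever $\beta_\Gamma(r)$ is at least polynomial of exponent $q$, then on every scale $n \le r$ the ball $B_\Gamma(n)$ contains a ``sub-ball'' structure that is genuinely $(\fq+1)$-dimensional up to scale $r^{\{q\}}$, and that grows at the slower rate $r^{\{q\}} n^{\fq}$ (i.e. a product of a $(\fq+1)$-dimensional piece that has saturated, times a $\fq$-dimensional free piece) in the window $r^{\{q\}}\le n\le r$. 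This dichotomy in the two regimes for $n$ is exactly the shape of the conclusion, so the real content is extracting it from the cited theorems.

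The steps I would carry out, in order: (1) State precisely the form of Theorems \ref{thm:tt}/\ref{thm:tt.rel} I need — namely that there are constants depending only on $\fq$ and an integer $d \le \fq+1$ such that $\beta_\Gamma(n) \asymp_{\fq} n^{d}$ for $n$ in a suitable range, together with the ``relative'' version comparing $\beta_\Gamma(n)$ at two different scales. (2) Use the hypothesis $\beta_\Gamma(r)\ge r^q$ together with a standard doubling/pigeonhole argument (of the type $\beta_\Gamma(2n) \le C\beta_\Gamma(n)$ forces polynomial growth) to pin down the effective dimension: on the bottom range it must be at least $\fq+1$, since if it were $\le \fq$ throughout $[1,r]$ we would get $\beta_\Gamma(r) \ll_{\fq} r^{\fq} < r^q$, a contradiction. (3) Identify the crossover scale as $\sim r^{\{q\}}$: below it the $(\fq+1)$-dimensional behavior $\beta_\Gamma(n)\gg_{\fq} n^{\fq+1}$ holds, and at $n = r^{\{q\}}$ this gives $\beta_\Gamma(r^{\{q\}}) \gg_{\fq} r^{\{q\}(\fq+1)}$; then in the upper range $[r^{\{q\}}, r]$ the growth is at worst $\fq$-dimensional relative to that scale, giving $\beta_\Gamma(n) \gg_{\fq} \beta_\Gamma(r^{\{q\}}) \cdot (n/r^{\{q\}})^{\fq} \asymp r^{\{q\}} n^{\fq}$ — wait, I should double-check the arithmetic here: $r^{\{q\}(\fq+1)} \cdot (n/r^{\{q\}})^{\fq} = r^{\{q\}} n^{\fq}$, which is exactly the claimed bound, and at $n=r$ it recovers $r^{\{q\}} r^{\fq} = r^q$, consistent with the hypothesis. (4) Assemble the two cases, being careful that the implied constants depend only on $\fq$ (so that the case $q$ an integer, $\{q\}=0$, degenerates correctly: then $r^{\{q\}}=1$ and the first range is empty, leaving $\beta_\Gamma(n)\gg_{\fq} n^{\fq} = n^q$ on all of $[1,r]$).

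The main obstacle I anticipate is Step (2)–(3): making the ``effective dimension is exactly $\fq+1$ below the crossover, and drops to $\fq$ above it'' rigorous with constants depending only on $\fq$. The honest difficulty is that a priori the growth function could behave irregularly, and one needs the full strength of the Breuillard–Green–Tao-based structure theory (via \cite{tt,ttLie,tt.trof}) to know that the ball is genuinely modeled by a product of a torus-like factor and a lattice-like factor, which is what forces the clean two-regime behavior and the precise crossover at $r^{\{q\}}$. In particular, locating the crossover at $r^{\{q\}}$ rather than some other power of $r$ is the delicate point, and I expect it to follow from the relative growth comparison of Theorem \ref{thm:tt.rel} applied with the two scales $1$ (or $r^{\{q\}}$) and $r$, combined with the constraint that the exponents are integers summing appropriately to give total ``dimension'' $q$. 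Everything else — the doubling estimates, the interpolation between scales, the bookkeeping of $\fq$-dependent constants — should be routine once this structural skeleton is in place.
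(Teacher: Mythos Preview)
Your overall strategy --- split into the two regimes $n\le r^{\{q\}}$ and $r^{\{q\}}\le n\le r$, and use the growth-regularity input of Theorem~\ref{thm:tt} --- is correct and matches the paper. However, the logical flow of your Step~(3) runs in the wrong direction, and this is a genuine gap.

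Theorem~\ref{thm:tt} is a \emph{forward upper bound}: from $\beta_\Gamma(n)\le n^{d+1}/C$ it deduces $\beta_\Gamma(m)\le C(m/n)^d\beta_\Gamma(n)$ for $m\ge n$. It therefore cannot be used to push a lower bound \emph{forward} from scale $r^{\{q\}}$ to scale $n>r^{\{q\}}$, which is what you attempt when you write ``the growth is at worst $\fq$-dimensional relative to that scale, giving $\beta_\Gamma(n)\gg_\fq\beta_\Gamma(r^{\{q\}})\cdot(n/r^{\{q\}})^{\fq}$''. (Note also that ``at worst $\fq$-dimensional'' would be an upper bound, not a lower one.) The only anchor for a lower bound is the hypothesis at scale $r$, and the argument must run \emph{backward} from there by contraposition. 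Concretely: for $n$ in either regime, suppose the claimed lower bound fails; check that this forces $\beta_\Gamma(n)\le n^{\fq+1}/C$, so Theorem~\ref{thm:tt} with $d=\fq$ gives $\beta_\Gamma(r)\le C(r/n)^{\fq}\beta_\Gamma(n)$; substituting the assumed failure yields $\beta_\Gamma(r)<r^q$, contradicting the hypothesis. This is exactly the paper's argument (it proves the parallel Proposition~\ref{prop:growth.lb.rel} this way and then remarks that the proof of Proposition~\ref{prop:growth.lb} is identical with Theorem~\ref{thm:tt} in place of Theorem~\ref{thm:tt.rel}).

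Two further remarks. First, the argument is much lighter than you anticipate: there is no need to introduce an ``effective dimension'' or to invoke any structure theory beyond what is already packaged inside Theorem~\ref{thm:tt}; once that theorem is available the proof is a two-line contradiction in each regime. Second, for this proposition you want Theorem~\ref{thm:tt} (the absolute version) rather than Theorem~\ref{thm:tt.rel}; the latter carries a factor of $\beta_\Gamma(1)$ and is the input for the companion result Proposition~\ref{prop:growth.lb.rel}.
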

Proposition \ref{prop:growth.lb} combines with Proposition \ref{prop:iso-growth} to imply a refined version of an isoperimetric inqeuality conjectured by Benjamini and Kozma, which we state below as Theorem \ref{thm:bk.iso}. It also answers a question that was posed to us by Jonathan Hermon. Proposition \ref{prop:growth.lb} is easily seen to be optimal by considering the Cayley graph of $\Z^\fq\times\Z/\lceil r^{\{q\}}\rceil\Z$ with respect to the generating set $\{-1,0,1\}^{\fq+1}$.

For the next result, and throughout the rest of this section, we define a function
\[
\begin{array}{ccccl}
h&:&\N_0&\to&\N_0\vspace{6pt}\\
   &&     d    &\mapsto&\begin{cases}
                                       0 & \text{if $d=0$}\\
                                       1+\textstyle\frac{1}{2}d(d-1) &\text{otherwise.}
                                       \end{cases}
\end{array}
\]
Thus, $h(d)$ is the maximum homogeneous dimension of a nilpotent Lie group of dimension $d$.

\begin{prop}\label{prop:growth.lb.rel.all}
Let $q\ge1$, let $r\in\N$, and suppose that $\Gamma$ is a locally finite vertex-transitive graph such that $\beta_\Gamma(r)\ge r^q\beta_\Gamma(1)$. Then, setting
\[
b(q)=\begin{cases}q &\text{if $q\in[0,3]\cup\{4\}$},\\
    \max\{d\in\N:h(d-1)\le q\}&\text{otherwise},
    \end{cases}
\]
we have
\begin{equation}\label{eq:lb.rel.bgt}
\beta_\Gamma(n)\gg_\fq n^{b(q)}\beta_\Gamma(1)
\end{equation}
for every $n\in\N$ with $n<r$. If $q\in[0,3]$ then we even have the stronger statement for every $n\le r^{\{q\}}$ we have $\beta_\Gamma(n)\gg n^{\lfloor q\rfloor+1}\beta_\Gamma(1)$, whilst for every $n$ with $r^{\{q\}}\le n\le r$ we have $\beta_\Gamma(n)\gg r^{\{q\}}n^{\lfloor q\rfloor}\beta_\Gamma(1)$. If $q>3$ then, defining
\[
\begin{array}{ccccl}
h^*&:&(0,\infty)&\to&\N_0\vspace{6pt}\\
   &&     p    &\mapsto&h(\lceil p\rceil-1),
\end{array}
\]
for every $p\in(b(q),q]$ and every $n\in\N$ with
\begin{equation}\label{eq:n.range}
r^\frac{h^*(p)-q}{h^*(p)-p}\le n<r,
\end{equation}
we have
\begin{equation}\label{eq:beta(n)>n^pbeta(1)}
\beta_\Gamma(n)\gg_\fq n^p\beta_\Gamma(1).
\end{equation}
By convention, we define $r^{-\alpha/0}=0$ for $\alpha>0$ in \eqref{eq:n.range}.
\end{prop}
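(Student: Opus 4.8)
The plan is to deduce Proposition \ref{prop:growth.lb.rel.all} from two structural inputs: first, the volume lower bound of Proposition \ref{prop:growth.lb}, which handles all scales $n\le r$ cleanly when $q$ is small; and second, the results of our earlier papers (referred to in the introduction as Theorems \ref{thm:tt} and \ref{thm:tt.rel}) on the structure of vertex-transitive graphs of polynomial growth, which say roughly that a ball $B_\Gamma(r)$ of volume at least $r^q\beta_\Gamma(1)$ looks, at coarser scales, like a ball in a nilpotent Lie group $N$ of dimension at most $b(q)$ (here the appearance of the function $h$ and the inverse of $h$ encodes the relation between dimension and homogeneous dimension of a nilpotent group). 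For $q\in[0,3]$ the conclusion is literally the statement of Proposition \ref{prop:growth.lb} with an extra factor of $\beta_\Gamma(1)$ carried along, so the only point there is to check that the argument of that proposition tolerates the normalisation by $\beta_\Gamma(1)$; this should be essentially formal. The substantive case is $q>3$.

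For $q>3$ I would argue as follows. Apply the structure theorem to the ball $B_\Gamma(r)$: after passing to a finite-index subgroup and quotienting by a finite normal subgroup (the operations permitted in that theorem), we obtain a vertex-transitive graph that is, at scales between some $n_0$ and $r$, comparable to a word metric on a nilpotent group, which in turn is comparable (via the Pansu/nilpotent-scaling machinery and the connected Lie model constructed in \cite{ttLie}) to a ball in a simply connected nilpotent Lie group $N$. Let $d=\dim N$ and let $D=h(d)$ be its homogeneous dimension. Since $\beta_\Gamma(r)\ge r^q\beta_\Gamma(1)$ and balls in $N$ of radius $r$ have volume $\asymp r^D$, we must have $D\ge q$, hence $d-1$ is at most $b(q)=\max\{d'\in\N:h(d'-1)\le q\}$ — wait, more precisely the bound on $d$ is exactly the one packaged into $b(q)$ via $h$, giving $b(q)\le d$. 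Now within $N$ one can compute volumes of balls at every intermediate scale: if $B_\Gamma$ has volume $\gg r^q\beta_\Gamma(1)$, then running the scaling backwards, the ball of radius $n$ has volume $\gg n^{p}\beta_\Gamma(1)$ for any $p$ between $b(q)$ and $q$, **provided** $n$ is not too small — the threshold being exactly when the lower-order (higher-step) terms in the volume polynomial of $N$, which scale like $n^{h^*(p)}$, would otherwise dominate. Matching the exponent of $r$ against $n$ at the crossover point yields the constraint \eqref{eq:n.range}: $n\ge r^{(h^*(p)-q)/(h^*(p)-p)}$.

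Concretely, the mechanism behind \eqref{eq:beta(n)>n^pbeta(1)} is an interpolation/Hölder-type inequality for volume growth in graphs that are approximately nilpotent: the function $n\mapsto\log\beta_\Gamma(n)$ is, up to bounded error, concave-ish and controlled by the homogeneous dimensions of the successive quotients of $N$, so knowing $\beta_\Gamma(1)$ and $\beta_\Gamma(r)\gg r^q\beta_\Gamma(1)$ pins down a lower bound at every intermediate $n$ of the stated shape. I would phrase this as: volume in $N$ at scale $n$ is $\asymp\sum_{j} n^{D_j}$ where the $D_j$ run over homogeneous dimensions of the lower central series quotients, the top one being $D=h(d)\ge h(b(q))$ but also $\ge q$, and the relevant competing exponent when we only ask for growth like $n^p$ with $p\le q$ is $h^*(p)=h(\lceil p\rceil-1)$; solving $n^{h^*(p)}\asymp r^q\cdot(n/r)^{p}$-type balance gives \eqref{eq:n.range}. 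The convention $r^{-\alpha/0}=0$ simply covers the degenerate case $h^*(p)=p$, i.e. when $p$ already equals the homogeneous dimension and no lower-order correction intervenes, so the bound holds for all $n<r$.

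**The main obstacle** I anticipate is bookkeeping the error terms in the structure theorem: passing to a subgroup of bounded index and quotienting by a bounded finite group changes $\beta_\Gamma(1)$ and the metric only by multiplicative constants depending on $\fq$, but one has to verify that these constants do not interact badly with the scale thresholds in \eqref{eq:n.range}, and in particular that the approximation between the graph ball and the Lie-group ball holds uniformly down to the scale where we need it — the Lie model of \cite{ttLie} and its extension to vertex-transitive graphs in \cite{tt.trof} is only a coarse approximation, valid above a scale that itself depends on the data, so one must check that this intrinsic scale is always below $r^{(h^*(p)-q)/(h^*(p)-p)}$, or else absorb it into the statement. A secondary technical point is making the "volume polynomial of $N$" statement precise at the level of the graph rather than the Lie group (Pansu's theorem gives an asymptotic, and we need a two-sided bound at a fixed finite scale), but this is exactly the kind of estimate established in the earlier papers of the series, so I would cite it rather than reprove it.
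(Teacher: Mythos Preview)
Your approach for $q>3$ is substantially more elaborate than what the paper does, and the obstacles you flag at the end---controlling the scale at which the Lie approximation becomes valid, and upgrading Pansu-type asymptotics to two-sided bounds at a fixed finite scale---are genuine difficulties that the paper's argument sidesteps entirely.

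The paper never passes to a Lie model or invokes a volume polynomial. It uses Theorem~\ref{thm:tt.rel} purely as a black-box growth-propagation statement and argues by contrapositive. Fix $p\in(3,q]$ and suppose, for some $n\in[C,r)$, that $\beta_\Gamma(n)\le\frac{n^p}{2C}\beta_\Gamma(1)$. Since $n^p\le n^{\lceil p\rceil}$, this places us in the hypothesis of Theorem~\ref{thm:tt.rel} with $d=\lceil p\rceil-1$, and the conclusion of that theorem propagates the smallness forward:
\[
\beta_\Gamma(r)\;\le\;C\Bigl(\frac{r}{n}\Bigr)^{h(\lceil p\rceil-1)}\beta_\Gamma(n)\;\le\;\tfrac12\,r^{h^*(p)}\,n^{p-h^*(p)}\,\beta_\Gamma(1).
\]
Comparing with the hypothesis $\beta_\Gamma(r)\ge r^q\beta_\Gamma(1)$ gives $n^{h^*(p)-p}\le\tfrac12\,r^{h^*(p)-q}$. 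This single inequality does all the work: it forces $h^*(p)>q$, hence $p>b(q)$, which in contrapositive yields \eqref{eq:lb.rel.bgt}; and (when $h^*(p)>p$) it forces $n<r^{(h^*(p)-q)/(h^*(p)-p)}$, which in contrapositive yields \eqref{eq:beta(n)>n^pbeta(1)} on the range \eqref{eq:n.range}. The case $p=4$, where $h^*(p)=p$, is handled separately by the same inequality. That is the entire argument---a few lines of algebra on top of one application of Theorem~\ref{thm:tt.rel}.

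So the function $h^*$ enters not because one is reading off the homogeneous dimension of an explicit Lie model, but simply because $h(d)$ is the exponent appearing in the conclusion of Theorem~\ref{thm:tt.rel} when its hypothesis is triggered with parameter $d=\lceil p\rceil-1$. All the structural work you describe is already encapsulated inside Theorem~\ref{thm:tt.rel}; re-opening that box here is unnecessary and is exactly what creates the bookkeeping problems you anticipate.

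Two smaller points on the $q\le3$ case. First, the relevant input is Proposition~\ref{prop:growth.lb.rel}, not Proposition~\ref{prop:growth.lb}: the former already carries the factor $\beta_\Gamma(1)$ in both hypothesis and conclusion, so there is nothing to ``carry along''. Second, that proposition is itself proved by precisely the same contrapositive trick with Theorem~\ref{thm:tt.rel} (using that $h(d)=d$ for $d\le2$), so the two regimes are handled by one uniform method.
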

Proposition \ref{prop:growth.lb.rel.all} generalises and refines a result of Breuillard, Green and Tao, who proved the bound \eqref{eq:lb.rel.bgt} with a non-explicit function $b$ and with $\Gamma$ assumed to be a Cayley graph \cite[Corollary 11.10]{bgt}. Our function $b(q)$ grows like $\sqrt{q}$; it is not hard to generalise \cite[Example 1.11]{tao.growth} to an arbitrary filiform group to show that this cannot be improved.

Write $\rad_x(\Gamma)=\min\{r\in\N:B(x,r)=\Gamma\}$ for the \emph{radius} of a conntected graph $\Gamma$ centred at $x\in\Gamma$. Note that if $\Gamma$ is vertex-transitive then $\diam(\Gamma)=\rad_x(\Gamma)$ for every $x\in\Gamma$. It is easy to see that for every connected graph $\Gamma$, every $x\in\Gamma$ and every $r\le\rad_x(\Gamma)$ we have $|B_\Gamma(x,r)|>r$ so one may vacuously extend Proposition \ref{prop:growth.lb} to the case in which $q<1$. For Proposition \ref{prop:growth.lb.rel.all} we have the following complementary result.
\begin{lemma}\label{lem:growth.lb.rel.lin}
Let $\Gamma$ be a connected regular graph of degree $k\in\N$, let $x\in\Gamma$, and fix $n\in\N$ with $n\le\rad_x(\Gamma)$. Then $|B_\Gamma(x,n)|\ge\frac{1}{3}(k+1)n$.
\end{lemma}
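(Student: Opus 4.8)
The plan is to argue directly with the sphere sizes $s_j=|S_\Gamma(x,j)|$, extracting a factor of $k+1$ from the local structure around a single vertex at each distance from $x$, and then summing up.

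First I would record what the hypothesis buys us: since $n\le\rad_x(\Gamma)$, there is a vertex at distance at least $n$ from $x$, and as $\Gamma$ is connected a geodesic from $x$ to such a vertex meets every sphere $S_\Gamma(x,j)$ with $0\le j\le n$, so $s_j\ge 1$ throughout this range. The key local estimate is then the following. For each $j$ with $1\le j\le n-1$, pick any vertex $v\in S_\Gamma(x,j)$; by the triangle inequality each of its $k$ neighbours lies at distance $j-1$, $j$ or $j+1$ from $x$, and none of them equals $v$, so $\{v\}\cup N_\Gamma(v)$ is a set of $k+1$ vertices contained in $S_\Gamma(x,j-1)\cup S_\Gamma(x,j)\cup S_\Gamma(x,j+1)$. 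Hence
\[
s_{j-1}+s_j+s_{j+1}\ge k+1\qquad(1\le j\le n-1).
\]
Running the same argument at $x$ itself gives $s_1\ge k$, and therefore $s_0+s_1\ge k+1$.

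Finally I would add everything up: summing the $n-1$ triple inequalities above together with the single inequality $s_0+s_1\ge k+1$ yields a left-hand side in which each $s_i$ with $0\le i\le n$ appears with coefficient at most $3$, and a right-hand side equal to $n(k+1)$. Bounding the left-hand side above by $3\sum_{i=0}^n s_i=3\,|B_\Gamma(x,n)|$ gives $|B_\Gamma(x,n)|\ge\tfrac13(k+1)n$, as required.

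The only point needing genuine care is the bookkeeping that pins the constant at exactly $\tfrac13$: the triple inequalities $s_{j-1}+s_j+s_{j+1}\ge k+1$ on their own lose a boundary term and give only $|B_\Gamma(x,n)|\gtrsim\tfrac13(k+1)(n-1)$, and it is precisely the auxiliary inequality $s_0+s_1\ge k+1$ at the basepoint that absorbs this deficit while keeping every $s_i$-coefficient $\le 3$. I expect this to be the main thing to get right, rather than anything conceptual; I would also note in passing that simplicity of $\Gamma$ is used (and is in fact necessary for the statement) when asserting that a vertex of degree $k$ has exactly $k$ distinct neighbours.
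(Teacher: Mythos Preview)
Your proof is correct. Both your argument and the paper's rest on the same basic observation---that a vertex together with its $k$ neighbours gives $k+1$ vertices lying in three consecutive spheres---but they package it differently. The paper fixes a geodesic $x=x_0,x_1,\ldots,x_n$ and notes that the $1$-balls $B_\Gamma(x_{3m},1)$ with $0\le 3m<n$ are pairwise disjoint subsets of $B_\Gamma(x,n)$, each of size $k+1$, and there are at least $n/3$ of them; this gives the bound in one line. You instead allow the three-sphere windows to overlap and control the overcounting by the multiplicity bound $\le 3$. Your route has the mild advantage that it never explicitly invokes the geodesic beyond guaranteeing $s_j\ge1$, while the paper's disjoint-packing argument is a touch shorter and makes the constant $\tfrac13$ appear without any bookkeeping.
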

\begin{proof}
Since $n\le\rad_x(\Gamma)$ there is a geodesic of length $n$ starting at $x$. Writing $x=x_0,x_1,\ldots,x_n$ for the vertices of this geodesic in increasing order of distance from $x$, the lemma follows from the fact that the balls $B_\Gamma(x_{3m},1)$ with $m\in\N_0$ and $3m<n$ are disjoint subsets of $B_\Gamma(x,n)$ of size $k+1$.
\end{proof}

The main ingredient in the proofs of the theorems of this section is \cref{thm:Growth}, quoted from our companion paper \cite{ttBalls}.

We start by isolating the $q\le3$ cases of Proposition \ref{prop:growth.lb.rel.all}, as follows.
\begin{prop}\label{prop:growth.lb.rel}
Let $q\in[1,3]$, let $r\in\N$, and suppose that $\Gamma$ is a locally finite vertex-transitive graph such that $\beta_\Gamma(r)\ge r^q\beta_\Gamma(1)$. Then for every $n\le r^{\{q\}}$ we have
\begin{equation}\label{eq:first.slope}
\beta_\Gamma(n)\gg n^{\lfloor q\rfloor+1}\beta_\Gamma(1),
\end{equation}
whilst for every $n$ with $r^{\{q\}}\le n\le r$ we have
\begin{equation}\label{eq:second.slope}
\beta_\Gamma(n)\gg r^{\{q\}}n^{\lfloor q\rfloor}\beta_\Gamma(1).
\end{equation}
\end{prop}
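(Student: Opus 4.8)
The plan is to use Theorem \ref{thm:tt.rel} in contrapositive form to force lower bounds on the sizes of small balls, splitting into the cases $q\in[1,2)$, $q\in[2,3)$, and $q=3$ according to the value of $\fq=\lfloor q\rfloor$. The key observation is that Theorem \ref{thm:tt.rel} with $d=\fq$ says: if $\beta_\Gamma(n)\le n^{\fq+1}\beta_\Gamma(1)/C$ for some $n\ge C$, then $\beta_\Gamma(m)\le C(m/n)^{h(\fq)}\beta_\Gamma(n)$ for all $m\ge n$; when $q\le 3$ we have $\fq\le 2$, so $h(\fq)\le h(2)=2$, and in fact $h(\fq)=\fq+1$ for $\fq\in\{0,1,2\}$ except we must be slightly careful: $h(0)=1$, $h(1)=1$, $h(2)=2$. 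So the relevant exponent $h(\fq)$ is at most $\fq+1$, and the point is that we want to run the theorem ``backwards'' from the known large ball $B_\Gamma(r)$.

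First I would establish \eqref{eq:first.slope}. Fix $n\le r^{\{q\}}$ and suppose for contradiction that $\beta_\Gamma(n)$ is much smaller than $n^{\fq+1}\beta_\Gamma(1)$; by the ``Moreover'' clause of Theorem \ref{thm:tt} (or Lemma \ref{lem:growth.lb.rel.lin}) we may assume $n\ge C$, since for bounded $n$ the bound $\beta_\Gamma(n)\gg_\fq n^{\fq+1}\beta_\Gamma(1)$ would follow from $\beta_\Gamma(n)\ge\beta_\Gamma(1)$ up to a constant depending on $\fq$ and $C$. Then Theorem \ref{thm:tt.rel} applied at scale $n$ with $d=\fq$ gives $\beta_\Gamma(r)\le C(r/n)^{h(\fq)}\beta_\Gamma(n)$. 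Combining with the hypothesis $\beta_\Gamma(r)\ge r^q\beta_\Gamma(1)$ yields
\[
r^q\beta_\Gamma(1)\le C(r/n)^{h(\fq)}\beta_\Gamma(n),
\]
i.e. $\beta_\Gamma(n)\ge C^{-1}r^{q-h(\fq)}n^{h(\fq)}\beta_\Gamma(1)$. Now I would check that, because $n\le r^{\{q\}}$ and $q-\fq=\{q\}$, the exponent bookkeeping works out: when $h(\fq)=\fq+1$ (the cases $q\in[1,2)\cup[2,3]$, i.e. $\fq=1$ or $\fq=2$) we get $\beta_\Gamma(n)\ge C^{-1}r^{\{q\}-1}n^{\fq+1}\beta_\Gamma(1)\ge C^{-1}(n/r)^{?}\cdots$ — here one uses $r^{\{q\}-1}\le 1$ in the wrong direction, so instead I would choose $n\le r^{\{q\}}$ to guarantee $r^{q-h(\fq)}\ge n^{(q-h(\fq))/\{q\}}$ and rearrange. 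The cleanest route: since $\{q\}\le 1$ and $q-h(\fq)=\{q\}-1+(\fq+1-h(\fq))$, for $\fq\in\{1,2\}$ this is just $\{q\}-1\le 0$, and then $r^{q-h(\fq)}=r^{\{q\}-1}\ge (r^{\{q\}})^{(\{q\}-1)/\{q\}}\ge n^{(\{q\}-1)/\{q\}}$, so $\beta_\Gamma(n)\gg n^{\fq+1+(\{q\}-1)/\{q\}}\beta_\Gamma(1)$, which is at least $n^{\fq+1}\beta_\Gamma(1)$ only if the extra exponent is nonnegative — it is not. So in fact one must apply Theorem \ref{thm:tt.rel} at a \emph{smaller} reference scale, namely somewhere in the range where $\beta$ already exceeds the $(\fq+1)$-power threshold; I would instead argue by contradiction at the level of the hypothesis and invoke Theorem \ref{thm:tt.rel} with $d=\fq-1$ at scale $r^{\{q\}}$ to pin down the ``first slope''.

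Let me restate the plan more carefully. To prove \eqref{eq:first.slope}: suppose $\beta_\Gamma(r^{\{q\}})<\epsilon (r^{\{q\}})^{\fq+1}\beta_\Gamma(1)$ for small $\epsilon$; applying Theorem \ref{thm:tt.rel} with $d=\fq$ at scale $n_0=r^{\{q\}}$ (legitimate once $r$, hence $n_0$, exceeds $C$) gives $\beta_\Gamma(r)\le C\,\epsilon\,(r/r^{\{q\}})^{h(\fq)}(r^{\{q\}})^{\fq+1}\beta_\Gamma(1)=C\epsilon\, r^{h(\fq)}\cdot r^{\{q\}(\fq+1-h(\fq))}\beta_\Gamma(1)$. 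For $\fq\in\{1,2\}$, $h(\fq)=\fq+1$, so this is $C\epsilon\, r^{\fq+1}\beta_\Gamma(1)\le C\epsilon\, r^q\beta_\Gamma(1)$ (since $q\le\fq+1$), contradicting the hypothesis $\beta_\Gamma(r)\ge r^q\beta_\Gamma(1)$ once $C\epsilon<1$. Thus $\beta_\Gamma(r^{\{q\}})\gg_\fq (r^{\{q\}})^{\fq+1}\beta_\Gamma(1)$; then for general $n\le r^{\{q\}}$ one applies Theorem \ref{thm:tt.rel} at scale $n$ (now with $d=\fq$, but this time using the just-established lower bound at $r^{\{q\}}$ as the ``large ball'') to propagate the bound down to scale $n$ — formally, if $\beta_\Gamma(n)$ were too small, Theorem \ref{thm:tt.rel} would cap $\beta_\Gamma(r^{\{q\}})$ below the bound we just proved. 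Exactly the same scheme, now comparing the known bound at scale $r^{\{q\}}$ with scale $n\in[r^{\{q\}},r]$ via Theorem \ref{thm:tt.rel} with $d=\fq-1$ (so exponent $h(\fq-1)=\fq$, giving the ``$n^{\fq}$'' slope), yields \eqref{eq:second.slope}; the constant $r^{\{q\}}$ in \eqref{eq:second.slope} is precisely the value $\beta_\Gamma(r^{\{q\}})/\beta_\Gamma(1)$ up to constants. The $q=3$ case ($\fq=3$, but then $\{q\}=0$ and $r^{\{q\}}=1$, so \eqref{eq:first.slope} is vacuous and \eqref{eq:second.slope} reads $\beta_\Gamma(n)\gg n^3\beta_\Gamma(1)$ for $n\le r$) is handled by Theorem \ref{thm:tt.rel} with $d=2$, $h(2)=3$, run directly from scale $r$.

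The main obstacle I anticipate is the bookkeeping around the boundary exponents and the uniformity of constants: making sure that (a) the scales $r^{\{q\}}$ and $n$ can be taken $\ge C=C_\fq$ (handled by treating bounded scales separately via $\beta_\Gamma(n)\ge\beta_\Gamma(1)$ and Lemma \ref{lem:growth.lb.rel.lin}), (b) the cases $q=1,2$ (where $\{q\}=0$) and $q=3$ degenerate gracefully, and (c) the exponent identity $h(\fq)=\fq+1$ for $\fq\le 2$ is used correctly, and that at the transition scale $r^{\{q\}}$ the two slopes $\fq+1$ and $\fq$ glue continuously. None of this is deep, but it requires care; the genuine mathematical content is entirely carried by Theorems \ref{thm:tt} and \ref{thm:tt.rel}, used in their contrapositive form.
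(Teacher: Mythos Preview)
Your overall plan---take the contrapositive of Theorem \ref{thm:tt.rel} at scale $n$ with $d=\fq$---is exactly the paper's approach. The difficulty you run into is self-inflicted: you repeatedly write $h(\fq)=\fq+1$, whereas in fact (as you yourself compute a few lines earlier) $h(1)=1$ and $h(2)=2$, i.e.\ $h(\fq)=\fq$ for $\fq\in\{1,2\}$. With the correct value the direct argument you first attempt goes through immediately and matches the paper: if $\beta_\Gamma(n)\le n^{\fq+1}\beta_\Gamma(1)/(2C)$ with $n\ge C$, then Theorem \ref{thm:tt.rel} with $d=\fq$ gives
\[
\beta_\Gamma(r)\le C(r/n)^{h(\fq)}\beta_\Gamma(n)=C(r/n)^{\fq}\beta_\Gamma(n)\le\tfrac12 r^{\fq}n\,\beta_\Gamma(1),
\]
contradicting $\beta_\Gamma(r)\ge r^q\beta_\Gamma(1)$ whenever $n\le r^{\{q\}}$. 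This is \eqref{eq:first.slope}. For \eqref{eq:second.slope} one again applies Theorem \ref{thm:tt.rel} with $d=\fq$ (not $d=\fq-1$): if $\beta_\Gamma(n)\le r^{\{q\}}n^{\fq}\beta_\Gamma(1)/(2C)$ and $n\ge r^{\{q\}}$, then in particular $\beta_\Gamma(n)\le n^{\fq+1}\beta_\Gamma(1)/C$, so the theorem yields $\beta_\Gamma(r)\le C(r/n)^{\fq}\beta_\Gamma(n)\le\tfrac12 r^q\beta_\Gamma(1)$, again a contradiction. The case $q=3$ uses $d=2$ and $h(2)=2$ the same way.

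Your two-step detour (first pin down the bound at scale $r^{\{q\}}$, then propagate) is unnecessary, and your proposal to use $d=\fq-1$ for the second slope is wrong on two counts: the claimed identity $h(\fq-1)=\fq$ fails (e.g.\ $h(1)=1\ne2$), and the hypothesis $\beta_\Gamma(n)\le n^{\fq}\beta_\Gamma(1)/C$ of Theorem \ref{thm:tt.rel} with $d=\fq-1$ is not implied by the negation of \eqref{eq:second.slope} once $r^{\{q\}}>1$. Fix the single arithmetic slip $h(\fq)=\fq$ and your first, direct argument is the paper's proof.
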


\begin{proof}
We start with the case $n\le r^{\{q\}}$, which implies in particular that we may assume that $q\notin\N$, and in particular that $q\ne3$. Since $h(d)=d$ for $d=1,2$, by \cref{thm:tt.rel} there exists $c>0$ such that if $\beta_\Gamma(n)\le\eps n^{\fq+1}\beta_\Gamma(1)$ then $\beta_\Gamma(r)\le\frac{1}{2}r^\fq n\beta_\Gamma(1)$, which is contrary to the hypothesis when $n\le r^{\{q\}}$. It follows that \eqref{eq:first.slope} holds for every such $n$, as required.

If $n\ge r^{\{q\}}$ and
\begin{equation}\label{eq:sec.slope.contra}
\beta_\Gamma(n)\le cr^{\{q\}}n^\fq\beta_\Gamma(1)
\end{equation}
for some $c>0$ then we would have in particular that
\[
\beta_\Gamma(n)\le cn^{\fq+1}\beta_\Gamma(1),
\]
or
\[
\beta_\Gamma(n)\le cn^3\beta_\Gamma(1)
\]
in the case $q=3$. In either case, provided $c$ is small enough, Theorem \ref{thm:tt.rel} would therefore imply that $\beta_\Gamma(r)\ll\left(\frac{r}{n}\right)^\fq\beta_\Gamma(n)$, which combined with \eqref{eq:sec.slope.contra} would imply that $\beta_\Gamma(r)\le\frac{1}{2}r^q\beta_\Gamma(1)$. This would be contrary to the hypothesis, and so \eqref{eq:second.slope} must hold as required.
\end{proof}

\begin{proof}[Proof of Proposition \ref{prop:growth.lb.rel.all}]
For $q\in[0,3]$ the theorem follows immediately from Proposition \ref{prop:growth.lb.rel}, so we may assume that $q>3$ and that
\begin{equation}\label{eq:lb.rel.q=3}
\beta_\Gamma(n)\gg n^3\beta_\Gamma(1)
\end{equation}
for every $n<r$. Rewriting $b$ in terms of $h^*$ instead of $h$, and restricting to the range $q>3$, we have
\begin{equation}\label{eq:rewrite.b}
b(q)=\begin{cases}4 &\text{if $q=4$},\\
    \max\{p\in(0,\infty):h^*(p)\le q\}&\text{otherwise}.
    \end{cases}
\end{equation}

Let $p\in(3,q]$, noting that
\begin{equation}\label{eq:h(p)>p}
h^*(p)\ge p
\end{equation}
in this range, with
\begin{equation}\label{eq:h*(4)}
h^*(4)=4
\end{equation}
the only instance of equality.

Let $n\in\N$ with $n<r$. By Theorem \ref{thm:tt.rel} there exists $c>0$ depending only on $\fq$ such that if
\begin{equation}\label{eq:growth.lb.contra}
\beta_\Gamma(n)\le cn^p\beta_\Gamma(1)
\end{equation}
then $\beta_\Gamma(r)\le\frac{1}{2}r^{h^*(p)}n^{p-h^*(p)}\beta_\Gamma(1)$, and hence by the hypothesis of the corollary that
\begin{equation}\label{eq:lb.before.rearrange}
n^{h^*(p)-p}\le\textstyle\frac{1}{2}r^{h^*(p)-q}.
\end{equation}
By \eqref{eq:h(p)>p}, this implies that
\begin{equation}\label{eq:h(p)>q}
h^*(p)>q.
\end{equation}
If $p=4$ then we would have $p>q$ by \eqref{eq:h*(4)} and \eqref{eq:h(p)>q}, contradicting the choice of $p$,
so it must be the case that
\begin{equation}\label{eq:p=4}
p\ne4.
\end{equation}
Moreover, \eqref{eq:rewrite.b} and \eqref{eq:h(p)>q} imply that
\begin{equation}\label{eq:growth.lb.contrapos.1}
p>b(q).
\end{equation}
Indeed, this is immediate if $q\ne4$, whilst if $q=4$ then \eqref{eq:h(p)>q} implies that $h^*(p)>4$, and it is easy to check, using the definition of $h^*$, that this implies $p>4$, as required. Finally, \eqref{eq:p=4} means in particular that the inequality \eqref{eq:h(p)>p} is strict, and so we may conclude from \eqref{eq:lb.before.rearrange} that
\begin{equation}\label{eq:growth.lb.contrapos.2}
n\le\textstyle\frac{1}{2}r^\frac{h^*(p)-q}{h^*(p)-p}.
\end{equation}

Rephrasing the last paragraph in the contrapositive, we conclude that if any of \eqref{eq:p=4}, \eqref{eq:growth.lb.contrapos.1} or \eqref{eq:growth.lb.contrapos.2} does not hold then neither does \eqref{eq:growth.lb.contra}. In the case of \eqref{eq:growth.lb.contrapos.1}, this means that \eqref{eq:lb.rel.bgt} holds as required. In the case of \eqref{eq:p=4}, this means that if $q\ge4$ and $p=4$ then \eqref{eq:beta(n)>n^pbeta(1)} holds as required. Finally, in the case of \eqref{eq:growth.lb.contrapos.2} this means that if $n$ lies in the range \eqref{eq:n.range} then \eqref{eq:beta(n)>n^pbeta(1)} holds, as required.
\end{proof}

\begin{proof}[Proof of Proposition \ref{prop:growth.lb}]
The proof is identical to that of Proposition \ref{prop:growth.lb.rel}, but with Theorem \ref{thm:tt} \ref{item:Benj.rel} in place of Theorem \ref{thm:tt.rel} \ref{item:growth.abs}. Since we do not use Proposition \ref{prop:growth.lb} to prove our main results, we leave the details to the reader.
\end{proof}

\section{Isoperimetric inequalities in large balls}\label{sec:iso}
In this section we deduce various isoperimetric inequalities for vertex-transitive graphs from the results of the previous two sections. We start by giving uniform bounds in the following classical isoperimetric inequality.
\begin{theorem}\label{thm:iso.classical}
Suppose $\Gamma$ is a vertex-transitive graph of growth at least polynomial of degree $d$. Then for every finite subset $A\subseteq\Gamma$ we have $|\partial A|\gg_d|A|^{\frac{d-1}d}$.
\end{theorem}
\begin{proof}
\cref{thm:Growth} \ref{item:growth.abs} implies that there exists $\eps=\eps(d)$ such that $\beta_\Gamma(n)\ge\eps n^d$ for all $n\in\N$, so the desired result follows from \cref{prop:iso-growth}.
\end{proof}

Our next result refines Theorem \ref{thm:bk.iso.orig}.

\begin{theorem}\label{thm:bk.iso}
Let $q\ge1$, let $\Gamma$ be a connected, locally finite vertex-transitive graph, and let $r\in\N$ be such that $r\le\diam(\Gamma)$. Let $A$ be a finite subset of $\Gamma$ with $|A|\le\beta_\Gamma(r)/2$.
Suppose that $\beta_\Gamma(r)\geq r^q$. Then
\[
|\partial A|\gg_\fq\min\left\{|A|^{\frac{\fq}{\fq+1}},r^\frac{\{q\}}{\fq}|A|^{\frac{\fq-1}{\fq}}\right\}.
\]
\end{theorem}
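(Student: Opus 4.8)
The plan is to derive Theorem~\ref{thm:bk.iso} by combining the growth lower bound of Proposition~\ref{prop:growth.lb} with the isoperimetric-from-growth inequality of Proposition~\ref{prop:iso-growth}. The point is that Proposition~\ref{prop:iso-growth} says $|\partial A|\ge |A|/(12\phi_\Gamma(2|A|))$, where $\phi_\Gamma$ is the inverse of the volume function, so all I need is a sufficiently strong \emph{upper} bound on $\phi_\Gamma(2|A|)$, i.e.\ a lower bound on $\beta_\Gamma(n)$ that tells me a ball of a given (smallish) radius is already large. Proposition~\ref{prop:growth.lb} provides exactly this, with two regimes: for radii $n\le r^{\{q\}}$ one has $\beta_\Gamma(n)\gg_{\fq} n^{\fq+1}$, and for $r^{\{q\}}\le n\le r$ one has $\beta_\Gamma(n)\gg_{\fq} r^{\{q\}}n^{\fq}$. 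Inverting these two growth estimates gives the two terms $|A|^{\fq/(\fq+1)}$ and $r^{\{q\}/\fq}|A|^{(\fq-1)/\fq}$ in the conclusion, and the minimum appears because which regime is relevant depends on the size of $|A|$ relative to $\beta_\Gamma(r^{\{q\}})\asymp_{\fq} r^{\{q\}(\fq+1)}$.

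Concretely, first I would fix $A$ with $|A|\le\beta_\Gamma(r)/2$ and apply Proposition~\ref{prop:iso-growth} to get $|\partial A|\ge|A|/(12\phi_\Gamma(2|A|))$; note $2|A|\le\beta_\Gamma(r)$, so $\phi_\Gamma(2|A|)\le r$ and we are in the range where Proposition~\ref{prop:growth.lb} applies. Set $n=\phi_\Gamma(2|A|)$, so that $\beta_\Gamma(n)\ge 2|A|$ but (essentially) $\beta_\Gamma(n-1)<2|A|$; up to the harmless $n$-vs-$n-1$ and constant-factor issues this means $2|A|\asymp_{\fq}\beta_\Gamma(n)$. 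Now split into two cases. If $n\le r^{\{q\}}$, then Proposition~\ref{prop:growth.lb} gives $|A|\gg_{\fq} n^{\fq+1}$, hence $n\ll_{\fq}|A|^{1/(\fq+1)}$, and plugging into the isoperimetric bound yields $|\partial A|\gg_{\fq}|A|/|A|^{1/(\fq+1)}=|A|^{\fq/(\fq+1)}$. If instead $r^{\{q\}}\le n\le r$, then Proposition~\ref{prop:growth.lb} gives $|A|\gg_{\fq} r^{\{q\}}n^{\fq}$, hence $n\ll_{\fq}(|A|/r^{\{q\}})^{1/\fq}$, and the isoperimetric bound gives $|\partial A|\gg_{\fq}|A|\cdot(r^{\{q\}}/|A|)^{1/\fq}=r^{\{q\}/\fq}|A|^{(\fq-1)/\fq}$. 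Since in each case we have bounded $|\partial A|$ below by one of the two quantities in the claimed minimum, we are done: the common lower bound is the minimum of the two.

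A couple of routine points need care. First, one must handle the boundary between $n-1$ and $n$ in the definition of $\phi_\Gamma$ and the fact that Proposition~\ref{prop:growth.lb} is phrased for $n\in\N$; since doubling $|A|$ only changes things by an $\fq$-dependent constant, and since for small $n$ (say $n\le$ some absolute constant) the bound $|\partial A|\ge 1$ together with $|A|\le\beta_\Gamma(n)\ll_{\fq}1$ is already enough, this is not a real obstacle. Second, one should double-check the edge cases $q\in\N$ (so $\{q\}=0$), where the second regime degenerates: then $r^{\{q\}}=1$ and the two bounds become $|A|^{\fq/(\fq+1)}$ and $|A|^{(\fq-1)/\fq}$, and the claim is consistent with Proposition~\ref{prop:growth.lb} reading $\beta_\Gamma(n)\gg_{\fq}n^{\fq}$ throughout $1\le n\le r$.

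The main obstacle, such as it is, is purely bookkeeping: making sure the exponents line up after inverting the piecewise power-law growth bounds, and checking that the threshold $|A|\asymp_{\fq}r^{\{q\}(\fq+1)}$ separating the two cases is exactly the value of $|A|$ at which the two terms in the minimum coincide (so that the ``wrong'' case never produces a weaker bound than claimed). There is no genuine conceptual difficulty once Propositions~\ref{prop:iso-growth} and~\ref{prop:growth.lb} are in hand — the content of the theorem is entirely in those two ingredients, and this proof is the mechanical step of combining them.
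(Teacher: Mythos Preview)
Your proposal is correct and is essentially the same argument as the paper's: the paper proves Theorem~\ref{thm:bk.iso} by inverting the two-regime growth lower bound of Proposition~\ref{prop:growth.lb} to bound $\phi_\Gamma$ and then applying Proposition~\ref{prop:iso-growth}, which is exactly what you do. The only cosmetic difference is that the paper packages the two cases as a single $\max$ bound on $\phi_\Gamma(m)$ rather than splitting explicitly on whether $n\le r^{\{q\}}$ or $n\ge r^{\{q\}}$, and handles the small-$|A|$ edge case with the trivial bound $|\partial A|\ge\beta_\Gamma(1)/2$ rather than the small-$n$ formulation you use.
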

Theorem \ref{thm:bk.iso} is easily seen to be optimal by considering the case in which $A$ is a ball in the Cayley graph of $\Z^\fq\times\Z/\lceil r^{\{q\}}\rceil\Z$ with respect to the generating set $\{-1,0,1\}^{\fq+1}$.

Our second result is Theorem \ref{thm:isoperim.orig.rel.bgt} with an explicit function $b$.

\begin{theorem}\label{thm:isoperim.orig.rel}
Let $q\ge1$, let $\Gamma$ be a locally finite vertex-transitive graph, let $r\in\N$ be such that $r\le\diam(\Gamma)$, and suppose that $\beta_\Gamma(r)\ge r^q\beta_\Gamma(1)$. Define $b(q)$ as in Proposition \ref{prop:growth.lb.rel.all}, and denote $b=b(q)$. For every subset $A\subset\Gamma$ with $|A|\le\beta_\Gamma(r)/2$ we have
\[
|\partial A|\gg_\fq\beta_\Gamma(1)^\frac{1}{b}|A|^\frac{b-1}{b}.
\]
\end{theorem}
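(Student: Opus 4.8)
The plan is to combine the growth lower bound of Proposition \ref{prop:growth.lb.rel.all} with the isoperimetric inequality of Proposition \ref{prop:iso-growth}, and write $b=b(q)$. Since $|A|\le\beta_\Gamma(r)/2\le|\Gamma|/2$, Proposition \ref{prop:iso-growth} applies and gives $|\partial A|\ge|A|/(12\,\phi_\Gamma(2|A|))$, where $\phi_\Gamma(m)$ denotes the least radius whose ball has at least $m$ vertices. So everything reduces to an upper bound on $\phi_\Gamma(2|A|)$; more precisely, the statement will follow once we establish
\[
\phi_\Gamma(2|A|)\ll_\fq\Big(\tfrac{|A|}{\beta_\Gamma(1)}\Big)^{1/b}\qquad\text{whenever }|A|\ge\beta_\Gamma(1).
\]

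To prove this I would set $n=\phi_\Gamma(2|A|)$. From $\beta_\Gamma(r)\ge2|A|$ we get $n\le r$, and from $\beta_\Gamma(1)<2\beta_\Gamma(1)\le2|A|$ we get $n\ge2$; hence $1\le n-1<r$ and, by minimality of $n$, $\beta_\Gamma(n-1)<2|A|$. Then \eqref{eq:lb.rel.bgt} of Proposition \ref{prop:growth.lb.rel.all} gives $\beta_\Gamma(n-1)\gg_\fq(n-1)^b\beta_\Gamma(1)$, so $(n-1)^b\ll_\fq|A|/\beta_\Gamma(1)$, and since $|A|\ge\beta_\Gamma(1)$ this rearranges to the claimed bound. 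Feeding it back into Proposition \ref{prop:iso-growth} yields
\[
|\partial A|\gg_\fq\frac{|A|}{(|A|/\beta_\Gamma(1))^{1/b}}=\beta_\Gamma(1)^{1/b}|A|^{\frac{b-1}{b}},
\]
which is the desired inequality in the range $|A|\ge\beta_\Gamma(1)$.

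The remaining range $1\le|A|<\beta_\Gamma(1)$ is where the argument needs a little extra care, and is the step I expect to be the main (if minor) obstacle: there the estimate above degenerates, since when $|A|$ is of bounded size, $|A|/\phi_\Gamma(2|A|)$ can also be of bounded size while the target $\beta_\Gamma(1)^{1/b}|A|^{(b-1)/b}$ is of order $\beta_\Gamma(1)^{1/b}$. Here I would use two soft observations. First, counting the neighbours of a single vertex of $A$ shows $|\partial A|\ge\deg(\Gamma)-(|A|-1)=\beta_\Gamma(1)-|A|$. Secondly, $\phi_\Gamma(2|A|)\le6$: if $\diam(\Gamma)\ge6$ then $2|A|<2\beta_\Gamma(1)\le\beta_\Gamma(6)$ by Lemma \ref{lem:growth.lb.rel.lin}, while if $\diam(\Gamma)<6$ then $\phi_\Gamma(2|A|)\le r\le\diam(\Gamma)<6$; so Proposition \ref{prop:iso-growth} gives $|\partial A|\ge|A|/72$. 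If $|A|\le\beta_\Gamma(1)/2$ the first observation gives $|\partial A|\gg\beta_\Gamma(1)$, and if $|A|>\beta_\Gamma(1)/2$ the second does; since $|A|<\beta_\Gamma(1)$ and $b\ge1$ imply $\beta_\Gamma(1)^{1/b}|A|^{(b-1)/b}\le\beta_\Gamma(1)$, the target inequality holds in this range as well, which would complete the proof. One could bypass Lemma \ref{lem:growth.lb.rel.lin} here by instead quoting the linear bound $\beta_\Gamma(n)\gg_\fq n\,\beta_\Gamma(1)$ already contained in Proposition \ref{prop:growth.lb.rel.all}.
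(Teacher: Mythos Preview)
Your proof is correct and follows essentially the same approach as the paper: combine conclusion \eqref{eq:lb.rel.bgt} of Proposition \ref{prop:growth.lb.rel.all} with Proposition \ref{prop:iso-growth} to bound $\phi_\Gamma$, and handle small $A$ separately via the trivial bound $|\partial A|\ge\beta_\Gamma(1)-|A|$. The only cosmetic difference is that the paper splits at $|A|=\beta_\Gamma(1)/2$ and treats the whole range $|A|\ge\beta_\Gamma(1)/2$ uniformly through the $\phi_\Gamma$ estimate, so your separate treatment of $\beta_\Gamma(1)/2<|A|<\beta_\Gamma(1)$ via Lemma \ref{lem:growth.lb.rel.lin} is not needed.
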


As with Proposition \ref{prop:growth.lb.rel.all}, one can generalise \cite[Example 1.11]{tao.growth} to an arbitrary filiform group to show that the function $b$ is optimal in \cref{thm:isoperim.orig.rel}. That said, we do not quite use every detail of Proposition \ref{prop:growth.lb.rel.all} in proving Theorem \ref{thm:isoperim.orig.rel}, so at the expense of complicating the statement of Theorem \ref{thm:isoperim.orig.rel} somewhat and by using the full strength of Proposition \ref{prop:growth.lb.rel.all} one ought to be able to improve the theorem marginally. We leave this matter to the interested reader.

For the proofs of Theorems \ref{thm:main} and \ref{thm:main.unimod} we will also need the following result.

\begin{theorem}\label{thm:isoperimRel}
Let $q\in[1,3]$, let $\Gamma$ be a connected, locally finite vertex-transitive graph, and let $r\in\N$ be such that $r\le\diam(\Gamma)$. Let $A$ be a finite subset of $\Gamma$ with $|A|\le\beta_\Gamma(r)/2$.
Suppose that $\beta_\Gamma(r)\ge r^q\beta_\Gamma(1)$. Then
\[
|\partial A|\gg\min\left\{\beta_\Gamma(1)^{\frac{1}{\fq+1}}|A|^{\frac{\fq}{\fq+1}},\beta_\Gamma(1)^{\frac{1}{\fq}}r^\frac{\{q\}}{\fq}|A|^{\frac{\fq-1}{\fq}}\right\}.
\]
\end{theorem}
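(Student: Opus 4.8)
The statement to prove, Theorem~\ref{thm:isoperimRel}, is essentially the ``relative'' version (with $\beta_\Gamma(1)$ factored in) of Theorem~\ref{thm:bk.iso}, and the natural strategy is to feed the growth bound of Proposition~\ref{prop:growth.lb.rel} into the isoperimetric-from-growth machine of Proposition~\ref{prop:iso-growth}. Concretely, I would first record that, by Proposition~\ref{prop:iso-growth}, for any finite $A\subset\Gamma$ with $|A|\le|\Gamma|/2$ one has $|\partial A|\ge\tfrac{|A|}{12\,\phi_\Gamma(2|A|)}$, so the whole problem reduces to producing a good \emph{upper} bound on $\phi_\Gamma(m)$ for $m$ in the range $2\le m\le\beta_\Gamma(r)$, i.e.\ to showing that balls of radius $n$ up to $r$ are large. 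That is exactly what Proposition~\ref{prop:growth.lb.rel} supplies under the hypothesis $\beta_\Gamma(r)\ge r^q\beta_\Gamma(1)$ (note $q\in[1,3]$ here, which is precisely the range covered by that proposition): for $n\le r^{\{q\}}$ we have $\beta_\Gamma(n)\gg n^{\fq+1}\beta_\Gamma(1)$, and for $r^{\{q\}}\le n\le r$ we have $\beta_\Gamma(n)\gg r^{\{q\}}n^{\fq}\beta_\Gamma(1)$.

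\textbf{Turning growth into a bound on $\phi_\Gamma$.} Inverting these two volume lower bounds gives, up to constants depending on $\fq$, an upper bound on $\phi_\Gamma(m)$: if $m\le \beta_\Gamma(r^{\{q\}})$, which is $\asymp (r^{\{q\}})^{\fq+1}\beta_\Gamma(1)$, then from $\beta_\Gamma(n)\gg n^{\fq+1}\beta_\Gamma(1)$ we get $\phi_\Gamma(m)\ll_\fq \bigl(m/\beta_\Gamma(1)\bigr)^{1/(\fq+1)}$; and if $\beta_\Gamma(r^{\{q\}})\le m\le\beta_\Gamma(r)$ then from $\beta_\Gamma(n)\gg r^{\{q\}} n^{\fq}\beta_\Gamma(1)$ we get $\phi_\Gamma(m)\ll_\fq \bigl(m/(r^{\{q\}}\beta_\Gamma(1))\bigr)^{1/\fq}$. (Here one has to be a little careful: $\phi_\Gamma$ is a minimum over integers $n$, and $\beta_\Gamma$ is only defined for integer radii, so the inversion is really the statement ``if $n$ is chosen $\asymp$ the claimed value then $\beta_\Gamma(n)\ge m$'', together with monotonicity of $\beta_\Gamma$; rounding costs only constants since $\fq\ge1$.) Plugging $m=2|A|$ into Proposition~\ref{prop:iso-growth} then yields, in the first regime, $|\partial A|\gg_\fq |A|\cdot\bigl(\beta_\Gamma(1)/|A|\bigr)^{1/(\fq+1)}=\beta_\Gamma(1)^{1/(\fq+1)}|A|^{\fq/(\fq+1)}$, and in the second regime $|\partial A|\gg_\fq |A|\cdot\bigl(r^{\{q\}}\beta_\Gamma(1)/|A|\bigr)^{1/\fq}=\beta_\Gamma(1)^{1/\fq}r^{\{q\}/\fq}|A|^{(\fq-1)/\fq}$. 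Taking the minimum over the two regimes (which is what one must do since $|A|$ can fall in either one) gives exactly the claimed bound. One should also check the boundary hypothesis of Proposition~\ref{prop:iso-growth}, namely $|A|\le|\Gamma|/2$: this follows from $|A|\le\beta_\Gamma(r)/2\le|\Gamma|/2$ since $r\le\diam(\Gamma)$.

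\textbf{Where the work is.} The conceptual content is entirely in Proposition~\ref{prop:growth.lb.rel} and Proposition~\ref{prop:iso-growth}, both already available, so the proof is short. The one genuinely fiddly point---and the place I'd expect to spend the most care---is the bookkeeping around the two regimes and the rounding: matching the threshold $|A|\asymp\beta_\Gamma(r^{\{q\}})$ between the two volume estimates so that the two resulting isoperimetric bounds agree at the crossover (they should, since both equal $\asymp\beta_\Gamma(1)^{1/\fq}(r^{\{q\}})\,^{?}\cdots$ up to constants there), handling the edge cases $q=3$ (where $\{q\}=0$ and only the first regime, with exponent $\fq+1=3$... actually $q=3$ gives $\fq=3$, $\{q\}=0$, so the second regime is empty and one just gets $|A|^{3/4}$---wait, $\fq/(\fq+1)=3/4$), and $q\in\N$ generally, and dealing with small $|A|$ (say $|A|\le C_\fq$) where the $\gg_\fq$ notation absorbs everything trivially. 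None of this is deep, but it is exactly the kind of thing where an off-by-one in a floor or a mismatched exponent can creep in, so I would write the two regimes out explicitly and verify the crossover by hand before taking the minimum. Finally, I would remark that the same argument with Theorem~\ref{thm:tt} in place of Theorem~\ref{thm:tt.rel} (equivalently, Proposition~\ref{prop:growth.lb} in place of Proposition~\ref{prop:growth.lb.rel}) and with the trivial bound $\beta_\Gamma(1)\ge1$ recovers Theorem~\ref{thm:bk.iso}, so the two theorems really have a common proof.
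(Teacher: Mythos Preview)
Your proposal is correct and follows essentially the same approach as the paper: invert the growth lower bounds of Proposition~\ref{prop:growth.lb.rel} to obtain an upper bound on $\phi_\Gamma$, then feed this into Proposition~\ref{prop:iso-growth}. The paper's proof is even more terse than yours, handling the small-$|A|$ case ($|A|\le\beta_\Gamma(1)/2$) by the trivial observation $|\partial A|\ge\beta_\Gamma(1)/2$ rather than absorbing it into constants, and writing the $\phi_\Gamma$ bound directly as a single $\max$ over the two regimes; your remark that the same argument with Proposition~\ref{prop:growth.lb} recovers Theorem~\ref{thm:bk.iso} is also exactly how the paper proceeds.
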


Theorem \ref{thm:isoperimRel} is easily seen to be optimal in general by considering the case in which $A$ is a ball in the Cayley graph of $\Z^\fq\,\times\,(\Z/\lceil r^{\{q\}}\rceil\Z)\,\times\,(\Z/k\Z)$ with respect to the generating set $\{-1,0,1\}^{\fq+1}\times\Z/k\Z$.

\begin{proof}[Proof of Theorems \ref{thm:bk.iso}, \ref{thm:isoperim.orig.rel} and \ref{thm:isoperimRel}]
We start with Theorem \ref{thm:isoperimRel}, since it is that result that is most important for our applications. If $|A|\le\beta_\Gamma(1)/2$ then we trivially have $|\partial A|\ge\beta_\Gamma(1)/2$, and so the proposition holds. Defining $\phi_\Gamma$ as in Proposition \ref{prop:iso-growth}, Proposition \ref{prop:growth.lb.rel} implies that
\[
\phi_\Gamma(m)\ll\max\{\beta_\Gamma(1)^{-\frac{1}{\fq+1}}m^\frac{1}{\fq+1},\beta_\Gamma(1)^{-\frac{1}{\fq}}r^{-\frac{\{q\}}{\fq}}m^\frac{1}{\fq}\}
\]
for every $m\in\N$ with $\beta_\Gamma(1)\le m\le\beta_\Gamma(r)$. By Proposition \ref{prop:iso-growth} the theorem therefore also holds for all sets $A$ with $|A|\ge\beta_\Gamma(1)/2$.

Theorem \ref{thm:bk.iso} follows similarly with Proposition \ref{prop:growth.lb} in place of Proposition \ref{prop:growth.lb.rel}, and Theorem \ref{thm:isoperim.orig.rel} follows similarly with conclusion \eqref{eq:lb.rel.bgt} of Proposition \ref{prop:growth.lb.rel.all}.
\end{proof}

It will be convenient to isolate the following special cases of Theorems \ref{thm:bk.iso} and \ref{thm:isoperimRel}.
\begin{corollary}\label{cor:bk.iso}
Suppose that $\beta_\Gamma(r)= r^q$ in the setting of Theorem \ref{thm:bk.iso}.
\begin{enumerate}[label=(\roman*)]
\item\label{item:bk.iso.main}
We have
\[
|\partial A|\gg_\fq\min\left\{|A|^{\frac{\fq}{\fq+1}},\beta_\Gamma(r)^{\frac{1}{\fq}-\frac{1}{q}}|A|^{1-\frac{1}{\fq}}\right\};
\]
\item\label{item:bk.iso.spec} If $p\ge1$ is such that $\fp=\fq$ then
\[
|\partial A|\gg_\fp\min\left\{|A|^{\frac{\fp}{\fp+1}},|A|^{1-\frac{1}{p}}\left(\frac{\beta_\Gamma(r)}{r^p}\right)^{\frac{1}{p}}\left(\frac{\beta_\Gamma(r)}{|A|}\right)^{\frac{1}{\fp}-\frac{1}{p}}\right\}.
\]
\end{enumerate}
\end{corollary}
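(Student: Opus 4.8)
The plan is to treat both parts as purely algebraic reformulations of Theorem~\ref{thm:bk.iso} under the extra hypothesis $\beta_\Gamma(r)=r^q$, with no new geometric input. For part~\ref{item:bk.iso.main} I would simply observe that Theorem~\ref{thm:bk.iso} applies and yields
\[
|\partial A|\gg_\fq\min\bigl\{|A|^{\fq/(\fq+1)},\,r^{\{q\}/\fq}|A|^{(\fq-1)/\fq}\bigr\},
\]
and then substitute $r=\beta_\Gamma(r)^{1/q}$. Since $\{q\}=q-\fq$, this turns $r^{\{q\}/\fq}$ into $\beta_\Gamma(r)^{(q-\fq)/(q\fq)}=\beta_\Gamma(r)^{\frac1\fq-\frac1q}$, while $(\fq-1)/\fq=1-1/\fq$, which is exactly the asserted bound.

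For part~\ref{item:bk.iso.spec} I would write $k=\fp=\fq$ and note first that the implied constant $\gg_\fp$ coincides with $\gg_\fq$, as it depends only on $k$. By part~\ref{item:bk.iso.main} it then suffices to check that the two terms of the minimum in~\ref{item:bk.iso.spec} agree (up to absolute constants) with those in~\ref{item:bk.iso.main}. The first terms are literally equal, $|A|^{\fp/(\fp+1)}=|A|^{k/(k+1)}=|A|^{\fq/(\fq+1)}$. For the second terms I would substitute $\beta_\Gamma(r)=r^q$ into
\[
|A|^{1-\frac1p}\left(\frac{\beta_\Gamma(r)}{r^p}\right)^{\frac1p}\left(\frac{\beta_\Gamma(r)}{|A|}\right)^{\frac1\fp-\frac1p}
\]
and collect exponents: the exponent of $|A|$ is $\bigl(1-\tfrac1p\bigr)-\bigl(\tfrac1k-\tfrac1p\bigr)=1-\tfrac1k$, and the exponent of $r$ is $\tfrac{q-p}{p}+q\bigl(\tfrac1k-\tfrac1p\bigr)=\tfrac qk-1=\tfrac{\{q\}}{\fq}$. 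Hence this term equals $r^{\{q\}/\fq}|A|^{1-1/\fq}=\beta_\Gamma(r)^{\frac1\fq-\frac1q}|A|^{1-1/\fq}$, which is exactly the second term of~\ref{item:bk.iso.main}, so~\ref{item:bk.iso.spec} follows from~\ref{item:bk.iso.main}.

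The only step requiring any care is the exponent bookkeeping in part~\ref{item:bk.iso.spec}; in particular, the hypothesis $\fp=\fq$ is used precisely to match the $1/\fp$ appearing in the last factor against the $1/\fq$ produced by Theorem~\ref{thm:bk.iso}, after which the powers of $|A|$ and of $r$ collapse as above. There is no substantive obstacle beyond this routine manipulation.
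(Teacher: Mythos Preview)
Your proposal is correct and is essentially the same argument as the paper's: both parts are obtained from Theorem~\ref{thm:bk.iso} by purely algebraic substitution using $\beta_\Gamma(r)=r^q$ and $\fp=\fq$. The only cosmetic difference is that for part~\ref{item:bk.iso.spec} the paper rewrites the second term of~\ref{item:bk.iso.main} forwards into the form stated in~\ref{item:bk.iso.spec}, whereas you verify the identity in the reverse direction; the exponent bookkeeping is identical.
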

\begin{proof}
Conclusion \ref{item:bk.iso.main} follows trivially from substituting $\beta_\Gamma(r)=r^q\beta_\Gamma(1)$ into the conclusion of Theorem \ref{thm:bk.iso}. To check \ref{item:bk.iso.spec}, note that
\[\beta(r)^{\frac{1}{\fp}-\frac{1}{q}}=\beta(r)^{\frac{1}{p}-\frac{1}{q}}\beta(r)^{\frac{1}{\fp}-\frac{1}{p}}=\frac{1}{\beta_\Gamma(1)^{\frac{1}{q}}}\left(\frac{\beta_\Gamma(r)}{r^p}\right)^{\frac{1}{p}}\beta(r)^{\frac{1}{\fp}-\frac{1}{p}}\]
and
\[|A|^{1-\frac{1}{\fp}}=|A|^{1-\frac{1}{p}}\frac{1}{|A|^{\frac{1}{\fp}-\frac{1}{p}}},\]
and substitute these expressions into \ref{item:bk.iso.main}.
\end{proof}
\begin{corollary}\label{cor:isoperimRel}
Suppose that $\beta_\Gamma(r)= r^q\beta_\Gamma(1)$ in the setting of Theorem \ref{thm:isoperimRel}.
\begin{enumerate}[label=(\roman*)]
\item\label{item:isoperimRel.main} We have
\[
|\partial A|\gg\min\left\{\beta_\Gamma(1)^{\frac{1}{\fq+1}}|A|^{\frac{\fq}{\fq+1}},\beta_\Gamma(1)^{\frac{1}{q}}\beta(r)^{\frac{1}{\fq}-\frac{1}{q}}|A|^{1-\frac{1}{\fq}}\right\}.
\]
\item\label{item:isoperimRel.spec} If $p\ge1$ is such that $\fp=\fq$ then
\[
|\partial A|\gg_\fp\min\left\{\beta_\Gamma(1)^{\frac{1}{\fq+1}}|A|^{\frac{\fp}{\fp+1}},|A|^{1-\frac{1}{p}}\left(\frac{\beta_\Gamma(r)}{r^p}\right)^{\frac{1}{p}}\left(\frac{\beta_\Gamma(r)}{|A|}\right)^{\frac{1}{\fp}-\frac{1}{p}}\right\}.
\]
\end{enumerate}
\end{corollary}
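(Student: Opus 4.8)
The plan is to mirror exactly the proof of Corollary \ref{cor:bk.iso}, using the hypothesis $\beta_\Gamma(r)=r^q\beta_\Gamma(1)$ to rewrite the two terms appearing in the conclusion of Theorem \ref{thm:isoperimRel}. For part \ref{item:isoperimRel.main}, I would simply substitute $r^{\{q\}}=(\beta_\Gamma(r)/\beta_\Gamma(1))^{\{q\}/q}$ into the bound $\beta_\Gamma(1)^{1/\fq}r^{\{q\}/\fq}|A|^{(\fq-1)/\fq}$ from Theorem \ref{thm:isoperimRel}. Since $\{q\}/q=1/q-\fq/q\cdot\ldots$ — more directly, $\{q\}/(q\fq)=1/\fq-1/q$ after a short computation (using $\{q\}=q-\fq$), so $r^{\{q\}/\fq}=(\beta_\Gamma(r)/\beta_\Gamma(1))^{(\{q\}/q)\cdot(1/\fq)}\cdot\ldots$; carrying the exponents through yields the claimed form $\beta_\Gamma(1)^{1/q}\beta_\Gamma(r)^{1/\fq-1/q}|A|^{1-1/\fq}$. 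The first term is unchanged, giving \ref{item:isoperimRel.main}.

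For part \ref{item:isoperimRel.spec}, starting from \ref{item:isoperimRel.main} I would repeat verbatim the algebraic manipulation in the proof of Corollary \ref{cor:bk.iso}\ref{item:bk.iso.spec}: write
\[
\beta_\Gamma(r)^{\frac{1}{\fq}-\frac{1}{q}}=\beta_\Gamma(r)^{\frac{1}{p}-\frac{1}{q}}\beta_\Gamma(r)^{\frac{1}{\fp}-\frac{1}{p}}=\frac{1}{\beta_\Gamma(1)^{1/q}}\left(\frac{\beta_\Gamma(r)}{r^p}\right)^{\frac{1}{p}}\beta_\Gamma(r)^{\frac{1}{\fp}-\frac{1}{p}},
\]
where the last equality uses $\beta_\Gamma(r)^{1/p-1/q}=(r^q\beta_\Gamma(1))^{1/p-1/q}=r^{q/p-1}\beta_\Gamma(1)^{1/p-1/q}=(\beta_\Gamma(r)/r^p)^{1/p}\beta_\Gamma(1)^{-1/q}$ (here I used $r^q\beta_\Gamma(1)=\beta_\Gamma(r)$ again to convert $r^{q/p}\beta_\Gamma(1)^{1/p}=(\beta_\Gamma(r))^{1/p}$). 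Together with $|A|^{1-1/\fp}=|A|^{1-1/p}\cdot|A|^{-(1/\fp-1/p)}$ and the fact that $\beta_\Gamma(1)^{1/q}$ multiplied by $\beta_\Gamma(1)^{-1/q}$ cancels, this puts the second term of \ref{item:isoperimRel.main} into exactly the form $|A|^{1-1/p}(\beta_\Gamma(r)/r^p)^{1/p}(\beta_\Gamma(r)/|A|)^{1/\fp-1/p}$. The first term $\beta_\Gamma(1)^{1/(\fq+1)}|A|^{\fq/(\fq+1)}$ is untouched apart from replacing $\fq$ by $\fp$ in the exponent of $|A|$, which is legitimate since $\fp=\fq$.

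There is no real obstacle here — the content is entirely in Theorem \ref{thm:isoperimRel}, and the corollary is a bookkeeping exercise with exponents. The only point requiring a little care is keeping track of the identity $\{q\}/(q\fq)=1/\fq-1/q$ and the consequences of $\fp=\fq$ (so that $\{p\}\ne\{q\}$ in general but the integer-part-dependent exponents agree), and making sure the implied constant in \ref{item:isoperimRel.spec} is allowed to depend on $\fp=\fq$ as indicated by the $\gg_\fp$ notation; this dependence is already present in Theorem \ref{thm:isoperimRel} via $\gg$ with an absolute constant, so in fact one could likely state \ref{item:isoperimRel.spec} with an absolute constant, but stating it with $\gg_\fp$ is harmless and matches Corollary \ref{cor:bk.iso}. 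I would therefore present the proof in two or three lines, as in the proof of Corollary \ref{cor:bk.iso}.
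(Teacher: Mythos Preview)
Your proposal is correct and matches the paper's approach exactly: the paper's proof simply reads ``This is almost identical to the proof of Corollary~\ref{cor:bk.iso}'', and you have correctly spelled out the exponent bookkeeping that this entails. Your verification of the identity $\{q\}/(q\fq)=1/\fq-1/q$ for part~\ref{item:isoperimRel.main} and the factorisation $\beta_\Gamma(r)^{1/p-1/q}=(\beta_\Gamma(r)/r^p)^{1/p}\beta_\Gamma(1)^{-1/q}$ for part~\ref{item:isoperimRel.spec} are precisely the substitutions needed.
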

\begin{proof}This is almost identical to the proof of Corollary \ref{cor:bk.iso}.
\end{proof}

It will also be useful to have the following inequality, which does not require any growth hypothesis.
\begin{lemma}\label{lem:iso.rel.lin}
Let $r\in\N$ and suppose that $\Gamma$ is a connected, locally finite vertex-transitive graph of diameter at least $r$. Then for every subset $A\subset\Gamma$ with $|A|\le\beta_\Gamma(r)/2$ we have
\[
|\partial A|\ge\frac{\beta_\Gamma(1)}{32}.
\]
\end{lemma}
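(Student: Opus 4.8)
The plan is to treat small and large sets $A$ separately, the threshold being $|A|\asymp\beta_\Gamma(1)$. A few preliminary reductions: we may assume $A\ne\varnothing$, in which case $A$ is finite and $|A|\le\beta_\Gamma(r)/2\le|\Gamma|/2$, so in particular $A\subsetneq\Gamma$ and hence $|\partial A|\ge1$ by connectedness of $\Gamma$; and we may then also assume $\beta_\Gamma(1)>32$, since otherwise $|\partial A|\ge1\ge\beta_\Gamma(1)/32$ already.

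\emph{Small $A$.} Suppose $|A|\le\tfrac12\beta_\Gamma(1)$. Fix any $a\in A$; its neighbourhood $N(a)$ has $\deg(\Gamma)=\beta_\Gamma(1)-1$ elements, does not contain $a$, and $N(a)\setminus A\subseteq\partial A$, so
\[
|\partial A|\ge|N(a)|-|A\setminus\{a\}|=\beta_\Gamma(1)-|A|\ge\tfrac12\beta_\Gamma(1),
\]
which is more than enough. Note that the estimate $|\partial A|\ge\beta_\Gamma(1)-|A|$ obtained this way remains useful, with a worse constant, for $|A|$ up to a constant fraction of $\beta_\Gamma(1)$; this is the slack one uses to make the two regimes overlap.

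\emph{Large $A$.} Suppose $|A|>\tfrac12\beta_\Gamma(1)$. Here I would feed $A$ into the Coulhon--Saloff-Coste-type inequality of Proposition~\ref{prop:iso-growth}, which gives $|\partial A|\ge|A|/(12\,\phi_\Gamma(2|A|))$; so the task is to bound $\phi_\Gamma(2|A|)$ from above by a constant multiple of $|A|/\beta_\Gamma(1)$. Put $n=\phi_\Gamma(2|A|)$. The hypothesis $\beta_\Gamma(r)\ge2|A|$ forces $n\le r$, and $2|A|>\beta_\Gamma(1)$ forces $n\ge2$; hence $1\le n-1\le r-1<\diam(\Gamma)=\rad_x(\Gamma)$, so Lemma~\ref{lem:growth.lb.rel.lin} applies at radius $n-1$ and gives $\beta_\Gamma(n-1)\ge\tfrac13\beta_\Gamma(1)(n-1)$. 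On the other hand, minimality of $n$ gives $\beta_\Gamma(n-1)<2|A|$, whence $n-1<6|A|/\beta_\Gamma(1)$, and therefore
\[
|\partial A|\ge\frac{|A|}{12\,n}\gg\beta_\Gamma(1).
\]

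The genuinely delicate point is the transitional range $|A|\asymp\beta_\Gamma(1)$: there Proposition~\ref{prop:iso-growth} is weakest (as $\phi_\Gamma(2|A|)$ is then a small integer) while the elementary neighbour count no longer applies, so to pin down the explicit constant $\tfrac1{32}$ one has to play the two bounds $|\partial A|\ge\beta_\Gamma(1)-|A|$ and $|\partial A|\ge|A|/(12\phi_\Gamma(2|A|))$ off against each other rather than use either in isolation; I expect this bookkeeping, rather than any conceptual difficulty, to be where the real work lies. Finally, note the one place the diameter hypothesis $\diam(\Gamma)\ge r$ enters: it guarantees $\phi_\Gamma(2|A|)-1\le\diam(\Gamma)$, so that Lemma~\ref{lem:growth.lb.rel.lin}---which only constrains balls of radius at most $\rad_x(\Gamma)=\diam(\Gamma)$---is applicable at radius $\phi_\Gamma(2|A|)-1$.
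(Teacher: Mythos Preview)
Your approach is exactly the paper's: split at $|A|=\beta_\Gamma(1)/2$, handle the small case by a neighbour count, and the large case via Proposition~\ref{prop:iso-growth} combined with the linear growth bound of Lemma~\ref{lem:growth.lb.rel.lin}. Your final paragraph, however, is misguided: there is no residual work in the ``transitional range.'' Your two cases already cover all of $\{1,\ldots,\lfloor\beta_\Gamma(r)/2\rfloor\}$ and yield an explicit absolute constant---from your own bounds one gets $|\partial A|\ge\beta_\Gamma(1)/2$ in the small case and, since $n\le 8|A|/\beta_\Gamma(1)$ once $|A|\ge\beta_\Gamma(1)/2$, one gets $|\partial A|\ge\beta_\Gamma(1)/96$ in the large case. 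This is in fact the same constant the paper's own proof produces; the stated $\tfrac{1}{32}$ appears to be a minor slip, not something obtained by further bookkeeping.
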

\begin{proof}If $|A|\le\beta_\Gamma(1)/2$ then we trivially have $|\partial A|\ge\beta_\Gamma(1)/2$, and so the lemma holds. Defining $\phi_\Gamma$ as in Proposition \ref{prop:iso-growth}, Lemma \ref{lem:growth.lb.rel.lin} implies that $\phi_\Gamma(m)\le\lceil3m/\beta_\Gamma(1)\rceil$ for every $m\le\beta_\Gamma(r)$, and in particular that $\phi_\Gamma(m)\le4m/\beta_\Gamma(1)$ for every $m$ satisfying $\beta_\Gamma(1)\le m\le\beta_\Gamma(r)$. Proposition \ref{prop:iso-growth} therefore implies that the lemma holds for every $|A|\ge\beta_\Gamma(1)/2$.
\end{proof}

We close this section with the following converse to Theorem \ref{thm:bk.iso.orig}.
\begin{prop}[converse to Theorem \ref{thm:bk.iso.orig}]\label{prop:iso.conv}
Let $q\ge1$, let $\Gamma$ be a locally finite  vertex-transitive graph, let $e\in\Gamma$, let $r\in\N$ be such that $r\le\diam(\Gamma)$, and suppose that
\begin{equation}\label{eq:iso.conv}
|\partial B_\Gamma(e,n)|\ge\beta_\Gamma(n)^{\frac{q-1}{q}}
\end{equation}
for every $n\in\N$ such that $\beta_\Gamma(n)\le\frac{1}{2}\beta_\Gamma(r)$. Then 
\begin{equation}\label{eq:iso.conv.concl}
\beta_\Gamma(r)\gg_qr^q.
\end{equation}
\end{prop}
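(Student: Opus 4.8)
The plan is to read hypothesis \eqref{eq:iso.conv} as a discrete differential inequality for the ball‑volume function and to ``integrate'' it. Since $\partial B_\Gamma(e,n)=S_\Gamma(e,n+1)$ we have $|\partial B_\Gamma(e,n)|=\beta_\Gamma(n+1)-\beta_\Gamma(n)$, so \eqref{eq:iso.conv} says exactly that $\beta_\Gamma(n+1)-\beta_\Gamma(n)\ge\beta_\Gamma(n)^{1-1/q}$ for every $n$ with $\beta_\Gamma(n)\le\tfrac12\beta_\Gamma(r)$. The natural substitution is $\beta_\Gamma^{1/q}$: writing $\beta_\Gamma(n+1)^{1/q}\ge\beta_\Gamma(n)^{1/q}\bigl(1+\beta_\Gamma(n)^{-1/q}\bigr)^{1/q}$ and using concavity of $t\mapsto(1+t)^{1/q}$ on $[0,1]$ together with $\beta_\Gamma(n)\ge1$, one obtains
\[
\beta_\Gamma(n+1)^{1/q}\ge\beta_\Gamma(n)^{1/q}+(2^{1/q}-1)
\]
for every such $n$. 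Telescoping this increment from $n=0$ upwards will produce a lower bound on $\beta_\Gamma(r)^{1/q}$ that is linear in the number of radii over which the hypothesis is available.

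The one thing that makes this more than a formality — and the step I expect to be the main obstacle — is that the telescoping is useful only if the set of $n$ with $\beta_\Gamma(n)\le\tfrac12\beta_\Gamma(r)$ has length comparable to $r$; a priori $\beta_\Gamma$ might cross the threshold $\tfrac12\beta_\Gamma(r)$ at a radius much smaller than $r$, giving control of $\beta_\Gamma(r)$ only in terms of that smaller radius. I would rule this out with a reverse‑doubling estimate: \emph{for every integer $n$ with $3n+1\le r$ one has $\beta_\Gamma(n)\le\tfrac12\beta_\Gamma(r)$}. Indeed, since $3n+1\le r\le\diam(\Gamma)$, connectedness and vertex‑transitivity provide a vertex $y$ with $d(e,y)=2n+1$; then $B_\Gamma(e,n)$ and $B_\Gamma(y,n)$ are disjoint by the triangle inequality and both lie in $B_\Gamma(e,3n+1)\subset B_\Gamma(e,r)$, and $|B_\Gamma(y,n)|=\beta_\Gamma(n)$ by vertex‑transitivity, so $\beta_\Gamma(r)\ge2\beta_\Gamma(n)$.

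Combining the two ingredients finishes the proof. Let $N$ be the least $n$ with $\beta_\Gamma(n)>\tfrac12\beta_\Gamma(r)$; the reverse‑doubling estimate gives $N>(r-1)/3$, while $N\le r$ because $\beta_\Gamma(r)>\tfrac12\beta_\Gamma(r)$. For every $n\in\{0,\dots,N-1\}$ we have $\beta_\Gamma(n)\le\beta_\Gamma(N-1)\le\tfrac12\beta_\Gamma(r)$, so the displayed increment inequality applies and telescoping yields $\beta_\Gamma(N)^{1/q}\ge\beta_\Gamma(0)^{1/q}+(2^{1/q}-1)N\ge(2^{1/q}-1)\,r/3$, a bound easily checked to be uniform in $r\ge1$ (using $2^{1/q}-1\le1$). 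Since $N\le r$ this gives $\beta_\Gamma(r)^{1/q}\ge\beta_\Gamma(N)^{1/q}\gg_q r$, whence $\beta_\Gamma(r)\gg_q r^q$, which is \eqref{eq:iso.conv.concl}. The only routine points left are the elementary concavity inequality invoked above and the trivial facts that $\beta_\Gamma(0)=1$ and $\beta_\Gamma$ is non‑decreasing.
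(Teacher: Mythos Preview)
Your proof is correct and follows essentially the same approach as the paper's: both integrate the differential inequality $\beta_\Gamma(n+1)-\beta_\Gamma(n)\ge\beta_\Gamma(n)^{1-1/q}$ to obtain $\beta_\Gamma(m)\gg_q m^q$, and both use a disjoint-balls reverse-doubling argument to guarantee that the hypothesis is available for all radii up to a fixed fraction of $r$. The paper leaves the integration step as ``straightforward'' and uses the constant $\lfloor r/4\rfloor$ in place of your $(r-1)/3$, but the structure is identical; your write-up is in fact more explicit (one tiny point: the hypothesis is stated only for $n\in\N$, so strictly speaking the $n=0$ step of your telescoping needs the trivial observation $\beta_\Gamma(1)\ge2$ rather than \eqref{eq:iso.conv}).
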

\begin{proof}
It is straightforward to check that \eqref{eq:iso.conv.concl} holds if \eqref{eq:iso.conv} holds for every $n=1,\ldots,r$. In particular, if \eqref{eq:iso.conv} holds for every $n=1,\ldots,\lfloor r/4\rfloor$ then $\beta_\Gamma(r)\ge\beta_\Gamma(\lfloor r/4\rfloor)\gg_q\lfloor r/4\rfloor^q\gg_qr^q$, and so \eqref{eq:iso.conv.concl} holds. It therefore suffices to show that $\beta_\Gamma(\lfloor r/4\rfloor)\le\frac{1}{2}\beta_\Gamma(r)$; to see this, note that if $x$ is an arbitrary element at distance, say, $\lfloor 2r/3\rfloor$ from $e$ then $B_\Gamma(e,\lfloor r/4\rfloor)$ and $B_\Gamma(x,\lfloor r/4\rfloor)$ are disjoint subsets of $B_\Gamma(e,r)$ of size $\beta_\Gamma(\lfloor r/4\rfloor)$.
\end{proof}

\section{Upper bounds on $p$-resistance}\label{sec:resist.ub}
In this section we establish the upper bounds on $p$-resistance appearing in our main theorems. We start by considering finite graphs, where the details are slightly cleaner. The following two results imply in particular the upper bounds of Theorems \ref{thm:mainp} and \ref{thm:main.linearp}.

\begin{prop}\label{prop:main.ub}
Let $p\in (1,3)$. Let $\Gamma$ be a finite, connected, vertex-transitive graph of diameter $\gamma$. Then
\[
R_{\Gamma,p}\ll_p\frac{1}{\deg(\Gamma)}+\frac{\gamma^p\left(\log(|\Gamma|/\deg(\Gamma))\right)^{p-1}}{|\Gamma|}.
\]
Moreover, if $p$ is not an integer then 
\[
R_{\Gamma,p}\ll_p\frac{1}{\deg(\Gamma)}+\frac{\gamma^p}{|\Gamma|}.
\]
\end{prop}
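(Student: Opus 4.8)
The plan is to apply the Benjamini--Kozma bound (Theorem \ref{thm:bk}) to a worst-case pair of vertices $u,v$ in $\Gamma$, and to feed into it the isoperimetric inequality of Theorem \ref{thm:isoperimRel}. Since $\Gamma$ is vertex-transitive, $\Gamma\setminus\{u\}$ and $\Gamma\setminus\{v\}$ are connected (a vertex-transitive graph is $2$-connected unless it is a single edge, which is trivial), so Theorem \ref{thm:bk} applies and it suffices to bound, for a vertex $u$ and each dyadic scale $|\Gamma|/2^{n+1}<|A|\le|\Gamma|/2^n$, the quantity $j_{A,p}$ for connected sets $A\ni u$. By the Remark following Theorem \ref{thm:bk.inf} it is cleanest to use the vertex term $\frac{|A|}{|\partial A|^{p/(p-1)}}+\frac{1}{|\partial A|^{1/(p-1)}}$. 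The point is that $|\Gamma|$ plays the role of $\beta_\Gamma(r)$ with $r=\gamma=\diam(\Gamma)$: since $\Gamma$ is connected of diameter $\gamma$, we have $|\Gamma|=\beta_\Gamma(\gamma)\ge\gamma^q$ where $q$ is determined by $\beta_\Gamma(\gamma)=\gamma^q$. We then apply Corollary \ref{cor:bk.iso}\ref{item:bk.iso.spec} (or Corollary \ref{cor:isoperimRel}\ref{item:isoperimRel.spec} if one wants the $\beta_\Gamma(1)$-refined version, though here $\deg(\Gamma)\asymp\beta_\Gamma(1)$) with this $p$, noting $\fp=\fq$ holds provided $q\in[\fp,\fp+1)$, which we can arrange — if $q\ge p$ the graph has even faster growth and the bound is only easier, while if $q<p$ a direct argument handles it.

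The main computation is then the following. For $A$ with $|A|\asymp|\Gamma|/2^n$, Corollary \ref{cor:bk.iso}\ref{item:bk.iso.spec} gives $|\partial A|\gg_p\min\{|A|^{\fp/(\fp+1)},\ |A|^{1-1/p}(|\Gamma|/\gamma^p)^{1/p}(|\Gamma|/|A|)^{1/\fp-1/p}\}$. I would substitute this into the vertex term of $j_{A,p}$; the first branch of the minimum contributes a bounded geometric-type sum (it behaves like $\sum_n (2^n/|\Gamma|)^{1/(\fp+1)\cdot p/(p-1)}\cdot(\ldots)$, which converges because $p/(p-1)>1$ forces a net negative power of $|\Gamma|$ up to a factor absorbed into $\ll_p$), and the second branch is where the $\gamma^p/|\Gamma|$ main term emerges: one checks that $\frac{|A|}{|\partial A|^{p/(p-1)}}\ll_p \frac{\gamma^p}{|\Gamma|}$ uniformly in the dyadic scale when $p$ is not an integer, because then $1/\fp-1/p>0$ strictly and the exponents are in "good position", and that $\frac{1}{|\partial A|^{1/(p-1)}}$ is summable to something $\ll_p\frac{1}{\deg(\Gamma)}+\frac{\gamma^p}{|\Gamma|}$. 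Summing over the $O(\log(|\Gamma|/\deg(\Gamma)))$ scales and raising to the power $p-1$, together with the $\deg(u)^{-1/(p-1)}$ terms from Theorem \ref{thm:bk}, yields the non-integer bound $R_{\Gamma,p}\ll_p \frac{1}{\deg(\Gamma)}+\frac{\gamma^p}{|\Gamma|}$.

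For the general (possibly integer) $p\in(1,3)$ case, the exponent $1/\fp-1/p$ can equal zero (exactly when $p$ is an integer), so the second branch of the isoperimetric bound degrades and the per-scale contribution to $\sum j_{A,p}$ is only $O\big((\gamma^p/|\Gamma|)^{1/(p-1)}\big)$ with no decay across scales; summing over $O(\log(|\Gamma|/\deg(\Gamma)))$ scales then raising to the $(p-1)$-st power produces exactly the claimed $\frac{\gamma^p(\log(|\Gamma|/\deg(\Gamma)))^{p-1}}{|\Gamma|}$ term. I expect the main obstacle to be the bookkeeping in this dyadic sum: one must carefully track which branch of the minimum in Corollary \ref{cor:bk.iso}\ref{item:bk.iso.spec} dominates at each scale (this depends on whether $|A|$ is above or below a threshold comparable to $\gamma^{\fp+1}$ times a $\deg$-factor), verify in each regime that the relevant geometric series converges with the right exponent, and confirm that the crossover scale does not contribute a spurious extra logarithm when $p\notin\Z$. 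A secondary technical point is justifying the reduction to the clean case $\fp=\fq$ and handling the edge term versus vertex term of $j_{A,p}$ correctly when $\deg(\Gamma)$ is large — but this is exactly the content of the Remark after Theorem \ref{thm:bk.inf}, and choosing the vertex term throughout is harmless.
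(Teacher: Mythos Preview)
Your approach is essentially the paper's: apply Theorem \ref{thm:bk}, bound $j_{A,p}$ at each dyadic scale via the isoperimetric inequalities of Section \ref{sec:iso}, and sum. The paper organises this by defining $q$ via $|\Gamma|=\gamma^q\beta_\Gamma(1)$ (the \emph{relative} exponent) and splitting into the four cases $q\ge\fp+1$, $\fp\le q<\fp+1$, $1\le q<\fp$, $q<1$, with the $j_{A,p}$ bounds packaged in advance as Lemma \ref{lem:j.ub.p=3}. One point to tighten: you should commit to the relative isoperimetric inequality (Corollary \ref{cor:isoperimRel}/Theorem \ref{thm:isoperim.orig.rel}) rather than Corollary \ref{cor:bk.iso}; the absolute version lacks the $\beta_\Gamma(1)^{1/(\fp+1)}$ factor in the first branch, and without it the first-branch contribution sums only to $\asymp\deg(\Gamma)^{(p-\fp-1)/((\fp+1)(p-1))}$ rather than $\deg(\Gamma)^{-1/(p-1)}$, so you would not recover the sharp $1/\deg(\Gamma)$ term.
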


\begin{prop}\label{prop:main.p.ub}
Let $p>1$. Let $\Gamma$ be a finite, connected, vertex-transitive graph of diameter $\gamma$, and define $q\ge1$ so that $|\Gamma|=\gamma^q$.
\begin{enumerate}[label=(\roman*)]
\item If $q\ge\fp+1$ then
\[
R_{\Gamma,p}\ll_p\frac{1}{\deg(\Gamma)^{1-\frac{p}{\fp+1}}}.
\]
\item If $\fp\le q<\fp+1$ then
\[
R_{\Gamma,p}\ll_p\frac{1}{\deg(\Gamma)^{1-\frac{p}{\fp+1}}}+\frac{\gamma^p\left(\log(|\Gamma|/\deg(\Gamma))\right)^{p-1}}{|\Gamma|}.
\]
If moreover $p$ is not an integer then
\[
R_{\Gamma,p}\ll_p\frac{1}{\deg(\Gamma)^{1-\frac{p}{\fp+1}}}+\frac{\gamma^p}{|\Gamma|}.
\]
\item If $q\le p$ then
\[
R_{\Gamma,p}\ll_p\frac{1}{\deg(\Gamma)}+\frac{\gamma^p\left(\log(|\Gamma|/\deg(\Gamma))\right)^{p-1}}{|\Gamma|}.
\]
\item If $q<\fp$ and $p$ is not an integer then
\[
R_{\Gamma,p}\ll_p\frac{\gamma^p}{|\Gamma|}.
\]
\item\label{item:main.p.ub.p-eps} If $q\le p-\eps$ for some $\eps>0$ then
\[
R_{\Gamma,p}\ll_{p,\eps}\frac{\gamma^p}{|\Gamma|}.
\]
\end{enumerate}
\end{prop}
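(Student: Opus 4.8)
The plan is to apply the Benjamini--Kozma bound of Theorem~\ref{thm:bk} to an arbitrary pair of vertices and to feed into it the isoperimetric inequalities of Section~\ref{sec:iso}; the five cases then just record which dyadic scale dominates the resulting sum, and whether that sum behaves geometrically or harmonically.

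Write $\gamma=\diam(\Gamma)$, and note the cases $|\Gamma|\le2$ are trivial. Fix $u,v\in\Gamma$. Since a finite connected graph has at least two non-cut vertices, vertex-transitivity forces $\Gamma\setminus\{u\}$ and $\Gamma\setminus\{v\}$ to be connected, so Theorem~\ref{thm:bk} applies and gives
\[
R_p(u\leftrightarrow v)^{\frac1{p-1}}\ll\frac1{\deg(\Gamma)^{\frac1{p-1}}}+\sum_{n=1}^{\lfloor\log_2(|\Gamma|/\deg(\Gamma))\rfloor}\ \max_{\substack{u\in A,\ A\text{ connected}\\|\Gamma|/2^{n+1}<|A|\le|\Gamma|/2^n}}j_{A,p}
\]
together with the analogous sum with $v$ in place of $u$. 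Taking $r=\gamma$ in the isoperimetric inequalities of Section~\ref{sec:iso} we have $\beta_\Gamma(\gamma)=|\Gamma|=\gamma^q$, and every $A$ above satisfies $|A|\le\beta_\Gamma(\gamma)/2$, so those inequalities are available. For the lower bound on $|\partial A|$ I would distinguish $\fq$ from $\fp$: if $\fq=\fp$ I would use the sharp two-slope bound of Corollary~\ref{cor:bk.iso}\ref{item:bk.iso.spec}; if $q\ge\fp+1$ I would apply Theorem~\ref{thm:bk.iso} with exponent $\fp+1$ (legitimate since then $\beta_\Gamma(\gamma)\ge\gamma^{\fp+1}$), obtaining $|\partial A|\gg_p|A|^{\fp/(\fp+1)}$; and if $q<\fp$ I would apply Theorem~\ref{thm:bk.iso} with $q$ itself. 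Substituting the resulting bound into the vertex term $\frac{|A|}{|\partial A|^{p/(p-1)}}+\frac1{|\partial A|^{1/(p-1)}}$ of $j_{A,p}$ (which dominates the edge term here, cf.\ Remark~\ref{rem:vertex}) and using $|A|\asymp\gamma^q/2^n$ expresses $\max_A j_{A,p}$ as a sum of powers of $\gamma^q/2^n$, $\gamma$ and $\deg(\Gamma)$.

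Next I would sum over $n$. Each resulting series in $2^{-n}$ is comparable to its largest term unless an exponent of $2^{-n}$ vanishes, in which case it is harmonic and contributes a factor $\log(|\Gamma|/\deg(\Gamma))$. The two candidate dominant scales are $n\asymp\log_2(|\Gamma|/\deg(\Gamma))$, where $|A|\asymp\deg(\Gamma)$ and the bound $|\partial A|\gg_p|A|^{\fp/(\fp+1)}$ yields a contribution $\asymp\deg(\Gamma)^{-\frac1{p-1}\left(1-\frac p{\fp+1}\right)}$, and $n\asymp1$, where $|A|\asymp|\Gamma|$ and a short calculation using $\beta_\Gamma(\gamma)=\gamma^q$ yields a contribution $\asymp\gamma^{(p-q)/(p-1)}$; after raising to the power $p-1$ these become $\deg(\Gamma)^{-(1-p/(\fp+1))}$ and $\gamma^p/|\Gamma|$. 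The case analysis then records which wins. If $q\ge\fp+1$ every exponent of $2^{-n}$ is strictly negative, so the lower scale dominates and no logarithm appears: case~(i). If $q$ is bounded away from $p$ — as when $q\le p-\eps$, and also when $p\notin\N$ and $q<\fp$, since then $p-q\ge\{p\}>0$ — the series is genuinely geometric, the upper scale wins cleanly with a constant depending only on $p$ (respectively on $p$ and $\eps$), and one obtains $\gamma^p/|\Gamma|$ alone: cases~(iv) and~(v). If $\fp\le q<\fp+1$ both scales contribute, producing both displayed terms, together with the factor $(\log(|\Gamma|/\deg(\Gamma)))^{p-1}$ unless $p\notin\N$, in which case no exponent of $2^{-n}$ vanishes: case~(ii). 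Finally, if $q\le p$ with $\fq=\fp$ the upper scale wins, but as $q\uparrow p$ the geometric ratio degenerates and one genuinely loses a logarithm at $q=p\in\N$, so absorbing this uniformly over $q\le p$ yields the conservative bound with the $(\log)^{p-1}$ factor: case~(iii). One then takes the maximum over $u,v$ and, as usual, disposes of the very bottom scales $|A|\lesssim\deg(\Gamma)$ via the edge term of $j_{A,p}$, Lemma~\ref{lem:iso.rel.lin} and~\eqref{eq:boundaries}.

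The real work, and the only genuine obstacle, is this bookkeeping: correctly locating the dominant dyadic scale for every position of $q$ relative to $\fp$, $\fp+1$ and $p$ simultaneously, handling the two slopes of the isoperimetric inequalities (which forces the dyadic sum to be split where the growth exponent drops), and pinpointing exactly when a harmonic — hence logarithmic — middle range occurs. The argument refines the proof of Proposition~\ref{prop:main.ub} (which corresponds to $p<3$), and each individual estimate is elementary once the appropriate isoperimetric inequality has been inserted at the relevant scale.
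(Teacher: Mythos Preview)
Your approach is correct and essentially the same as the paper's: both feed the isoperimetric inequalities of Section~\ref{sec:iso} into Theorem~\ref{thm:bk} and then analyse the resulting dyadic sum according to the position of $q$ relative to $\fp$, $\fp+1$ and $p$. The paper streamlines the bookkeeping by first packaging the dyadic sum into Proposition~\ref{prop:Rpleqq'} (via Lemma~\ref{lem:j.ub.pq}) and Lemma~\ref{lem:j.ub.fp=fq}, then invoking these together with the identity $|\Gamma|^{(p-q)/q}=\gamma^p/|\Gamma|$ for each case; you describe the same computation directly at the level of dyadic scales, which is equivalent but slightly less modular.
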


Our next two results contain the upper bounds of Theorem \ref{thm:main.unimodp} in the special case in which the diameter of $\Gamma$ is at least $4r$, and a variant of Theorem \ref{thm:main.unimodp.linear}. At the end of the section we show how to deduce Theorems \ref{thm:main.unimodp} and \ref{thm:main.unimodp.linear} in full.

\begin{prop}\label{prop:unimod.ub}
Let $p\in (1,3)$ and $r\in\N$. Let $\Gamma$ be a connected, locally finite vertex-transitive graph of diameter at least $4r$, and let $x\in\Gamma$. Then
\[
R_p(x\leftrightarrow S_\Gamma(x,r+1))\ll_p\frac{1}{\deg(\Gamma)}+\frac{r^p\left(\log(\beta_\Gamma(r)/\deg(\Gamma))\right)^{p-1}}{\beta_\Gamma(r)}.
\]
Moreover, if $p$ is not an integer then 
\[
R_p(x\leftrightarrow S_\Gamma(x,r+1))\ll_p\frac{1}{\deg(\Gamma)}+\frac{r^p}{\beta_\Gamma(r)}.
\]
\end{prop}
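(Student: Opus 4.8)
The plan is to run the Benjamini--Kozma argument on the ball $B=B_\Gamma(x,r)$. Since $\diam(\Gamma)\ge4r>r$ this is a finite connected proper subset of $\Gamma$ containing $x$, and its external vertex boundary is precisely $S_\Gamma(x,r+1)$, so Theorem \ref{thm:bk.inf} gives
\[
R_p(x\leftrightarrow S_\Gamma(x,r+1))^{\frac1{p-1}}\ll\frac1{\deg(\Gamma)^{\frac1{p-1}}}+\sum_{n=0}^{\lfloor\log_2(\beta_\Gamma(r)/\deg(\Gamma))\rfloor}\max_{A}\;j_{A,p},
\]
the maximum being over connected $A\subseteq B_\Gamma(x,r)$ containing $x$ with $\beta_\Gamma(r)/2^{n+1}<|A|\le\beta_\Gamma(r)/2^n$. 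By Remark \ref{rem:vertex} it is best to use the vertex term of $j_{A,p}$, so everything reduces to lower bounds on $|\partial A|$ at each dyadic scale of $|A|$.

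For the scales $n\ge1$ we have $|A|\le\beta_\Gamma(r)/2$, which is exactly the range in which the relative isoperimetric inequality of Theorem \ref{thm:isoperimRel} (equivalently Corollary \ref{cor:isoperimRel}) applies. I would set $q=\min\{3,\max\{1,q_0\}\}$, where $q_0$ is defined by $\beta_\Gamma(r)=r^{q_0}\beta_\Gamma(1)$; the hypothesis $\beta_\Gamma(r)\ge r^q\beta_\Gamma(1)$ then holds except when $q_0<1$, in which case one simply uses the growth-free bound $|\partial A|\gg\beta_\Gamma(1)$ of Lemma \ref{lem:iso.rel.lin} instead. In either case $\beta_\Gamma(1)\asymp\deg(\Gamma)$, so each $j_{A,p}$ becomes an explicit function of $|A|$, $r$, $\beta_\Gamma(r)$ and $\deg(\Gamma)$. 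The scale $n=0$ is genuinely different, because there $|A|$ may be as large as $\beta_\Gamma(r)$, outside the scope of Theorem \ref{thm:isoperimRel}; here I would first record the elementary consequence of $\diam(\Gamma)\ge4r$ that $\beta_\Gamma(4r)\ge2\beta_\Gamma(r)$ — choosing $y$ with $d(x,y)=3r$ makes $B_\Gamma(x,r)$ and $B_\Gamma(y,r)$ disjoint subsets of $B_\Gamma(x,4r)$ — and then apply Proposition \ref{prop:iso-growth} directly, which yields $|\partial A|\ge|A|/(12\phi_\Gamma(2|A|))\gg\beta_\Gamma(r)/r$ and hence $j_{A,p}\ll_p r^{p/(p-1)}/\beta_\Gamma(r)^{\frac1{p-1}}$, comfortably inside the target.

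It then remains to sum the $n\ge1$ contributions. Substituting the two-branch bound of Corollary \ref{cor:isoperimRel} and writing $|A|\asymp\beta_\Gamma(r)2^{-n}$, each summand is a power of $2^{-n}$ times fixed powers of $r,\beta_\Gamma(r),\deg(\Gamma)$; which branch of the minimum is active switches at a single scale, exactly as in the case analysis of Proposition \ref{prop:main.p.ub}. The sum is a convergent geometric series in $2^{-n}$ in every case \emph{except} the borderline one $\beta_\Gamma(r)\asymp r^p\deg(\Gamma)$, where the exponents cancel and the series is flat; that flat case contributes the factor $\lfloor\log_2(\beta_\Gamma(r)/\deg(\Gamma))\rfloor$, which becomes $(\log(\beta_\Gamma(r)/\deg(\Gamma)))^{p-1}$ after raising to the power $p-1$. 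Off the borderline, the geometric sum is controlled by its extreme term: its smallest-$|A|$ end gives the $1/\deg(\Gamma)$ contribution (using the baseline $|\partial A|\gg\beta_\Gamma(1)$ from Lemma \ref{lem:iso.rel.lin} for the $1/|\partial A|^{1/(p-1)}$ part of $j_{A,p}$), while its largest-$|A|$ end gives the $r^p/\beta_\Gamma(r)$ contribution. When $p\notin\Z$, the exponent cancellation that produces the flat series cannot occur — the floors $\lfloor p\rfloor,\lfloor q\rfloor$ never align perfectly with $p$ — so the series stays geometric and the logarithm can be dropped, which is the improved bound.

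The step I expect to cost the most effort is the bookkeeping of this last paragraph: correctly tracking the active branch of the isoperimetric minimum on each dyadic window and verifying that the geometric (or flat) series assembles to precisely $\deg(\Gamma)^{-1}+r^p(\log(\beta_\Gamma(r)/\deg(\Gamma)))^{p-1}/\beta_\Gamma(r)$, and to $\deg(\Gamma)^{-1}+r^p/\beta_\Gamma(r)$ when $p$ is not an integer, across all ranges of $q$ relative to $p$. The handling of the top scale via the doubling consequence of $\diam(\Gamma)\ge4r$ is a smaller but necessary point and should be isolated as its own lemma.
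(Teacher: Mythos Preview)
Your proposal is correct and follows essentially the same route as the paper: apply Theorem \ref{thm:bk.inf} with $B=B_\Gamma(x,r)$, feed in the relative isoperimetric inequality of Theorem \ref{thm:isoperimRel}/Corollary \ref{cor:isoperimRel} (packaged in the paper as Lemma \ref{lem:j.ub.p=3}) to bound each $j_{A,p}$, and sum the resulting (geometric or flat) series. The one organisational difference is that the paper defines $q$ via $\beta_\Gamma(4r)=(4r)^q\beta_\Gamma(1)$ and invokes Lemma \ref{lem:4r>2.r} so that \emph{every} $A\subset B_\Gamma(x,r)$ already satisfies $|A|\le\beta_\Gamma(4r)/2$; this lets the isoperimetric inequality at radius $4r$ cover the $n=0$ scale along with the rest, avoiding your separate treatment of the top scale via Proposition \ref{prop:iso-growth}. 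Your handling of $n=0$ is perfectly valid, but the paper's device is a little cleaner and worth adopting.
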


\begin{prop}\label{prop:unimod.p.ub}
Let $p>1$ and $r\in\N$. Let $\Gamma$ be a connected, locally finite vertex-transitive graph of diameter at least $4r$, let $x\in\Gamma$, and define $q\ge1$ so that $\beta_\Gamma(4r)=(4r)^q$.
\begin{enumerate}[label=(\roman*)]
\item If $q\ge\fp+1$ then
\[
R_p(x\leftrightarrow S_\Gamma(x,r+1))\ll_p\frac{1}{\deg(\Gamma)^{1-\frac{p}{\fp+1}}}.
\]
\item If $\fp\le q<\fp+1$ then
\[
R_p(x\leftrightarrow S_\Gamma(x,r+1))\ll_p\frac{1}{\deg(\Gamma)^{1-\frac{p}{\fp+1}}}+\frac{r^p\left(\log(\beta_\Gamma(r)/\deg(\Gamma))\right)^{p-1}}{\beta_\Gamma(r)}.
\]
If moreover $p$ is not an integer then
\[
R_p(x\leftrightarrow S_\Gamma(x,r+1))\ll_p\frac{1}{\deg(\Gamma)^{1-\frac{p}{\fp+1}}}+\frac{r^p}{\beta_\Gamma(r)}.
\]
\item If $q\le p$ then
\[
R_p(x\leftrightarrow S_\Gamma(x,r+1))\ll_p\frac{1}{\deg(\Gamma)}+\frac{r^p\left(\log(\beta_\Gamma(r)/\deg(\Gamma))\right)^{p-1}}{\beta_\Gamma(r)}.
\]
\item If $q<\fp$ and $p$ is not an integer then
\[
R_p(x\leftrightarrow S_\Gamma(x,r+1))\ll_p\frac{r^p}{\beta_\Gamma(r)}.
\]
\item\label{item:main.p.ub.p-eps} If $q\le p-\eps$ for some $\eps>0$ then
\[
R_p(x\leftrightarrow S_\Gamma(x,r+1))\ll_{p,\eps}\frac{r^p}{\beta_\Gamma(r)}.
\]
\end{enumerate}
\end{prop}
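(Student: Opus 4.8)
The plan is to run, for the ball $B_\Gamma(x,r)$, the same argument that proves Proposition \ref{prop:main.p.ub}: reduce to a Benjamini--Kozma sum via Theorem \ref{thm:bk.inf}, feed in the isoperimetric inequalities of Section \ref{sec:iso} applied at scale $4r$, and evaluate the resulting dyadic sum. The hypothesis $\diam(\Gamma)\ge 4r$ is used twice: for $y$ with $d(x,y)=2r+1$ the balls $B_\Gamma(x,r)$ and $B_\Gamma(y,r)$ are disjoint subsets of $B_\Gamma(x,3r+1)$, so $\beta_\Gamma(4r)\ge 2\beta_\Gamma(r)$, whence every finite $A\subset B_\Gamma(x,r)$ has $|A|\le\beta_\Gamma(r)\le\beta_\Gamma(4r)/2$; and since also $4r\le\diam(\Gamma)$, the isoperimetric inequalities of Section \ref{sec:iso} may be applied to such $A$ with their radius taken to be $4r$. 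Theorem \ref{thm:bk.inf} with $u=x$ and $B=B_\Gamma(x,r)$ (so $\partial B=S_\Gamma(x,r+1)$, $|B|=\beta_\Gamma(r)$, $\deg(x)=\deg(\Gamma)$) then gives
\[
R_p(x\leftrightarrow S_\Gamma(x,r+1))^{\frac{1}{p-1}}\ll\frac{1}{\deg(\Gamma)^{1/(p-1)}}+\sum_{n=0}^{\lfloor\log_2(\beta_\Gamma(r)/\deg(\Gamma))\rfloor}\ \max_{\substack{A\subset B_\Gamma(x,r),\ x\in A,\ A\text{ connected}\\ \beta_\Gamma(r)/2^{n+1}<|A|\le\beta_\Gamma(r)/2^n}} j_{A,p},
\]
and since every isoperimetric inequality we use comes through Proposition \ref{prop:iso-growth}, Remark \ref{rem:vertex} lets us bound $j_{A,p}$ by its vertex term $|A|/|\partial A|^{p/(p-1)}+|\partial A|^{-1/(p-1)}$.

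\textbf{Isoperimetry and summation.} Write $\beta_\Gamma(4r)=(4r)^q$. For a set of size $N$ I would use $|\partial A|\gg\deg(\Gamma)$ (Lemma \ref{lem:iso.rel.lin}) near the bottom scale, and at larger scales the two-slope bound from Theorem \ref{thm:bk.iso}/Corollary \ref{cor:bk.iso} applied at radius $4r$, of dimension $\fq+1$ below the crossover $N\asymp(4r)^{\{q\}(\fq+1)}$ and of dimension $\fq$ above it (here $\fq=\fp$ exactly when $q\ge\fp$); in case (i), i.e.\ when $q\ge\fp+1$, I would instead apply Theorem \ref{thm:bk.iso} with exponent $\fp+1$ (valid since $\beta_\Gamma(4r)\ge(4r)^{\fp+1}$), which collapses this to the single bound $|\partial A|\gg_p N^{\fp/(\fp+1)}$ of dimension $\fp+1$. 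A dimension-$d$ bound $|\partial A|\gg c\,N^{(d-1)/d}$ makes the scale-$N$ summand of the vertex term $\asymp_p c^{-p/(p-1)}N^{(p-d)/(d(p-1))}+c^{-1/(p-1)}N^{-(d-1)/(d(p-1))}$, so over the dyadic scales $N\in[\deg(\Gamma),\beta_\Gamma(r)]$ the sum is controlled by $N\asymp\deg(\Gamma)$ when $d>p$, by $N\asymp\beta_\Gamma(r)$ when $d<p$, and acquires a factor $\log(\beta_\Gamma(r)/\deg(\Gamma))$ when $d=p$. Carrying this out on the two sides of the crossover and taking $(p-1)$-th powers produces a bound for $R_p(x\leftrightarrow S_\Gamma(x,r+1))$ that is a power of $\deg(\Gamma)$ plus $r^p/\beta_\Gamma(r)$ times a factor $(\log(\beta_\Gamma(r)/\deg(\Gamma)))^{p-1}$ which is present precisely when the large-scale dimension (an integer, $\fq$, absent in case (i)) equals $p$, i.e.\ when $p\in\Z$; the power of $\deg(\Gamma)$ is $\deg(\Gamma)^{-(1-p/(\fp+1))}$ when the small-scale dimension $\fq+1$ exceeds $p$ — equivalently $q\ge\fp$ — and is only $\deg(\Gamma)^{-1}$, read off from the trivial bound at the bottom scale, otherwise.

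\textbf{Matching the cases.} Case (i) uses the single slope of dimension $\fp+1>p$ with no large-scale term, giving cleanly $\deg(\Gamma)^{-(1-p/(\fp+1))}$; case (ii) ($\fp\le q<\fp+1$, hence $\fq=\fp$) keeps both terms, with no logarithm when $p\notin\Z$ since then $\fq=\fp<p$. For cases (iii)--(v) one first notes, via Proposition \ref{prop:growth.lb} applied at radius $4r$, that $\beta_\Gamma(r)\asymp_p r^q$; since $q\le p$ (resp.\ $q<\fp$ with $p\notin\Z$, resp.\ $q\le p-\eps$) this gives $r^p/\beta_\Gamma(r)\gg_p 1$ (resp.\ $\gg_{p,\eps}1$), so the power of $\deg(\Gamma)$ — together with any stray logarithmic term produced at a critical dimension — is absorbed into $r^p/\beta_\Gamma(r)$, and in cases (iv)--(v) the integer large-scale dimension $\fq<p$ (bounded away from $p$ by $\eps$ in case (v)) also removes the logarithm, leaving just $r^p/\beta_\Gamma(r)$. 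This matches the five stated bounds. The same computation with $\diam(\Gamma)$ and $|\Gamma|$ in place of $4r$ and $\beta_\Gamma(r)$, and with Theorem \ref{thm:bk} in place of Theorem \ref{thm:bk.inf}, proves Proposition \ref{prop:main.p.ub}.

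\textbf{The main obstacle.} The genuine work is the case analysis of the previous two steps: for each of the five regimes, and for each position of the dyadic scale relative to the crossover $N\asymp(4r)^{\{q\}(\fq+1)}$, one must identify the dominant summand of $j_{A,p}$ and the dominant scale, check that the exponent of $\deg(\Gamma)$ comes out to be exactly $-(1-p/(\fp+1))$ rather than something stronger or weaker, pin down precisely when the logarithmic factor appears, and negotiate cleanly the interfaces between the two isoperimetric slopes and between the polynomial bound and the trivial bound of Lemma \ref{lem:iso.rel.lin} — all while keeping the $p$-dependent constants under control.
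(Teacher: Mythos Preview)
Your proposal is correct and follows essentially the paper's approach: apply Theorem \ref{thm:bk.inf} with $B=B_\Gamma(x,r)$, use Lemma \ref{lem:4r>2.r} to ensure the isoperimetric inequalities of Section \ref{sec:iso} apply at radius $4r$ to every $A\subset B_\Gamma(x,r)$, and evaluate the resulting dyadic sum. The implementation differs for cases (iii)--(v): where you carry the two-slope inequality of Theorem \ref{thm:bk.iso} through a crossover analysis and then invoke Proposition \ref{prop:growth.lb} to obtain $\beta_\Gamma(r)\asymp_p r^q$, the paper instead uses the \emph{single}-slope inequality $|\partial A|\gg_{\fq}|A|^{(q-1)/q}$ of Theorem \ref{thm:bk.iso.orig} (packaged as Proposition \ref{prop:Rpleqq}) together with the elementary bound $\beta_\Gamma(r)^{(p-q)/q}\le\beta_\Gamma(4r)^{p/q}\beta_\Gamma(r)^{-1}=(4r)^p/\beta_\Gamma(r)\ll_p r^p/\beta_\Gamma(r)$. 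This sidesteps the crossover entirely and makes the ``main obstacle'' you identify largely disappear for those cases; your route also works, but is noticeably more laborious.
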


We prove these propositions by using the isoperimetric inequalities of Section \ref{sec:iso} to bound the quantities $j_{A,p}$ appearing in Theorems \ref{thm:bk} and \ref{thm:bk.inf}. In fact, in our arguments it is only the $|A|/|\partial A|^{p/(p-1)}$ term of $j_{A,p}$ that will be relevant; we capture this with the following lemma.

\begin{lemma}\label{lem:secondtermonly}
Let $C>0$ and $p>1$. Let $\Gamma$ be a graph. Suppose that $A\subset\Gamma$ is a finite set of vertices and $\xi\le C|A|$ is such that $|\partial A|\ge\xi$. Then
\[
j_{A,p}\le\frac{(1+C)|A|}{\xi^\frac{p}{p-1}}.
\]
\end{lemma}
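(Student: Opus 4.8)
The plan is entirely elementary: bound $j_{A,p}$ by its first (vertex-boundary) term, use the monotonicity of $t\mapsto t^{-p/(p-1)}$ and $t\mapsto t^{-1/(p-1)}$ together with the hypothesis $|\partial A|\ge\xi$ to replace $|\partial A|$ by $\xi$, and then absorb the resulting lower-order term $\xi^{-1/(p-1)}$ into the main term using $\xi\le C|A|$.

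Concretely, I would first note that by definition $j_{A,p}$ is the minimum of two expressions, and in particular
\[
j_{A,p}\le\frac{|A|}{|\partial A|^{\frac{p}{p-1}}}+\frac{1}{|\partial A|^{\frac{1}{p-1}}}.
\]
Since $|\partial A|\ge\xi>0$ and the exponents $\tfrac{p}{p-1}$ and $\tfrac{1}{p-1}$ are positive, both denominators only decrease when $|\partial A|$ is replaced by $\xi$, so the right-hand side is at most $|A|\xi^{-p/(p-1)}+\xi^{-1/(p-1)}$.

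The one observation that makes everything fit is the identity $\tfrac{1}{p-1}+1=\tfrac{p}{p-1}$, which gives $\xi^{-1/(p-1)}=\xi\cdot\xi^{-p/(p-1)}$. Combining this with the hypothesis $\xi\le C|A|$ yields $\xi^{-1/(p-1)}\le C|A|\,\xi^{-p/(p-1)}$, and hence
\[
j_{A,p}\le\frac{|A|}{\xi^{\frac{p}{p-1}}}+\frac{C|A|}{\xi^{\frac{p}{p-1}}}=\frac{(1+C)|A|}{\xi^{\frac{p}{p-1}}},
\]
as required. There is no real obstacle here — the "hard part", such as it is, is simply spotting that the exponent arithmetic lets the additive $\xi^{-1/(p-1)}$ term be dominated by $C|A|\xi^{-p/(p-1)}$; everything else is monotonicity and the definition of $j_{A,p}$.
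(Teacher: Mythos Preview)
Your proof is correct and essentially identical to the paper's: both bound $j_{A,p}$ by its vertex-boundary term, then use $|\partial A|\ge\xi$ and $\xi\le C|A|$ together with the exponent identity $\tfrac{1}{p-1}+1=\tfrac{p}{p-1}$ to absorb the $\xi^{-1/(p-1)}$ piece. The only cosmetic difference is the order in which the two hypotheses are applied to the second term.
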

\begin{proof}
We have
\begin{align*}
j_{A,p}&\le\frac{|A|}{|\partial A|^{\frac{p}{p-1}}}+\frac{1}{|\partial A|^{\frac{1}{p-1}}}\\
   &\le\frac{|A|}{|\partial A|^{\frac{p}{p-1}}}+\frac{C|A|}{\xi|\partial A|^{\frac{1}{p-1}}}\\
   &\le\frac{(1+C)|A|}{\xi^\frac{p}{p-1}},
\end{align*}
as required.
\end{proof}

We now collect together the necessary bounds on $j_{A,p}$.
\begin{lemma}\label{lem:j.ub.p=3}
Let $p\in(1,3)$. Let $\Gamma$ be a connected, locally finite vertex-transitive graph. Let $r\in\N$ be such that $r\le\diam(\Gamma)$, and define $q\ge0$ so that $\beta_\Gamma(r)=r^q\beta_\Gamma(1)$. Set $\eta=\frac{p}{\fp+1}$.
\begin{enumerate}[label=(\roman*)]
\item If $q\ge\fp+1$ then
\[
j_{A,p}\ll_p\frac{1}{\deg(\Gamma)^{\frac{\eta}{p-1}}|A|^{\frac{1-\eta}{p-1}}}
\]
for every $A\subset\Gamma$ with $\deg(\Gamma)/2\le|A|\le\beta_\Gamma(r)/2$.
\item If $\fp\le q<\fp+1$ then
\[
j_{A,p}\ll_p\frac{1}{\deg(\Gamma)^{\frac{\eta}{p-1}} |A|^{\frac{1-\eta}{p-1}}}+\left(\frac{r^p}{\beta_\Gamma(r)}\right)^{\frac{1}{p-1}}\left(\frac{|A|}{\beta_\Gamma(r)}\right)^{\frac{p-\fp}{\fp(p-1)}}
\]
for every $A\subset\Gamma$ with $\deg(\Gamma)/2\le|A|\le\beta_\Gamma(r)/2$.
\item If $1\le q\le\fp$ then
\[
j_{A,p}\ll_p \frac{|A|^{\frac{p-q}{(p-1)q}}}{\deg(\Gamma)^{\frac{p}{(p-1)q}}}
\]
for every $A\subset\Gamma$ with $\deg(\Gamma)/2\le|A|\le\beta_\Gamma(r)/2$.
\item\label{item:j.ub.p=3.q<1} Regardless of the value of $q$, we have
\[
j_{A,p}\ll\frac{|A|}{\deg(\Gamma)^\frac{p}{p-1}}
\]
for every $A\subset\Gamma$ with $\deg(\Gamma)/2\le|A|\le\beta_\Gamma(r)/2$.
\end{enumerate}
\end{lemma}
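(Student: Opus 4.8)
The plan is to establish all four parts by one common two-step scheme: first feed the relevant growth hypothesis into one of the isoperimetric inequalities of Section~\ref{sec:iso} to obtain an explicit lower bound $|\partial A|\ge\xi$, with $\xi$ a monomial in $|A|$, $\deg(\Gamma)$, $\beta_\Gamma(r)$ and $r$, and then apply Lemma~\ref{lem:secondtermonly} to turn this into the claimed upper bound on $j_{A,p}$. Two elementary points make this go through uniformly. Since $|A|\ge\deg(\Gamma)/2$ and $\beta_\Gamma(1)=\deg(\Gamma)+1\asymp\deg(\Gamma)$, one has $\beta_\Gamma(1)/4\le|A|$, so each such $\xi$ automatically satisfies $\xi\le C|A|$ for an absolute $C$ (making Lemma~\ref{lem:secondtermonly} applicable, so that only the $|A|/|\partial A|^{p/(p-1)}$ part of $j_{A,p}$ is relevant), and any factor $\beta_\Gamma(1)$ in $\xi$ may be replaced by $\deg(\Gamma)$. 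Moreover, each isoperimetric inequality from Section~\ref{sec:iso} is stated as a minimum of two monomials in $|A|$, and in the range $\beta_\Gamma(1)/4\le|A|\le\beta_\Gamma(r)/2$ that minimum is comparable, up to an absolute factor, to the monomial carrying the smaller power of $|A|$ — the ratio of the two monomials being a bounded power of $|A|/\beta_\Gamma(1)\ge\tfrac14$ times a bounded power of $\beta_\Gamma(r)/|A|\ge2$. With these reductions in hand, the rest of each part is exponent bookkeeping.

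For (i), $q\ge\fp+1$ gives $\beta_\Gamma(r)\ge r^{\fp+1}\beta_\Gamma(1)$ with $\fp+1\le3$, so I would apply Theorem~\ref{thm:isoperimRel} with exponent $\fp+1$; its conclusion is the minimum of $\beta_\Gamma(1)^{1/(\fp+2)}|A|^{(\fp+1)/(\fp+2)}$ and $\xi_1:=\beta_\Gamma(1)^{1/(\fp+1)}|A|^{\fp/(\fp+1)}$, hence is $\gg_p\xi_1$ by the remark above. Lemma~\ref{lem:secondtermonly} then gives $j_{A,p}\ll_p|A|\,\xi_1^{-p/(p-1)}$, and substituting $\eta/(p-1)=p/((\fp+1)(p-1))$ and $1-\tfrac{\fp p}{(\fp+1)(p-1)}=\tfrac{p-\fp-1}{(\fp+1)(p-1)}=-\tfrac{1-\eta}{p-1}$ identifies the right-hand side with $\deg(\Gamma)^{-\eta/(p-1)}|A|^{-(1-\eta)/(p-1)}$. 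For (ii), $\fp\le q<\fp+1$ forces $\fq=\fp$, so I would apply part~\ref{item:isoperimRel.spec} of Corollary~\ref{cor:isoperimRel} with its free parameter equal to $p$; with $\xi_1$ as above and $\xi_2:=|A|^{1-1/\fp}\beta_\Gamma(r)^{1/\fp}/r$ (which one checks is exactly its second monomial), this gives $|\partial A|\gg_p\min\{\xi_1,\xi_2\}$, and since $\xi_1\le4|A|$ Lemma~\ref{lem:secondtermonly} yields $j_{A,p}\ll_p|A|(\xi_1^{-p/(p-1)}+\xi_2^{-p/(p-1)})$. The first summand is the $\deg(\Gamma)$-term of (i); rearranging $|A|\,\xi_2^{-p/(p-1)}$ (using the $|A|$-exponent $1-\tfrac{(\fp-1)p}{\fp(p-1)}=\tfrac{p-\fp}{\fp(p-1)}$ and the $\beta_\Gamma(r)$-exponent $-\tfrac{p}{\fp(p-1)}$) gives precisely $(r^p/\beta_\Gamma(r))^{1/(p-1)}(|A|/\beta_\Gamma(r))^{(p-\fp)/(\fp(p-1))}$.

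For (iii), $1\le q\le\fp$ so $\fq\le q$, and I would apply part~\ref{item:isoperimRel.spec} of Corollary~\ref{cor:isoperimRel} with free parameter $q$, so that $\beta_\Gamma(r)/r^q=\beta_\Gamma(1)$; its second monomial is $\ge|A|^{1-1/q}\beta_\Gamma(1)^{1/q}$ (because $1/\fq-1/q\ge0$ and $\beta_\Gamma(r)\ge2|A|$) and its first monomial is $\gg\beta_\Gamma(1)^{1/q}|A|^{1-1/q}$ (the ratio being $(|A|/\beta_\Gamma(1))^{1/q-1/(\fq+1)}$ with exponent in $[0,1]$), so $|\partial A|\gg_\fq\beta_\Gamma(1)^{1/q}|A|^{1-1/q}$, a quantity which is $\le4|A|$. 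Lemma~\ref{lem:secondtermonly} then gives $j_{A,p}\ll_p|A|\,\bigl(\beta_\Gamma(1)^{1/q}|A|^{1-1/q}\bigr)^{-p/(p-1)}$, whose exponent of $|A|$ is $1-\tfrac{(q-1)p}{q(p-1)}=\tfrac{p-q}{q(p-1)}$ and whose exponent of $\beta_\Gamma(1)\asymp\deg(\Gamma)$ is $-\tfrac{p}{q(p-1)}$, as required. Finally, (iv) is the degenerate case: the crude linear lower bound $\beta_\Gamma(n)\gg\deg(\Gamma)\,n$ (Lemma~\ref{lem:growth.lb.rel.lin}), fed into Proposition~\ref{prop:iso-growth}, gives $|\partial A|\gg\deg(\Gamma)$ for every $A$ in the stated range — this is the content of Lemma~\ref{lem:iso.rel.lin} — whence Lemma~\ref{lem:secondtermonly} with $\xi\asymp\deg(\Gamma)\le2|A|$ gives $j_{A,p}\ll|A|\deg(\Gamma)^{-p/(p-1)}$; making the implied constant absolute rather than $p$-dependent needs a little extra care, via using the edge term of $j_{A,p}$ in place of the vertex term on the configurations where $|\partial A|<\deg(\Gamma)$. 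The only real obstacle overall is the exponent bookkeeping in (ii), where three scales ($|A|$, $\beta_\Gamma(r)$, $r$) must be juggled at once and the two pieces coming out of Lemma~\ref{lem:secondtermonly} shown to match the two advertised terms exactly.
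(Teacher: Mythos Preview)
Your proposal is correct and follows essentially the same two-step scheme as the paper: apply an isoperimetric inequality from Section~\ref{sec:iso}, then feed the resulting bound $|\partial A|\ge\xi$ into Lemma~\ref{lem:secondtermonly}. The only differences are cosmetic: in (i) and (iii) you invoke Theorem~\ref{thm:isoperimRel} and Corollary~\ref{cor:isoperimRel} and then argue that the minimum of the two monomials is comparable to the smaller one, whereas the paper invokes Theorem~\ref{thm:isoperim.orig.rel} directly (using that $b(\fp+1)=\fp+1$ and $b(q)=q$ for $q\le3$), which yields that single monomial without the extra comparison step; these routes produce identical bounds. Your remark in (iv) about making the constant absolute via the edge term is extra caution the paper does not take---its proof simply cites Lemmas~\ref{lem:iso.rel.lin} and~\ref{lem:secondtermonly}, and in the applications the bound is only ever used with a $\ll_p$ constant anyway.
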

\begin{proof}
Define the function $b$ as in Proposition \ref{prop:growth.lb.rel.all}.
\begin{enumerate}[label=(\roman*)]
\item\label{item:j.ub.p=3} Since $b(\fp+1)=\fp+1$ for $p<3$, Theorem \ref{thm:isoperim.orig.rel} and Lemma \ref{lem:secondtermonly} imply that
\[
j_{A,p}\ll_p \frac{|A|}{\left(\deg(\Gamma)^{\frac{1}{\fp+1}}|A|^{\frac{\fp}{\fp+1}}\right)^{\frac{p}{p-1}}}= \frac{1}{\deg(\Gamma)^{\frac{\eta}{p-1}} |A|^{\frac{1-\eta}{p-1}}},
\]
as required.
\item This follows from Corollary \ref{cor:isoperimRel} \ref{item:isoperimRel.spec} and Lemma \ref{lem:secondtermonly}, using the same calculation as in the proof of \ref{item:j.ub.p=3}.
\item Since $b(q)=q$ for this value of $q$, the desired bound follows from Theorem \ref{thm:isoperim.orig.rel} and Lemma \ref{lem:secondtermonly}.
\item This is immediate from Lemmas \ref{lem:iso.rel.lin} and \ref{lem:secondtermonly}.
\end{enumerate}
\end{proof}

\begin{lemma}\label{lem:j.ub.pq}
Let $p>1$. Let $\Gamma$ be a connected, locally finite vertex-transitive graph. Let $r\in\N$ be such that $r\le\diam(\Gamma)$, let $q\ge1$, and suppose that $\beta_\Gamma(r)\ge r^q$. Then
\[
j_{A,p}\ll_{p,\fq}|A|^\frac{p-q}{q(p-1)}
\]
for every $A\subset\Gamma$ with $|A|\le\beta_\Gamma(r)/2$.
\end{lemma}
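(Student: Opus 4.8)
The plan is to bound $j_{A,p}$ using only its vertex term, namely $\frac{|A|}{|\partial A|^{p/(p-1)}}+\frac{1}{|\partial A|^{1/(p-1)}}$, and to feed in the isoperimetric inequality of Theorem \ref{thm:bk.iso.orig}. Its hypotheses are exactly those of the present lemma: $r\le\diam(\Gamma)$, $\beta_\Gamma(r)\ge r^q$, and $|A|\le\frac{1}{2}\beta_\Gamma(r)$. Hence there is a constant $c=c(\fq)>0$ with $|\partial A|\ge c\,|A|^{(q-1)/q}$.

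First I would discard the trivial case $A=\varnothing$, so that $|A|\ge1$. Since $q\ge1$ we have $0\le\frac{q-1}{q}\le1$, and therefore $|A|^{(q-1)/q}\le|A|$; consequently $\xi:=c\,|A|^{(q-1)/q}$ satisfies both $|\partial A|\ge\xi$ and $\xi\le c\,|A|$. I would then apply Lemma \ref{lem:secondtermonly} with this $\xi$ and with $C=c$, obtaining
\[
j_{A,p}\le\frac{(1+c)\,|A|}{\xi^{p/(p-1)}}=\frac{1+c}{c^{p/(p-1)}}\;|A|^{\,1-\frac{p(q-1)}{q(p-1)}}.
\]
A one-line exponent computation, $1-\frac{p(q-1)}{q(p-1)}=\frac{q(p-1)-p(q-1)}{q(p-1)}=\frac{p-q}{q(p-1)}$, then gives the claimed bound, the multiplicative constant $\frac{1+c}{c^{p/(p-1)}}$ depending only on $p$ and on $c=c(\fq)$, hence only on $p$ and $\fq$.

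There is no real obstacle here: the lemma is an immediate corollary of Theorem \ref{thm:bk.iso.orig} and Lemma \ref{lem:secondtermonly}, and the only thing to check is that the hypotheses match and that $\xi\le c|A|$, both of which are automatic from $q\ge1$ and $|A|\ge1$. One could equally well invoke the sharper Theorem \ref{thm:bk.iso} in place of Theorem \ref{thm:bk.iso.orig}, but the weaker form already suffices for the stated conclusion.
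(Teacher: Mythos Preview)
Your proof is correct and is essentially identical to the paper's: both invoke Theorem \ref{thm:bk.iso.orig} to obtain $|\partial A|\gg_{\fq}|A|^{(q-1)/q}$ and then apply Lemma \ref{lem:secondtermonly}. Your additional verification that $\xi\le c|A|$ (from $|A|\ge1$ and $q\ge1$) makes explicit what the paper leaves implicit.
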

\begin{proof}
\cref{thm:bk.iso.orig} implies that $|\partial A|\gg_\fq|A|^{\frac{q-1}{q}}$ for every $A\subset\Gamma$ with $|A|\le\beta_\Gamma(r)/2$, and so the lemma follows from Lemma \ref{lem:secondtermonly}.
\end{proof}

\begin{lemma}\label{lem:j.ub.fp=fq}
Let $p>1$. Let $\Gamma$ be a connected, locally finite vertex-transitive graph. Let $r\in\N$ be such that $r\le\diam(\Gamma)$, and define $q\ge1$ such that $\beta_\Gamma(r)=r^q$. Suppose that $\fp\le q<\fp+1$. Then
\[
j_{A,p}\ll_p\frac{1}{|A|^{\frac{\fp+1-p}{(\fp+1)(p-1)}}}+ \left(\frac{r^p}{\beta_\Gamma(r)}\right)^{\frac{1}{p-1}}\left(\frac{|A|}{\beta_\Gamma(r)}\right)^{\frac{p-\fp}{(p-1)\fp}}.
\]
for every $A\subset\Gamma$ with $|A|\le\beta_\Gamma(r)/2$.
\end{lemma}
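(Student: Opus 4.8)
The plan is to run the argument of Lemma~\ref{lem:j.ub.p=3}(ii) with the \emph{absolute} isoperimetric inequality of Corollary~\ref{cor:bk.iso} in place of the relative one. Since $\fp\le q<\fp+1$ we have $\fq=\fp$, so the hypothesis of Corollary~\ref{cor:bk.iso}\ref{item:bk.iso.spec} is satisfied with the present value of $p$. Abbreviating $\beta=\beta_\Gamma(r)$, that corollary gives
\[
|\partial A|\gg_\fp\min\{T_1,T_2\},\qquad T_1:=|A|^{\frac{\fp}{\fp+1}},\quad T_2:=|A|^{1-\frac1p}\Bigl(\frac{\beta}{r^p}\Bigr)^{\frac1p}\Bigl(\frac{\beta}{|A|}\Bigr)^{\frac1\fp-\frac1p},
\]
for every $A\subset\Gamma$ with $|A|\le\beta/2$.

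Next I would feed this into Lemma~\ref{lem:secondtermonly}. Take $\xi$ to be the fixed constant multiple of $\min\{T_1,T_2\}$ appearing as the lower bound above (the constant depending only on $\fp$, hence only on $p$); then $|\partial A|\ge\xi$ by the previous display, and $\xi\ll_p T_1\le|A|$ because $|A|\ge1$, so the hypothesis $\xi\le C|A|$ of Lemma~\ref{lem:secondtermonly} holds with $C$ depending only on $p$. The lemma then yields $j_{A,p}\ll_p|A|/\min\{T_1,T_2\}^{p/(p-1)}$, and replacing $\min\{T_1,T_2\}$ by $T_1$ and by $T_2$ in turn gives
\[
j_{A,p}\ll_p\frac{|A|}{T_1^{p/(p-1)}}+\frac{|A|}{T_2^{p/(p-1)}}.
\]

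It then remains only to simplify these two terms, which is a routine exponent computation. For the first, $T_1^{p/(p-1)}=|A|^{\fp p/((\fp+1)(p-1))}$, so $|A|/T_1^{p/(p-1)}=|A|^{(p-1-\fp)/((\fp+1)(p-1))}=|A|^{-(\fp+1-p)/((\fp+1)(p-1))}$, which is the first term of the lemma. For the second, one first checks that the powers of $|A|$ and of $\beta$ in $T_2$ telescope, namely $T_2=|A|^{1-1/\fp}\beta^{1/\fp}r^{-1}$, and then that $|A|/T_2^{p/(p-1)}=r^{p/(p-1)}|A|^{(p-\fp)/(\fp(p-1))}\beta^{-p/(\fp(p-1))}=(r^p/\beta)^{1/(p-1)}(|A|/\beta)^{(p-\fp)/((p-1)\fp)}$, which is the second term of the lemma.

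There is no genuine obstacle: the argument is the analogue of the proof of Lemma~\ref{lem:j.ub.p=3}(ii) with absolute rather than relative isoperimetry, and the only point that needs care is the bookkeeping in the final simplification of $T_2$ --- keeping track of the three separate places in which $|A|$ and $\beta$ occur --- together with the standard device of bounding the lower bound $\min\{T_1,T_2\}$ from below by each $T_i$ separately in order to produce the two summands on the right-hand side.
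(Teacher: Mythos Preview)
Your argument is correct and is exactly the approach the paper takes: the paper's proof reads in full ``This follows from Corollary~\ref{cor:bk.iso}~\ref{item:bk.iso.spec} and Lemma~\ref{lem:secondtermonly}'', and you have simply spelled out those two steps together with the exponent bookkeeping. One tiny wording slip: in your final paragraph you speak of ``bounding $\min\{T_1,T_2\}$ from below by each $T_i$'', but of course $\min\{T_1,T_2\}\le T_i$; the correct statement (which your main argument uses implicitly) is that $|A|/\min\{T_1,T_2\}^{p/(p-1)}=\max_i|A|/T_i^{p/(p-1)}\le\sum_i|A|/T_i^{p/(p-1)}$.
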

\begin{proof}
This follows from Corollary \ref{cor:bk.iso} \ref{item:bk.iso.spec} and Lemma \ref{lem:secondtermonly}.
\end{proof}

We now prove the main results of this section.
\begin{proof}[Proof of Proposition \ref{prop:main.ub}]
Set $q\ge0$ so that $|\Gamma|=\gamma^q\beta_\Gamma(1)$, noting that
\begin{equation}\label{eq:main.ub.gamma/|G|}
\left(\frac{|\Gamma|}{\deg(\Gamma)}\right)^\frac pq\ll_p\left(\frac{|\Gamma|}{\beta_\Gamma(1)}\right)^\frac pq=\gamma^p.
\end{equation}
If $q\ge\fp+1$ then Lemma \ref{lem:j.ub.p=3} and Theorem \ref{thm:bk} imply that $R_{\Gamma,p}\ll_p1/\deg(\Gamma)$, and the proposition is satisfied. If $\fp\le q<\fp+1$ then the desired bounds follow from Lemma \ref{lem:j.ub.p=3}, Theorem \ref{thm:bk} and Lemma \ref{lem:^p.dist}. If $1\le q<\fp$ then Lemma \ref{lem:j.ub.p=3}, Theorem \ref{thm:bk} and Lemma \ref{lem:^p.dist} imply that
\[
R_{\Gamma,p}\ll_p\frac{1}{\deg(\Gamma)}+\frac{|\Gamma|^{\frac{p-q}{q}}\left(\log(|\Gamma|/\deg(\Gamma))\right)^{p-1}}{\deg(\Gamma)^{\frac{p}{q}}},
\]
and even
\[
R_{\Gamma,p}\ll_p\frac{1}{\deg(\Gamma)}+\frac{|\Gamma|^{\frac{p-q}{q}}}{\deg(\Gamma)^{\frac{p}{q}}}
\]
if $p$ is not an integer. In each case the desired bounds follow from \eqref{eq:main.ub.gamma/|G|}. Finally, Lemma \ref{lem:j.ub.p=3} \ref{item:j.ub.p=3.q<1} and Theorem \ref{thm:bk} imply that
\[
R_{\Gamma,p}\ll_p\frac{|\Gamma|^{p-1}}{\deg(\Gamma)^p},
\]
which if $q<1$ gives
\[
R_{\Gamma,p}\ll_p\frac{\gamma^p}{|\Gamma|},
\]
as required.
\end{proof}

\begin{prop}\label{prop:Rpleqq'}
Let $p>1$. Let $\Gamma$ be a finite, connected vertex-transitive graph of diameter $\gamma$. Let $q\ge1$, and suppose that $|\Gamma|\ge\gamma^q$. Then
\begin{equation}\label{eqR_p'}
R_{\Gamma,p}\ll_{p,\fq}\frac{1}{\deg(\Gamma)}+\left(\sum_{n=\lfloor\log_2\beta_{\Gamma}(1)\rfloor}^{\lfloor\log_2|\Gamma|\rfloor}  2^{\frac{(p-q)n}{q(p-1)}}\right)^{p-1}
\end{equation}
In particular,
if $q\ne p$ then
\[
R_{\Gamma,p}\ll_{p,\fq}\frac{1}{\deg(\Gamma)}+\left(\frac{\deg(\Gamma)^{\frac{p-q}{q(p-1)}}-|\Gamma|^{\frac{p-q}{q(p-1)}}}{1-2^{\frac{p-q}{q(p-1)}}}\right)^{p-1},
\]
and if $q=p$ then 
\[
R_{\Gamma,p}\ll_p\left(\log(|\Gamma|/\deg(\Gamma))\right)^{p-1}.
\]
\end{prop}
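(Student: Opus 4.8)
The plan is to combine the Benjamini--Kozma resistance estimate of Theorem \ref{thm:bk} with the bound on $j_{A,p}$ provided by Lemma \ref{lem:j.ub.pq}, and then to collapse the resulting dyadic sum into the sum appearing in \eqref{eqR_p'}.

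First I would pick $u,v\in\Gamma$ realising $R_{\Gamma,p}=R_p(u\leftrightarrow v)$. Vertex-transitivity gives $\deg(u)=\deg(v)=\deg(\Gamma)$ and also ensures that $\Gamma\setminus\{u\}$ and $\Gamma\setminus\{v\}$ are connected (a connected vertex-transitive graph on at least two vertices has no cut vertex), so Theorem \ref{thm:bk} applies and yields
\[
R_{\Gamma,p}^{\frac{1}{p-1}}\ll\frac{1}{\deg(\Gamma)^{\frac{1}{p-1}}}+\sum_{n=1}^{\lfloor\log_2(|\Gamma|/\deg(\Gamma))\rfloor}\ \max_{\substack{A\subset\Gamma,\ A\text{ connected}\\ |\Gamma|/2^{n+1}<|A|\le|\Gamma|/2^n}}j_{A,p}.
\]
Next I would feed in Lemma \ref{lem:j.ub.pq} with $r=\diam(\Gamma)$: this is legitimate since $\beta_\Gamma(\diam(\Gamma))=|\Gamma|\ge\diam(\Gamma)^q$ by hypothesis, and every set $A$ in the sum above satisfies $|A|\le|\Gamma|/2=\beta_\Gamma(\diam(\Gamma))/2$. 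Writing $c=\tfrac{p-q}{q(p-1)}$, this gives $j_{A,p}\ll_{p,\fq}|A|^{c}$; since $|c|$ is bounded in terms of $p$, the maximum of $|A|^c$ over the dyadic block $|\Gamma|/2^{n+1}<|A|\le|\Gamma|/2^n$ is $\ll_p(|\Gamma|/2^n)^{c}$ whatever the sign of $c$.

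The remaining work is a reindexing. Writing $(|\Gamma|/2^n)^c=2^{c(\log_2|\Gamma|-n)}$ and pulling out the bounded factor $2^{c\{\log_2|\Gamma|\}}$, the sum $\sum_{n}(|\Gamma|/2^n)^c$ becomes, up to a constant depending only on $p$, a sum $\sum_m 2^{cm}$ over a set of consecutive integers $m$ that agrees with $\{\lfloor\log_2\beta_\Gamma(1)\rfloor,\dots,\lfloor\log_2|\Gamma|\rfloor\}$ up to $O_p(1)$ terms at either end (here one uses $\beta_\Gamma(1)=\deg(\Gamma)+1$, so that $\lfloor\log_2\beta_\Gamma(1)\rfloor$ and $\lfloor\log_2|\Gamma|\rfloor-\lfloor\log_2(|\Gamma|/\deg(\Gamma))\rfloor$ differ by a bounded amount). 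Altering the range of such a sum by boundedly many terms changes it by a bounded factor, so after substituting and raising to the power $p-1$ via $(a+b)^{p-1}\ll_p a^{p-1}+b^{p-1}$ we obtain \eqref{eqR_p'}.

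Finally, the two displayed consequences: if $q\ne p$ then $c\ne0$ and $\sum_{m=\lfloor\log_2\beta_\Gamma(1)\rfloor}^{\lfloor\log_2|\Gamma|\rfloor}2^{cm}$ is a geometric series summing to $\tfrac{2^{c\lfloor\log_2\beta_\Gamma(1)\rfloor}-2^{c(\lfloor\log_2|\Gamma|\rfloor+1)}}{1-2^c}$, and replacing $2^{\lfloor\log_2\beta_\Gamma(1)\rfloor}$ by $\deg(\Gamma)$ and $2^{\lfloor\log_2|\Gamma|\rfloor+1}$ by $|\Gamma|$ costs only a bounded factor (the $\tfrac{1}{\deg(\Gamma)}$ term absorbing the loss in the degenerate case where $\deg(\Gamma)$ is comparable to $|\Gamma|$); if $q=p$ then the summand is identically $1$ and there are $\asymp 1+\log(|\Gamma|/\deg(\Gamma))$ terms, giving the logarithmic bound. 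I expect the one genuine point of care to be this last bookkeeping step: Theorem \ref{thm:bk} indexes the sets $A$ by the \emph{top} scale $|\Gamma|/2^n$, whereas \eqref{eqR_p'} is anchored at the \emph{bottom} scale $\beta_\Gamma(1)$, and one must match the two ranges and correctly identify which endpoint of each dyadic block dominates $\max_A j_{A,p}$ according to the sign of $p-q$. None of the ingredients is deep; the work is in assembling them cleanly.
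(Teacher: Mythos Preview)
Your proposal is correct and follows exactly the approach the paper takes: the paper's proof is the one-line ``This follows from Lemma \ref{lem:j.ub.pq}, Theorem \ref{thm:bk} and Lemma \ref{lem:^p.dist}'', and you have correctly identified all three ingredients and how they combine, including the reindexing bookkeeping the paper leaves implicit.
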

\begin{proof}
This follows from Lemma \ref{lem:j.ub.pq}, Theorem \ref{thm:bk} and Lemma \ref{lem:^p.dist}.
\end{proof}

\begin{proof}[Proof of Proposition \ref{prop:main.p.ub}]
First, note that by definition of $q$ we have
\begin{equation}\label{eq:Gamma^((p-q)/q)}
|\Gamma|^\frac{p-q}{q}=\gamma^p/|\Gamma|.
\end{equation}
We now consider the different parts of the proposition in turn. 
\begin{enumerate}[label=(\roman*)]
\item Since $|\Gamma|\ge\gamma^{\fp+1}$, the required bound follows immediately from the $p\ne q$ case of Proposition \ref{prop:Rpleqq'}.
\item This follows from Lemma \ref{lem:j.ub.fp=fq}, Theorem \ref{thm:bk} and Lemma \ref{lem:^p.dist}.
\item Every term of the sum in \eqref{eqR_p'} is at most 
$|\Gamma|^{\frac{p-q}{q(p-1)}}$, so Proposition \ref{prop:Rpleqq'} gives
\[
R_{\Gamma,p} \ll_p \frac{1}{\deg(\Gamma)}+  \left(\log(|\Gamma|/\deg(\Gamma))\right)^{p-1} |\Gamma|^{\frac{p-q}{q}}
\]
and the desired bound follows from \eqref{eq:Gamma^((p-q)/q)}.
\item This is immediate from \ref{item:main.p.ub.p-eps}, which we are about to prove.
\item The case $p\ne q$ of \cref{prop:Rpleqq'} gives
\[
R_{\Gamma,p}\ll_{p}\frac{|\Gamma|^{\frac{p-q}{q}}}{\left(2^{\frac{p-q}{q(p-1)}}-1\right)^{p-1}},
\]
which if $q<p-\eps$ implies that
\[
R_{\Gamma,p}\ll_{p,\eps} |\Gamma|^{\frac{p-q}{q}},
\]
and so the desired bound follows from \eqref{eq:Gamma^((p-q)/q)}.
\end{enumerate}
\end{proof}

When considering graphs that are not necessarily finite, it will be useful to have the following simple lemma, which allows us to prove isoperimetric inequalities for subsets of $B_\Gamma(x,r)$ given lower bounds on $\beta_\Gamma(4r)$.

\begin{lemma}\label{lem:4r>2.r}
Let $r\in\N$ and let $\Gamma$ be a locally finite vertex-transitive graph of diameter at least $4r$. Then $\beta_\Gamma(r)\le\frac12\beta_\Gamma(4r)$.
\end{lemma}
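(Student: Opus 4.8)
The plan is a simple ball-packing argument: exhibit two disjoint translates of a radius-$r$ ball sitting inside a radius-$4r$ ball. Since $\diam(\Gamma)\ge 4r$, there are vertices $u,v\in\Gamma$ with $d(u,v)\ge 4r$; fix a geodesic $u=x_0,x_1,\ldots,x_N$ from $u$ to $v$ with $N\ge 4r$, and set $x=x_{3r}$, so that $d(u,x)=3r$. (Here I use $r\in\N$, so $3r$ is a genuine index along the geodesic, and $2r$, $4r$ below are honest integers.)

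Next I would verify the two required properties of these balls. First, $d(u,x)=3r>2r$ since $r\ge 1$, so if some vertex $w$ lay in both $B_\Gamma(u,r)$ and $B_\Gamma(x,r)$ we would get $d(u,x)\le d(u,w)+d(w,x)\le 2r$, a contradiction; hence $B_\Gamma(u,r)\cap B_\Gamma(x,r)=\varnothing$. Second, for any $w\in B_\Gamma(x,r)$ we have $d(u,w)\le d(u,x)+d(x,w)\le 3r+r=4r$, so $B_\Gamma(x,r)\subseteq B_\Gamma(u,4r)$, and of course $B_\Gamma(u,r)\subseteq B_\Gamma(u,4r)$ as well.

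Finally, by vertex-transitivity $|B_\Gamma(x,r)|=|B_\Gamma(u,r)|=\beta_\Gamma(r)$ and $|B_\Gamma(u,4r)|=\beta_\Gamma(4r)$, so combining the two properties above,
\[
\beta_\Gamma(4r)=|B_\Gamma(u,4r)|\ge|B_\Gamma(u,r)|+|B_\Gamma(x,r)|=2\beta_\Gamma(r),
\]
which is exactly the claim. There is no genuine obstacle here; this is the same packing trick used in the proof of Proposition \ref{prop:iso.conv}, and the only point worth a moment's care is that $r$ being a positive integer guarantees both the existence of a vertex at distance exactly $3r$ on the geodesic and the strict inequality $3r>2r$ needed for disjointness.
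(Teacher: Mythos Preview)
Your proof is correct and follows essentially the same approach as the paper: pick two vertices at distance $3r$ apart, observe that their radius-$r$ balls are disjoint and both contained in a single radius-$4r$ ball, and conclude by vertex-transitivity. The paper is terser, simply asserting the existence of $x,y$ with $d(x,y)=3r$, while you justify this explicitly via a geodesic, but the argument is the same.
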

\begin{proof}
Let $x,y\in\Gamma$ be such that $d(x,y)=3r$, and note that $B_\Gamma(x,r)$ and $B_\Gamma(y,r)$ are disjoint subsets of $B_\Gamma(x,4r)$.
\end{proof}

\begin{proof}[Proof of Proposition \ref{prop:unimod.ub}]
Set $q\ge0$ so that $\beta_\Gamma(4r)=(4r)^q\beta_\Gamma(1)$, noting that
\begin{equation}\label{eq:unimod.ub}
\left(\frac{\beta_\Gamma(r)}{\deg(\Gamma)}\right)^\frac pq\ll_p\left(\frac{\beta_\Gamma(r)}{\beta_\Gamma(1)}\right)^\frac pq=r^p.
\end{equation}
If $q\ge\fp+1$ then Lemmas \ref{lem:j.ub.p=3} and \ref{lem:4r>2.r} and Theorem \ref{thm:bk.inf} imply that $R_p(x\leftrightarrow S_\Gamma(x,r+1))\ll_p1/\deg(\Gamma)$, and the proposition is satisfied. If $\fp\le q<\fp+1$ then the desired bounds follow from Lemma \ref{lem:j.ub.p=3}, Lemma \ref{lem:4r>2.r}, Theorem \ref{thm:bk.inf} and Lemma \ref{lem:^p.dist}. If $1\le q<\fp$ then Lemma \ref{lem:j.ub.p=3}, Lemma \ref{lem:4r>2.r}, Theorem \ref{thm:bk.inf} and Lemma \ref{lem:^p.dist} imply that
\[
R_p(x\leftrightarrow S_\Gamma(x,r+1))\ll_p\frac{1}{\deg(\Gamma)}+\frac{\beta_\Gamma(r)^{\frac{p-q}{q}}\left(\log(\beta_\Gamma(r)/\deg(\Gamma))\right)^{p-1}}{\deg(\Gamma)^{\frac{p}{q}}},
\]
and even
\[
R_p(x\leftrightarrow S_\Gamma(x,r+1))\ll_p\frac{1}{\deg(\Gamma)}+\frac{\beta_\Gamma(r)^{\frac{p-q}{q}}}{\deg(\Gamma)^{\frac{p}{q}}}
\]
if $p$ is not an integer. In each case the desired bounds follow from \eqref{eq:unimod.ub}. Finally, Lemma \ref{lem:j.ub.p=3} \ref{item:j.ub.p=3.q<1}, Lemma \ref{lem:4r>2.r} and Theorem \ref{thm:bk.inf} imply that
\[
R_p(x\leftrightarrow S_\Gamma(x,r+1))\ll_p\frac{\beta_\Gamma(r)^{p-1}}{\deg(\Gamma)^p},
\]
which if $q<1$ gives
\[
R_p(x\leftrightarrow S_\Gamma(x,r+1))\ll_p\frac{r^p}{\beta_\Gamma(r)},
\]
as required.
\end{proof}

The following proposition plays a role analogous to that played by Proposition \ref{prop:Rpleqq'} in the case of finite graphs.

\begin{prop}\label{prop:Rpleqq}
Let $p>1$ and $q\ge1$. Let $\Gamma$ be a connected, locally finite vertex-transitive graph, let $r\in\N$ be such that $\diam(\Gamma)\ge4r$, and suppose that $\beta_\Gamma(4r)\ge(4r)^q$. Then
\begin{equation}\label{eqR_p}
R_p(x\leftrightarrow S_\Gamma(x,r+1))\ll_{p,\fq}\frac{1}{\deg(\Gamma)}+\left(\sum_{n=\lfloor\log_2\beta_{\Gamma}(1)\rfloor}^{\lfloor\log_2\beta_\Gamma(r)\rfloor}2^{\frac{(p-q)n}{q(p-1)}}\right)^{p-1}
\end{equation}
In particular,
if $q\ne p$ then
\[
R_p(x\leftrightarrow S_\Gamma(x,r+1))\ll_{p,\fq}\frac{1}{\deg(\Gamma)}+\left(\frac{\deg(\Gamma)^{\frac{p-q}{q(p-1)}}-\beta_\Gamma(r)^{\frac{p-q}{q(p-1)}}}{1-2^{\frac{p-q}{q(p-1)}}}\right)^{p-1},
\]
and if $q=p$ then 
\[
R_p(x\leftrightarrow S_\Gamma(x,r+1))\ll_p\left(\log(\beta_\Gamma(r)/\deg(\Gamma))\right)^{p-1}.
\]
\end{prop}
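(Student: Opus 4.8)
The plan is to transcribe the proof of Proposition~\ref{prop:Rpleqq'} to the infinite setting, using Theorem~\ref{thm:bk.inf} in place of Theorem~\ref{thm:bk} and using Lemma~\ref{lem:4r>2.r} to convert the hypothesis $\beta_\Gamma(4r)\ge(4r)^q$ into an isoperimetric inequality holding for every subset of the ball $B_\Gamma(x,r)$. Set $B=B_\Gamma(x,r)$. Since $\diam(\Gamma)\ge4r>r$, the set $B$ is a finite, connected, proper subset of $\Gamma$ containing $x$, and $\partial B=S_\Gamma(x,r+1)$; hence Theorem~\ref{thm:bk.inf} gives
\[
R_p(x\leftrightarrow S_\Gamma(x,r+1))^{\frac{1}{p-1}}\ll\frac{1}{\deg(\Gamma)^{\frac{1}{p-1}}}+\sum_{n=0}^{\lfloor\log_2(\beta_\Gamma(r)/\deg(\Gamma))\rfloor}M_n,
\]
where $M_n$ is the supremum of $j_{A,p}$ over connected $A\subset B$ with $x\in A$ and $\beta_\Gamma(r)/2^{n+1}<|A|\le\beta_\Gamma(r)/2^n$.

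First I would bound $M_n$. By Lemma~\ref{lem:4r>2.r} every $A\subset B$ satisfies $|A|\le\beta_\Gamma(r)\le\tfrac12\beta_\Gamma(4r)$, so Lemma~\ref{lem:j.ub.pq}, applied with $4r$ in place of $r$ and with our fixed $q$ (legitimate since $4r\le\diam(\Gamma)$ and $\beta_\Gamma(4r)\ge(4r)^q$), yields $j_{A,p}\ll_{p,\fq}|A|^{\frac{p-q}{q(p-1)}}$ for every such $A$. As the exponent $\frac{p-q}{q(p-1)}$ is a fixed real number and the dyadic window $\beta_\Gamma(r)/2^{n+1}<|A|\le\beta_\Gamma(r)/2^n$ pins $|A|$ down up to a factor of two, monotonicity of $t\mapsto t^{\frac{p-q}{q(p-1)}}$ gives $M_n\ll_{p,\fq}\beta_\Gamma(r)^{\frac{p-q}{q(p-1)}}2^{-\frac{(p-q)n}{q(p-1)}}$. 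Substituting into the displayed inequality and reindexing the sum by $m=\lfloor\log_2\beta_\Gamma(r)\rfloor-n$ — so that the general term becomes $\asymp_{p,\fq}2^{\frac{(p-q)m}{q(p-1)}}$ and, using $\beta_\Gamma(1)=\deg(\Gamma)+1$, the index $m$ ranges over $\lfloor\log_2\beta_\Gamma(1)\rfloor,\dots,\lfloor\log_2\beta_\Gamma(r)\rfloor$ after a shift of the endpoints by a bounded amount — I obtain
\[
R_p(x\leftrightarrow S_\Gamma(x,r+1))^{\frac{1}{p-1}}\ll_{p,\fq}\frac{1}{\deg(\Gamma)^{\frac{1}{p-1}}}+\sum_{m=\lfloor\log_2\beta_\Gamma(1)\rfloor}^{\lfloor\log_2\beta_\Gamma(r)\rfloor}2^{\frac{(p-q)m}{q(p-1)}}.
\]
Raising to the power $p-1$ and distributing with Lemma~\ref{lem:^p.dist} gives \eqref{eqR_p}.

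For the two ``in particular'' statements I would simply evaluate the geometric series, writing $c=\frac{p-q}{q(p-1)}$. If $q\ne p$ then $c\ne0$, and the closed form for $\sum_m 2^{cm}$ together with $2^{c\lfloor\log_2\beta_\Gamma(1)\rfloor}\asymp\deg(\Gamma)^c$ and $2^{c\lfloor\log_2\beta_\Gamma(r)\rfloor}\asymp\beta_\Gamma(r)^c$ shows that the sum is $\asymp_{p,\fq}(\deg(\Gamma)^c-\beta_\Gamma(r)^c)/(1-2^c)$; raising to the power $p-1$ (Lemma~\ref{lem:^p.dist}) produces the stated bound. If $q=p$ then $c=0$, the sum equals the number of its terms, which is $\asymp\log(\beta_\Gamma(r)/\deg(\Gamma))$, and Lemma~\ref{lem:^p.dist} again finishes the estimate.

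The argument is essentially bookkeeping once the inputs — Theorem~\ref{thm:bk.inf}, Lemma~\ref{lem:j.ub.pq}, Lemma~\ref{lem:4r>2.r} and Lemma~\ref{lem:^p.dist} — are lined up, exactly paralleling the finite case of Proposition~\ref{prop:Rpleqq'}. The only point demanding a little care is the matching of summation ranges: one must check that passing from the $n$-indexed sum output by Theorem~\ref{thm:bk.inf} to the $m$-indexed sum of \eqref{eqR_p} costs only a constant depending on $p$ and $q$. This reduces to (i) the bounded factor lost in replacing $|A|$ by $\beta_\Gamma(r)/2^n$ inside a power with a fixed exponent, and (ii) the fact that adding or removing a bounded number of end terms of a geometric series with a fixed ratio changes its value by at most a bounded factor — both harmless for an estimate of the form $\ll_{p,\fq}$.
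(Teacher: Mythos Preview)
Your proposal is correct and follows essentially the same approach as the paper, which simply cites Lemma~\ref{lem:j.ub.pq}, Lemma~\ref{lem:4r>2.r}, Theorem~\ref{thm:bk.inf} and Lemma~\ref{lem:^p.dist} without further elaboration. Your write-up fills in exactly the bookkeeping that the paper leaves implicit, including the reindexing of the dyadic sum and the geometric-series evaluation for the two ``in particular'' cases.
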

\begin{proof}
This follows from Lemma \ref{lem:j.ub.pq}, Lemma \ref{lem:4r>2.r}, Theorem \ref{thm:bk.inf} and Lemma \ref{lem:^p.dist}.
\end{proof}

\begin{proof}[Proof of Proposition \ref{prop:unimod.p.ub}]
First, note that
\begin{equation}\label{eq:Gamma^((p-q)/q).inf}
\beta_\Gamma(r)^\frac{p-q}{q}\le\beta_\Gamma(4r)^\frac{p}{q}\beta_\Gamma(r)^{-1}\ll_p\frac{r^p}{\beta_\Gamma(r)}
\end{equation}
by definition of $q$, and that
\begin{equation}\label{eq:4r->r}
\frac{(4r)^p}{\beta_\Gamma(4r)}\ll_p\frac{r^p}{\beta_\Gamma(4r)}\le\frac{r^p}{\beta_\Gamma(r)}.
\end{equation}
We now consider the different parts of the proposition in turn. 
\begin{enumerate}[label=(\roman*)]
\item Since $\beta_\Gamma(4r)\ge(4r)^{\fp+1}$, the required bound follows immediately from the $p\ne q$ case of Proposition \ref{prop:Rpleqq}.
\item This follows from Lemma \ref{lem:j.ub.fp=fq}, Lemma \ref{lem:4r>2.r}, Theorem \ref{thm:bk.inf}, Lemma \ref{lem:^p.dist}, and \eqref{eq:4r->r}.
\item Every term of the sum in \eqref{eqR_p} is at most 
$\beta_\Gamma(r)^{\frac{p-q}{q(p-1)}}$, so Proposition \ref{prop:Rpleqq} gives
\[
R_p(x\leftrightarrow S_\Gamma(x,r+1))\ll_p \frac{1}{\deg(\Gamma)}+\left(\log(\beta_\Gamma(r)/\deg(\Gamma))\right)^{p-1}\beta_\Gamma(r)^{\frac{p-q}{q}}
\]
and the desired bound follows from \eqref{eq:Gamma^((p-q)/q).inf}.
\item This is immediate from \ref{item:main.p.ub.p-eps}, which we are about to prove.
\item The case $p\ne q$ of \cref{prop:Rpleqq} gives
\[
R_{\Gamma,p}\ll_{p}\frac{\beta_\Gamma(r)^{\frac{p-q}{q}}}{\left(2^{\frac{p-q}{q(p-1)}}-1\right)^{p-1}},
\]
which if $q<p-\eps$ implies that
\[
R_{\Gamma,p}\ll_{p,\eps}\beta_\Gamma(r)^{\frac{p-q}{q}},
\]
and so the desired bound follows from \eqref{eq:Gamma^((p-q)/q).inf}.
\end{enumerate}
\end{proof}

\begin{proof}[Proof of Theorem \ref{thm:main.unimodp}]
If $\diam(\Gamma)\ge4r$ then the theorem follows from Propositions \ref{prop:unimod.ub} and \ref{prop:unimod.p.ub}. If not then we have $\diam(\Gamma)^p\le4^pr^p$, so since $R_p(x\leftrightarrow S_\Gamma(x,r+1))\le R_{\Gamma,p}$ the desired bounds follow from Theorem \ref{thm:mainp}.
\end{proof}

\begin{proof}[Proof of Theorem \ref{thm:main.unimodp.linear}]
Given $p>1$ and $R\in\N$ there are only finitely many possibilities for a ball of size at most $R^p$ in a graph, so we upon adjusting the implied constants if necessary we may assume that $r$ is sufficiently large in terms of $p$ and $\eps$. Theorem \ref{thm:tt} then implies that
\begin{equation}\label{eq:main.linear.4r}
\beta_\Gamma(m)\le m^{p-\eps/2}
\end{equation}
for every $m\ge r$. If $\diam(\Gamma)\ge4r$, the theorem then follows from Proposition \ref{prop:unimod.p.ub} \ref{item:main.p.ub.p-eps} and the $m=4r$ case of \eqref{eq:main.linear.4r}. If not then we have $\diam(\Gamma)^p\le4^pr^p$, so since $R_p(x\leftrightarrow S_\Gamma(x,r+1))\le R_{\Gamma,p}$ the desired bound therefore follows from Theorem \ref{thm:main.linearp} and the $m=\diam(\Gamma)$ case of \eqref{eq:main.linear.4r}.
\end{proof}

\section{The Nash-Williams inequality and lower bounds on $p$-resistance}\label{sec:nash-w}

A useful tool for giving lower bounds on resistance is the \emph{Nash-Williams inequality}.
Given two disjoint subsets of vertices $U$ and $U'$, a set $\Pi$ of edges is said to {\it separate} $U$ from $U'$ if every path joining a vertex $u\in U$ to a vertex $v\in V$ must pass through an edge of $\Pi$. The Nash-Williams inequality for $p$-resistance is then as follows.
\begin{prop}\label{prop:Nash-William}
Let $\Pi_1, \ldots, \Pi_k$ be disjoint subsets of edges, each of which separates $U$ from $U'$.
Then
\[
R_p(U\leftrightarrow U')\geq \left(\sum_{i=1}^k\frac{1}{|\Pi_i|}^{\frac{1}{p-1}}\right)^{p-1}.
\]
\end{prop}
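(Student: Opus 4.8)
The plan is to use the dual, current-based characterisation of $p$-resistance from Section~\ref{sec:resist}. Let $f$ be the unit $p$-potential from $U$ to $U'$ and let $i=\nabla_pf$ be the associated $p$-current, oriented from $U$ to $U'$. By Proposition~\ref{prop:currentResistance} the total $p$-current is $C_p(f)=\ca_p(U,U')=1/R_p(U\leftrightarrow U')$. The key structural fact I would establish first is a \emph{flow property}: because $f$ is $p$-harmonic on $\Gamma\setminus(U\cup U')$, the divergence $\di\, i$ vanishes at every interior vertex, so $i$ is a genuine flow of strength $C_p(f)$ from $U$ to $U'$. Combining this with Stokes's theorem (Proposition~\ref{prop:stokes}), for each cutset $\Pi_j$ separating $U$ from $U'$ one gets $\sum_{\bar e\in\Pi_j}i(\bar e)=C_p(f)$, where each edge of $\Pi_j$ is oriented consistently across the cut. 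This is the step that deserves the most care: one must check that summing the divergences over the union of all the interior components on the $U$-side of $\Pi_j$ telescopes correctly, using that the $\Pi_j$ are edge-cuts.

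With the flow property in hand, the rest is Hölder's inequality applied cutset by cutset. For each $j$, write
\[
C_p(f)=\sum_{\bar e\in\Pi_j}i(\bar e)\le\sum_{\bar e\in\Pi_j}|i(\bar e)|\le|\Pi_j|^{1/p}\Big(\sum_{\bar e\in\Pi_j}|i(\bar e)|^{p/(p-1)}\Big)^{(p-1)/p},
\]
using Hölder with exponents $p$ and $p/(p-1)$ and the fact that $|i(\bar e)|^{p/(p-1)}=|\nabla_pf(\bar e)|^{p/(p-1)}=|f(y)-f(x)|^p$ for an edge $\bar e=(x,y)$. Rearranging gives
\[
\sum_{\bar e\in\Pi_j}|f(y)-f(x)|^p\ge C_p(f)^{p/(p-1)}\,|\Pi_j|^{-1/(p-1)}.
\]

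Finally I would sum over $j=1,\dots,k$. Since the $\Pi_j$ are pairwise disjoint subsets of the edge set, the left-hand sides add up to at most $2\EE_p(f)$ (each unoriented edge contributes $|f(x)-f(y)|^p$ from its two orientations in $\EE_p$, and each edge lies in at most one $\Pi_j$), so
\[
2\,\EE_p(f)\ge C_p(f)^{p/(p-1)}\sum_{j=1}^k|\Pi_j|^{-1/(p-1)}.
\]
Now by Proposition~\ref{prop:p-energy/p-resistance} (applied after rescaling $f$ so that $C_p(f)=1$, which multiplies $\EE_p(f)$ by $C_p(f)^{-1/(p-1)}$ and leaves $R_p$ unchanged) one has $\EE_p(f)=C_p(f)^{p/(p-1)}\,R_p(U\leftrightarrow U')$ for the original $f$, up to the harmless factor of $2$; substituting and cancelling $C_p(f)^{p/(p-1)}$ yields
\[
R_p(U\leftrightarrow U')\gg\Big(\sum_{j=1}^k|\Pi_j|^{-1/(p-1)}\Big)^{p-1}.
\]
To get the clean constant $1$ stated in the proposition, rather than bounding $\sum_j(\cdots)$ by $2\EE_p(f)$ I would instead note that the quantities $\sum_{\bar e\in\Pi_j}|f(y)-f(x)|^p$ summed over the disjoint cutsets are bounded by $\EE_p(f)$ exactly when one counts each edge once (the factor $1/2$ in the definition of $\EE_p$ cancels the two orientations), so no spurious factor appears. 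The only genuine obstacle is the flow/telescoping argument in the first paragraph; everything after that is Hölder plus bookkeeping.
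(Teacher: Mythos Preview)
Your approach is essentially the same as the paper's: take the $p$-potential, use Stokes/divergence-free to get that the total current through each cutset equals $C_p(f)$, apply H\"older on each $\Pi_j$, sum over the disjoint cutsets, and compare to $\EE_p(f)$. The paper streamlines the bookkeeping by normalising $C_p(f)=1$ from the outset (so Proposition~\ref{prop:p-energy/p-resistance} gives $R_p=\EE_p(f)^{p-1}$ directly), rather than taking the unit potential and rescaling at the end.

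One slip to fix: your claimed identity $\EE_p(f)=C_p(f)^{p/(p-1)}R_p(U\leftrightarrow U')$ is wrong for the unit potential. For the unit potential one has simply $\EE_p(f)=C_p(f)=1/R_p$, so your inequality $\EE_p(f)\ge C_p(f)^{p/(p-1)}\sum_j|\Pi_j|^{-1/(p-1)}$ becomes $C_p(f)^{-1/(p-1)}\ge\sum_j|\Pi_j|^{-1/(p-1)}$, i.e.\ $R_p^{1/(p-1)}\ge\sum_j|\Pi_j|^{-1/(p-1)}$, which is exactly the desired bound. (Equivalently, rescaling to $C_p=1$ multiplies $\EE_p$ by $C_p(f)^{-p/(p-1)}$, not $C_p(f)^{-1/(p-1)}$.) Also, your worry about a factor of $2$ is unfounded: summing $\sum_{e\in\Pi_j}|f(y_e)-f(x_e)|^p$ over unoriented edges and over the disjoint $\Pi_j$ is bounded by $\EE_p(f)$ on the nose, since the $\tfrac12$ in the definition of $\EE_p$ already accounts for the two orientations.
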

\begin{proof}
Let $f$ be a $p$-potential between $U$ and $U'$ such that $C_p(f)=1$. For each $i$, let $U_i$ be the set vertices of all of those connected components of $\Gamma\setminus\Pi_i$ that contain at least one element of $U$.
Observe that $\partial U_i$ is contained in $\Pi_i$. For every $e\in \partial U_i$, let $\bar{e}=(x_e,y_e)$ be the corresponding outward oriented edge. 
Then by (\ref{eq:stokes}), we have for every $i=1,\ldots, k$,
\[1=C_p(f)=\sum_{e\in \partial U_i}\nabla_pf(\bar{e})\leq \sum_{e\in \Pi_i}|f(y_e)-f(x_e)|^{p-1}.\]
By H\"older's inequality, we have that
\[1\leq |\Pi_i|^{\frac{1}{p-1}}\sum_{e\in \Pi_i}|f(y_e)-f(x_e)|^{p}.\]
Summing over $i$, we get
\[\EE_p(f)\geq \sum_{i=1}^k\frac{1}{|\Pi_i|}^{\frac{1}{p-1}},\]
and we conclude by Proposition \ref{prop:p-energy/p-resistance}.
\end{proof}

In this section we use the Nash-Williams inequality to prove the resistance lower bounds of our main theorems. For the convenience of the reader we isolate the following easy lemma.
\begin{lemma}\label{lem:^p.dist}
For every $a,b,p\ge0$ we have $(a+b)^p\asymp_pa^p+b^p$.
\end{lemma}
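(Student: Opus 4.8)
The plan is to compare everything with $\max\{a,b\}^p$, using only that $t\mapsto t^p$ is non-decreasing on $[0,\infty)$ for $p\ge0$. First I would record the two elementary sandwiches
\[
\max\{a,b\}\le a+b\le 2\max\{a,b\},\qquad \max\{a,b\}^p\le a^p+b^p\le 2\max\{a,b\}^p,
\]
where in the second display the lower bound holds because one of $a^p,b^p$ already equals $\max\{a,b\}^p$, and the upper bound because each of $a^p,b^p$ is at most $\max\{a,b\}^p$. Raising the first sandwich to the power $p$ (permissible since $p\ge0$) gives $\max\{a,b\}^p\le (a+b)^p\le 2^p\max\{a,b\}^p$.

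Combining these, I would conclude on the one hand that $(a+b)^p\le 2^p\max\{a,b\}^p\le 2^p(a^p+b^p)$, and on the other that $(a+b)^p\ge\max\{a,b\}^p\ge\tfrac12(a^p+b^p)$; together these say $\tfrac12(a^p+b^p)\le (a+b)^p\le 2^p(a^p+b^p)$, which is precisely $(a+b)^p\asymp_p a^p+b^p$. There is no genuine obstacle here: the only point meriting a word is the degenerate case $a=b=0$, where both sides vanish and the statement is trivial. (In the applications only $p\in(1,\infty)$ is used, so no convention for $0^0$ is needed.)
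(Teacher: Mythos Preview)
Your proof is correct and is essentially the same as the paper's: both arguments compare each side with $\max\{a,b\}^p$, the paper just compresses your explicit sandwiches into the single chain $a^p+b^p\asymp\max\{a^p,b^p\}=\max\{a,b\}^p\asymp_p(a+b)^p$.
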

\begin{proof}
We have $a^p+b^p\asymp\max\{a^p,b^p\}=\max\{a,b\}^p\asymp_p(a+b)^p$.
\end{proof}

\begin{proof}[Proof of Theorem \ref{thm:main.unimodp} (lower bound)]
For each $i=0,\ldots,r$, let $X_i$ be the edge boundary of $B_{\Gamma}(x,i)$, noting that the $X_i$ are disjoint and separate $x$ from $\Gamma\setminus B_{\Gamma}(x,r)$. We have $|X_0|=\deg(\Gamma)$ by definition, and at least $r/2$ of the sets $X_1,\ldots,X_r$ have size at most $2\deg(\Gamma)(\beta_\Gamma(r)-1)/r$. Proposition \ref{prop:Nash-William} therefore implies that
\[
R_p(x\leftrightarrow\Gamma\setminus B_\Gamma(x,r))^\frac{1}{p-1}\gg_p\left(\frac{1}{\deg(\Gamma)}\right)^\frac{1}{p-1}+r\left(\frac{r}{\deg(\Gamma)\beta_\Gamma(r)}\right)^\frac{1}{p-1},
\]
and so the lower bound of Theorem \ref{thm:main.unimodp} follows from Lemma \ref{lem:^p.dist}
\end{proof}

The lower bound of Theorem \ref{thm:mainp} is a special case of the following more general bound.
\begin{prop}\label{prop:lower.bound}
Let $\Gamma$ be a finite graph, and let $u,v\in\Gamma$ be such that $d(u,v)=\diam(\Gamma)$. Then
\[
R_p(u \leftrightarrow  v)^{\frac{1}{p-1}}\ge\left(\frac{1}{\deg(u)}\right)^{\frac{1}{p-1}}+\left(\frac{(\diam(\Gamma)-1)^p}{4(|E(\Gamma)|-\deg(u))}\right)^{\frac{1}{p-1}}.
\]
In particular,
\[
R_p(u \leftrightarrow  v)^\frac{1}{p-1}\ge\left(\frac{1}{\deg(u)}\right)^{\frac{1}{p-1}}+\left(\frac{(\diam(\Gamma)-1)^p}{2\deg(\Gamma)(|\Gamma|-2)}\right)^{\frac{1}{p-1}}.
\]
\end{prop}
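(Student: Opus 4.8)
The plan is to prove Proposition~\ref{prop:lower.bound} by exhibiting a collection of disjoint edge cut-sets separating $u$ from $v$, and then applying the Nash--Williams inequality (Proposition~\ref{prop:Nash-William}). First I would set $\gamma=\diam(\Gamma)=d(u,v)$ and, for each $i\in\{0,1,\ldots,\gamma-1\}$, let $\Pi_i$ be the set of edges joining $S_\Gamma(u,i)$ to $S_\Gamma(u,i+1)$. Since every path from $u$ to $v$ must, at each step, change the distance from $u$ by at most $1$ and ultimately move from distance $0$ to distance $\gamma$, it must cross each of the ``spheres'' at integer distances; hence each $\Pi_i$ separates $u$ from $v$. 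The sets $\Pi_i$ are pairwise disjoint because an edge joining a vertex at distance $i$ to one at distance $i+1$ cannot simultaneously join vertices at distances $j$ and $j+1$ for $j\ne i$. Moreover $\Pi_0$ consists precisely of the $\deg(u)$ edges incident to $u$, so $|\Pi_0|=\deg(u)$.

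The next step is to bound $\sum_{i=1}^{\gamma-1}|\Pi_i|$. Every edge in $\Pi_i$ for $i\ge 1$ has both endpoints in $\Gamma\setminus\{u\}$ and is not incident to $u$, so $\sum_{i=1}^{\gamma-1}|\Pi_i|\le |E(\Gamma)|-\deg(u)$ (in fact one could be slightly more careful, but this crude bound suffices). Hence at least half of the indices $i\in\{1,\ldots,\gamma-1\}$ satisfy $|\Pi_i|\le \frac{2(|E(\Gamma)|-\deg(u))}{\gamma-1}$; call the set of such indices $I$, so $|I|\ge (\gamma-1)/2$. Applying Proposition~\ref{prop:Nash-William} with the cut-sets $\Pi_0$ together with $\{\Pi_i:i\in I\}$ gives
\[
R_p(u\leftrightarrow v)^{\frac{1}{p-1}}\ge \left(\frac{1}{\deg(u)}\right)^{\frac{1}{p-1}}+\sum_{i\in I}\left(\frac{\gamma-1}{2(|E(\Gamma)|-\deg(u))}\right)^{\frac{1}{p-1}}\ge \left(\frac{1}{\deg(u)}\right)^{\frac{1}{p-1}}+\frac{\gamma-1}{2}\left(\frac{\gamma-1}{2(|E(\Gamma)|-\deg(u))}\right)^{\frac{1}{p-1}},
\]
and since $\frac{\gamma-1}{2}=\left(\frac{(\gamma-1)^{p-1}}{2^{p-1}}\right)^{\frac{1}{p-1}}$ this simplifies to the first displayed bound of the proposition, because $2^{p-1}\cdot 2\ge 4$ when $p\ge 1$... more carefully, $\frac{\gamma-1}{2}\cdot\left(\frac{\gamma-1}{2(|E|-\deg u)}\right)^{\frac{1}{p-1}}=\left(\frac{(\gamma-1)^p}{2^p(|E|-\deg u)}\right)^{\frac{1}{p-1}}\ge\left(\frac{(\gamma-1)^p}{4(|E|-\deg u)}\right)^{\frac{1}{p-1}}$ since $2^p\ge 4$ for $p\ge 2$; for $p\in(1,2)$ one gets $2^p\le 4$ and the inequality still holds in the stated direction because we can afford the weaker constant. (I would double-check the constant bookkeeping here; replacing $4$ by $2^{p}$ would be cleanest, but the statement as given with $4$ follows as long as the exponent manipulation is done carefully, perhaps using $2^{p}\le 4$ only when convenient and otherwise absorbing.)

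For the ``in particular'' statement, I would simply bound $|E(\Gamma)|-\deg(u)\le |E(\Gamma)|\le\frac12\deg(\Gamma)|\Gamma|$, or more precisely use $2|E(\Gamma)|=\sum_x\deg(x)\le\deg(\Gamma)|\Gamma|$, hence $|E(\Gamma)|-\deg(u)\le\frac12\deg(\Gamma)|\Gamma|-\deg(u)\le\frac12\deg(\Gamma)(|\Gamma|-2)$ using $\deg(u)\ge\deg(\Gamma)$... wait, $\deg(u)\le\deg(\Gamma)$, so I instead write $2(|E(\Gamma)|-\deg(u))\le \deg(\Gamma)|\Gamma|-2\deg(u)\le\deg(\Gamma)|\Gamma|-2\deg(\Gamma)=\deg(\Gamma)(|\Gamma|-2)$ using $\deg(u)\ge\deg(\Gamma)$, which is false in general; rather I should note $\deg(u)\ge 1$ is not enough. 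The correct route: since $\Gamma$ is vertex-transitive (or at least regular in the applications) $\deg(u)=\deg(\Gamma)$, but the proposition does not assume transitivity, so I would instead bound $|E(\Gamma)|-\deg(u)\le|E(\Gamma)|\le\frac12\deg(\Gamma)|\Gamma|$ and then check that $\frac12\deg(\Gamma)|\Gamma|\le\deg(\Gamma)(|\Gamma|-2)/1$ fails for small $|\Gamma|$ — so in fact the ``in particular'' clause probably does tacitly use regularity, and I would state it accordingly, deriving $4(|E(\Gamma)|-\deg(u))\le 2\deg(\Gamma)(|\Gamma|-2)$ from $2|E(\Gamma)|\le\deg(\Gamma)|\Gamma|$ and $\deg(u)\ge\deg(\Gamma)/2$ or the like. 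The main obstacle is thus not conceptual but the careful tracking of constants and the precise hypotheses needed for the second inequality; the cut-set construction and the Nash--Williams application are routine.
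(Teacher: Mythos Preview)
Your approach is exactly the paper's: edge boundaries of the balls $B_\Gamma(u,i)$ as disjoint cutsets, then Nash--Williams. The paper's proof is a three-line sketch that does not track constants, so your bookkeeping worries are well founded rather than a divergence from the paper.

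On the constant $4$: you have the direction backwards. The ``at least half'' pigeonhole gives
\[
\frac{\gamma-1}{2}\left(\frac{\gamma-1}{2(|E|-\deg u)}\right)^{\frac{1}{p-1}}
=\left(\frac{(\gamma-1)^p}{2^{p}(|E|-\deg u)}\right)^{\frac{1}{p-1}},
\]
and this is $\ge\bigl(\frac{(\gamma-1)^p}{4(|E|-\deg u)}\bigr)^{1/(p-1)}$ precisely when $2^p\le4$, i.e.\ for $p\le2$, not $p\ge2$. So for $p>2$ neither your argument nor the paper's sketch, taken literally, delivers the constant $4$. The clean fix is to skip the pigeonhole entirely and use convexity: $t\mapsto t^{-1/(p-1)}$ is convex on $(0,\infty)$, so by Jensen
\[
\sum_{i=1}^{\gamma-1}|X_i|^{-\frac{1}{p-1}}
\ge(\gamma-1)\left(\frac{1}{\gamma-1}\sum_{i=1}^{\gamma-1}|X_i|\right)^{-\frac{1}{p-1}}
\ge(\gamma-1)\left(\frac{|E|-\deg u}{\gamma-1}\right)^{-\frac{1}{p-1}}
=\left(\frac{(\gamma-1)^p}{|E|-\deg u}\right)^{\frac{1}{p-1}},
\]
which is stronger than the stated bound for every $p>1$.

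On the ``in particular'': your suspicion is correct. One needs $2(|E|-\deg u)\le\deg(\Gamma)(|\Gamma|-2)$, i.e.\ $2|E|-2\deg(u)\le\deg(\Gamma)|\Gamma|-2\deg(\Gamma)$; since $2|E|\le\deg(\Gamma)|\Gamma|$ always, this reduces to $\deg(u)\ge\deg(\Gamma)$, which forces $\deg(u)=\deg(\Gamma)$. So the second inequality is really a statement about regular (in particular vertex-transitive) graphs, which is exactly how the paper uses it.
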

\begin{proof}
If $\Gamma$ is not connected then both sides of the inequality are infinite and the proposition holds, so we may assume that $\Gamma$ is connected. Write $\gamma=\diam(\Gamma)$. For each $i=0,\ldots,\gamma-1$, let $X_i$ be the edge boundary of $B_\Gamma(u,i)$, noting that the $X_i$ are disjoint cutsets for $u$ and $v$. We have $|X_0|=\deg(u)$ by definition. Moreover, at least $\frac{1}{2}(\gamma-1)$ of the sets $X_1,\ldots,X_{\gamma-1}$ have size at most
\[
\frac{2(|E(\Gamma)|-\deg(u))}{\gamma-1}.
\]
The proposition therefore follows from the Nash-Williams inequality (Proposition \ref{prop:Nash-William}).
\end{proof}

\begin{proof}[Proof of Theorem \ref{thm:mainp} (lower bound)]
This follows from Lemma \ref{lem:^p.dist} and the second conclusion of Proposition \ref{prop:lower.bound}.
\end{proof}

\begin{proof}[Proof of Theorem \ref{thm:var.converse}]
Theorem \ref{thm:tt.rel} implies that if \eqref{var.conv.hyp} holds for large enough $n$ then
\begin{equation}\label{eq:var.conv.gr.bound}
\beta_\Gamma(m)\ll\left(\frac{m}{n}\right)^2\beta_\Gamma(n)
\end{equation}
for every $m\ge n$.

We start by proving the theorem in the case in which $n<r\le4n$. In that case, defining $\alpha\in(0,3]$ so that $r=(1+\alpha)n$, the bound \eqref{eq:var.conv.gr.bound} implies that $\beta_\Gamma(r)\le(1+\alpha)^2\beta_\Gamma(n)\le(1+5\alpha)\beta_\Gamma(n)$. This implies in particular that for at least half of the values of $k\in\{n+1,\ldots r\}$ we have
\[
\sigma_\Gamma(k)\le\frac{10\beta_\Gamma(n)}{n},
\]
and hence
\[
|\partial^E B(x,k-1)|\le\frac{10\deg(\Gamma)\beta_\Gamma(n)}{n}.
\]
Proposition \ref{prop:Nash-William} therefore implies that 
\begin{align*}
R[S_\Gamma(x,n)\leftrightarrow S_\Gamma(x,r)]&\ge\frac{\alpha n^2}{10\deg(\Gamma)\beta_\Gamma(n)}\\
     &=\frac{n^2}{10\deg(\Gamma)\beta_\Gamma(n)}\left(\frac{r}{n}-1\right)\\
     &\ge\frac{n^2}{10\deg(\Gamma)\beta_\Gamma(n)}\log\frac{r}{n},
\end{align*}
proving the theorem.

We now consider the case in which $r>4n$. The bound \eqref{eq:var.conv.gr.bound} implies that, for every $m\ge2n$, we have
\[
\sigma_\Gamma(k)\ll\frac{\beta_\Gamma(n)k}{n^2}
\]
for at least half of all values of $k\in\{2n,\ldots,m\}$; indeed, given $c>0$, if $\sigma_\Gamma(k)\ge c\beta_\Gamma(n)k/n^2$ for more than half of all $k\in\{2n,\ldots,m\}$ then we would have
\begin{align*}
\beta_\Gamma(m)&\ge\sum_{k=2n}^{\lceil (m+2n)/2\rceil}\frac{c\beta_\Gamma(n)k}{n^2}\\
   &\gg\frac{cm^2\beta_\Gamma(n)}{n^2},
\end{align*}
contradicting \eqref{eq:var.conv.gr.bound} for large enough $c$. This implies in particular that, for every $m\ge2n$, we have
\[
|\partial^E B(x,k)|\ll\frac{\deg(\Gamma)\beta_\Gamma(n)k}{n^2}
\]
for at least half of all values of $k\in\{2n,\ldots,m\}$. In particular, given $r\ge4n$ we have
\begin{align*}
\sum_{k=2n}^r\frac{1}{\sigma_\Gamma(k)}&\gg\frac{n^2}{\deg(\Gamma)\beta_\Gamma(n)}\sum_{i=n}^{\lfloor r/2\rfloor}\frac{1}{2i}\\
   &\gg\frac{n^2}{\deg(\Gamma)\beta_\Gamma(n)}(\log r-\log n)
\end{align*}
and so the theorem follows from Proposition \ref{prop:Nash-William}.
\end{proof}

\section{Tightness of the bounds in our main theorems}\label{sec:bk.examples}
In this section we give examples to show that the bounds in our main theorems are tight up to the multiplicative constants. We start with our upper bounds on $p$-resistance.
\begin{prop}[sharpness of our upper bounds on $p$-resistance]\label{prop:upper.sharp}
Let $p>1$, and fix $n,k,d\in\N$, with $n$ even. Let $G=(\Z/n\Z)^d\times(\Z/k\Z)$, and set
\[
S=\Big(\{-1,0,1\}^d\times\{0\}\Big)\cup\Big(\{0\}^d\times(\Z/k\Z)\Big).
\]
Then
\[
R_p(0\leftrightarrow G\setminus B_S(0,\textstyle{\frac{n}{2}}))\gg_{p,d}
\begin{cases}
n^{p-d}/k&\text{if $d<p$},\\
(\log n)^{p-1}/k&\text{if $d=p$},\\
1/k&\text{if $d>p$}
\end{cases}
\]
and
\[
R_{\Gamma,p}\gg_{p,d}
\begin{cases}
n^{p-d}/k&\text{if $d<p$},\\
(\log n)^{p-1}/k&\text{if $d=p$},\\
1/k&\text{if $d>p$}.
\end{cases}
\]
\end{prop}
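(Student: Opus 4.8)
The plan is to derive both lower bounds in one go from the Nash--Williams inequality (Proposition \ref{prop:Nash-William}), applied to the edge boundaries of the concentric balls $B_S(0,i)$ in $\Gamma=\Cay(G,S)$. First I would record the word metric: since $S$ respects the direct-sum decomposition and each generator alters only one of the two coordinates, $d_S((a,b),(a',b'))=\|a-a'\|_\infty$ if $b=b'$ and $\|a-a'\|_\infty+1$ if $b\ne b'$, where $\|\cdot\|_\infty$ is the $\ell^\infty$-metric on $(\Z/n\Z)^d$ coming from $\{-1,0,1\}^d$. Consequently, for $0\le i\le\frac n2-1$ (no wrap-around in the torus coordinates) one has $B_S(0,i)=(\{a:\|a\|_\infty\le i\}\times\{0\})\cup(\{a:\|a\|_\infty\le i-1\}\times(\Z/k\Z))$, hence $|B_S(0,i)|\asymp_d i^dk$ and --- the key point ---
\[
|\partial^EB_S(0,i)|\ll_d(i+1)^{d-1}k\qquad\big(0\le i\le\tfrac n2-1\big).
\]
To see the last bound I would note that the only vertices of $B_S(0,i)$ with a neighbour outside $B_S(0,i)$ are those $(a,b)$ with $b\ne0$ and $\|a\|_\infty=i-1$ (there are $\ll_d(i+1)^{d-1}k$ of these, each contributing at most $3^d$ boundary edges, all in the torus direction) together with those $(a,0)$ with $\|a\|_\infty=i$ (there are $\ll_d(i+1)^{d-1}$ of these, each contributing at most $\deg(\Gamma)\le3^d+k$ edges); summing and using $3^d+k\le(3^d+1)k$ gives the displayed estimate.

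Next I would set $\Pi_i=\partial^EB_S(0,i)$ for $i=0,\dots,\frac n2-1$. Because $d_S(\cdot,0)$ changes by at most $1$ along any edge, each $\Pi_i$ consists precisely of the edges between a vertex at distance $i$ and a vertex at distance $i+1$; in particular the $\Pi_i$ are pairwise disjoint, and every path from $0$ to a vertex at distance $\ge\frac n2$ must traverse an edge of each $\Pi_i$. Hence each $\Pi_i$ separates $0$ both from $G\setminus B_S(0,\frac n2)$ and from any fixed vertex $y$ with $d_S(0,y)=\frac n2$ (e.g. $y=(\frac n2,0,\dots,0)$). Feeding these cutsets and the edge-boundary bound into Proposition \ref{prop:Nash-William}, I would obtain, for $U'$ equal to either $G\setminus B_S(0,\frac n2)$ or $\{y\}$,
\[
R_p(0\leftrightarrow U')^{\frac1{p-1}}\ge\sum_{i=0}^{n/2-1}\Big(\tfrac1{|\Pi_i|}\Big)^{\frac1{p-1}}\gg_{d,p}\frac1{k^{1/(p-1)}}\sum_{j=1}^{n/2}j^{-\frac{d-1}{p-1}},
\]
and since $R_{\Gamma,p}\ge R_p(0\leftrightarrow y)$ the same bound holds for $R_{\Gamma,p}$.

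It then remains to evaluate the sum $\sum_{j=1}^{n/2}j^{-(d-1)/(p-1)}$ in the three regimes: when $d<p$ the exponent is $<1$ and the sum is $\asymp_{d,p}n^{1-(d-1)/(p-1)}=n^{(p-d)/(p-1)}$; when $d=p$ the exponent equals $1$ and the sum is $\asymp\log n$; when $d>p$ the exponent exceeds $1$ and the sum is $\gg1$, already from the $j=1$ term. Raising to the power $p-1$ yields $R_p\gg_{p,d}n^{p-d}/k$, $R_p\gg_{p,d}(\log n)^{p-1}/k$ and $R_p\gg_{p,d}1/k$ respectively, which is exactly the claimed bound for both $R_p(0\leftrightarrow G\setminus B_S(0,\frac n2))$ and $R_{\Gamma,p}$. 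I expect the one genuinely delicate step to be the edge-boundary estimate: it is essential to exploit that the $k-1$ edges leaving a vertex in the $\Z/k\Z$-direction remain \emph{inside} $B_S(0,i)$ whenever that vertex lies at distance $i\ge1$ from $0$, so that $|\partial^EB_S(0,i)|$ has size of order $(i+1)^{d-1}k$ rather than the far cruder $\deg(\Gamma)\,|S_S(0,i)|\asymp(3^d+k)(i+1)^{d-1}k$ coming from the trivial bound; with the cruder estimate the Nash--Williams sum would be too small by a power of $\deg(\Gamma)$ and the stated bounds would not be attained.
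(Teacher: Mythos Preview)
Your proposal is correct and follows exactly the paper's approach: apply the Nash--Williams inequality (Proposition~\ref{prop:Nash-William}) to the edge boundaries $\Pi_i=\partial^EB_S(0,i)$, using the bound $|\Pi_i|\ll_d (i+1)^{d-1}k$, and then evaluate the resulting sum $\sum_j j^{-(d-1)/(p-1)}$ in the three regimes. The paper simply asserts the edge-boundary bound, whereas you supply a careful justification via the explicit description of the word metric; this is a welcome addition, and your observation that the $\Z/k\Z$-direction edges from boundary vertices with $b\ne0$ stay inside the ball is precisely the point needed to get $|\Pi_i|\asymp_d i^{d-1}k$ rather than the cruder $\deg(\Gamma)\cdot\sigma(i)$.
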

See Table \ref{tab:examples} for the values one can take for the parameters in Proposition \ref{prop:upper.sharp} in order to achieve the upper bounds on $p$-resistance in our main theorems.

\begin{proof}
For each $i=0,1,\ldots,\frac{n}{2}$, let $X_i$ be the edge boundary of $B_S(u,i)$. Note that the sets $X_i$ are disjoint and separate $0$ from $G\setminus B_S(0,\frac{n}{2})$, and that $|X_i|\ll_di^{d-1}k$ for each $i$. It therefore follows from the Nash-Williams inequality (Proposition \ref{prop:Nash-William}) that
\[
R_p(0\leftrightarrow G\setminus B_S(0,\textstyle\frac{n}{2}))\ge\displaystyle\frac{1}{k}\left(\sum_{i=1}^{n/2}i^{(1-d)/(p-1)}\right)^{p-1},
\]
and also that $R_{\Gamma,p}$ is bounded below by the same quantity, giving the desired conclusion.
\end{proof}

\begin{table}[t]
\begin{center}
  \begin{tabular}{ c | c }
    Theorem & Parameters \\ \hline
    \ref{thm:main.unimod} & $p=2$; $d=2,3$\\
    \ref{thm:main} & $p=2$; $d=2,3$\\
    \ref{thm:main.unimodp} & $k=1$\\
    \ref{thm:mainp} & $k=1$\\
    \ref{thm:main.unimodp.linear} & $d=p-1$; $k=n^{1-\eps}$\\
    \ref{thm:main.linearp} & $d=p-1$; $k=n^{1-\eps}$
  \end{tabular}
\end{center}
\caption{Values of the parameters in Proposition \ref{prop:upper.sharp} that achieve the upper bounds on $p$-resistance in our main theorems.}\label{tab:examples}
\end{table}

We now give various examples to show that the lower bounds of Theorems \ref{thm:main.unimodp} and \ref{thm:mainp} are sharp. The we start by showing that the lower bounds of Theorems \ref{thm:main.unimodp} and \ref{thm:mainp} are optimal when $p\in\N$ and $\beta_\Gamma(r)\asymp r^p$ or $|\Gamma|\asymp\diam(\Gamma)^p$.
\begin{prop}\label{prop:resist.lb.ex}
Let $r,p\in\N$ be such that $r^{(p-1)/p}$ is an integer, and let $k\in\N$. Define $\Gamma_\infty$ to be the Cayley graph
\[
\Gamma_\infty=\Cay(\,\Z\times(\Z/r^{(p-1)/p}\Z)^p\times\Z/k\Z\,\,,\,\{-1,0,1\}^{p+1}\times\Z/k\Z\,),
\]
and define $\Gamma_0$ to be the Cayley graph
\[
\Gamma_0=\Cay(\,\Z/r\Z\times(\Z/r^{(p-1)/p}\Z)^p\times\Z/k\Z\,\,,\,\{-1,0,1\}^{p+1}\times\Z/k\Z\,).
\]
Then
\[
R_p(x\leftrightarrow\Gamma_\infty\setminus B_{\Gamma_\infty}(x,r))\ll_p\frac{1}{\deg(\Gamma_\infty)},
\]
and
\[
R_{\Gamma_0,p}\ll_p\frac{1}{\deg(\Gamma_0)}.
\]
\end{prop}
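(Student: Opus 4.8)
The plan is, in both cases, to pass to a convenient spanning subgraph and bound its $p$-resistance by exhibiting an explicit unit flow of small $q$-energy, where $q$ is the exponent conjugate to $p$, i.e.\ $q=\tfrac{p}{p-1}$. The tool that converts such a flow into a resistance bound is the inequality $R_p(U\leftrightarrow U')\le\EE_q(\theta)^{p-1}$, valid for every unit $p$-flow $\theta$ from $U$ to $U'$ (here $\EE_q(\theta)=\tfrac12\sum_{(x,y):x\sim y}|\theta(x,y)|^q$); this is dual to \cref{prop:Nash-William} and follows from the definition of $\ca_p$ together with Hölder's inequality, exactly as in the proof of \cref{prop:p-energy/p-resistance}. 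Set $m=r^{(p-1)/p}\in\N$. By vertex-transitivity we may take $x$ to be the identity; and we may assume $r\ge2^p$, since for smaller $r$ there are, for fixed $p$, only finitely many possibilities for the finite graph $B_{\Gamma_\infty}(x,r+1)$ while enlarging $k$ only decreases the resistances in question. In particular $m\le r/2$.

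First I would strip off the $\Z/k\Z$-coordinate. Write $\Lambda=\Cay(\Z\oplus(\Z/m\Z)^p,\{-1,0,1\}^{p+1})$ and $\Lambda_0=\Cay(\Z/r\Z\oplus(\Z/m\Z)^p,\{-1,0,1\}^{p+1})$. Deleting from $\Gamma_\infty$ (respectively $\Gamma_0$) every generator that moves two or more of the first $p+1$ coordinates, or that moves the last coordinate simultaneously with one of the others, leaves the spanning subgraph $\Lambda\,\square\,K_k$ (respectively $\Lambda_0\,\square\,K_k$), where $K_k$ is the complete graph on $\Z/k\Z$ and $\square$ is the Cartesian product of graphs. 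Since adding edges cannot increase $p$-resistance, since the last coordinate can be changed in a single step so that $S_{\Gamma_\infty}(x,r+1)=S_\Lambda(0,r+1)\times\Z/k\Z\subseteq\Gamma_\infty\setminus B_{\Gamma_\infty}(x,r)$, and since $\deg(\Gamma_\infty),\deg(\Gamma_0)\asymp_p k$, it is enough to prove that the $p$-resistance in $\Lambda\,\square\,K_k$ from $(0,0)$ to $S_\Lambda(0,r+1)\times\Z/k\Z$, and the maximal $p$-resistance in $\Lambda_0\,\square\,K_k$, are both $\ll_p1/k$.

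Next I would peel off the $K_k$-factor. Route a unit flow that, at the source, splits its current evenly among the $k$ sheets $\Lambda\times\{j\}$ by sending $1/k$ along each of the $k-1$ vertical $K_k$-edges out of $(0,0)$, and then inside each sheet carries this current $1/k$ to the target using the $p$-current $\nabla_p f^\ast$ of the $p$-potential $f^\ast$ realising $R^\Lambda_p:=R_p(\Lambda;0\leftrightarrow S_\Lambda(0,r+1))$, rescaled to current $1/k$ (in the finite case one additionally recombines at the target vertex through the $k-1$ vertical edges there). The vertical edges contribute $q$-energy $\ll k^{1-q}$; rescaling a unit flow to current $1/k$ scales its $q$-energy by $k^{-q}$, and $\EE_q(\nabla_pf^\ast)=\EE_p(f^\ast)=(R^\Lambda_p)^{1/(p-1)}$ by \cref{prop:p-energy/p-resistance}, so the $k$ sheets contribute $q$-energy $\ll k\cdot k^{-q}(R^\Lambda_p)^{1/(p-1)}$. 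Applying $R_p\le\EE_q(\theta)^{p-1}$, the elementary bound $(a+b)^{p-1}\asymp_p a^{p-1}+b^{p-1}$, and the identity $(1-q)(p-1)=-1$, the resistance in the product graph is $\ll_p k^{-1}(1+R^\Lambda_p)$, and likewise $\ll_p k^{-1}(1+R_{\Lambda_0,p})$ in the finite case. It thus remains to show $R^\Lambda_p\ll_p1$ and $R_{\Lambda_0,p}\ll_p1$.

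Finally, for these last two bounds I would exploit the $\ell^\infty$-ball geometry of $\Lambda$ and $\Lambda_0$. In $\Lambda$, the sphere $S_\Lambda(0,t)$ is isometric to the $\Z^{p+1}$-sphere of radius $t$ (hence of size $\asymp_p t^p$) for $t\le m/2$, and equals $\{\pm t\}\times(\Z/m\Z)^p$ (of size $2m^p=2r^{p-1}$) for $t>m/2$. Routing a unit flow from $0$ to $S_\Lambda(0,r+1)$ that is, up to an absolute constant, uniform on each sphere $S_\Lambda(0,t)$ gives a flow of $q$-energy
\[
\ll_p\ \sum_{1\le t\le m/2}|S_\Lambda(0,t)|^{1-q}\ +\ \sum_{m/2<t\le r+1}\big(r^{p-1}\big)^{1-q}\ \ll_p\ \sum_{t\ge1}t^{-q}\ +\ r\cdot r^{-1}\ \ll_p\ 1,
\]
the first sum converging because $q>1$ and the second being $\asymp_p1$ because $(p-1)(1-q)=-1$; this is precisely the balance forced by the choice $m=r^{(p-1)/p}$ (a $\Z^{p+1}$-ball of radius $\asymp m$ carries bounded $p$-resistance since $p+1>p$, while the residual cylinder of length $\asymp r$ and cross-section $r^{p-1}$ has $p$-resistance $\asymp_p r^{p-1}/r^{p-1}=1$). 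Hence $R^\Lambda_p\ll_p1$. The same scheme applied to the ``thick cycle'' $\Lambda_0$ -- spreading out in a $\Z^{p+1}$-ball of radius $\asymp m$ near each of the two extremal vertices and running along the intervening cylinder of length $\lesssim r$ and cross-section $r^{p-1}$ -- gives $R_{\Lambda_0,p}\ll_p1$.

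The step I expect to be the real obstacle is making these flow concatenations precise: the interfaces being crossed -- the $K_k$-fibres, the spheres $S_\Lambda(0,t)$ -- are not single vertices, so one must route so that current traverses each interface essentially uniformly and then control the $q$-energy of a concatenation through the convexity of $\EE_q$ (for $\theta=\sum_w\mu_w\theta_w$ with $\mu_w\ge0$ and $\sum_w\mu_w=1$ one has $\EE_q(\theta)\le\sum_w\mu_w\EE_q(\theta_w)$). All implied constants arising in this way depend only on $p$.
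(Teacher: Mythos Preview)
Your argument is essentially correct, but it proceeds along a genuinely different route from the paper's proof. The paper does not construct any flows. Instead, it computes the volume growth of $\Gamma_0$ and $\Gamma_\infty$, feeds this into the Coulhon--Saloff-Coste inequality (\cref{prop:iso-growth}) to obtain the vertex-isoperimetric profile
\[
|\partial A|\gg_p
\begin{cases}
   k^{1/(p+1)}|A|^{p/(p+1)} & \text{if }|A|\le kr^{(p^2-1)/p},\\
   kr^{p-1} & \text{otherwise},
\end{cases}
\]
and then plugs these bounds directly into the Benjamini--Kozma machinery (\cref{thm:bk} and \cref{thm:bk.inf}). The two regimes correspond exactly to your $\Z^{p+1}$-ball phase and your cylinder phase, and the resulting dyadic sums over $j_{A,p}$ collapse to $O_p(1/\deg(\Gamma))$ by the same arithmetic you carry out for the $q$-energy.

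What each approach buys: the paper's argument is short precisely because it reuses the heavy tools (\cref{prop:iso-growth}, \cref{thm:bk}, \cref{thm:bk.inf}) already set up for the main theorems, so these examples become a two-line corollary of the general machinery. Your approach via the Thomson-type inequality $R_p\le\EE_q(\theta)^{p-1}$ and an explicit flow is more self-contained---it needs none of \S\ref{sec:bk} or \S\ref{sec:growth->iso}---and makes transparent why the choice $m=r^{(p-1)/p}$ is exactly the balance point between the ball and cylinder contributions. The price is the bookkeeping you flag at the end: gluing the flows across the sphere interfaces and the $K_k$-fibre so that they remain approximately uniform. This is routine but does need to be written out (for the $\Z^{p+1}$ phase the cleanest realisation is probably the flow induced by a uniform random $\ell^\infty$-geodesic). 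One small wrinkle: your reduction to $r\ge 2^p$ via ``finitely many possibilities'' is not quite right as stated, since $k$ still ranges over all of $\N$; but your $K_k$-peeling step already handles all $k$ uniformly once $R_p^\Lambda\ll_p1$, and for the finitely many remaining $\Lambda$'s with $r<2^p$ that bound is trivial.
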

In particular, there is no way of avoiding the gap of $(\log r)^{p-1}\deg(\Gamma)$ that appears between the upper and lower bounds of those theorems in these settings.
\begin{proof}
Using Proposition \ref{prop:iso-growth}, one may check that for $A\subset\Gamma_0$ with $|A|\le\frac{1}{2}|\Gamma_0|$, or for $A\subset\Gamma_\infty$ with $|A|<\infty$, we have
\[
|\partial A|\gg_p
\begin{cases}
   k^\frac{1}{p+1}|A|^\frac{p}{p+1} & \text{if $|A|\le kr^\frac{p^2-1}{p}$},\\
   kr^{p-1} & \text{otherwise}.
\end{cases}
\]
The desired bounds therefore follow from Theorems \ref{thm:bk.inf} and \ref{thm:bk}, respectively.
\end{proof}

Define $Z_{n,k}$ to be the Cayley graph $Z_{n,k}=\Cay(\Z/n\Z,\{-k,\ldots,k\})$.
\begin{lemma}[edge isoperimetric inequality for $Z_{n,k}$]\label{lem:cyclic.edge}
Let $k,n\in\N$, and suppose that $A\subset Z_{n,k}$ satisfies $k\le|A|\le n-k$. Then $|\partial^EA|\ge\frac{1}{4}k^2-1$.
\end{lemma}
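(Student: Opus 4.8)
The plan is to reduce to the case of intervals. The graph $Z_{n,k}$ is the circulant graph on $\Z/n\Z$ in which $i\sim j$ whenever $|i-j|\le k$ (taking differences mod $n$), so a set $A$ has small edge boundary precisely when $A$ is, up to a small error, a union of arcs. The key structural observation is that a nonempty proper subset $A\subsetneq\Z/n\Z$ decomposes uniquely into maximal arcs (``runs'') $I_1,\ldots,I_m$, separated by maximal gaps $J_1,\ldots,J_m$; the edges of $\partial^EA$ are exactly the edges running across the $m$ transition zones where $A$ meets its complement, and different transition zones contribute disjoint sets of edges. So $|\partial^EA|=\sum_{t=1}^m e_t$, where $e_t$ counts edges from the right-hand end of arc $I_t$ to the left-hand end of the following gap $J_t$ (together with the symmetric count, but to avoid double-counting one orients the edges consistently). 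It therefore suffices to bound from below the number of edges crossing a single ``right end'' of an arc, in terms of the lengths of the arc and the succeeding gap.

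First I would record the single-arc estimate. If $I$ is an arc of length $\ell\ge1$ immediately followed by a gap of length $g\ge1$, then the number of edges of $Z_{n,k}$ joining $I$ to that gap is $\sum_{a\in I,\,b\in J,\,1\le b-a\le k}1$; writing $a$ for distance from the right end of $I$ and $b$ for distance from the left end of $J$, this equals the number of pairs $(a,b)$ with $0\le a\le\ell-1$, $0\le b\le g-1$, $a+b+1\le k$, i.e. $a+b\le k-1$. The number of lattice points in the triangle $\{a,b\ge0,\ a+b\le k-1\}$ is $\binom{k+1}{2}=\tfrac12 k(k+1)$, and restricting to $a\le\ell-1$, $b\le g-1$ removes at most (triangle on $a\ge\ell$) plus (triangle on $b\ge g$), i.e. at most $\tfrac12(k-\ell)(k-\ell+1)_+ + \tfrac12(k-g)(k-g+1)_+$. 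So the crossing count is at least $\tfrac12 k(k+1) - \tfrac12\max\{0,k-\ell\}^2 - \tfrac12\max\{0,k-g\}^2$ (being slightly lossy with the lower-order terms), which is $\ge\tfrac14 k^2$ as soon as $\ell\ge k$ or, more relevantly, once the total deficiency $\max\{0,k-\ell\}^2+\max\{0,k-g\}^2$ is controlled.

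The main case analysis is then the following. If $A$ is a single arc ($m=1$), then since $|A|\ge k$ the arc has length $\ell\ge k$, and its complement $J_1$ has length $g=n-|A|\ge k$; the single-arc estimate with $\ell,g\ge k$ gives crossing count $\ge\tfrac12 k(k+1)>\tfrac14 k^2-1$, and there are two such transition edges-counts to add (the right end and the left end of the arc), so $|\partial^EA|\ge\tfrac12 k(k+1)$, more than enough. If $A$ has $m\ge2$ arcs, then I would argue that the arc lengths sum to $|A|\ge k$ while the gap lengths sum to $n-|A|\ge k$; if some arc has length $\ge k$ and the following gap has length $\ge k$ we are done as above, so we may assume every ``(arc, next gap)'' pair has total deficiency contributing, and here one uses convexity/superadditivity of $x\mapsto x^2$ to see that the aggregate crossing count $\sum_t e_t$ is minimized when the mass is concentrated in one run and one gap, reducing to the $m=1$ case. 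The cleanest route is: among the $m$ runs choose the longest, of length $\ell^*\ge|A|/m\ge\lceil k/m\rceil$, and the longest gap, of length $g^*\ge(n-|A|)/m\ge\lceil k/m\rceil$; but rather than the longest, it is better to sum the single-arc bound over all $t$ and use that $\sum_t\max\{0,k-\ell_t\}^2\le\left(\sum_t\max\{0,k-\ell_t\}\right)^2\le (mk)^2$ is too weak — so instead one keeps the full sum $|\partial^EA|=\sum_t e_t\ge \tfrac12 m k(k+1)-\tfrac12\sum_t\max\{0,k-\ell_t\}^2-\tfrac12\sum_t\max\{0,k-g_t\}^2$ and observes $\max\{0,k-\ell_t\}^2\le k\max\{0,k-\ell_t\}\le k(k-\ell_t)_+$, so $\sum_t\max\{0,k-\ell_t\}^2\le k\sum_t(k-\ell_t)_+\le k(mk-|A|)\le k(mk-k)=mk^2-k^2$, and similarly for gaps, giving $|\partial^EA|\ge\tfrac12 mk(k+1)-\tfrac12(mk^2-k^2)-\tfrac12(mk^2-k^2)=\tfrac12 mk - \tfrac12 mk^2+k^2 \ge\tfrac14 k^2-1$ after checking the small cases $m=1,2$ by hand.

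The main obstacle I anticipate is getting the run/gap bookkeeping exactly right so that the edges in $\partial^EA$ are partitioned (not merely covered) by the $m$ transition zones, and being careful with the lower-order terms so the final constant really is $\tfrac14 k^2-1$ rather than something slightly worse; the arithmetic is elementary but the off-by-one and double-counting issues at the arc/gap interfaces are where an error would creep in. A safer and shorter alternative, which I would actually write up, is to bound $|\partial^EA|$ below by the number of edges crossing \emph{one} fixed cut point: pick any $i$ with $i\in A$ and $i+1\notin A$ (exists since $A$ is proper and nonempty), or any $i$ with $i\notin A$, $i+1\in A$; near such a boundary point the interval $\{i-k+1,\dots,i+k\}$ meets both $A$ and $A^c$, and since $|A|\ge k$ and $|A^c|\ge k$ one shows that at least $\sim\tfrac14 k^2$ of the edges within that window cross between $A$ and $A^c$ — this localizes the whole argument to a single window of length $2k$ and makes the extremal configuration (a half-line of $A$ meeting a half-line of $A^c$) transparent, yielding exactly the triangular count $\ge\binom{k}{2}\ge\tfrac14 k^2-1$ with no global case analysis at all.
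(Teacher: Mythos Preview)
Both of your approaches contain genuine gaps.

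In the arc-decomposition approach, the inequality $\sum_t(k-\ell_t)_+\le mk-|A|$ goes the wrong way: since $(k-\ell_t)_+\ge k-\ell_t$ always, one has $\sum_t(k-\ell_t)_+\ge mk-|A|$, with strict inequality whenever some arc has length $>k$. Even if one grants your estimate, the resulting lower bound $\tfrac12 mk(k+1)-k^2(m-1)=\tfrac12 mk+k^2-\tfrac12 mk^2$ is negative for every $m\ge3$ once $k>m$, so the argument collapses precisely in the generic case; saying ``check $m=1,2$ by hand'' does not help.

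In the single-window approach, the global hypotheses $|A|\ge k$ and $|A^c|\ge k$ do not transfer to the local window around an arbitrary transition point. Take $n$ large, $A=\{0\}\cup J$ with $J$ an arc of length $k-1$ placed far from $0$, and choose $i=0$. Then $A\cap\{-k+1,\dots,k\}=\{0\}$, and the boundary edges with both endpoints in that window number only $2k-1$, not $\asymp k^2$. (Of course $|\partial^EA|$ is large for this $A$, but not for the reason you give.)

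The missing idea, and what the paper actually does, is to choose the window via an intermediate-value argument rather than at an arbitrary transition. Set $I_m=\{m,m+1,\dots,m+k\}$ and observe that $r(m):=|I_m\cap A|$ changes by at most $1$ as $m$ increments. Inside any $I_m$ every pair of vertices is adjacent, so $I_m$ alone contributes $r(m)\bigl(k+1-r(m)\bigr)$ edges to $\partial^EA$; if $r(m)=\lfloor k/2\rfloor$ for some $m$ this already exceeds $\tfrac14 k^2-1$. If no such $m$ exists then, by the discrete intermediate value property, either every $r(m)<\lfloor k/2\rfloor$ or every $r(m)>\lfloor k/2\rfloor$; replacing $A$ by its complement if necessary, assume the former. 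Then each $a\in A$ has more than $k/2$ right-neighbours outside $A$, whence $|\partial^EA|\ge |A|\cdot k/2\ge k^2/2$. This is exactly the ``localise to a window'' strategy you had in mind, but the point is that one must \emph{find} a balanced window (or else exploit the uniform imbalance), not merely pick any transition point.
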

\begin{proof}
First, note that if an interval $I_m=\{m,m+1,\ldots,m+k\}\subset\Z/n\Z$ contains exactly $r$ elements of $A$ then eactly $r(k+1-r)$ of the edges between elements of $I_m$ belong to $\partial^EA$. In particular, if there exists $m$ with $|I_m\cap A|=\lfloor k/2\rfloor$ then the lemma is satisfied. Next, note that $|I_m\cap A|$ differs from $|I_{m+1}\cap A|$ by at most $1$, so if there exist $m$ such that $|I_m\cap A|<\lfloor k/2\rfloor$ and $m'$ such that $|I_{m'}\cap A|>\lfloor k/2\rfloor$ then there also exists some $m''$ such that $|I_{m''}\cap A|=\lfloor k/2\rfloor$, and the lemma is satisfied as before. Replacing $A$ with its complement if necessary, we may therefore assume that $|I_m\cap A|<\lfloor k/2\rfloor$ for every $m\in\Z/n\Z$. This means in particular that every element of $A$ has at least $k/2$ neighbours outside $A$, and so $|\partial^EA|\ge k|A|/2$ and the lemma is satisfied.
\end{proof}

\begin{example}
Combining Proposition \ref{lem:cyclic.edge} with Theorem \ref{thm:bk} gives the bound
\[
R_{Z_{n,k},p}^\frac{1}{p-1}\ll_p\frac{1}{k^\frac{1}{p-1}}+\frac{n}{k^\frac{p+1}{p-1}}+\frac{\log n}{k^\frac{2}{p-1}}.
\]
Provided $k\le n/\log n$, and using Lemma \ref{lem:^p.dist}, this translates as
\begin{equation}\label{eq:resist.Z_n,k}
R_{Z_{n,k},p}\ll_p\frac{1}{k}+\frac{n^{p-1}}{k^{p+1}}.
\end{equation}
Since $\deg(Z_{n,k})\asymp k$ and $\diam(Z_{n,k})\asymp n/k$, this implies in particular that
\[
R_{Z_{n,k}}\ll\frac{1}{\deg(Z_{n,k})}\left(1+\frac{\diam(Z_{n,k})^2}{|Z_{n,k}|}\right),
\]
which matches the lower bound of Theorem \ref{thm:mainp}. This example similarly achieves the lower bound of Theorem \ref{thm:main.unimodp.linear} with $r=\diam(Z_{n,k})-1$.
\end{example}

\begin{remark}\label{rem:edge}
The vertex isoperimetric inequality for $Z_{n,k}$ is much weaker than the edge isoperimetric inequality given by Lemma \ref{lem:cyclic.edge}. Indeed, it is easy to check that $|\partial A|\ge2k$ for every $A\subset Z_{n,k}$ with $|A|\le n-2k$, and also to check that this bound is optimal. Using this and Theorem \ref{thm:bk} to bound $R_{Z_{n,k}}$ gives only
\[
R_{Z_{n,k},p}\ll\frac{n^{p-1}}{k^p}+\frac{\log n}{k}.
\]
For $n$ much larger than $k$ this is weaker by a factor of $k\asymp\deg(Z_{n,k})$ than the bound \eqref{eq:resist.Z_n,k} obtained using the edge isoperimetric inequality and Theorem \ref{thm:bk}.
\end{remark}

\section{Locality of the escape probability}\label{sec:locality}
\begin{proof}[Proof of \cref{cor:local}]
We may assume without loss of generality that $\Gamma_n$ has degree equal to that of $\Gamma$. The graphs $\Gamma_n$ have superquadratic growth by hypothesis, so \cref{thm:iso.classical} and \cite[Corollary 6.32]{ly-per} imply that $p_t(x,y)\ll_{\deg(\Gamma)}t^{-3/2}$ for all $x,y\in\Gamma_n$ and $t\in\N$ for all $n\in\N$. In particular, for all  $n\in\N$, given two vertices $x,y\in\Gamma_n$, the probability that the random walk starting at $x$ hits $y$ after time $t$ is at most $Ct^{-1/2}$, with $C>0$ depending only on $\deg(\Gamma)$. In particular, if $d(x,y)=r$ then the probability that the random walk starting at $y$ hits $x$ at least once is at most $Cr^{-1/2}$, and so
\[
\Prob[\,x\to S_{\Gamma_n}(x,r)\,]-e(\Gamma_n)\le Cr^{-1/2}.
\]
Since for each $r$ we have $\Prob[\,x\to S_{\Gamma_n}(x,r)\,]=\Prob[\,x'\to S_\Gamma(x',r)\,]$ for all $n$, it follows that $e(\Gamma_n)\to e(\Gamma)$ as required.
\end{proof}

 \footnotesize{
  \bibliographystyle{abbrv}
  \bibliography{biblio.bib}
  }
\end{document}